\newtheorem{theorem}{Theorem}[section]
\newtheorem{proposition}[theorem]{Proposition}
\newtheorem{lemma}[theorem]{Lemma}
\newtheorem{claim}[theorem]{Claim}
\newtheorem*{claim*}{Claim}
\newtheorem{corollary}[theorem]{Corollary}
\newtheorem{Main Conjecture}[theorem]{Main Conjecture}
\theoremstyle{definition}
\newtheorem{definition}[theorem]{Definition}
\theoremstyle{remark}
\newtheorem{example}[theorem]{Example}
\newtheorem{remark}[theorem]{Remark}
\theoremstyle{plain}
\newcommand{\Red}{{\mathrm{Red}}}
\newcommand{\C}{\mathcal{C}}
\newcommand{\cellsize}{12}
\newlength{\cellsz} \setlength{\cellsz}{\cellsize\unitlength}
\newsavebox{\cell}
\sbox{\cell}{\begin{picture}(\cellsize,\cellsize)
\put(0,0){\line(1,0){\cellsize}}
\put(0,0){\line(0,1){\cellsize}}
\put(\cellsize,0){\line(0,1){\cellsize}}
\put(0,\cellsize){\line(1,0){\cellsize}}
\end{picture}}
\newcommand\cellify[1]{\def\thearg{#1}\def\nothing{}%
\ifx\thearg\nothing
\vrule width0pt height\cellsz depth0pt\else
\hbox to 0pt{\usebox{\cell} \hss}\fi%
\vbox to \cellsz{
\vss
\hbox to \cellsz{\hss$#1$\hss}
\vss}}
\newcommand\tableau[1]{\vtop{\let\\\cr
\baselineskip -16000pt \lineskiplimit 16000pt \lineskip 0pt
\ialign{&\cellify{##}\cr#1\crcr}}}
\def\stackrel#1#2{\mathrel{\mathop{#2}\limits^{#1}}}
\newcommand{\excise}[1]{}
\begin{document}
\pagestyle{plain}
\title{Classification of Levi-spherical Schubert varieties}
\author{Yibo Gao}
\address{Department of Mathematics, Massachusetts Institute of Technology, Cambridge, MA 02139, USA}
\email{gaoyibo@mit.edu}
\author{Reuven Hodges}
\author{Alexander Yong}
\address{Department of Mathematics, U.~Illinois at Urbana-Champaign, Urbana, IL 61801, USA} 
\email{rhodges@illinois.edu, ayong@illinois.edu}
\date{April 20, 2021}
\maketitle

\begin{abstract}
A Schubert variety in the complete flag manifold $GL_n/B$ is \emph{Levi-spherical} if the action of a Borel subgroup in a Levi subgroup of a standard
parabolic has a dense orbit. We give a combinatorial classification of these Schubert varieties. This establishes a conjecture
of the latter two authors, and a new formulation in terms of standard Coxeter elements. Our proof uses the theory of \emph{key polynomials} (type $A$ Demazure module characters).
\end{abstract}

\section{Introduction}\label{sec:1}

This is a sequel to \cite{Hodges.Yong1}, which studies Schubert varieties that 
are spherical for the action of a Levi subgroup. That paper defines the notion of a \emph{spherical element}
of a Coxeter group. For symmetric groups, 
it was conjectured \cite[Conjecture~3.2]{Hodges.Yong1} that these elements characterize Levi-spherical
Schubert varieties in the complete flag manifold $GL_n/B$. We prove this conjecture using a new, concise formulation of spherical elements in terms of standard Coxeter elements. This provides the first (proved)
combinatorial characterizations of said geometric property for these Schubert varieties.

\subsection{Main idea}
Let $G=GL_n$. Its Weyl group $W\cong {\mathfrak S}_n$ consists of permutations of $[n]:=\{1,2,\ldots,n\}$. Thus $W$ is generated, as a Coxeter group, by the simple transpositions
$S=\{s_i=(i \ i+1): 1\leq i\leq n-1\}$. 
The set of \emph{left descents} is
\[J(w)=\{j\in [n-1]: w^{-1}(j)>w^{-1}(j+1)\}\] 
($j\in J(w)$ if $j+1$ appears to the left of $j$ in $w$'s one-line notation).

Let $\ell(w)$ denote the \emph{Coxeter length} of $w$. For $w\in {\mathfrak S}_n$, 
\[\ell(w)=\#\{1\leq i<j\leq n: w(i)>w(j)\}\]
counts \emph{inversions} of $w$. 

A \emph{parabolic subgroup} $W_I$ of $W$ is the subgroup generated by a subset $I\subset S$. A \emph{standard
Coxeter element} $c\in W_I$ is any product of the elements of $I$ listed in some order. Let $w_0(I)$ be the longest element of $W_I$.

\begin{definition}\label{def:spherical-standard-coxeter}
Let $w\in W$ and fix $I\subseteq J(w)$. Then $w$ is $I$-\emph{spherical} if $w_0(I)w$ is a standard Coxeter element for some parabolic subgroup $W_I$ of $W$.
\end{definition}

Let $\Red(w)$ be the set of \emph{reduced expressions}
$w=s_{i_1}\cdots s_{i_{\ell(w)}}$. Let  $D:=[n-1]-I=\{d_1<d_2<\ldots<d_k\}; \ d_0:=0, d_{k+1}:=n$. 
In \cite{Hodges.Yong1}, another definition of $I$-spherical was given.

\begin{definition}[{\cite[Definition~3.1]{Hodges.Yong1}}]\label{def:original-spherical}
Let $w\in {\mathfrak S}_n$ and $I\subseteq J(w)$. Then $w$ is \emph{$I$-spherical} 
if 
$R=s_{i_1}s_{i_2}\cdots s_{i_{\ell(w)}}\in \Red(w)$
 exists such that
\begin{itemize}
\item[(S.1)] $s_{d_i}$ appears at most once in $R$; and
\item[(S.2)] $\#\{m:d_{t-1}<i_m< d_t\} < {d_{t}-d_{t-1}+1\choose 2}$   for $1\leq t\leq k+1$.
\end{itemize}
\end{definition}

Our starting point is the following result:

\begin{theorem}
\label{thm:equivalent}
Definitions~\ref{def:spherical-standard-coxeter}
and~\ref{def:original-spherical} are equivalent.
\end{theorem}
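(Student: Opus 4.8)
First I would unwind what "$w_0(I)w$ is a standard Coxeter element for some parabolic subgroup $W_I$" really demands. Write $c = w_0(I)w$. Since $I \subseteq J(w)$, multiplying $w$ on the left by $w_0(I)$ is length-subtractive, so $\ell(c) = \ell(w) - \ell(w_0(I)) = \ell(w) - \binom{?}{?}$... more precisely $\ell(w_0(I)) = \sum_t \binom{d_t - d_{t-1}}{2}$, where the blocks of $I$ (maximal runs of consecutive simple reflections) have sizes governed by the "gaps" $d_0 < d_1 < \cdots < d_{k+1}$. A standard Coxeter element of a parabolic $W_{I'}$ is a product of the distinct generators in $I'$, each appearing exactly once, in some order; its length is $|I'|$, and a reduced word for it uses each letter of $I'$ exactly once. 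So Definition~\ref{def:spherical-standard-coxeter} says: $c = w_0(I)w$ has a reduced word in which every letter appears at most once (equivalently, $c$ is a standard Coxeter element of the parabolic generated by its support). The two things to check are therefore (a) which letters can appear, and (b) the multiplicity-at-most-one condition, and then translate these back through the factorization $w = w_0(I) c$ into a statement about reduced words of $w$.

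The bridge between the two definitions is the following combinatorial normal form for reduced words of $w$: I would show $R \in \Red(w)$ satisfies (S.1)--(S.2) if and only if $R$ can be chosen of the form $R = R_0 \cdot R_c$, where $R_0$ is a fixed reduced word for $w_0(I)$ (concatenating, blockwise, the standard "staircase" reduced words for the longest elements $w_0$ of the symmetric groups on the intervals $(d_{t-1}, d_t]$) and $R_c$ is a word using only letters $s_{d_1}, \ldots, s_{d_k}$ (the "separating" generators $D = [n-1] \setminus I$) together with letters inside the blocks, subject to: each $s_{d_i}$ used at most once — this is exactly (S.1) — and each block contributes strictly fewer than $\binom{d_t - d_{t-1}+1}{2}$ letters total — this is exactly (S.2), since $\binom{m+1}{2}$ is the length of $w_0$ of $\mathfrak{S}_{m+1}$ on a block of size $m$ plus one "overflow". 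Concretely: $w$ is $I$-spherical in the sense of Def.~\ref{def:original-spherical} iff $w = w_0(I) \cdot c$ reducedly with $c$ a standard Coxeter element of a parabolic $W_{I}$; the conditions (S.1), (S.2) are precisely the constraints forcing $w_0(I)w = c$ to have no repeated letters and to "fit" inside a single parabolic. The implication Def.~\ref{def:original-spherical} $\Rightarrow$ Def.~\ref{def:spherical-standard-coxeter} I would prove by taking such an $R$, using the subword property and the exchange condition to pull a reduced word for $w_0(I)$ to the front (this uses $I \subseteq J(w)$, so $\ell(w_0(I)w) = \ell(w) - \ell(w_0(I))$ and Deodhar-type lifting), and then reading off that the remaining word $R_c$ has each letter at most once — for letters in $D$ this is immediate from (S.1); for letters inside a block one argues that if some block letter repeated then, together with the staircase already removed, the block would contribute $\ge \binom{d_t - d_{t-1}+1}{2}$ letters, contradicting (S.2). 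Conversely, given $c = w_0(I)w$ a standard Coxeter element of some $W_{I}$, prepend the staircase word for $w_0(I)$ to any reduced word for $c$ to get $R \in \Red(w)$, and check (S.1)--(S.2) hold by the same counting.

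The main obstacle, I expect, is the bookkeeping in the "pull $w_0(I)$ to the front" step: a priori a reduced word $R$ of $w$ satisfying (S.1)--(S.2) need not literally begin with a staircase word for $w_0(I)$, so one must use commutations and braid moves (or, cleanly, the subword/exchange characterization of Bruhat order: $w_0(I) \le w$ since $I \subseteq J(w)$, hence $w_0(I)$ appears as a subword of $R$) to extract it, and then verify that the complementary subword $R_c$ — which represents $c = w_0(I)^{-1} w = w_0(I) w$ — inherits exactly the at-most-once property. Making the per-block length bookkeeping match $\binom{d_t-d_{t-1}+1}{2}$ precisely, rather than off by one, is the delicate point; the key numerical identity is $\ell(w_0(\mathfrak{S}_{m+1})) = \binom{m+1}{2}$ and $|\{$block letters$\}| = m$, so (S.2) is the statement "block contribution $<$ length of the block's $w_0$ plus one," which is exactly what is needed for the block part of $c$ to be a Coxeter element (length $\le m$, each letter once) rather than something longer. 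Once that identity is pinned down, both implications are short.
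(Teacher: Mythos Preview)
Your easy direction (Definition~\ref{def:spherical-standard-coxeter} $\Rightarrow$ Definition~\ref{def:original-spherical}) is correct and matches the paper's one-line dismissal of it. Your overall architecture for the hard direction --- pull $w_0(I)$ to the front, then analyze the remaining reduced word $R_c$ for $c = w_0(I)w$ --- is also the paper's. But there are two genuine gaps.

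First, the ``pull to the front'' step is more delicate than you indicate. When you prepend some $s_{k}$ with $k \in I$ and delete an $s_{i_{j'}}$ by the exchange condition, you must show $i_{j'}$ lies in the same connected component of $I$ as $k$; otherwise the deletion could kill a separator $s_{d_i}$ or move mass between blocks, destroying (S.1) or the block count behind (S.2). The paper handles this via a root-theoretic lemma: if $s_{i_1}\cdots s_{i_{k-1}} = s_{i_2}\cdots s_{i_k}$ are both reduced, then every $s_j$ on the Dynkin path from $i_1$ to $i_k$ appears at least twice among $i_1,\dots,i_k$, so under (S.1) the deletion cannot cross a separator. Your appeal to the subword property does not supply this.

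Second, and more seriously, your counting argument for block letters is wrong. After extracting $w_0(I)$, the remainder $R_c$ uses at most $d_t - d_{t-1} - 1$ letters from block $t$, which equals the number of generators in that block. But this bound alone does \emph{not} force each generator to appear at most once: with three block generators $s_a, s_{a+1}, s_{a+2}$ and a budget of three, the reduced word $s_a s_{a+1} s_a$ stays within budget yet repeats $s_a$. What actually rescues the argument is a structural fact you never invoke: since $\ell(w_0(I)c) = \ell(w_0(I)) + \ell(c)$, the element $c$ has no left descents in $I$, so $c^{-1}$ is increasing on each block. The paper then uses Tenner's characterization (a permutation is a product of distinct simple generators iff it avoids $321$ and $3412$) and argues by contradiction: a $3412$ pattern in $c^{-1}$ forces some separator $s_{d_i}$ to be used twice; a $321$ pattern in $c^{-1}$ must land in three different blocks (by the block-increasing property), and then a careful tracking of how the three values pass through the middle block forces at least $d_t - d_{t-1}$ uses of that block's generators. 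Neither contradiction follows from raw counting --- both require following specific values across the separators. Your ``key numerical identity'' is correct but insufficient; the real obstruction is combinatorial, not arithmetic.
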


\subsection{Schubert geometry and the main result}
Let ${\rm Flags}({\mathbb C}^n)$ be the variety of complete flags 
$\langle 0\rangle \subset F_1 \subset F_2 \subset \cdots \subset
F_{n-1}\subset {\mathbb C}^n$, where $F_i$ is a subspace of dimension $i$. 
The group $GL_n$ of invertible $n\times n$ matrices over ${\mathbb C}$ acts transitively on ${\rm Flags}({\mathbb C}^n)$ by change of basis. The \emph{standard
flag} is defined by $F_i={\rm span}(\vec e_1, \vec e_2,\ldots, \vec e_i)$ where $\vec e_i$ is the $i$-th
standard basis vector. The stabilizer of this flag is $B\subset GL_n$, the Borel subgroup
of upper triangular invertible matrices. Hence ${\rm Flags}({\mathbb C}^n)\cong GL_n/B$. $B$ acts on $GL_n/B$ with finitely many orbits; these are the \emph{Schubert cells} $X_w^{\circ}=BwB/B\cong
{\mathbb C}^{\ell(w)}$
indexed by $w\in {\mathfrak S}_n$ (viewed as a permutation matrix). Their closures $X_w:=\overline{X_{w}^{\circ}}$
are the \emph{Schubert varieties}; these are of interest in algebraic geometry and
representation theory. A standard reference
is \cite{Fulton}. 

For $I\subseteq J(w)$, let $L_I\subseteq GL_n$ be the Levi subgroup of invertible block diagonal matrices
\[L_I\cong GL_{d_1-d_0}\times GL_{d_2-d_1}\times \cdots \times GL_{d_k-d_{k-1}} \times GL_{d_{k+1}-d_k}.\]
As explained in, \emph{e.g.}, \cite[Section~1.2]{Hodges.Yong1}, $L_I$ acts on $X_w$.  
\begin{definition}
$X_w$ is \emph{$L_I$-spherical} if $X_w$ has a dense orbit of a Borel subgroup of $L_I$. If in addition, $I=J(w)$, $X_w$ is \emph{maximally spherical}.
\end{definition}

Our main result is a classification of $L_I$-spherical Schubert varieties:

\begin{theorem}[{\cite[Conjecture~3.2]{Hodges.Yong1}}]
\label{thm:main}
Let $w\in {\mathfrak S}_n$ and $I\subseteq J(w)$. $X_w\subseteq GL_n/B$ is $L_I$-spherical if and only if $w$ is $I$-spherical. 
\end{theorem}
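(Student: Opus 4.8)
\textbf{Proof proposal for Theorem~\ref{thm:main}.}

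The plan is to translate the geometric sphericity condition into a combinatorial statement about \emph{multiplicity-freeness} of a Demazure (key) module character, and then match that combinatorial statement with Definition~\ref{def:original-spherical} using Theorem~\ref{thm:equivalent} as needed. First I would recall the standard dictionary (developed in \cite{Hodges.Yong1}): $X_w$ is $L_I$-spherical if and only if a certain $L_I$-module $H^0(X_w,\mathcal{L})$ attached to a suitable ample line bundle $\mathcal{L}$ on $GL_n/B$ decomposes multiplicity-freely when restricted to $L_I$; and by the Borel--Weil-type description of Schubert varieties, this representation is exactly a Demazure module for $\mathfrak{gl}_n$, whose character is the key polynomial $\kappa_w$ (type $A$ Demazure character) specialized appropriately. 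Restricting to the Levi $L_I$ amounts to re-expanding $\kappa_w$ in the basis of products of Schur polynomials in the variable blocks $\{x_{d_{t-1}+1},\ldots,x_{d_t}\}$ cut out by $D=[n-1]\setminus I$. So the geometric claim becomes: $X_w$ is $L_I$-spherical $\iff$ in this ``Levi-block'' Schur expansion, every coefficient is $0$ or $1$.

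Next I would analyze when the key polynomial $\kappa_w$ has such a multiplicity-free block-Schur expansion. The key combinatorial tool is the Lascoux--Sch\"utzenberger / Kohnert description of $\kappa_w$ as a sum over Kohnert diagrams (or the crystal-theoretic description via Demazure crystals), which lets one control the Schur expansion of each block via a Littlewood--Richardson-type or Demazure-crystal branching rule. The heart of the argument is to show that the numerical conditions (S.1)--(S.2) are precisely the obstruction: condition (S.1) — that each ``wall generator'' $s_{d_i}$ occurs at most once in a reduced word — governs whether the relevant crystal/Kohnert structure is ``thin'' enough across the block boundaries, and condition (S.2) — the strict binomial-coefficient bound on the number of letters strictly inside each block — controls whether within a single $GL_{d_t-d_{t-1}}$-block the Demazure character is itself multiplicity-free (equivalently, forces the associated weight to avoid configurations producing a repeated Schur summand, i.e. keeps the within-block shape from being the full staircase plus a repeat). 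I would prove both directions by induction on $n$ (or on $\ell(w)$), peeling off one block or one simple reflection at a time and using the monotonicity of key polynomials under the Bruhat/Demazure operators $\pi_i$, together with the explicit behavior of $w_0(I)w$ as a standard Coxeter element provided by Definition~\ref{def:spherical-standard-coxeter} and Theorem~\ref{thm:equivalent}.

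For the forward direction (sphericity $\Rightarrow$ $I$-spherical), I would argue contrapositively: if (S.1) fails, some $s_{d_i}$ appears at least twice in every reduced word, and one exhibits two distinct lifts in the Demazure crystal mapping to the same $L_I$-highest weight, hence a multiplicity $\geq 2$, contradicting sphericity; if instead (S.1) holds but (S.2) fails for some block $t$, the number of interior letters forces the restriction of $\kappa_w$ to that $GL_{d_t-d_{t-1}}$-factor to contain the square of a Schur function (the staircase is ``overfilled''), again giving a repeated constituent. For the reverse direction ($I$-spherical $\Rightarrow$ sphericity), I would use the standard-Coxeter formulation: when $w_0(I)w$ is a standard Coxeter element, its reduced word uses each generator of $W_I$ exactly once, which one converts into the statement that the Levi-block Schur expansion of $\kappa_w$ is a sum of distinct ``sub-staircase'' shapes — a multiplicity-free family — whence the corresponding Borel orbit in $X_w$ is dense. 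I expect the main obstacle to be the reverse direction's combinatorial core: proving that the standard-Coxeter / (S.1)--(S.2) hypotheses genuinely force distinctness of all block-Schur summands, which requires a careful bijective or crystal-theoretic bookkeeping argument controlling how Kohnert moves can redistribute boxes across block boundaries, and ruling out ``accidental'' coincidences of shapes. Establishing that uniform bound — rather than checking small cases — will be where the real work lies.
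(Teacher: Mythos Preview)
Your reduction to multiplicity-freeness of $\kappa_{w\lambda}$ in the block-Schur basis is exactly right and matches the paper (this is Theorem~\ref{thm:fundamentalRelationship}, quoted from \cite{Hodges.Yong1}); the contrapositive strategy for ``not $I$-spherical $\Rightarrow$ multiplicity $\geq 2$'' is also the paper's approach, though the paper does not work directly with (S.1)--(S.2) but passes through Theorem~\ref{thm:equivalent} and Proposition~\ref{prop:Sn-coxeter-pattern}: one shows $u=w_0(I)w$ contains a $321$ or $3412$ pattern and then builds an explicit $\lambda$ and $\gamma$ for which a Kohnert-diagram count gives $[s_\gamma]\kappa_{w\lambda}\geq 2$.

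The genuine gap is in the hard direction ($I$-spherical $\Rightarrow$ multiplicity-free). Your picture --- that when $w_0(I)w=c$ is a standard Coxeter element the block-Schur summands of $\kappa_{w\lambda}$ are simply \emph{distinct} ``sub-staircase'' shapes --- is not what happens, and a bookkeeping argument of that kind will not close. What the paper actually does is write $\kappa_{w\lambda}=\pi_{w_0(I)}\kappa_{c\lambda}$ and observe that, for a fixed target $\gamma\in{\sf Par}_D$, \emph{many} monomials $x^\beta$ of $\kappa_{c\lambda}$ can satisfy $\pi_{w_0(I)}x^\beta=\pm s_\gamma$, with \emph{both signs} occurring (Lemma~\ref{lemma:thetrick}). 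So the coefficient $[s_\gamma]\kappa_{w\lambda}$ is a genuinely signed sum, and one must show it equals $0$ or $1$ by cancellation, not by distinctness. The mechanism is structural: the contributing $\beta$'s form a subposet $\mathcal P_{c\lambda,\gamma}$ of a Bruhat-type poset $\mathcal S_{I,\gamma}$, and the paper proves (i) a Diamond property (Theorem~\ref{thm:diamond}) forcing $\mathcal P_{c\lambda,\gamma}$ to have a unique maximum, and (ii) that $\mathcal P_{c\lambda,\gamma}$ is in fact an \emph{interval} in the Bruhat order of the Young subgroup $\mathfrak S_{d_1-d_0}\times\cdots\times\mathfrak S_{d_{k+1}-d_k}$ (Proposition~\ref{prop:interval}). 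Then Deodhar's M\"obius-function formula \eqref{eqn:deodhar} forces the signed sum to be $0$ or $1$. The Diamond property is the technical core (all of Section~\ref{sec:6}) and relies on the composition-pattern avoidance of $c\lambda$ (Lemma~\ref{lem:clampattern}), which in turn uses the toric-variety / $[n-1]$-multiplicity-free characterization of standard Coxeter elements. None of this is visible from a direct crystal or Kohnert analysis, and the induction on $n$ or $\ell(w)$ you propose does not organize these cancellations.
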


Theorems~\ref{thm:equivalent} and \ref{thm:main} were used in  C.~Gaetz's \cite{Gaetz}, which proves \cite[Conjecture~3.8]{Hodges.Yong1}. Consequently, this gives a pattern avoidance criterion for maximally
spherical Schubert varieties \cite[Theorem~1.4, Corollary~1.5]{Gaetz}. We refer to \cite{Hodges.Yong1} for further references and background.

\subsection{Organization}
In Section~\ref{sec:2} we prove Theorem~\ref{thm:equivalent}; in the process, we establish a root-system uniform result (Proposition~\ref{lem:witness-start-w0}) that shows Definition~\ref{def:spherical-standard-coxeter} and Definition~\ref{def:spherical-original} from \cite{Hodges.Yong1} (a generalization of Definition~\ref{def:original-spherical}) are, in some sense,
``close''. Section~\ref{sec:3} introduces
some notation and terminology about symmetric groups, Bruhat order, and a certain
poset ${\mathcal S}_{I,\gamma}$ that we define.
In Section~\ref{sec:4} we recall notions about
\emph{key polynomials}, split-symmetry, and multiplicity-freeness from \cite{Hodges.Yong1}. This connects the Coxeter
combinatorics to the geometry. Thereby, Theorem~\ref{thm:main} is equivalent to
Theorem~\ref{thm:maingoal}, the form we prove.
In Section~\ref{sec:5} we introduce a subposet ${\mathcal P}_{c\lambda, \gamma}$ of ${\mathcal S}_{I,\gamma}$ whose
main feature is the ``Diamond property'' (Theorem~\ref{thm:diamond}). Assuming this property, we
also prove the ``$\Rightarrow$'' direction of Theorem~\ref{thm:maingoal}. 
The central observation is that ${\mathcal P}_{c\lambda, \gamma}$ is poset isomorphic to an interval in the Bruhat order of a Young subgroup (Proposition~\ref{prop:interval}). This permits us to reduce ``$\Rightarrow$'' to basics about the M\"obius function of Bruhat order \cite{Deodhar}. 
 Theorem~\ref{thm:diamond} is proved in Section~\ref{sec:6}. Finally, the ``$\Leftarrow$'' direction of
Theorem~\ref{thm:maingoal} is established in Section~\ref{sec:7}.

\section{Proof of Theorem~\ref{thm:equivalent}}\label{sec:2}

We first derive some results valid for any finite crystallographic root system $\Phi$. Let the
positive roots be $\Phi^+$, with simple roots $\Delta=\{\alpha_1,\ldots,\alpha_r\}$. Let $W$ be its finite Weyl group with corresponding simple generators $S=\{s_1,s_2,\ldots,s_r\}$, where we have fixed a bijection of $[r]:=\{1,2,\ldots,r\}$ with the nodes of the Dynkin 
diagram ${\mathcal G}$.  Let ${\Red}(w)$ be the set of the \emph{reduced expressions} $w=s_{i_1}\cdots s_{i_k}$, where $k=\ell(w)$ is the \emph{Coxeter length} of $w$.  The \emph{left descents} of $w$ are 
\[J(w)=\{j \in [r]: \ell(s_j w)<\ell(w)\}.\]

For $I\in 2^{[r]}$, let ${\mathcal G}_I$ be the induced subdiagram of ${\mathcal G}$.  Write
\begin{equation}
\label{eqn:thedecompabc}
{\mathcal G}_I=\bigcup_{z=1}^m {\mathcal C}^{(z)}
\end{equation}
as its decomposition into connected components. Let $w_0^{(z)}$ be the longest element of the parabolic subgroup
$W_{I^{(z)}}$ generated by $I^{(z)}=\{s_j: j\in {\mathcal C}^{(z)}\}$. 
The generalization of Definition~\ref{def:original-spherical} to general-type was given as follows:

\begin{definition}\label{def:spherical-original}
Let $w\in W$ and fix $I\subset J(w)$. Then $w$ is $I$-\emph{spherical} if there exists $R=s_{i_1}\cdots s_{i_{\ell(w)}}\in\Red(w)$ such that
\begin{itemize}
    \item $\#\{t\:|\: i_t=j\}\leq1$ for all $j\in[r]-I$, and
    \item $\#\{t\:|\: i_t\in\C^{(z)}\}\leq\ell(w_0^{(z)})+\#\text{vertices}(\C^{(z)})$ for $1\leq z\leq m$.
\end{itemize}
Such an $R$ is called an $I$-\emph{witness}.
\end{definition}

Definition~\ref{def:spherical-standard-coxeter} makes sense in the general context as well. However, that notion differs from 
Definition~\ref{def:spherical-original} in type $D_4$ and $F_4$ (this reduces confidence in
the general-type classification conjecture for Levi-spherical Schubert varieties \cite[Conjecture~1.9]{Hodges.Yong1}). 
We plan to study this further in future work.
%
%
%
%
%
%
%
%
%
%
%
%
%
%
%
%
%
%
%
%
%
%
%
%
%

We now develop some preliminary results.

\begin{lemma}\label{lem:independent-si-component}
Let $w\in W$ and fix $I\subset J(w)$. Let 
$R=s_{i_1}\cdots s_{i_{\ell(w)}}$ and $R'=s_{j_1}\cdots s_{j_{\ell(w)}} \in \Red(w)$
be such that each $s_t$, $t\in[r]-I$, appears at most once in $R$, and at most once in $R'$. Then for each $1\leq z\leq m$, 
\[\#\{t\:|\: i_t\in\mathcal{C}^{(z)}\}=\#\{t\:|\: j_t\in\mathcal{C}^{(z)}\}.\]
\end{lemma}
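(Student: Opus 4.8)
The key invariant to control is the number of letters from each component $\mathcal{C}^{(z)}$ appearing in a reduced word for $w$, under the hypothesis that each generator $s_t$ with $t \in [r]-I$ occurs at most once. I would first reduce to the case $m=1$, i.e., $\mathcal{G}_I$ connected: since distinct connected components $\mathcal{C}^{(z)}$ and $\mathcal{C}^{(z')}$ use generators that commute with each other, and the generators $s_t$ ($t \in [r]-I$) separating them appear at most once, one can hope to argue that the multiset of letters lying in $\mathcal{C}^{(z)} \cup ([r]-I)$ is itself, after deleting the at-most-once letters outside $\mathcal{C}^{(z)}$, a reduced word for a fixed element. The cleanest way to make this precise is to track the projection behavior under parabolic quotients.

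Concretely, I would proceed via the standard theory of parabolic decompositions. Fix $z$ and let $K = [r] - \big(\mathcal{C}^{(z)} \cup ([r]-I)\big)$, so $\mathcal{C}^{(z)}$ is a connected component of $\mathcal{G}_{[r]-([r]-I)} = \mathcal{G}_I$ sitting inside the complement of the ``rare'' generators. The plan is: starting from $R$, repeatedly apply braid and commutation moves to bring it to a canonical form. Commutation moves do not change which letters lie in $\mathcal{C}^{(z)}$; braid moves within a connected piece of $S$ do not change the total count of letters in that piece either (e.g. $s_is_js_i \leftrightarrow s_js_is_j$ preserves counts when $i,j$ are both in $\mathcal{C}^{(z)}$ or both outside it). The subtle moves are braid moves straddling the boundary — but these necessarily involve a generator $s_t$ with $t \in \mathcal{C}^{(z)}$ adjacent in $\mathcal{G}$ to some $s_{t'}$ with $t' \in [r]-I$, and a braid move $s_t s_{t'} s_t \leftrightarrow s_{t'} s_t s_{t'}$ would force $s_{t'}$ to appear at least twice, contradicting the hypothesis (note $t' \notin I$). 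Hence no such move is ever available within the class of words having the ``appears at most once'' property, and one shows $R$ and $R'$ are connected by moves that each preserve $\#\{t : i_t \in \mathcal{C}^{(z)}\}$.

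To run this rigorously I would invoke Tits' theorem: any two reduced words for $w$ are connected by braid moves. The issue is that an intermediate word in such a chain might temporarily violate the ``at most once'' condition, so I cannot directly say every word in the chain has the property. I would instead argue invariantly: define $f_z(v) = \#\{t : i_t \in \mathcal{C}^{(z)}\}$ for a word $v$, and show that $f_z$ is constant on all reduced words for $w$ that satisfy the hypothesis, by the following argument. Consider the sub-root-system $\Phi_I$ with Weyl group $W_I$ and its decomposition $W_I = W_{I^{(1)}} \times \cdots \times W_{I^{(m)}}$. Using the parabolic factorization $w = u \cdot w^I$ with $u \in W_I$ and $w^I$ a minimal coset representative in $W^I$, observe that $w^I$ contributes a fixed count to each component (it has a canonical reduced word up to commutations among the ``rare'' letters and letters outside, none of which move $\mathcal{C}^{(z)}$-counts, once we know the rare letters each appear at most once — which pins down $w^I$). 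Then $u \in W_I$ factors as $u = u^{(1)} \cdots u^{(m)}$ with $u^{(z)} \in W_{I^{(z)}}$, and the number of $\mathcal{C}^{(z)}$-letters in any reduced word for $w$ respecting the hypothesis equals $\ell(u^{(z)})$ plus the fixed contribution of $w^I$ — a quantity depending only on $w$ and $I$, not on $R$. Applying the same to $R'$ gives the equality.

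\textbf{Main obstacle.} The delicate point is showing that the hypothesis ``each rare generator appears at most once'' actually forces the parabolic components $u^{(z)}$ (equivalently, forces $w^I$) to be well-defined independently of $R$ — in other words, that it rules out the braid-chain subtlety. I expect this to require a careful local analysis of how a rare generator $s_t$ ($t \in [r]-I$) can interact with the adjacent components of $\mathcal{G}_I$: one must verify that $s_t$, appearing exactly once, acts as a ``wall'' that the letters of $\mathcal{C}^{(z)}$ cannot cross, so that the restriction of $R$ to $\mathcal{C}^{(z)} \cup ([r]-I)$-letters, after deletion of the other rare letters, is a genuine reduced word for a fixed element of the parabolic $W_{\mathcal{C}^{(z)} \cup \{\text{neighbors}\}}$. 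This is essentially a statement that ``deletion of a once-occurring generator commutes with the reduced-word combinatorics,'' which should follow from the exchange condition, but needs to be stated and proved carefully.
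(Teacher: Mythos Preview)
Your braid-move approach correctly identifies its own fatal gap: Tits' theorem gives a chain of braid moves between $R$ and $R'$, but intermediate words need not satisfy the ``at most once'' hypothesis, so you cannot conclude that every move in the chain preserves the $\mathcal{C}^{(z)}$-count. This approach does not recover.

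Your parabolic-factorization approach has a more serious unacknowledged gap. You write $w=u\cdot w^I$ and assert that the $\mathcal{C}^{(z)}$-count in $R$ equals $\ell(u^{(z)})$ plus a ``fixed contribution of $w^I$''. But an arbitrary reduced word $R$ for $w$ need not split as a concatenation of reduced words for $u$ and $w^I$, so there is no a priori reason the $\mathcal{C}^{(z)}$-letters of $R$ decompose this way. Moreover, $w^I$ itself can use generators from $I$, and different reduced words for $w^I$ can use different multisets of generators (e.g.\ $s_is_js_i=s_js_is_j$), so the ``fixed contribution of $w^I$'' is not well-defined without exactly the kind of argument you are trying to establish. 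The ``main obstacle'' you flag is in fact the entire content of the lemma.

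The paper's proof takes a different route that sidesteps both issues. It inducts on $m$. For $m=1$ the count is simply $\ell(w)-\#([r]-I)$, which is manifestly independent of $R$. For $m\ge 2$, it exploits the fact that the Dynkin diagram is a tree: one can choose $t_0\in[r]-I$ whose removal disconnects the diagram into pieces $J_1,\ldots,J_p$ with the target component $\mathcal{C}^{(z)}\subset J_1$. Since $s_{t_0}$ appears at most once in $R$ and generators in different $J_i$ commute, $R$ can be rearranged as $(w_{J_2}\cdots w_{J_p})(w_{J_1}s_{t_0}u_{J_1})(u_{J_2}\cdots u_{J_p})$. The middle factor $w_{J_1}s_{t_0}u_{J_1}$ contains no generators from $K=J_2\cup\cdots\cup J_p$, hence is the unique minimal $W_K$-double-coset representative of $w$ --- an element depending only on $w$, not on $R$. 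The same holds for $R'$, so both give reduced words for the \emph{same} shorter element, whose $\mathcal{G}_{I\cap(J_1\cup\{t_0\})}$ has fewer components; now apply induction. The double-coset uniqueness is the missing idea that makes the parabolic structure actually bite.
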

\begin{proof}
We may assume without loss of generality that $\Phi$ is irreducible. Furthermore,
we may assume without loss of generality that each $s_i\in S$ is used in any (equivalently, all) $R''\in \Red(w)$, since otherwise we work individually on the root systems associated to each
irreducible component of $\Delta\setminus \{\alpha_i\}$.

 We induct on $m\geq 1$. In the base case $m=1$, then 
\[\#\{t\:|\: i_t\in\mathcal{C}^{(1)}\}=\ell(w)-(r-\# I)\] 
is independent of any choice of $R''$, so we are done.

For the induction step, consider a fixed $\mathcal{C}\in\{\mathcal{C}^{(1)},\ldots,\mathcal{C}^{(m)}\}$. Fix some $t_0\in[r]-I$ such that not all of $\mathcal{C}^{(1)},\ldots,\mathcal{C}^{(m)}$ lie in the same connected component of (the Dynkin diagram of) $S\setminus\{t_0\}$. Such $t_0$ can be chosen because $m\geq2$ and that the Dynkin diagram for $W$ is a tree. Let $J_1,J_2,\ldots,J_p$ be the connected components of $S\setminus\{t_0\}$ and assume $\mathcal{C}\subset J_1$.

Note that generators in different $J_i$'s commute with each other. For the reduced word $R$, we can regroup it as $w_{J_1}\cdots w_{J_p}s_{t_0}u_{J_1}\cdots u_{J_p}$ where $w_{J_i},u_{J_i}\in W_{J_i}$, the parabolic subgroup generated by $J_i$. We can rearrange it as 
\[
w=(w_{J_2}\cdots w_{J_p})(w_{J_1}s_{t_0}u_{J_1})(u_{J_2}\cdots u_{J_p}).
\]
Similarly, for $R'$ we obtain 
\begin{align*}
w & = w_{J_1}'\cdots w_{J_p}'s_{t_0}u_{J_1}'\cdots u_{J_p}'\\
\ & = (w_{J_2}'\cdots w_{J_p}')(w_{J_1}'s_{t_0}u_{J_1}')(u_{J_2}'\cdots u_{J_p}').
\end{align*}
Since $w_{J_1}s_{t_0}u_{J_1}$ does not contain any simple generators associated to $K=J_2\cup\cdots\cup J_p$, it is the unique minimal double coset representative of $W_KwW_K$. This implies that $w_{J_1}s_{t_0}u_{J_1}=w_{J_1}'s_{t_0}u_{J_1}'$, where we obtained the same Weyl group element from different reduced decompositions.

Now apply the induction hypothesis by replacing $S$ by $J_1\cup\{t_0\}$, $I$ by $I\cap J_1\cup\{t_0\}$, $w$ by the minimal length coset representative of $W_KwW_K$, $R$ (and $R'$) by the subword of $R$ (and $R'$) that equals $w_{J_1}s_{t_0}u_{J_1}=w_{J_1}'s_{t_0}u_{J_1}'$, and leaving $\mathcal{C}$ unchanged. 
\end{proof}

For each $\alpha\in\Phi^+$, define its \emph{support} to be
\[{\mathrm{Supp}}(\alpha)=\{\alpha_i\in\Delta\:|\: \alpha-\alpha_i \text{\ is a nonnegative linear combination of $\Delta$}\}.\]
Also, for each positive root $\alpha=\sum_{i=1}^r c_i\alpha_i$, written as a nonnegative linear combination of $\Delta$, define its \emph{height} to be $\mathrm{ht}(\alpha)=\sum_{i=1}^r c_i$. The next folklore result is well-known,
but we do not know a precise reference with proof. We include one here:

\begin{lemma}\label{lem:support-connected}
For each $\alpha\in\Phi^+$, $\mathrm{Supp}(\alpha)$ is a connected subgraph in the Dynkin diagram.
\end{lemma}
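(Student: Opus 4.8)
The plan is to induct on the height $\mathrm{ht}(\alpha)$. For the base case $\mathrm{ht}(\alpha)=1$, the root $\alpha$ is simple and $\mathrm{Supp}(\alpha)$ is a single node, hence connected. For the inductive step, assume $\mathrm{ht}(\alpha)\geq 2$, so $\alpha=\sum_i c_i\alpha_i$ is not simple. From $0<(\alpha,\alpha)=\sum_i c_i(\alpha,\alpha_i)$ and $c_i\geq 0$, there is some $i$ with $c_i\geq 1$ (so $\alpha_i\in\mathrm{Supp}(\alpha)$) and $(\alpha,\alpha_i)>0$. The standard root-string argument then gives that $\beta:=\alpha-\alpha_i$ is a root, and it is positive since its coefficients are those of $\alpha$ with the $i$-th decreased by one, which is still a nonzero nonnegative combination of $\Delta$ because $\mathrm{ht}(\alpha)\geq 2$. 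By induction, $\mathrm{Supp}(\beta)$ is connected.

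Now there are two cases. If $c_i\geq 2$, then $\alpha_i$ still appears in $\beta$, so $\mathrm{Supp}(\alpha)=\mathrm{Supp}(\beta)$ and we are done. If $c_i=1$, then $\alpha_i\notin\mathrm{Supp}(\beta)$ and $\mathrm{Supp}(\alpha)=\mathrm{Supp}(\beta)\cup\{\alpha_i\}$, so it remains to check that $\alpha_i$ is adjacent in ${\mathcal G}$ to at least one node of $\mathrm{Supp}(\beta)$. Suppose not; then $(\alpha_i,\alpha_j)=0$ for every $\alpha_j\in\mathrm{Supp}(\beta)$, whence $(\alpha_i,\beta)=0$. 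Consider the $\alpha_i$-string through $\beta$, say $\beta-p\alpha_i,\ldots,\beta+q\alpha_i$. Since $\beta-\alpha_i$ would have coefficient $-1$ on $\alpha_i$ and nonnegative coefficients elsewhere, it cannot be a root, so $p=0$; and $\beta+\alpha_i=\alpha$ is a root, so $q\geq 1$. The root-string identity $p-q=\langle\beta,\alpha_i^\vee\rangle$ then gives $\langle\beta,\alpha_i^\vee\rangle=-q<0$, contradicting $(\alpha_i,\beta)=0$. Hence $\alpha_i$ attaches to $\mathrm{Supp}(\beta)$ and $\mathrm{Supp}(\alpha)$ is connected, completing the induction.

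The only real obstacle is the $c_i=1$ case: once a genuinely new simple root is appended to the support, one must show it does not fall off as an isolated node, and this is exactly what the $\alpha_i$-root-string computation above handles. (Alternatively, one could argue non-inductively: $\Phi\cap\mathrm{span}(\mathrm{Supp}(\alpha))$ is a sub-root-system with simple roots $\mathrm{Supp}(\alpha)$, so if $\mathrm{Supp}(\alpha)$ split into two mutually non-adjacent pieces $A\sqcup B$ this sub-root-system would be an orthogonal product $\Phi_A\times\Phi_B$, in which every root is supported entirely in $A$ or entirely in $B$ — contradicting that $\alpha$ meets both. I would include the height-induction proof as the primary argument since it is more self-contained.)
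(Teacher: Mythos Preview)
Your proof is correct and follows essentially the same height-induction strategy as the paper: find a shorter positive root differing from $\alpha$ by (a multiple of) some $\alpha_i$, apply the inductive hypothesis, and in the case $\alpha_i\notin\mathrm{Supp}$ of the shorter root use an inner-product argument to produce an adjacent node. The only cosmetic differences are that the paper obtains the shorter root via the simple reflection $s_i\alpha$ (subtracting $k\alpha_i$ at once) rather than a single $\alpha_i$ via root strings, and then gets the adjacency more directly from $\langle\alpha',\alpha_i\rangle\neq 0$ (forced by $s_i\alpha'\neq\alpha'$) instead of your root-string computation.
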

\begin{proof}
We use induction on $\mathrm{ht}(\alpha)$. The base case $\mathrm{ht}(\alpha)=1$, \emph{i.e.}, $\alpha\in\Delta$, is clear. 

In the induction step, for each $\alpha\in\Phi^+\setminus\Delta$, there exists $i\in[r]$ such that $\alpha':=s_i\alpha=\alpha-k\alpha_i\in\Phi^+$ for some positive integer $k$. We know that $\mathrm{ht}(\alpha')<\mathrm{ht}(\alpha)$ so $\mathrm{Supp}(\alpha')$ is connected by induction hypothesis. At the same time, $\mathrm{Supp}(\alpha)=\mathrm{Supp}(\alpha')\cup\{\alpha_i\}$. If $\alpha_i\in\mathrm{Supp}(\alpha')$, then $\mathrm{Supp}(\alpha)=\mathrm{Supp}(\alpha')$ is connected. Thus, we assume $\alpha_i\notin\mathrm{Supp}(\alpha')$. Let $\langle -,-\rangle$ denote the standard inner product on the ambient vector space containing our root system. We have
\[\alpha=s_i\alpha'=\alpha'-\frac{2\langle \alpha',\alpha_i\rangle}{\langle\alpha_i,\alpha_i\rangle}\alpha_i\neq\alpha'.\]
As $\langle \alpha',\alpha_i\rangle\neq0$, there exists some $\alpha_j\in\mathrm{Supp}(\alpha')$ such that $\langle\alpha_j,\alpha_i\rangle\neq0$, meaning that the node $j$ is connected to the node $i$ in the Dynkin diagram. Therefore, $\mathrm{Supp}(\alpha)=\mathrm{Supp}(\alpha')\cup\{\alpha_i\}$ is connected.
\end{proof}

\begin{lemma}\label{lem:root-path}
Suppose that we have an equality of reduced words 
$s_{i_1}s_{i_2}\cdots s_{i_{k-1}}=s_{i_2}s_{i_3}\cdots s_{i_k}$. 
Then $\#\{t\:|\: i_t=j\}\geq2$ for all $j$ on the path (excluding $i_1$ and $i_k$) between ${i_1}$ and ${i_k}$ in ${\mathcal G}$.
\end{lemma}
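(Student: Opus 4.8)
The plan is to rewrite the equality of reduced words as a conjugation relation in $W$, and then to track the support of a single root as it is pushed around the Dynkin tree.

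First I would set $v := s_{i_2}\cdots s_{i_{k-1}}$; this is reduced of length $k-2$, and, writing $w$ for the common value of the two given words, $w=s_{i_1}v=vs_{i_k}$ with both products reduced of length $k-1$. From $s_{i_1}v=vs_{i_k}$ one gets $v^{-1}s_{i_1}v=s_{i_k}$, so the reflection associated to $v^{-1}(\alpha_{i_1})$ is $s_{i_k}$; hence $v^{-1}(\alpha_{i_1})=\pm\alpha_{i_k}$, and since $\ell(s_{i_1}v)=\ell(v)+1$ forces $v^{-1}(\alpha_{i_1})\in\Phi^+$, in fact $v^{-1}(\alpha_{i_1})=\alpha_{i_k}$.

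Next I would define $\delta_0:=\alpha_{i_1}$ and $\delta_m:=s_{i_{m+1}}(\delta_{m-1})$ for $1\le m\le k-2$, so that $\delta_{k-2}=v^{-1}(\alpha_{i_1})=\alpha_{i_k}$. Each $\delta_m$ is a root, so by Lemma~\ref{lem:support-connected} (applied to whichever of $\pm\delta_m$ is positive) $\Sigma_m:=\mathrm{Supp}(\delta_m)$ is a nonempty connected subgraph of $\mathcal G$; and since $\delta_m-\delta_{m-1}$ is an integer multiple of $\alpha_{i_{m+1}}$, the sets $\Sigma_m$ and $\Sigma_{m-1}$ agree off the node $i_{m+1}$. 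Now fix an internal node $j$ of the path $P$ from $i_1$ to $i_k$. The connected component of $\mathcal G$ containing $i_1$ and $i_k$ is a tree, and deleting $j$ from it separates $i_1$ from $i_k$; let $V_1$ and $V_2$ be the components of $\mathcal G\setminus\{j\}$ containing $i_1$, respectively $i_k$. Since each $\Sigma_m$ is connected, it either contains $j$ or lies in a unique component of $\mathcal G\setminus\{j\}$; record this as the \emph{location} $\mathrm{loc}(m)$, equal to $j$ in the first case and to that component otherwise. Then $\mathrm{loc}(0)=V_1$ and $\mathrm{loc}(k-2)=V_2$.

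The crux is the claim: $\mathrm{loc}(m)\neq\mathrm{loc}(m-1)$ forces $i_{m+1}=j$, and every change of $\mathrm{loc}$ is into or out of the value $j$. Granting this, since $\mathrm{loc}(0)=V_1\neq V_2=\mathrm{loc}(k-2)$ the sequence $\mathrm{loc}$ is nonconstant; its first change is necessarily $V_1\to j$, and a later change is $j\to(\text{non-}j)$, producing two distinct indices $m$ with $i_{m+1}=j$; as $j\notin\{i_1,i_k\}$ this gives $\#\{t : i_t=j\}\ge 2$, as required. The claim itself is a short case check on $m$: if $i_{m+1}\neq j$ then $j\in\Sigma_m\iff j\in\Sigma_{m-1}$, and if $j$ lies in neither then $\Sigma_m,\Sigma_{m-1}$ share a node (or both equal $\{i_{m+1}\}$) and hence sit in the same component; if $i_{m+1}=j$ and $j\notin\Sigma_m\cup\Sigma_{m-1}$, then $\Sigma_m=\Sigma_{m-1}$ because they agree off $j$. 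I expect this case analysis — correctly combining the single-node change of support with the tree-connectivity supplied by Lemma~\ref{lem:support-connected} — to be the only point needing genuine care; everything else is bookkeeping.
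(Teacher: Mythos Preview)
Your proof is correct and follows essentially the same strategy as the paper's: both arguments convert the word equality into the statement that $s_{i_2},\ldots,s_{i_{k-1}}$ carries the simple root $\alpha_{i_1}$ to $\alpha_{i_k}$ (you use $v^{-1}\alpha_{i_1}=\alpha_{i_k}$; the paper computes the equivalent $s_{i_2}\cdots s_{i_{k-1}}\alpha_{i_k}=\alpha_{i_1}$), then track the connected support of the evolving root step-by-step and invoke Lemma~\ref{lem:support-connected} together with the tree structure of $\mathcal G$ to force $\alpha_j$ to enter and later leave the support, each event contributing one $s_j$. Your ``location'' bookkeeping and case analysis make explicit what the paper states more informally (``a positive multiple of $\alpha_j$ can be added\ldots and subtracted''), and you avoid needing positivity of the intermediate roots by applying support to $\pm\delta_m$, but the underlying mechanism is the same.
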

\begin{proof}
Let $w=s_{i_1}s_{i_2}\cdots s_{i_{k-1}}=s_{i_2}s_{i_3}\cdots s_{i_k}$. As $s_{i_2}\cdots s_{i_k}$ is reduced, $\alpha_{i_k}$ is a right inversion of $w$, where $\alpha_{i_k}$ is the simple root corresponding to $s_{i_k}$, \emph{i.e.}, $\alpha_{i_k}\in\Phi^{+}$ and $w\alpha_{i_k}\in\Phi^{-}$. Let $-\beta=w\alpha_{i_k}$ so $\beta\in\Phi^+$. We have that 
\[\beta=-s_{i_2}\cdots s_{i_k}\alpha_{i_k}=-s_{i_2}\cdots s_{i_{k-1}}(-\alpha_{i_k})=s_{i_2}\cdots s_{i_{k-1}}\alpha_{i_k}.\] 
This means that $s_{i_1}\beta=w\alpha_{i_k}=-\beta$ so $\beta=\alpha_{i_1}$.

Note that since $s_{i_j}\cdots s_{i_k}$ is reduced and has $\alpha_{i_k}$ as its right descent, we know 
\[s_{i_j}\cdots s_{i_{k-1}}s_{i_k}\alpha_{i_k}\in\Phi^-, \ s_{i_j}\cdots s_{i_{k-1}}\alpha_{i_k}\in\Phi^+.\]

Consider the sequence of positive roots 
\[\alpha_{i_k},s_{i_{k-1}}\alpha_{i_k},\ldots,s_{i_2}\cdots s_{i_{k-1}}\alpha_{i_k}=\alpha_{i_1}.\]
By definition, $s_{\alpha}(x)=x-\frac{2\langle x,\alpha\rangle}{\langle \alpha,\alpha\rangle}\alpha$.
Hence the symmetric difference
\[{\mathrm{Supp}}(s_{i_t}\cdots s_{i_{k-1}}\alpha_{i_k})\ \Delta \ {\mathrm{Supp}}(s_{i_{t+1}}\cdots s_{i_{k-1}}\alpha_{i_k})\subseteq \{\alpha_{i_t}\}, \text{ \  for $t=k-1,\ldots,2$.}\]
Recall that for each $\alpha\in\Phi^+$, its support $\mathrm{Supp}(\alpha)$ is connected in the Dynkin diagram (Lemma~\ref{lem:support-connected}). Fix any $j$ on the path between $i_1$ and $i_k$ in the Dynkin diagram. As a result, there exists some $p$ such that
$\alpha_{j}\in {\mathrm{Supp}}(s_{i_p}\cdots s_{i_{k-1}}\alpha_{i_k})$. Thus, there must be some $s_j$ among $s_{i_p},\ldots,s_{i_{k-1}}$ so that a positive multiple of $\alpha_j$ can be added from $\alpha_{i_k}$, and there must be some $s_j$ among $s_{i_2},\ldots,s_{i_{p-1}}$ so that a positive multiple of $\alpha_j$ can be subtracted to obtain $\alpha_{i_1}$.
\end{proof}

We use this textbook result:
\begin{proposition}[{Deletion property \cite[Proposition~1.4.7]{Bjorner.Brenti}}]\label{lem:deletion-property}
Let $w=s_{i_1}\cdots s_{i_{\ell}}$ be a reduced word. Then for a left descent $s_{i_0}$ of $w$, i.e. $\ell(s_{i_0}w)=\ell(w)-1$, we have another reduced word $w=s_{i_0}s_{i_1}\cdots \widehat{s_{i_j}}\cdots s_{i_{\ell}}$, where $\widehat{s_{i_j}}$ means the deletion of $s_{i_j}$.
\end{proposition}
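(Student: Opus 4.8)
This is the classical Exchange/Deletion property for Coxeter systems, and since the ambient group $W$ is a finite Weyl group I would give the standard root-theoretic argument. The plan is to route everything through the \emph{left inversion set} $N(w):=\{\alpha\in\Phi^+:w^{-1}\alpha\in\Phi^-\}$, which satisfies $\#N(w)=\ell(w)$ and, for the given reduced word $w=s_{i_1}\cdots s_{i_\ell}$, is parametrized explicitly by $N(w)=\{\beta_1,\ldots,\beta_\ell\}$ with $\beta_j:=s_{i_1}s_{i_2}\cdots s_{i_{j-1}}(\alpha_{i_j})$.

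The first step is to prove this parametrization, along with distinctness of the $\beta_j$, by induction on $\ell$. Writing $w=s_{i_1}w'$ with $w'=s_{i_2}\cdots s_{i_\ell}$ (a reduced word of length $\ell-1$), one has $(w')^{-1}\alpha_{i_1}\in\Phi^+$ because $\ell(w)=\ell(w')+1$. Using that $s_{i_1}$ fixes $\Phi^+\setminus\{\alpha_{i_1}\}$ setwise and sends $\alpha_{i_1}\mapsto-\alpha_{i_1}$, a short case analysis on whether $\alpha=\alpha_{i_1}$ yields the recursion $N(w)=\{\alpha_{i_1}\}\sqcup s_{i_1}\bigl(N(w')\bigr)$ as a disjoint union; the inductive hypothesis applied to $w'$ then gives $N(w)=\{\beta_1,\ldots,\beta_\ell\}$, and disjointness forces the $\beta_j$ to be pairwise distinct.

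With this in hand, the proposition is formal. Since $s_{i_0}$ is a left descent, $\ell(s_{i_0}w)<\ell(w)$ is equivalent to $\alpha_{i_0}\in N(w)$, so $\alpha_{i_0}=\beta_j$ for some $j\in\{1,\ldots,\ell\}$. As reflections are conjugation-equivariant, $s_{i_0}=s_{\alpha_{i_0}}=s_{\beta_j}=(s_{i_1}\cdots s_{i_{j-1}})\,s_{i_j}\,(s_{i_1}\cdots s_{i_{j-1}})^{-1}$, which rearranges to the identity $s_{i_0}(s_{i_1}\cdots s_{i_{j-1}})=s_{i_1}\cdots s_{i_{j-1}}s_{i_j}$ of Weyl group elements. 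Substituting this into $s_{i_0}w=s_{i_0}(s_{i_1}\cdots s_{i_{j-1}})\,s_{i_j}\,(s_{i_{j+1}}\cdots s_{i_\ell})$ and using $s_{i_j}^2=1$ gives $s_{i_0}w=s_{i_1}\cdots\widehat{s_{i_j}}\cdots s_{i_\ell}$; left-multiplying by $s_{i_0}$ gives $w=s_{i_0}s_{i_1}\cdots\widehat{s_{i_j}}\cdots s_{i_\ell}$. This word has $1+(\ell-1)=\ell=\ell(w)$ letters, hence is reduced.

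The only substantive point is the inversion-set lemma of the first step — distinctness of the $\beta_j$ and the identification with $N(w)$; everything afterward is bookkeeping with reduced words plus a length count. Since the statement is already attributed to \cite[Proposition~1.4.7]{Bjorner.Brenti}, the most economical option in the paper is instead to cite the Exchange Condition directly, which produces $s_{i_0}w=s_{i_1}\cdots\widehat{s_{i_j}}\cdots s_{i_\ell}$ with no root computation, followed by the same length count.
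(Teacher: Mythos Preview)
Your proof is correct and is the standard root-theoretic argument for the Exchange Condition in Weyl groups. However, the paper does not prove this proposition at all: it is introduced with the phrase ``We use this textbook result'' and is simply cited from \cite[Proposition~1.4.7]{Bjorner.Brenti} without argument. So there is no proof in the paper to compare against; your final remark that the most economical option is to cite the result directly is exactly what the authors do.
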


The culmination of the above root-system uniform arguments is this next proposition, which says that
Definition~\ref{def:spherical-original} is, in general, ``close'' to Definition~\ref{def:spherical-standard-coxeter}.

\begin{proposition}\label{lem:witness-start-w0}
If $w\in W$ is $I$-spherical (in the sense of Definition~\ref{def:spherical-original}), then there exists an $I$-witness $R$ of $w$ of the form
$R=R' R''$ where $R'\in \Red(w_0(I))$ and $R''\in \Red(w_0(I)w)$.
\end{proposition}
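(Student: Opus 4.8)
The plan is to take an arbitrary $I$-witness $R = s_{i_1}\cdots s_{i_{\ell(w)}}$ and repeatedly ``pull'' the generators of $w_0(I)$ to the front while preserving the $I$-witness property. Concretely, I would argue by induction on $\ell(w_0(I))$. Since $I\subseteq J(w)$, every $j\in I$ is a left descent of $w$; pick some $j\in I$. By the Deletion property (Proposition~\ref{lem:deletion-property}), from $R$ we obtain a reduced word $w = s_j s_{i_1}\cdots \widehat{s_{i_t}}\cdots s_{i_{\ell(w)}}$ for some index $t$ with $i_t = $ (in the best case) $j$ itself. If indeed $i_t = j$, then the new word, call it $s_j R_1$, is still an $I$-witness: the count of each generator outside $I$ is unchanged, and the count of generators inside each component $\mathcal{C}^{(z)}$ is unchanged as well, so both conditions of Definition~\ref{def:spherical-original} persist. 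One would then like to recurse on $s_j w$ with parameter $w_0(I)s_j$ (which has length $\ell(w_0(I)) - 1$ and is the longest element of... no — here is the subtlety: $w_0(I)s_j$ is generally not $w_0$ of a parabolic).

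So instead the cleaner induction is the following. I would show: if $w$ admits an $I$-witness, then $w$ admits an $I$-witness beginning with $s_j$ for any chosen $j\in I$, and moreover the ``tail'' $s_j$-removed word is an $I$-witness for $s_j w$ with respect to the same $I$. Iterating, I can prepend a reduced word for $w_0(I)$: writing $w_0(I) = s_{j_1}\cdots s_{j_N}$ in some reduced expression, I peel off $s_{j_N}$ (a left descent of $w$ since $j_N \in I \subseteq J(w)$... actually I need the descent order to match), then $s_{j_{N-1}}$ from $s_{j_N}^{-1}w = s_{j_N}w$, and so on. The point is that $s_{j_1}\cdots s_{j_N}$ can be chosen to be \emph{any} reduced word for $w_0(I)$, and the standard fact that $\ell(w_0(I)w) = \ell(w) - \ell(w_0(I))$ when $I\subseteq J(w)$ guarantees that after peeling off all $N$ letters we have used each exactly once in the prefix and are left with a reduced word $R''$ for $w_0(I)w$. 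The concatenation $R = R'R''$ with $R' \in \Red(w_0(I))$ is then the desired witness, provided each peeling step preserved the witness conditions.

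The main obstacle — and where Lemma~\ref{lem:root-path} enters — is verifying that the Deletion property can be applied so that the deleted letter $s_{i_t}$ is actually $s_j$, i.e. that we are genuinely removing an occurrence of $s_j$ rather than some other generator. If the Deletion property forces us to delete some $s_{i_t}$ with $i_t \neq j$, then the new word $s_j R_1$ has one \emph{more} $s_j$ and one \emph{fewer} $s_{i_t}$; this could violate condition~(S.1)/(the first bullet) if $i_t\notin I$, or shift counts between components. Here is where I would use Lemma~\ref{lem:root-path}: the equality of reduced words $s_j s_{i_1}\cdots \widehat{s_{i_t}}\cdots s_{i_{\ell}} = s_{i_1}\cdots s_{i_\ell}$ — rearranged suitably — forces, along the path between $j$ and $i_t$ in the Dynkin diagram, every intermediate generator to appear at least twice. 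One shows that if no occurrence of $s_j$ could be deleted, one can find a ``shorter'' bad configuration or directly derive a contradiction with the $I$-witness bounds: an intermediate node on the path that lies outside $I$ can appear at most once (first bullet), and if all path-nodes lie inside $I$ one can instead choose a better reduced word for $w_0(I)$ whose first letter avoids this path. Making this case analysis airtight — choosing the reduced expression of $w_0(I)$ compatibly with the combinatorics of $R$, and handling the commutations among components $\mathcal{C}^{(z)}$ as in Lemma~\ref{lem:independent-si-component} — is the technical heart of the argument.

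A secondary point to check carefully is that after prepending the full $\Red(w_0(I))$ prefix, the remaining word $R''$ really is reduced of length $\ell(w) - \ell(w_0(I))$ and equals a reduced word for $w_0(I)w$: this follows from the length-additivity $\ell(w) = \ell(w_0(I)) + \ell(w_0(I)w)$ valid because $I \subseteq J(w)$ implies $w \in w_0(I)\cdot {}^{I}W$ with $w_0(I)w$ the minimal-length representative, together with the fact that at each peeling step we strictly decreased length by removing a genuine left descent. I expect this bookkeeping to be routine once the path/deletion obstruction above is resolved.
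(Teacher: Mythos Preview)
Your overall approach matches the paper's: fix $R' = s_{k_1}\cdots s_{k_{\ell'}} \in \Red(w_0(I))$, prepend its letters one at a time (in the order $k_{\ell'}, k_{\ell'-1},\ldots,k_1$) to the current $I$-witness, apply the Deletion property at each step, and invoke Lemma~\ref{lem:root-path} to control the deleted letter. The substantive gap is that you have misidentified the obstacle. You do \emph{not} need the deleted letter $s_{i_t}$ to equal the prepended letter $s_j$; it suffices that they lie in the same component $\mathcal{C}^{(z)}$ of $\mathcal{G}_I$. Your own dichotomy already delivers this: by Lemma~\ref{lem:root-path} every interior node on the Dynkin path from $j$ to $i_t$ appears at least twice in the current word, and since that word is an $I$-witness no node outside $I$ can do so --- hence the entire path stays inside $I$, so $j$ and $i_t$ are in the same $\mathcal{C}^{(z)}$. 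In particular $i_t\in I$, so the first bullet of Definition~\ref{def:spherical-original} is untouched, and since we added one letter from $\mathcal{C}^{(z)}$ and removed one, the component counts are unchanged, preserving the second bullet. Your branch ``if all path-nodes lie inside $I$ one can instead choose a better reduced word for $w_0(I)$'' is exactly the benign conclusion, not a problem requiring further case analysis; drop that detour and the argument is complete.

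One point worth making explicit (the paper glosses over it) is why the prepended prefix survives intact. Inductively, after $m$ steps the word has the form $s_{k_{\ell'-m+1}}\cdots s_{k_{\ell'}}\cdot T^{(m)}$. When you prepend $s_{k_{\ell'-m}}$, the deleted letter cannot lie in the prefix: if it did, cancelling $T^{(m)}$ would exhibit $s_{k_{\ell'-m}}\cdots s_{k_{\ell'}}$ (a reduced word of length $m+1$, being a suffix of $R'$) as a product of $m-1$ simple reflections. Thus every deletion hits $T^{(m)}$, and after $\ell'$ steps the word is literally $R'R''$ with $R''\in\Red(w_0(I)w)$, as your final paragraph correctly anticipates.
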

\begin{proof}
Let $R^{(0)}=s_{i_1}\cdots s_{i_{\ell}}$ be an $I$-witness of $w$. Pick any 
$R'=s_{k_1}\cdots s_{k_{\ell'}}\in \Red(w_0(I))$.
We gradually modify $R^{(0)}$, so that at each step it remains an $I$-witness, until it is of the desired form. For each $j=\ell',\ldots,1$, add $s_{k_j}$ to the start of $R$. By the deletion property (Proposition~\ref{lem:deletion-property}), some $s_{i_{j'}}$ is deleted resulting in $R^{(1)}\in \Red(w)$. By Lemma~\ref{lem:root-path}, $k_j$ and $i_{j'}$ must be in the same $\mathcal{C}^{(z)}$ since otherwise, some $s_i$ with $i\notin I$ on the path from $k_j$ to $i_{j'}$ in the Dynkin diagram is used at least twice in $R^{(0)}$, contradicting that $R^{(0)}$ is an $I$-witness. Thus, in $R^{(1)}$, $\#\{t\:|\: i_t\in\mathcal{C}^{(z)}\}$ remains unchanged for each $z$.
Repeating this, $k_{\ell'}$ many times, we obtain an $I$-witness $R^{(k_{\ell'})}=R'R''$, as claimed.
\end{proof}

Henceforth, we assume that $W={\mathfrak S}_n$. Recall that $w\in {\mathfrak S}_n$ \emph{contains the pattern} $u\in {\mathfrak S}_k$ if there exists $i_1<i_2<\ldots<i_k$ such that $w(i_1), w(i_2),\ldots,w(i_k)$ is in the same relative order as $u(1),u(2),\ldots,u(k)$. Furthermore $w$ \emph{avoids} $u$ if no such indices exist.

We need the following proposition relating pattern avoidance and standard Coxeter elements. A more general statement for finite Weyl groups can be found in \cite{GH20}.

\begin{proposition}[\cite{Tenner}]\label{prop:Sn-coxeter-pattern}
A permutation $w\in\mathfrak{S}_n$ is a product of distinct generators, \emph{i.e.,} a standard Coxeter element in some parabolic subgroup, if and only if $w$ avoids 321 and 3412.
\end{proposition}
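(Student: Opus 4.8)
The plan is to prove the two implications separately, using the combinatorial characterization of standard Coxeter elements via reduced words.

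\textbf{Setup.} First I would record the well-known fact that $w \in \mathfrak{S}_n$ is a product of distinct simple transpositions for \emph{some} ordering iff $w$ has a reduced word in which every letter $s_i$ that appears, appears exactly once; equivalently, the set $\mathrm{Supp}(w) := \{i : s_i \text{ appears in some/any reduced word of } w\}$ has size $\ell(w)$. (This ``support of a permutation'' is well-defined since all reduced words use the same set of generators.) Moreover, by commuting relations between non-adjacent generators, such a $w$ factors as a product $\prod_z c^{(z)}$ over the connected components $\mathcal{C}^{(z)}$ of the induced subdiagram on $\mathrm{Supp}(w)$, where each $c^{(z)}$ is a standard Coxeter element of the irreducible parabolic $W_{\mathcal{C}^{(z)}}$. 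So the problem reduces to the case where $\mathrm{Supp}(w)$ is connected, i.e.\ $w$ lies in a parabolic $\mathfrak{S}_m$ (after relabeling) and uses every generator of it exactly once: I must show such $w$ exists iff $w$ avoids $321$ and $3412$, and conversely.

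\textbf{($\Rightarrow$) Standard Coxeter elements avoid $321$ and $3412$.} Suppose $w$ is a standard Coxeter element in an irreducible parabolic $\mathfrak{S}_m$, so $\ell(w) = m-1$. A permutation containing $321$ has at least $\binom{3}{2}=3$ inversions \emph{concentrated} in a way that forces a repeated generator — more cleanly, I would argue directly: if $w = s_{i_1}\cdots s_{i_{m-1}}$ with the $i_j$ distinct, one checks by induction on $m$ (peeling off the generator $s_{i_1}$ at the front, which is a left-ascent/descent analysis, or peeling off the last one) that the one-line notation of $w$ is obtained from the identity by a sequence of adjacent transpositions at distinct positions, and such permutations have a very restricted one-line form. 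Concretely, a standard Coxeter element of $\mathfrak{S}_m$ using each of $s_1,\dots,s_{m-1}$ once is, up to the ordering, a ``shuffle'' permutation: there is a subset determining which entries move up by one block versus down, and the one-line notation looks like a single ``descent run'' pattern with no three-term decreasing subsequence and no $3412$. I would make this precise by the standard bijection between standard Coxeter elements of $\mathfrak{S}_m$ and orientations of the path $A_{m-1}$ (equivalently, subsets of $\{1,\dots,m-2\}$), and directly read off the one-line notation, then verify it avoids both patterns. Alternatively — and this is cleaner — cite that containing $321$ forces $\ell \geq \binom{3}{2}$ relative to a restriction and is incompatible with using a connected set of $m-1$ generators each once; the $3412$ obstruction is the genuinely new phenomenon and must be checked by hand on the explicit form.

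\textbf{($\Leftarrow$) If $w$ avoids $321$ and $3412$, it is a standard Coxeter element.} Here I would induct on $\ell(w)$. Take a left descent $s_j$ of $w$ (if $w \neq e$). The key claim is: \emph{$w$ avoiding $321$ and $3412$ implies $s_j$ appears exactly once in every reduced word of $w$}, or equivalently $\ell(s_j w) = \ell(w) - 1$ and $s_j \notin \mathrm{Supp}(s_j w)$. Granting the claim, $s_j w$ still avoids $321$ and $3412$ (patterns are closed under... well, one must check $s_j w$ inherits avoidance — this needs a small argument since multiplying by $s_j$ on the left swaps \emph{values} $j, j+1$, and one checks no new $321$ or $3412$ is created precisely because $j$ appeared directly left of $j+1$... actually the cleanest route is to instead pick a suitable descent so this holds), so by induction $s_j w$ is a standard Coxeter element not using $s_j$, and prepending $s_j$ keeps all letters distinct, giving the result. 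The main obstacle is this claim: I expect to prove it via the structure of $321$-and-$3412$-avoiding permutations. A $321$-avoiding permutation is a union of two increasing subsequences; adding $3412$-avoidance pins down the structure further (these are related to ``skew-vexillary'' or fully-commutative-plus permutations). I would show that for such $w$ and a carefully chosen descent position, left-multiplication by $s_j$ strictly decreases the support, using that $321$-avoidance already makes $w$ \emph{fully commutative} (no $s_i s_{i+1} s_i$ in any reduced word, by Billey–Jockusch–Stanley / Fan), and $3412$-avoidance is exactly what rules out the other way a generator could be forced to repeat in a fully commutative element — namely a ``braid-free but still repeating'' pattern like $s_i s_{i+2} s_{i+1} s_i s_{i+2}$-type configurations. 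So the heart of the matter, and the step I expect to fight with, is converting ``$321$- and $3412$-avoiding'' into the combinatorial statement ``$|\mathrm{Supp}(w)| = \ell(w)$'', for which I would lean on the heap/wiring-diagram model of fully commutative elements: $w$ is fully commutative, its heap is a poset on $\ell(w)$ elements labeled by generators, and I must show $3412$-avoidance forces the labels to be distinct, which is a finite local check on heaps once set up correctly. If this direct route proves thorny, the fallback is to cite \cite{Tenner} as the paper does and only sketch, since Proposition~\ref{prop:Sn-coxeter-pattern} is explicitly attributed there.
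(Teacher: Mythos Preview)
The paper does not prove this proposition at all: it is stated with the attribution \cite{Tenner} and immediately used, with no argument given. So there is no ``paper's own proof'' to compare against; the intended answer here is simply to cite Tenner's result.

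That said, your sketch is heading in the right direction but is not a proof. The $(\Rightarrow)$ direction is fine in spirit (any product of distinct $s_i$ is fully commutative and one can read off avoidance from the explicit one-line form), though you never actually carry it out. The $(\Leftarrow)$ direction has a real gap: your inductive claim, that for a left descent $s_j$ of a $321$- and $3412$-avoiding $w$ one has $s_j \notin \mathrm{Supp}(s_j w)$, is exactly equivalent to ``$s_j$ occurs once in a reduced word of $w$,'' which is what you are trying to prove --- so as written the induction is circular. You recognize this and retreat to the heap model, which is indeed how Tenner's argument goes: $321$-avoidance gives full commutativity, so the heap is well-defined, and then $3412$-avoidance is precisely the condition that no generator label repeats in the heap (equivalently, that the element is \emph{boolean} in Tenner's terminology). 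But you do not execute that step either; ``a finite local check on heaps once set up correctly'' is a promissory note, not an argument. If you want a self-contained proof rather than a citation, you need to actually show that a repeated label in the heap of a fully commutative element forces a $3412$ pattern in the one-line notation, which is the substantive content of \cite{Tenner}.
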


\noindent
\emph{Conclusion of the proof of Theorem~\ref{thm:equivalent}:} If $w\in {\mathfrak S}_n$ satisfies the Definition~\ref{def:spherical-standard-coxeter} then it clearly satisfies Definition~\ref{def:original-spherical}.

Conversely, suppose $w\in {\mathfrak S}_n$ satisfies Definition~\ref{def:original-spherical}. We now show that it satisfies Definition~\ref{def:spherical-standard-coxeter}. Recall
$D=[n]-I=\{d_1 < d_2< \ldots < d_k\}; d_0=0, d_{k+1}=n$.
Let 
\[A_i=\{d_{i-1}+1,\ldots,d_i\} \text{ \ for $i=1,\ldots,k+1$.}\]
Assume $w$ is $I$-spherical with some $I$-witness. By Proposition~\ref{lem:witness-start-w0} and Definition~\ref{def:original-spherical}, we can write $w=w_0(I)u$ such that there is a reduced word $R''=s_{i_1}\cdots s_{i_{\ell(u)}}$ of $u$ such that
\begin{itemize}
\item $s_{d_i}$ appears at most once in $R''$; and 
\item $\#\{m\:|\: d_{t-1}<i_m<d_t\}<{{d_t-d_{t-1}+1}\choose2}-{{d_t-d_{t-1}}\choose2}=d_t-d_{t-1}$ for $1\leq t\leq k+1$.
\end{itemize}

By Proposition~\ref{prop:Sn-coxeter-pattern}, it suffices to show that $u=w_0(I)\cdot w$ avoids $321$ and $3412$, or equivalently, $u^{-1}$ avoids $321$ and $3412$. 
Since Proposition~\ref{lem:witness-start-w0} implies $\ell(w)=\ell(w_0(I))+\ell(u)$,
$u=w_0(I)\cdot w$ does not have left descents in $I$. In other words, $u^{-1}$ is increasing on
the indices $A_i$ for $1\leq i\leq k+1$.

Think about $R''$ as successive multiplications of $u^{-1}$ on the right by simple transpositions of $R''$ (read right to left) until one reaches $\mathrm{id}$ (for example, if $u^{-1}=2413$, 
$R''=s_1 s_3 s_2$ represents $2\underline{41}3\to 21\underline{43}\to \underline{21}34\to 1234$). Since $s_{d_i}$ appears at most once in $R''$, we know $|\{u^{-1}(1),u^{-1}(2),\ldots,u^{-1}(d_i)\}\setminus [d_i]|\leq 1$. Moreover, if this cardinality is $1$, $s_{d_i}$ swaps $\max\{u^{-1}(1),\ldots,u^{-1}(d_i)\}$ at index $d_i$ with $\min\{u^{-1}(d_i+1),\ldots,u^{-1}(n)\}$ at index $d_{i}+1$.

First suppose $u^{-1}$ contains $3412$ at indices $k_1<k_2<k_3<k_4$.
Then any reduced expression of $u^{-1}$ contains at least two copies of $s_j$ for $k_2\leq j<k_3$. Since $u^{-1}(k_2)>u^{-1}(k_3)$, $k_2$ and $k_3$ lie in different $A_i$'s. This means that there exists some $k_2\leq j<k_3$ with $j\notin I$ such that $s_j$ is used at least twice in $R''$, a contradiction. 

If $u^{-1}$ contains $321$ at indices $k_1<k_2<k_3$ with $k_i\in A_{t_i}$, then $t_1<t_2<t_3$. We concentrate on the block $A_{t_2}$ and will show that simple transpositions in $A_{t_2}$ are used at least $d_{t_2}-d_{t_2-1}$ times in $R''$. A visualization of $u^{-1}$ is shown in Figure~\ref{fig:equivalent-definition-321}.
\begin{figure}[h!]
\begin{tikzpicture}
\node at (1,0.5) {$\bullet$};
\node at (2,1) {$\bullet$};
\node at (3,3.5) {$\bullet$};
\node at (4,1.5) {$\bullet$};
\node at (5,2) {$\bullet$};
\node at (6,3) {$\bullet$};
\node at (7,2.5) {$\bullet$};
\node at (8,4) {$\bullet$};
\draw[dotted](3.5,0)--(3.5,4.5);
\draw[dotted](6.5,0)--(6.5,4.5);
\node at (3,0) {$k_1$};
\node at (6,0) {$k_2$};
\node at (7,0) {$k_3$};
\node at (2,4.5) {$t_1$};
\node at (5,4.5) {$t_2$};
\node at (7.5,4.5) {$t_3$};
\node at (-0.5,4.5) {block};
\end{tikzpicture}
\caption{An example of a permutation $u^{-1}$ containing $321$ in the proof of Theorem~\ref{thm:equivalent}}
\label{fig:equivalent-definition-321}
\end{figure}
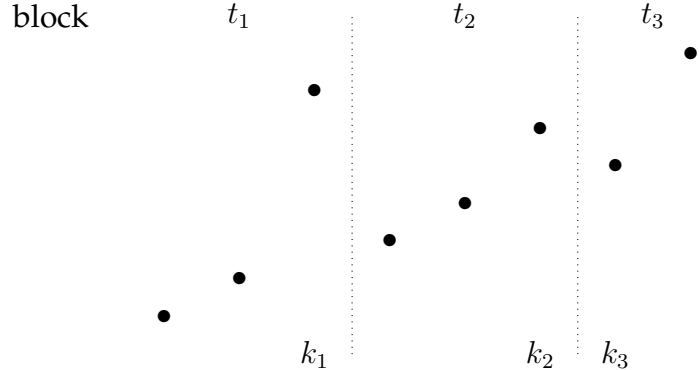

Recall that $s_{d_{t_2-1}}$ exchanges the maximum value in indices $A_1\cup\cdots\cup A_{t_2-1}$ with the minimum value in indices $A_{t_2}\cup\cdots\cup A_{k+1}$. Since $u^{-1}(k_2)>u^{-1}(k_3)$, the value $u^{-1}(k_2)$ is not the minimum among $u^{-1}(A_{t_2}\cup\cdots\cup A_{k+1})$ and thus cannot arrive left of index $d_{t_2-1}+1$ during this $s_{d_{t_2-1}}$ swap. Similarly, since $u^{-1}(k_2)<u^{-1}(k_1)$, the value $u^{-1}(k_2)$ cannot go to the right of index $d_{t_2}-1$. As a result, the value of $u^{-1}(k_2)$ occurs among $u^{-1}(A_{t_2})$ as we are using $R''$ to transform $u^{-1}$ into $\mathrm{id}$. 

In order to put $u^{-1}(k_1),u^{-1}(k_2),u^{-1}(k_3)$ into the correct order, both the values $u^{-1}(k_1)$ and $u^{-1}(k_3)$ must enter $A_{t_2}$ and exchange with $u^{-1}(k_2)$. In particular, all of the simple transpositions $s_j$, $j=d_{t_2-1}+1,\ldots,d_{t_2}-1$ must be used in order to exchange $u^{-1}(k_1)$ with $u^{-1}(k_3)$. Moreover, certain $s_j$ need to be applied twice: if $u^{-1}(k_1)$ switches with $u^{-1}(k_2)$ at transposition $s_j$ before $u^{-1}(k_2)$ switches with $u^{-1}(k_3)$, then $s_j$ must be used again; and if $u^{-1}(k_3)$ switches with $u^{-1}(k_2)$ first at $s_j$, then $s_j$ must be used again as well to eventually switch $u^{-1}(k_2)$ and $u^{-1}(k_1)$. Either way, in this case, the total number of times that $s_j$, $j=d_{t_2-1}+1,\ldots,d_{t_2}-1$, is used is at least $d_{t_2}-d_{t_2-1}$. \qed

\section{Bruhat order of Young subgroups and the poset ${\mathcal S}_{I,\gamma}$}\label{sec:3}

The symmetric group ${\mathfrak S}_n$ has the poset structure of
\emph{(strong) Bruhat order} $<_{\text{Bruhat}}$. It is convenient for us to use the ``upside down'' version. That is, the
covering relations are $u<_{\text{Bruhat}} u s_{ij}$ where $\ell(u)-1=\ell(us_{ij})$ and $s_{ij}=(i\ j )$ is a transposition.
Hence, under this choice of convention, the longest length permutation  
$w_0= n \ n-1 \ \ldots \ 3\ 2 \ 1$ is the unique minimum, and the identity permutation is the unique maximum.

A sequence of non-negative integers $\alpha=(\alpha_1,\alpha_2,\ldots,\alpha_n)$ is a \emph{weak composition}. Let ${\sf Comp}_n$ be the set of all such compositions. 
Let  ${\sf Par}_t$ be the set of partitions with at most $t$ nonzero-parts. A \emph{split-partition} is
\[(\lambda^1,\ldots,\lambda^k)\in {\sf Par}_D:={\sf Par}_{d_1-d_0}\times \cdots \times{\sf Par}_{d_{k+1}-d_k}.\]

Fix $\gamma\in {\sf Par}_D$, where $D=[n-1]-I$ (as in Section~\ref{sec:1}), which we will identify
(in the obvious way) with an element of ${\sf Comp}_n$.

\begin{definition}\label{def:sameblock}
$i,j\in [n]$ are \emph{in the same block} (with respect to $D=[n]-I$) if there exists $t\in [0,k]$ such that $d_{t}+1\leq i,j\leq d_{t+1}$.
\end{definition}

Let $\delta_t=(t,t-1,\ldots,3,2,1)$. Given $\gamma$, pick $\Delta:=\Delta_{\gamma}\in {\mathbb Z}_{\geq 0}^n$ to be any fixed but arbitrary strictly decreasing vector such that:
\begin{itemize} 
\item In the $i$-th block (of size $d_{i}-d_{i-1}$), the components of $\Delta$ are of the form $(f_i,f_i,\ldots, f_i)+\delta_{d_i -d_{i-1}}$ where $f_i$ is some positive integer depending on $i$.
\item $\gamma+\Delta$ is a vector with distinct components.
\end{itemize}
Let ${\widehat {\mathfrak S}}_n$ be permutations on the (distinct) entries of $\gamma+\Delta$. Clearly there is an isomorphism of Bruhat orders between
that of ${\mathfrak S}_n$ and ${\widehat {\mathfrak S}}_n$ that sends $w_0$ to $\Delta+\gamma$.
We will therefore mildly abuse notation and use $<_{\text{Bruhat}}$ for either order, as the context
will be clear. Let 
\[\Omega:({\mathfrak S}_n,<_{\text{Bruhat}})\to ({\widehat {\mathfrak S}}_n,
<_{\text{Bruhat}})\] 
be this poset isomorphism.

Now, let 
\[\tilde {\mathcal S}_{I,\gamma}= {\widehat {\mathfrak S}}_{d_1-d_0}\times {\widehat {\mathfrak S}_{d_2-d_1}}\times\cdots\times
{\widehat {\mathfrak S}}_{d_{k+1}-d_k}\] 
be the Young subgroup of ${\widehat {\mathfrak S}}_n$, where ${\widehat {\mathfrak S}}_{d_{i+1}-d_i}$ is the permutation group on the labels of $\Delta+\gamma$ in the $i$-th block. Thus, strong Bruhat order $<_{\text{Bruhat}}$ on ${\widehat {\mathfrak S}}_n$ restricts to  $\tilde{\mathcal S}_{I,\gamma}$. 

\begin{definition}
Given $\tilde\beta\in \tilde {\mathcal S}_{I,\gamma}$ (thought of as a vector in ${\mathbb Z}_{\geq 0}^n$), let 
\[\Phi(w)={\tilde\beta}-\Delta.\]
\end{definition}

 Let ${\mathcal S}_{I,\gamma}:={\text{Im}} \ \Phi \subset {\sf Comp}_n$. For $x,y\in {\mathcal S}_{I,\gamma}$ define $x<_{\text{Bruhat}} y$ if $\Phi^{-1}(x)<_{\text{Bruhat}} \Phi^{-1}(y)$.
 
\begin{proposition}\label{prop:posetsiso}
$({\mathcal S}_{I,\gamma},<_{\text{\emph{Bruhat}}})\cong (\tilde{\mathcal S}_{I,\gamma},<_{\text{\emph{Bruhat}}})
\cong ({\mathfrak S}_{d_1-d_0}\times\cdots \times {\mathfrak S}_{d_{k+1}-d_k},<_{\text{\emph{Bruhat}}})$. 
\end{proposition}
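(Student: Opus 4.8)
The plan is to verify the two claimed isomorphisms separately, since the first one is essentially tautological and the second is the substantive content. For the first isomorphism $({\mathcal S}_{I,\gamma}, <_{\text{Bruhat}}) \cong (\tilde{\mathcal S}_{I,\gamma}, <_{\text{Bruhat}})$: the map $\Phi$ is, by construction, a bijection from $\tilde{\mathcal S}_{I,\gamma}$ onto ${\mathcal S}_{I,\gamma}$ given by subtracting the fixed vector $\Delta$ coordinatewise, and the order on the target ${\mathcal S}_{I,\gamma}$ was \emph{defined} by transporting $<_{\text{Bruhat}}$ through $\Phi^{-1}$. So this isomorphism holds by fiat; I would state it in one sentence, noting only that $\Phi$ is indeed well-defined and injective because $\Delta$ is a fixed vector and the entries of $\gamma + \Delta$ are distinct, so the assignment $\tilde\beta \mapsto \tilde\beta - \Delta$ is reversible.

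For the second isomorphism, the key point is that $\tilde{\mathcal S}_{I,\gamma}$ is, as an abstract group, the Young subgroup ${\widehat{\mathfrak S}}_{d_1-d_0} \times \cdots \times {\widehat{\mathfrak S}}_{d_{k+1}-d_k}$, where each factor ${\widehat{\mathfrak S}}_{d_{i+1}-d_i}$ is the symmetric group on the labels of $\Delta + \gamma$ lying in the $i$-th block, and each such label set has exactly $d_{i+1}-d_i$ elements. Relabeling each block's entries order-isomorphically by $\{1,2,\ldots,d_{i+1}-d_i\}$ gives a group isomorphism $\tilde{\mathcal S}_{I,\gamma} \cong {\mathfrak S}_{d_1-d_0} \times \cdots \times {\mathfrak S}_{d_{k+1}-d_k}$. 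I would then argue this relabeling is also an isomorphism of Bruhat orders: the restriction of $<_{\text{Bruhat}}$ on ${\widehat{\mathfrak S}}_n$ to the Young subgroup $\tilde{\mathcal S}_{I,\gamma}$ is the product order of the Bruhat orders on the factors — this is the standard fact that Bruhat order on a parabolic (Young) subgroup $W_I \le W$ agrees with its intrinsic Bruhat order, and that the intrinsic Bruhat order of a direct product of Coxeter groups is the product of the factors' Bruhat orders (see, e.g., \cite{Bjorner.Brenti}, Prop.~2.4.4 and the discussion of products). Since Bruhat order on each ${\widehat{\mathfrak S}}_{d_{i+1}-d_i}$ depends only on the relative order of the labels, not their actual values, the relabeling to ${\mathfrak S}_{d_{i+1}-d_i}$ preserves it.

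Concretely, the steps in order are: (i) observe $\Phi$ gives the first isomorphism by definition; (ii) identify $\tilde{\mathcal S}_{I,\gamma}$ with the abstract product of symmetric groups via block-wise order-preserving relabeling, noting the $i$-th block has size $d_{i+1}-d_i$; (iii) invoke the fact that Bruhat order on a Young subgroup of ${\mathfrak S}_n$ coincides with the product of Bruhat orders on the factors; (iv) observe that Bruhat order is invariant under order-preserving relabeling of the ground set, completing the identification with $({\mathfrak S}_{d_1-d_0} \times \cdots \times {\mathfrak S}_{d_{k+1}-d_k}, <_{\text{Bruhat}})$. The only step requiring any care — and the main obstacle if one wants a self-contained argument — is (iii): one should confirm that the \emph{strong} Bruhat order restricted to a standard parabolic subgroup equals the strong Bruhat order of that subgroup as a Coxeter group in its own right. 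This is classical but worth a citation, and one must keep straight the ``upside down'' convention in force here (so $w_0$ is the minimum), which poses no difficulty since reversing a poset commutes with taking products and restrictions.
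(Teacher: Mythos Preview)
Your proposal is correct and takes essentially the same approach as the paper. The paper's proof is more terse: it dispatches the first isomorphism exactly as you do (``$\Phi$ is injective and hence a bijection onto its image; it is a poset map by construction''), and for the second it simply says it is ``induced from $\Omega$'' --- the already-established global poset isomorphism $(\mathfrak{S}_n,<_{\text{Bruhat}})\to(\widehat{\mathfrak{S}}_n,<_{\text{Bruhat}})$ --- which, when restricted to the Young subgroup, is precisely the block-wise relabeling you describe; your steps (iii)--(iv) unpack what the paper leaves implicit in that one word.
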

\begin{proof}
$\Phi$ is injective and hence a bijection onto its image. It is a poset map by construction. This 
proves the first isomorphism. The second isomorphism is induced from $\Omega$.
\end{proof}

\begin{definition}
If $\beta\!=\!(\beta_1,\ldots,\beta_n)\!\in\! {\sf Comp}_n$ and $i\!<\!j\in [n-1]$, define 
$t_{ij}\!:\!{\sf Comp}_n\!\to\! {\sf Comp}_n$ 
by 
\begin{equation}
\label{tijdef}
t_{ij}(\ldots,\beta_i,\ldots,\beta_j,\ldots)=
(\ldots,\beta_j-(j-i),\ldots,\beta_i+(j-i),\ldots).
\end{equation}
Also let $t_{i}:=t_{i\ i{+}1}$.
\end{definition}

The next lemma asserts that the role of $t_{ij}$'s in ${\mathcal S}_{I,\gamma}$ is the same as that of the $s_{ij}=(i\ j)$ in ${\mathfrak S}_n$. In particular, the $t_i$'s are analogous to the simple transpositions. 

\begin{lemma}\label{lemma:commutes}
For $i<j$ in the same block, this diagram commutes:
\begin{equation}
\begin{array}{ccc}
\tilde{\mathcal S}_{I,\gamma} & \stackrel{\Phi}{\longrightarrow} &  {\mathcal S}_{I,\gamma} \\
s_{ij} \downarrow & & \downarrow t_{ij} \\
\tilde{\mathcal S}_{I,\gamma}  & \stackrel{\Phi}{\longrightarrow} & {\mathcal S}_{I,\gamma}.
\end{array}
\end{equation}
\end{lemma}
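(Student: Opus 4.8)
The plan is a short coordinate-by-coordinate verification, with all the real content hidden in the prescribed shape of $\Delta=\Delta_\gamma$ inside each block. First I would spell out the two composites. Fix $i<j$ in the same block and $\tilde\beta\in\tilde{\mathcal S}_{I,\gamma}$, viewed as a vector. Unwinding the identifications of Section~\ref{sec:3}, the left vertical map sends $\tilde\beta$ to the vector obtained from $\tilde\beta$ by interchanging its entries in positions $i$ and $j$; this is well defined on $\tilde{\mathcal S}_{I,\gamma}$ precisely because $i$ and $j$ lie in the same block, so a block-wise rearrangement of $\Delta+\gamma$ remains one, and it is the analogue of right multiplication by the transposition $s_{ij}$ compatible with the covering relations $u<_{\text{Bruhat}}u s_{ij}$. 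Since $\Phi(\tilde\beta)=\tilde\beta-\Delta$, the vector $\Phi(s_{ij}\tilde\beta)$ has $\tilde\beta_j-\Delta_i$ in position $i$, has $\tilde\beta_i-\Delta_j$ in position $j$, and agrees with $\tilde\beta-\Delta$ in every other coordinate.

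Next I would compute the other composite. Writing $\beta=\Phi(\tilde\beta)=\tilde\beta-\Delta$, the definition \eqref{tijdef} of $t_{ij}$ gives that $t_{ij}(\beta)$ has $\beta_j-(j-i)=\tilde\beta_j-\Delta_j-(j-i)$ in position $i$, has $\beta_i+(j-i)=\tilde\beta_i-\Delta_i+(j-i)$ in position $j$, and equals $\tilde\beta-\Delta$ in all remaining coordinates. Comparing the two outputs, the coordinates outside $\{i,j\}$ agree trivially, and the coordinates $i$ and $j$ each agree if and only if the single numerical identity $\Delta_i-\Delta_j=j-i$ holds.

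Finally I would verify that identity, which is exactly where the chosen form of $\Delta$ is used. On the $t$-th block, occupying positions $d_{t-1}+1,\ldots,d_t$ of size $m=d_t-d_{t-1}$, the vector $\Delta$ equals $(f_t,\ldots,f_t)+\delta_m$ with $\delta_m=(m,m-1,\ldots,1)$, so the entry of $\Delta$ in position $p$ of that block is $(f_t+d_t+1)-p$. Hence $\Delta_p+p$ is constant along the block, which gives $\Delta_i-\Delta_j=j-i$ whenever $i<j$ are in the same block, completing the proof. As a byproduct, since $\Phi$ is a bijection onto $\mathcal S_{I,\gamma}$ and $s_{ij}$ preserves $\tilde{\mathcal S}_{I,\gamma}$, the square also shows $t_{ij}$ maps $\mathcal S_{I,\gamma}$ into itself. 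I do not anticipate a genuine obstacle: the only point needing care is fixing the convention that the vertical maps are ``swap positions $i$ and $j$'' rather than ``swap the values sitting in those positions'', since the latter would not make the square commute; everything else is bookkeeping together with the one-line observation that the $\delta_m$-shift built into $\Delta$ within each block is precisely what forces $\Delta_i-\Delta_j=j-i$.
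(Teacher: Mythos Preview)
Your proof is correct and follows essentially the same approach as the paper: both are coordinate-by-coordinate verifications reducing the commutativity to the single identity $\Delta_i-\Delta_j=j-i$ within a block, which follows from the $\delta_m$-shift built into $\Delta$. The paper phrases this as ``there is some number $f$ such that $\Delta_k=f-k$ for $i\leq k\leq j$'' and then computes directly, while you make the constant $f_t+d_t+1$ explicit; the substance is identical.
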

\begin{proof}
Let $\tilde{\beta}\in\tilde{\mathcal S}_{I,\gamma}$. By definition of $\Delta$, there is some number $f$ such that $\Delta_k=f-k$ for $i\leq k\leq j$. We have
\begin{align*}
t_{ij}\Phi\tilde{\beta}=&t_{ij}(\ldots,\tilde{\beta}_i-f+i,\ldots,\tilde{\beta}_k-f+k,\ldots,\tilde{\beta}_j-f+j,\ldots)\\
=&(\ldots,\tilde{\beta}_j-f+j-(j-i),\ldots,\tilde{\beta}_k-f+k,\ldots,\tilde{\beta}_i-f+i+(j-i),\ldots)\\
=&(\ldots,\tilde{\beta}_j-f+i,\ldots,\tilde{\beta}_k-f+k,\ldots,\tilde{\beta}_i-f+j,\ldots)\\
=&\Phi(\ldots,\tilde{\beta}_j,\ldots,\tilde{\beta}_k,\ldots,\tilde{\beta}_i,\ldots)=\Phi s_{ij}\tilde{\beta}
\end{align*}
as desired.
\end{proof}

\begin{example}
Let $n=3$, $I=\{1,2\}$ with a single block, $\gamma=443$ and $\Delta=321$. Figure~\ref{fig:example-S3} shows the poset $\tilde{\mathcal S}_{I,\gamma}$ and ${\mathcal S}_{I,\gamma}$ with the actions of $s_{ij}$'s and $t_{ij}$'s respectively.
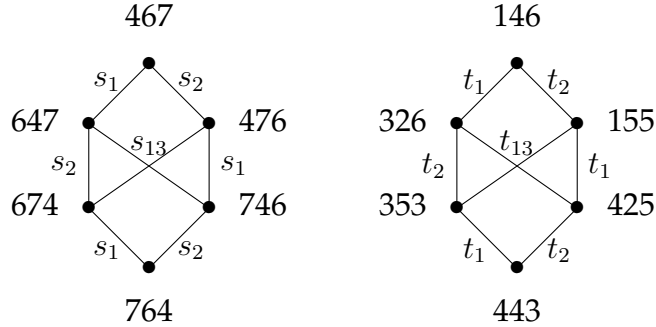
\begin{figure}[h!]
\begin{tikzpicture}[scale=0.8]
\node[label=below:764] at (0,0) {$\bullet$};
\node[label=left:674] at (-1,1) {$\bullet$};
\node[label=right:746] at (1,1) {$\bullet$};
\node[label=left:647] at (-1,2.4) {$\bullet$};
\node[label=right:476] at (1,2.4) {$\bullet$};
\node[label=above:467] at (0,3.4) {$\bullet$};
\draw(0,0)--(-1,1)--(-1,2.4)--(0,3.4)--(1,2.4)--(1,1)--(0,0);
\draw(-1,1)--(1,2.4);
\draw(1,1)--(-1,2.4);
\node at (-0.7,0.3) {$s_1$};
\node[left] at (-1,1.7) {$s_2$};
\node at (-0.7,3.1) {$s_1$};
\node at (0.7,0.3) {$s_2$};
\node[right] at (1,1.7) {$s_1$};
\node at (0.7,3.1) {$s_2$};
\node[above] at (0,1.7) {$s_{13}$};
\end{tikzpicture}
\qquad
\begin{tikzpicture}[scale=0.8]
\node[label=below:443] at (0,0) {$\bullet$};
\node[label=left:353] at (-1,1) {$\bullet$};
\node[label=right:425] at (1,1) {$\bullet$};
\node[label=left:326] at (-1,2.4) {$\bullet$};
\node[label=right:155] at (1,2.4) {$\bullet$};
\node[label=above:146] at (0,3.4) {$\bullet$};
\draw(0,0)--(-1,1)--(-1,2.4)--(0,3.4)--(1,2.4)--(1,1)--(0,0);
\draw(-1,1)--(1,2.4);
\draw(1,1)--(-1,2.4);
\node at (-0.7,0.3) {$t_1$};
\node[left] at (-1,1.7) {$t_2$};
\node at (-0.7,3.1) {$t_1$};
\node at (0.7,0.3) {$t_2$};
\node[right] at (1,1.7) {$t_1$};
\node at (0.7,3.1) {$t_2$};
\node[above] at (0,1.7) {$t_{13}$};
\end{tikzpicture}
\caption{Example of the poset $\tilde{\mathcal S}_{I,\gamma}$ (left) and ${\mathcal S}_{I,\gamma}$ (right)}
\label{fig:example-S3}
\end{figure}
\end{example}

\begin{remark}
Having formally defined $({\mathcal S}_{I,\gamma},<_{\text{Bruhat}})$ above, in the remainder of the paper, one can think of this poset as generated from $\gamma$ \emph{via} the action of $t_{ij}$'s, including just the~$t_i$'s.
\end{remark}

\begin{definition}\label{def:rank}
For $\beta\in{\mathcal S}_{I,\gamma}$, let $\theta(\beta)$ be the \emph{rank} of $\beta$, \emph{i.e.}, there exists a saturated chain 
\[\beta=\beta^{(\theta)}\gtrdot_{\text{Bruhat}}\beta^{(\theta-1)}\gtrdot_{\text{Bruhat}}\cdots\gtrdot_{\text{Bruhat}}\beta^{(0)}=\gamma\] 
of length $\theta=\theta(\beta)$ from $\beta$ to the minimum $\gamma$ in ${\mathcal S}_{I,\gamma}$. Also define the \emph{sign} of $\beta$ to be ${\sf sgn}(\beta):=(-1)^{\theta(\beta)}$.
\end{definition}

These facts follow immediately from the usual Bruhat orders and the isomorphism $\Phi$.

\begin{lemma}\label{lemma:posetBruhatorder}
For $\beta\in {\mathcal S}_{I,\gamma}$ and $i,j$ in the same block,
\begin{itemize}
\item[(i)] $\beta_i>\beta_j-(j-i)$ if and only if $\beta<_{\text{\emph{Bruhat}}} t_{ij}\beta$; in particular, $\beta_i-i\neq\beta_j-j$ for $i\neq j$;
\item[(ii)] ${\sf sgn}(t_{ij}\beta)=-{\sf sgn}(\beta)$;
\end{itemize}
\end{lemma}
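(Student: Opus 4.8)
The plan is to transport everything back to the usual Bruhat order on ${\mathfrak S}_n$ (or equivalently ${\widehat{\mathfrak S}}_n$) via the isomorphisms $\Phi$ and $\Omega$ set up above, so that the two claims become standard facts about transpositions acting on one-line notations. First I would recall that by Lemma~\ref{lemma:commutes} the map $\Phi$ intertwines $t_{ij}$ on ${\mathcal S}_{I,\gamma}$ with $s_{ij}$ on $\tilde{\mathcal S}_{I,\gamma}$, and that $\Omega$ further intertwines $\tilde{\mathcal S}_{I,\gamma}$ with the Young subgroup ${\mathfrak S}_{d_1-d_0}\times\cdots\times{\mathfrak S}_{d_{k+1}-d_k}$ sitting inside $({\mathfrak S}_n,<_{\text{Bruhat}})$ with its ``upside down'' convention (so $w_0$ is the minimum and $\mathrm{id}$ the maximum). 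Under this dictionary, $\beta\in{\mathcal S}_{I,\gamma}$ corresponds to a permutation $w$ whose one-line notation in block $i$ records the relative order of $\beta_{d_{i-1}+1},\ldots,\beta_{d_i}$, and multiplying by $t_{ij}$ corresponds to multiplying $w$ by the transposition $s_{ij}$ on the right (equivalently, swapping the values in positions $i$ and $j$).

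For part (i): in the upside-down convention, $w <_{\text{Bruhat}} w s_{ij}$ (a covering or non-covering up-step) precisely when applying $s_{ij}$ to $w$ \emph{decreases} the ordinary length, i.e. when $w(i) > w(j)$ for $i<j$ — this is the standard characterization of Bruhat covers, cf.\ \cite{Bjorner.Brenti}. So I must check that, under the identification $\beta \leftrightarrow w$, the condition ``$w(i) > w(j)$'' is exactly ``$\beta_i > \beta_j - (j-i)$''. This is where the specific form of $\Delta_{\gamma}$ enters: within a single block, $\Delta$ has the shape $(f,f,\ldots,f) + \delta$, so $\Delta_i - \Delta_j = j-i$ for $i<j$ in the same block; hence the strictly-decreasing ``labels'' $\tilde\beta_i = \beta_i + \Delta_i$ satisfy $\tilde\beta_i > \tilde\beta_j \iff \beta_i + \Delta_i > \beta_j + \Delta_j \iff \beta_i > \beta_j - (j-i)$. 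Since $\tilde\beta_i > \tilde\beta_j$ (with $i<j$, same block) is exactly the statement that the corresponding $w$ has $w(i)>w(j)$, this gives the equivalence. The parenthetical ``$\beta_i - i \neq \beta_j - j$ for $i\neq j$'' then follows because $\tilde\beta$ has distinct entries: $\beta_i + \Delta_i \neq \beta_j + \Delta_j$ forces $\beta_i - i \neq \beta_j - j$ once the constant $f$ is cancelled (and for $i,j$ in different blocks it is built into the construction of $\Delta_\gamma$ that $\gamma+\Delta$ has distinct components, giving $\beta_i - i \neq \beta_j - j$ after absorbing the block constants, or one simply notes the claim is only needed within a block).

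Part (ii) is then immediate: $s_{ij}$ is a transposition, so $\ell(ws_{ij})$ and $\ell(w)$ have opposite parities in ${\mathfrak S}_n$; transporting through $\Phi$ and $\Omega$, the rank $\theta$ in ${\mathcal S}_{I,\gamma}$ (defined via saturated chains to $\gamma$) also changes parity under $t_{ij}$, whence ${\sf sgn}(t_{ij}\beta) = (-1)^{\theta(t_{ij}\beta)} = -(-1)^{\theta(\beta)} = -{\sf sgn}(\beta)$. One small point to be careful about is that $\theta$ as defined is the length of a saturated chain down to the minimum $\gamma$, so I should invoke the fact that Bruhat order on ${\mathfrak S}_n$ (hence on the Young subgroup, hence on ${\mathcal S}_{I,\gamma}$ via $\Phi$) is graded, which it is; then $\theta(\beta)$ equals $\ell(w_0) - \ell(w)$ under the identification, and the parity statement follows.

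The main obstacle — really the only non-bookkeeping point — is verifying the arithmetic translation in part (i), namely that the defining offset $j-i$ in \eqref{tijdef} matches the difference $\Delta_i - \Delta_j$ coming from the $\delta_{d_i - d_{i-1}}$ summand in the construction of $\Delta_\gamma$; once that is pinned down, both parts reduce cleanly to textbook properties of Bruhat order and are, as the text says, immediate.
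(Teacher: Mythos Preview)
Your proposal is correct and follows exactly the approach the paper indicates: the paper's entire proof is the single sentence ``These facts follow immediately from the usual Bruhat orders and the isomorphism $\Phi$,'' and your write-up is precisely a careful unpacking of that sentence via Lemma~\ref{lemma:commutes}, the explicit form of $\Delta$ on a block (so that $\Delta_i-\Delta_j=j-i$), and the standard parity/length behaviour of transpositions in Bruhat order. One cosmetic point: in your remark on $\theta$ at the end of part~(ii), the relevant longest element is $w_0(I)$ (the longest element of the Young subgroup), not the full $w_0$ of ${\mathfrak S}_n$, but this does not affect the parity argument.
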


\section{Polynomials and sphericality} \label{sec:4}

\subsection{Key polynomials} Let ${\sf Pol}:={\mathbb Z}[x_1,x_2,\ldots, x_n]$ be the 
polynomial ring in the indeterminates $x_1,x_2,\ldots, x_n$.
For $\alpha=(\alpha_1,\alpha_2,\ldots,\alpha_n) \in {\sf Comp}_n$, the \emph{key polynomial}
$\kappa_{\alpha}$ is defined as follows. If $\alpha$ is weakly decreasing, then $\kappa_{\alpha}:=\prod_i x_i^{\alpha_i}$. Otherwise, suppose $\alpha_i>\alpha_{i+1}$. Let
\[\pi_i:{\sf Pol}\to {\sf Pol}, \ \ f\mapsto \frac{x_i f(\ldots,x_i,x_{i+1},\ldots)-x_{i+1}f(\ldots,x_{i+1},x_i,\ldots)}{x_i-x_{i+1}},\]
and 
\[\kappa_{\alpha}=\pi_i(\kappa_{\widehat\alpha}) \text{\ where ${\widehat\alpha}:=(\alpha_1,\ldots,\alpha_{i+1},\alpha_i,\ldots)$.}\]

We need facts about the operators
$\pi_i$; our reference is \cite{Lascoux:polynomials}. The operators $\pi_i$ satisfy the relations 
\begin{align*}
\pi_i \pi_j & = \pi_j \pi_i \text{\ \ (for $|i-j|>1$)}\\ 
\pi_i \pi_{i+1} \pi_i & = \pi_{i+1}\pi_i \pi_{i+1}\\
\pi_i^2 & =\pi_i.
\end{align*} 

Recall that the \emph{Demazure product} on ${\mathfrak S}_n$ is defined by
\[w*s_i=\begin{cases}
ws_i & \text{if $\ell(ws_i)=\ell(w)+1$}\\
0 & \text{otherwise.}
\end{cases}.\]
This product is associative.
Then $R=(s_{i_1},\cdots, s_{i_{\ell}})$ is a \emph{Hecke word} of $w$ if $w=s_{i_1}*s_{i_2}*\cdots* s_{i_{\ell}}$.

For any $w\in {\mathfrak S}_n$ one unambiguously defines 
\[\pi_w:=\pi_{i_1}\pi_{i_2}\cdots\pi_{i_{\ell}},\]
where $R=(s_{i_1},\ldots, s_{i_{\ell}})$ is any Hecke word of $w$.

Now suppose $\lambda=(\lambda_1\geq \lambda_2\geq \ldots \geq \lambda_n)$ is a partition, and $w\in {\mathfrak S}_n$. Define
\[\kappa_{w\lambda}:=\kappa_{\lambda_{w^{-1}(1)},\ldots,\lambda_{w^{-1}(n)}}.\]
With this choice of convention, we have
\begin{equation}
\label{eqn:piwlambda}
    \kappa_{w\lambda}=\pi_w \kappa_{\lambda}.
\end{equation}

Below, we do not assume $\ell(w)=\ell(w_0(I))+\ell(c)$:
\begin{lemma}
If $w=w_0(I)c$ where $c$ is a standard Coxeter element, then $\kappa_{w\lambda}=\pi_{w_0(I)}\kappa_{c\lambda}$.
\end{lemma}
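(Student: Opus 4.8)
The plan is to reduce the claim to the operator identity \eqref{eqn:piwlambda} together with the Hecke-word characterization of $\pi_w$. Write $w = w_0(I)c$ with $c$ a standard Coxeter element in some parabolic $W_I$. By \eqref{eqn:piwlambda} we have $\kappa_{w\lambda} = \pi_w \kappa_\lambda$ and $\kappa_{c\lambda} = \pi_c \kappa_\lambda$, so what must be shown is the operator-level statement $\pi_w = \pi_{w_0(I)}\pi_c$ (as operators applied to $\kappa_\lambda$, though in fact it will hold as an identity of operators outright, since $\pi$ is a well-defined function on $\mathfrak{S}_n$). Because the $\pi_i$ satisfy the $0$-Hecke relations (braid relations plus $\pi_i^2=\pi_i$), the composite $\pi_{w_0(I)}\pi_c$ equals $\pi_{u}$ where $u = w_0(I) * c$ is the Demazure product; indeed concatenating a Hecke word of $w_0(I)$ with one of $c$ gives a Hecke word of $w_0(I)*c$, and $\pi_{w_0(I)*c} = \pi_{w_0(I)}\pi_c$ by associativity of $*$ and well-definedness of $\pi$ on Hecke words.

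So the heart of the matter is to prove $w_0(I) * c = w_0(I) c$, i.e.\ that the Demazure product of $w_0(I)$ and $c$ is the honest product, \emph{without} assuming $\ell(w)=\ell(w_0(I))+\ell(c)$. First I would reduce to the case where $I$ is a single connected interval (a single block), since $w_0(I)$ and $c$ both factor as commuting products over the connected components of $I$, and the Demazure product respects this factorization. Within one block, say on $\{a, a{+}1, \ldots, b\}$, the element $w_0(I)$ is the longest element of that symmetric group. The key fact is that for the longest element $w_0$ of a (parabolic) symmetric group, $w_0 * v = w_0$ for every $v$ in that group — equivalently, $w_0$ is a left zero for the Demazure monoid structure on $W_I$ — because $\ell(w_0 s_i) < \ell(w_0)$ for every generator $s_i$ of $W_I$, so each further Demazure multiplication stays at $w_0$. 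Applying this with $v = c$ (which lies in $W_I$) gives $w_0(I) * c = w_0(I) = w_0(I) c$, the last equality being the definition of $w_0(I)c$ as a product in the group.

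Assembling: $\kappa_{w\lambda} = \pi_w\kappa_\lambda = \pi_{w_0(I)c}\kappa_\lambda = \pi_{w_0(I)*c}\kappa_\lambda = \pi_{w_0(I)}\pi_c\kappa_\lambda = \pi_{w_0(I)}\kappa_{c\lambda}$, which is the desired formula. The only subtlety worth spelling out — and the step I expect to need the most care — is justifying $\pi_{w_0(I)*c} = \pi_{w_0(I)}\pi_c$ and the identification of $w$ with $w_0(I)*c$ when lengths are not additive: one must be careful that "$w = w_0(I)c$" as a group product may satisfy $\ell(w) < \ell(w_0(I)) + \ell(c)$, so the concatenation of reduced words of $w_0(I)$ and $c$ is only a Hecke word (not a reduced word) of $w$, but that is exactly enough, since $\pi_v$ is defined via \emph{any} Hecke word of $v$ and $w = w_0(I)*c$ in that case. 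Everything else is bookkeeping with the $0$-Hecke relations and the block decomposition.
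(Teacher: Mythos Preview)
Your closing chain of equalities is exactly the paper's proof; the paper simply writes
\[\kappa_{w\lambda}=\kappa_{w_0(I)c\lambda}=\pi_{w_0(I)c}(\kappa_{\lambda})=\pi_{w_0(I)}\pi_c(\kappa_{\lambda})=\pi_{w_0(I)}\kappa_{c\lambda},\]
citing \eqref{eqn:piwlambda} and the Hecke-word definition of $\pi_w$. However, your justification of the middle step has two real errors. First, you assume $c$ is a standard Coxeter element of $W_I$ itself; in the paper's setting $c$ is a standard Coxeter element of \emph{some} parabolic subgroup, typically involving generators outside $I$. Second, even granting $c\in W_I$, your asserted equality ``$w_0(I)=w_0(I)c$, the last equality being the definition of $w_0(I)c$ as a product in the group'' is simply false for $c\neq e$: the Demazure product $w_0(I)*c$ equals $w_0(I)$, but the honest group product does not. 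So your block argument does not establish $w_0(I)*c=w_0(I)c$.

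In fact that equality can genuinely fail: with $n=3$, $I=\{1\}$, $c=s_1s_2$ one gets $w_0(I)c=s_2$ while $w_0(I)*c=s_1s_2s_1$, and then $\kappa_{w\lambda}\neq\pi_{w_0(I)}\kappa_{c\lambda}$ for $\lambda=(3,2,1)$. What makes the step legitimate in the paper's context is the standing assumption $I\subseteq J(w)$: writing $w=x\cdot w^I$ with $x\in W_I$ and $w^I$ minimal in $W_Iw$, one has $J(w)\cap I=J(x)$, so $I\subseteq J(w)$ forces $x=w_0(I)$ and hence $\ell(w)=\ell(w_0(I))+\ell(c)$. Under length additivity the concatenation of reduced words for $w_0(I)$ and $c$ is already a reduced (hence Hecke) word for $w$, and $\pi_w=\pi_{w_0(I)}\pi_c$ follows immediately. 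Replace your $c\in W_I$ argument with this observation.
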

\begin{proof}
By two applications of (\ref{eqn:piwlambda}), and the definition of $\pi_w$
\[\kappa_{w\lambda}=\kappa_{w_0(I)c\lambda}=\pi_{w_0(I)c}(\kappa_{\lambda})=
\pi_{w_0(I)}\pi_c(\kappa_{\lambda})=\pi_{w_0(I)}\kappa_{c\lambda}.\qedhere\]
\end{proof}

For any $\alpha\in {\sf Comp}_n$, let
\[a_{\alpha_1+n-1,\alpha_2+n-2,\ldots,\alpha_n}:=\det(x_j^{\lambda_i+n-i})_{1\leq i,j\leq n}.\]
In particular, 
\[\Delta_n:=a_{n-1,n-2,\ldots,0}=\prod_{1\leq j<k\leq n}(x_j-x_k)\] 
is the Vandermonde
determinant. Define a \emph{generalized
Schur polynomial} $s_{\alpha}$ by 
\begin{equation}
    \label{eqn:schurdef}
s_{\alpha}(x_1,\ldots,x_n):=a_{\alpha_1+n-1,\alpha_2+n-2,\ldots,\alpha_n}/a_{n-1,n-2,\ldots,1,0}.
\end{equation}

This is well-known, and clear from (\ref{eqn:schurdef}) and the
row-swap property of determinants: 
\begin{lemma}
\label{lemma:maintrick}
$s_{t_i\alpha}(x_1,\ldots,x_n)=-s_{\alpha}(x_1,\ldots,x_n)$. Thus, if $\alpha_{i+1}=\alpha_i+1$ then
$s_{\alpha}(x_1,\ldots,x_n)=0$.
\end{lemma}

A result  we need  is a characterization of the monomials $x^{\beta}$ that appear
(with nonzero coefficient) in $\kappa_{\alpha}$. Graphically represent the weak composition $\alpha$ as a \emph{skyline} $D(\alpha)$ of boxes where column $i$ (from the left) is a tower of $\alpha_i$ boxes. For example, if $\alpha=(3,0,4,1,0,2)$ then the associated skyline is
\[
\begin{tikzpicture}[scale=0.400000000000000]
\draw(0,0)--(1,0);
\draw(0,0)--(1,0)--(1,1)--(0,1)--(0,0);
\draw(0,1)--(1,1)--(1,2)--(0,2)--(0,1);
\draw(0,2)--(1,2)--(1,3)--(0,3)--(0,2);
\draw(1,0)--(2,0);
\draw(2,0)--(3,0);
\draw(2,0)--(3,0)--(3,1)--(2,1)--(2,0);
\draw(2,1)--(3,1)--(3,2)--(2,2)--(2,1);
\draw(2,2)--(3,2)--(3,3)--(2,3)--(2,2);
\draw(2,3)--(3,3)--(3,4)--(2,4)--(2,3);
\draw(3,0)--(4,0);
\draw(3,0)--(4,0)--(4,1)--(3,1)--(3,0);
\draw(4,0)--(5,0);
\draw(5,0)--(6,0);
\draw(5,0)--(6,0)--(6,1)--(5,1)--(5,0);
\draw(5,1)--(6,1)--(6,2)--(5,2)--(5,1);
\end{tikzpicture}
\]
Define ${\sf Tab}(\alpha)$ to be fillings of $D(\alpha)$ with ${\mathbb N}:=\{1,2,3,\ldots\}$ such that:
\begin{itemize}
\item no label appears twice in a row (\emph{row distinct}); and
\item the labels in column $i$ are at most $i$ (\emph{flagged}).
\end{itemize}
 The \emph{weight}
of $T\in {\sf Tab}(\alpha)$ is the vector ${\sf wt}(T)=(c_1,c_2,\ldots)$ where $c_i=\#\{i\in T\}$. The following result
is implicit in \cite{ARYpreprint, ARYFPSAC,ARYAdv} and explicit in \cite{Fan}.

\begin{theorem}\label{thm:nonzerorule}
$[x^{\beta}] \kappa_{\alpha}\neq 0$ if and only if there exists $T\in {\sf Tab}(\alpha)$ with content
$\beta$.
\end{theorem}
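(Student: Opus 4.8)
The plan is to prove Theorem~\ref{thm:nonzerorule} (that $[x^\beta]\kappa_\alpha \neq 0$ if and only if there is a row-distinct flagged filling of $D(\alpha)$ with content $\beta$) by induction on $\ell(\alpha) := \#\{i < j : \alpha_i < \alpha_j\}$, the number of ``inversions'' separating $\alpha$ from its decreasing rearrangement; equivalently, induction on the length of the $w$ with $\alpha = w\lambda$. First I would dispose of the base case: when $\alpha$ is weakly decreasing, $\kappa_\alpha = \prod_i x_i^{\alpha_i}$ has the single monomial $x^\alpha$, and the only flagged filling is the one whose column $i$ is the all-$i$'s column (any deviation either repeats a label in some row or violates the flag condition), which has weight exactly $\alpha$. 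For the inductive step, pick $i$ with $\alpha_i < \alpha_{i+1}$ and write $\widehat\alpha = s_i\alpha$ (the swap), so $\alpha_i > \widehat\alpha_{i+1}$ — wait, rather $\widehat\alpha$ has $\widehat\alpha_i = \alpha_{i+1} > \alpha_i = \widehat\alpha_{i+1}$ and $\kappa_\alpha = \pi_i \kappa_{\widehat\alpha}$, with $\ell(\widehat\alpha) = \ell(\alpha) - 1$.

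The analytic heart is to understand which monomials appear in $\pi_i(f)$ in terms of those in $f$. Writing $f = \sum_\beta c_\beta x^\beta$ and grouping the variables $x_i, x_{i+1}$ (holding all others fixed), a standard computation with the divided-difference-type operator $\pi_i$ shows: if $a \geq b$ then $\pi_i(x_i^a x_{i+1}^b) = x_i^a x_{i+1}^b + x_i^{a-1}x_{i+1}^{b+1} + \cdots + x_i^b x_{i+1}^a$ (all monomials with exponent pair weakly between $(a,b)$ and $(b,a)$), while if $a < b$ then $\pi_i(x_i^a x_{i+1}^b) = -(x_i^{a+1}x_{i+1}^{b-1} + \cdots + x_i^{b-1}x_{i+1}^{a+1})$, a negative sum over the strictly-between pairs, and $\pi_i$ kills $x_i^a x_{i+1}^{a+1}$. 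Because $\kappa_{\widehat\alpha}$ is already symmetric in $x_i, x_{i+1}$ under the $\pi_i$-image structure — more precisely because $\pi_i(\pi_i g) = \pi_i g$, so $\kappa_\alpha = \pi_i \kappa_{\widehat\alpha}$ is $\pi_i$-invariant, hence its $(x_i,x_{i+1})$-monomial support in each slice is ``saturated'' between a dominant pair and its reverse — one extracts a clean combinatorial description: $[x^\beta]\kappa_\alpha \neq 0$ iff there is $\beta'$ with $[x^{\beta'}]\kappa_{\widehat\alpha}\neq 0$ such that $\beta$ is obtained from $\beta'$ by ``moving weight down from slot $i{+}1$ into slot $i$'' within the allowed range, together with a sign/cancellation analysis ensuring no coefficient vanishes. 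The positivity of key polynomials (each $c_\beta \geq 0$, itself provable by the same induction) is what rules out cancellation in $\pi_i$.

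On the tableau side I would set up the matching bijection. Given a flagged row-distinct filling $T$ of $D(\widehat\alpha)$ of weight $\beta'$, I want to produce fillings of $D(\alpha)$ (which differs from $D(\widehat\alpha)$ by moving boxes between columns $i$ and $i+1$, i.e. $D(\alpha)$ has $\alpha_i < \alpha_{i+1}$ boxes in those columns versus $\widehat\alpha_i > \widehat\alpha_{i+1}$) realizing exactly the weights $\beta$ that $\pi_i$ produces from $\beta'$, and conversely. This is the column-moving operation familiar from the theory of key polynomials / Kohnert-type moves: the labels $\leq i$ in column $i{+}1$ of $T$, together with the column-$i$ entries, get redistributed, and the flag condition (labels in column $j$ are $\leq j$) interacts precisely with the exponent ranges appearing in the $\pi_i$ formula. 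I would verify (a) the redistribution always lands in $\mathrm{Tab}(\alpha)$, (b) it hits every $\beta$ in the $\pi_i$-image and nothing else, and (c) it is injective enough that distinct contributions don't collide destructively — but since everything is nonnegative the last point is automatic once (a),(b) hold.

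The main obstacle I expect is the careful bookkeeping in the inductive step: matching the exact set of exponents in the $(x_i, x_{i+1})$-slice produced by $\pi_i$ against the exact set of flagged redistributions of boxes between columns $i$ and $i{+}1$, in the presence of the row-distinctness constraint across all columns (not just $i, i+1$). In particular one must check that a label already appearing in row $r$ somewhere to the left of column $i$ does not obstruct the move, and that the flag bound $\leq i$ on column $i$ versus $\leq i{+}1$ on column $i{+}1$ is exactly the combinatorial shadow of the exponent range $b, b{+}1, \ldots, a$ in the $\pi_i$ formula. Given Theorem~\ref{thm:nonzerorule} is attributed as ``implicit in \cite{ARYpreprint, ARYFPSAC, ARYAdv} and explicit in \cite{Fan}'', I would either cite \cite{Fan} directly for the statement or, if a self-contained proof is wanted, carry out the induction above, with the $\pi_i$-monomial lemma and the column-move bijection as the two named sub-lemmas.
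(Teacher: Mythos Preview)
Your approach is genuinely different from the paper's. The paper does not carry out any induction on $\ell(\alpha)$ or any analysis of $\pi_i$ on monomials; instead it gives a two-line citation argument via Newton polytopes. Namely, it quotes \cite{ARYAdv} for the statement that the lattice points of the Schubitope attached to $D(\alpha)$ are exactly the contents of fillings in ${\sf Tab}(\alpha)$, and quotes \cite{Fink} (Fink--M\'esz\'aros--St.~Dizier) for the statement that those same lattice points are exactly the exponent vectors appearing in $\kappa_\alpha$. Chaining the two gives the theorem. What this buys is that all the hard work (saturated Newton polytope, polytope description) is outsourced; there is no cancellation bookkeeping at all. Your inductive strategy is closer in spirit to what the paper attributes to \cite{Fan}.

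There is, however, a real gap in your sketch that you should be aware of. You assert that ``the positivity of key polynomials \ldots\ is what rules out cancellation in $\pi_i$,'' but positivity of $\kappa_{\widehat\alpha}$ alone does \emph{not} control the support of $\pi_i\kappa_{\widehat\alpha}$: a monomial $x^{\beta'}$ with $\beta'_i<\beta'_{i+1}$ contributes negatively under $\pi_i$, and these negatives can and do cancel against positives coming from other $\beta'$ in the support. Knowing separately that $\kappa_\alpha$ is also positive still does not, by itself, give you a description of $\mathrm{supp}(\kappa_\alpha)$ in terms of $\mathrm{supp}(\kappa_{\widehat\alpha})$. What is actually needed is a structural lemma about $\kappa_{\widehat\alpha}$ itself, for instance that its support is ``saturated from above'' in the $i$-direction: whenever $\beta'$ is in the support with $\beta'_i<\beta'_{i+1}$, then so is $s_i\beta'$ (and with at least as large a coefficient). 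With such a lemma the negative pieces of $\pi_i$ are always dominated and the support description you want follows; without it, your step ``one extracts a clean combinatorial description'' is unjustified. This saturation property is true (it follows, e.g., from Corollary~\ref{cor:nonzeroconsequence} once the theorem is known, or from Demazure crystal theory), but it must be proved as part of the induction, and you have not isolated it. Likewise, the tableau-side bijection between ${\sf Tab}(\widehat\alpha)$ and ${\sf Tab}(\alpha)$ is only gestured at; the row-distinctness constraint couples all columns, not just $i$ and $i{+}1$, and making the column-move precise is where the work lies.
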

\begin{proof}
We explicate the argument alluded to in \cite{ARYpreprint, ARYFPSAC, ARYAdv}; we refer
to these papers for definitions. This argument
differs from the one in \cite{Fan}. In \cite{ARYAdv}, it is shown that a lattice point $\beta$ appears
in the \emph{Schubitope} associated to $D(\alpha)$ (rotated $90$-degrees clockwise) if and only
if there exists $T\in {\sf Tab}(\alpha)$ with content
$\beta$. In \cite{Fink}, it is proved that these lattice points correspond exactly to the monomials
of $\kappa_{\alpha}$.
\end{proof}

A consequence of Theorem~\ref{thm:nonzerorule} that we will use is
\begin{corollary}
\label{cor:nonzeroconsequence}
Let $\alpha,\beta\in {\sf Comp}_n$ and assume $[x^{\beta}]\kappa_{\alpha}>0$.
Suppose $i<j$ and $\beta_j-\beta_i=t\in {\mathbb Z}_{>0}$. For $1\leq s\leq t$,
let $\beta':=(\ldots,\beta_i+s,\ldots,\beta_j-s,\ldots)$. Then $[\beta']\kappa_{\alpha}>0$.
\end{corollary}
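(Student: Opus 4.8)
The plan is to use Theorem~\ref{thm:nonzerorule} to translate the statement about monomials of $\kappa_\alpha$ into a statement about fillings in ${\sf Tab}(\alpha)$: given $T \in {\sf Tab}(\alpha)$ with $\wt(T) = \beta$, I want to produce $T' \in {\sf Tab}(\alpha)$ with $\wt(T') = \beta'$, where $\beta'$ decreases the count of the label $j$ by $s$ and increases the count of the label $i$ by $s$ (using $i<j$ and $\beta_j - \beta_i = t \geq s \geq 1$). The key point is that the flaggedness condition is monotone: if a box in column $c$ currently holds label $j$, we are allowed to replace it with any smaller label — in particular with $i$ — provided row-distinctness is not violated. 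So the core combinatorial task is to select $s$ boxes currently labeled $j$ and relabel each of them with $i$, choosing them in rows that do not already contain an $i$.

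The main step is a counting/marriage argument. Let $r_j$ be the set of rows containing a box labeled $j$, and $r_i$ the set of rows containing a box labeled $i$. Since labels are row-distinct, $\#r_j = \beta_j$ and $\#r_i = \beta_i$. A row labeled $j$ that we wish to recolor to $i$ must not already lie in $r_i$; the number of rows in $r_j \setminus r_i$ is at least $\beta_j - \beta_i = t \geq s$. Pick any $s$ such rows and, in each, change the $j$ to an $i$. Row-distinctness is preserved because these rows had no $i$ before; flaggedness is preserved because $i < j$ and the box was legal with $j$; and the column condition is unaffected since we only decreased labels. The resulting filling $T'$ is in ${\sf Tab}(\alpha)$ and has $\wt(T') = \wt(T) - s\,e_j + s\,e_i = \beta'$. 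Applying the ``if'' direction of Theorem~\ref{thm:nonzerorule} then gives $[x^{\beta'}]\kappa_\alpha \neq 0$; since key polynomials have nonnegative coefficients (they are Demazure characters, so the coefficient is in fact a dimension count, or alternatively each application of $\pi_i$ preserves nonnegativity of coefficients), nonzero means positive, giving $[x^{\beta'}]\kappa_\alpha > 0$.

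I expect the only genuinely delicate point to be bookkeeping the row-distinctness: one should recolor the $s$ rows one at a time, but since the chosen rows are distinct and each recoloring affects only its own row, no conflict arises between successive steps, so in fact they can all be done simultaneously. (If one preferred to induct on $s$, a single recoloring of a $j$ to an $i$ in a row of $r_j \setminus r_i$ takes $\beta$ to $(\ldots,\beta_i+1,\ldots,\beta_j-1,\ldots)$, and $r_j \setminus r_i$ is nonempty as long as $\beta_j > \beta_i$, which persists for the first $t$ steps; then iterate $s$ times.) No other ingredient beyond Theorem~\ref{thm:nonzerorule} and the monotonicity of the flag condition is needed.
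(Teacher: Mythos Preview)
Your proposal is correct and follows essentially the same approach as the paper: both use Theorem~\ref{thm:nonzerorule} to obtain $T\in{\sf Tab}(\alpha)$ of content $\beta$, observe that at least $\beta_j-\beta_i=t\geq s$ rows contain a $j$ but no $i$, and replace $j$ by $i$ in $s$ such rows to produce $T'\in{\sf Tab}(\alpha)$ of content $\beta'$. Your discussion of flaggedness, row-distinctness, and positivity of the coefficient is slightly more detailed than the paper's, but the argument is the same.
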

\begin{proof}
By Theorem~\ref{thm:nonzerorule} there exists $T\in {\sf Tab}(\alpha)$ of content $\beta$.
By definition, there are $\beta_j$ distinct rows where $T$ has a label $j$, and there are $\beta_i$
distinct rows where $T$ has a label $i$. Since $\beta_j-\beta_i=t$, there exist $s$ rows where $T$
contains a $j$ but not an $i$. Define $T'$ by replacing $j$ by $i$ in those $s$ rows. Since $i<j$,
we conclude $T'\in {\sf Tab}(\beta')$ and hence (by Theorem~\ref{thm:nonzerorule}), 
$[\beta']\kappa_{\alpha}>0$, as claimed.
\end{proof}

Given $\alpha$, define the set of \emph{Kohnert diagrams} ${\sf Koh}(\alpha)$ iteratively. To start $D(\alpha)\in {\sf Koh}(\alpha)$. If $D\in {\sf Koh}(\alpha)$, consider the
top-most box in any column. Let $D'$ be the result of moving that box left, in the same row,
to the rightmost location that is not occupied (if it exists); this operation is a \emph{Kohnert move}. Now include $D'\in {\sf Koh}(\alpha)$, as well. We emphasize that ${\sf Koh}(\alpha)$ is a finite set (rather than multiset), hence if a diagram $D$
is obtained by two different sequences of Kohnert moves starting from $D(\alpha)$, then $D$ only counts once in ${\sf Koh}(\alpha)$. 

Given $D\in {\sf Koh}(\alpha)$, let 
\[{\sf Kohwt}(D)=\prod_{i=1}^n x_i^{\#\text{boxes of $D$ in column $i$}}.\]

\begin{theorem}[Kohnert's rule \cite{Kohnert}]\label{thm:Kohnert}
$\kappa_{\alpha}=\sum_{D\in {\sf Koh}(\alpha)} {\sf Kohwt}(D)$.
\end{theorem}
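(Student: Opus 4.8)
The plan is to prove Kohnert's rule by induction on the number of \emph{coinversions} $\mathrm{coinv}(\alpha):=\#\{p<q:\alpha_p<\alpha_{q}\}$, which is the statistic driving the recursive definition of $\kappa_\alpha$. Write $K_\alpha:=\sum_{D\in {\sf Koh}(\alpha)}{\sf Kohwt}(D)$ for the Kohnert generating function; the goal is $K_\alpha=\kappa_\alpha$. In the base case $\mathrm{coinv}(\alpha)=0$, $\alpha$ is weakly decreasing, so the columns of $D(\alpha)$ have weakly decreasing heights and every row of $D(\alpha)$ is an initial segment of columns; consequently the top box of column $j$ sits in a row in which columns $1,\dots,j$ are all occupied, so no Kohnert move is available, ${\sf Koh}(\alpha)=\{D(\alpha)\}$, and $K_\alpha=\prod_i x_i^{\alpha_i}=\kappa_\alpha$ by definition.

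For the inductive step, assume $\alpha$ is not weakly decreasing and fix an ascent $\alpha_i<\alpha_{i+1}$. Let $\beta=s_i\alpha$, so that $\beta_i>\beta_{i+1}$ and $\mathrm{coinv}(\beta)=\mathrm{coinv}(\alpha)-1$. By the inductive hypothesis $K_\beta=\kappa_\beta$, and by the defining recursion $\kappa_\alpha=\pi_i\kappa_\beta$, so it suffices to prove the purely combinatorial identity $\pi_i K_\beta=K_\alpha$. Here I would use the explicit action of $\pi_i$ on monomials: for $x^\gamma$ with $\gamma_i=a$, $\gamma_{i+1}=b$, one has $\pi_i(x^\gamma)=\big(x_i^a x_{i+1}^b+x_i^{a-1}x_{i+1}^{b+1}+\cdots+x_i^b x_{i+1}^a\big)\prod_{t\neq i,i+1}x_t^{\gamma_t}$ when $a\ge b$, and $\pi_i(x^\gamma)=-\big(x_i^{b-1}x_{i+1}^{a+1}+\cdots+x_i^{a+1}x_{i+1}^{b-1}\big)\prod_{t\neq i,i+1}x_t^{\gamma_t}$ when $a<b$ (an empty, hence zero, sum when $b=a+1$). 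Thus $\pi_i$ replaces each monomial by a signed ``interval'' of monomials obtained by trading powers between $x_i$ and $x_{i+1}$, the negative terms cancelling against the images of their partners under the involution $\gamma\mapsto(\dots,\gamma_{i+1}-1,\gamma_i+1,\dots)$.

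The heart of the proof is therefore to organize ${\sf Koh}(\beta)$ into groups (``$\pi_i$-strings'') on which $\pi_i$ acts transparently, and to match the surviving terms with ${\sf Koh}(\alpha)$. Concretely I would: (i) show that the Kohnert moves exchanging boxes between columns $i$ and $i+1$ generate, on each fiber obtained by fixing all column heights other than $i,i+1$, a chain of diagrams whose weights form exactly such an interval; (ii) exhibit a sign-reversing involution on the diagrams of ${\sf Koh}(\beta)$ whose column $i+1$ is taller than column $i$, realizing the cancellation predicted by the $a<b$ formula; and (iii) show that Kohnert moves \emph{not} involving columns $i,i+1$ are compatible with this bookkeeping, so the $\pi_i$-string decomposition is stable under the rest of the Kohnert dynamics and the surviving diagrams are precisely ${\sf Koh}(s_i\beta)={\sf Koh}(\alpha)$ with weights matching. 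Step (iii) is the main obstacle: Kohnert moves interact nonlocally---a box moved in a distant column can later enable or obstruct a move in column $i$ or $i+1$---so proving that the grouping is well defined and exhaustive requires a careful confluence/commutation analysis of Kohnert moves, and this is exactly the delicate point that must be handled with care.

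As an alternative that splits this difficulty, one can route through an intermediate labelled model: first biject ${\sf Koh}(\alpha)$ with row-distinct flagged fillings of Kohnert diagrams (compatible with the ${\sf Tab}(\alpha)$ model of Theorem~\ref{thm:nonzerorule}), then biject those with semistandard key tableaux, or with $\pi_i$-compatible sequences, whose generating function is visibly $\kappa_\alpha$; each bijection is elementary but still must be checked to intertwine the relevant operators. In any case, Theorem~\ref{thm:nonzerorule} already pins down the support of $\kappa_\alpha$ (which monomials occur), so all remaining content concerns multiplicities, and that is precisely what the $\pi_i$-string analysis is designed to supply.
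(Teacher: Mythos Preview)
The paper does not give its own proof of Theorem~\ref{thm:Kohnert}; it is quoted as a known result from Kohnert's thesis \cite{Kohnert} and used as a black box. So there is nothing in the paper to compare your argument against.

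That said, your outline is the natural approach---prove $\pi_i K_{s_i\alpha}=K_\alpha$ and induct---but as you correctly flag, step~(iii) is not a technicality to be ``handled with care''; it is the entire content of the theorem, and your proposal does not resolve it. The difficulty is that ${\sf Koh}(\alpha)$ is defined as a \emph{set} of diagrams reachable from $D(\alpha)$, not as a set closed under some local moves, so there is no obvious reason the fiber over a fixed profile of column heights (other than $i,i+1$) should be a single interval in the $(i,i+1)$-height parameter, nor that Kohnert moves away from columns $i,i+1$ commute with your grouping. Indeed, Kohnert's original argument along these lines was regarded as incomplete for years, and later complete proofs (e.g.\ via Demazure crystals or via bijections with semistandard key tableaux) required substantial new machinery precisely to control this nonlocality. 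Your alternative route through labelled models is closer to how modern proofs actually proceed, but ``each bijection is elementary but still must be checked'' understates the work: constructing a weight-preserving bijection from ${\sf Koh}(\alpha)$ to a model with a manifest Demazure character is itself the theorem. As written, the proposal is a correct diagnosis of what needs to be shown, not a proof.
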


Define \emph{dominance order} on $\alpha,\beta\in {\sf Comp}_n$ such that $|\alpha|:=\sum_{i=1}^n \alpha_i=\sum_{i=1}^n\beta_i :=|\beta|$ by $\alpha\leq_{\sf dom}\beta$ if for every $1\leq t\leq n$ we have $\sum_{i=1}^t \alpha_i\leq \sum_{i=1}^t \beta_i$.

\begin{corollary}
\label{cor:weightsdom}
Let $\alpha,\beta\in {\sf Comp}_n$ with $[x^{\beta}]\kappa_{\alpha}>0$. Then $\beta \geq_{\sf dom} \alpha$.
\end{corollary}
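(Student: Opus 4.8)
The plan is to combine Kohnert's rule (Theorem~\ref{thm:Kohnert}) with the elementary observation that a single Kohnert move cannot decrease any partial sum $\sum_{i=1}^t (\cdot)_i$ of the column-count vector. Concretely, I would argue as follows. First recall that $D(\alpha)\in{\sf Koh}(\alpha)$, and ${\sf Kohwt}(D(\alpha))=\prod_i x_i^{\alpha_i}=x^{\alpha}$, since column $i$ of $D(\alpha)$ has exactly $\alpha_i$ boxes. By Theorem~\ref{thm:Kohnert}, if $[x^{\beta}]\kappa_{\alpha}>0$ then there is some $D\in{\sf Koh}(\alpha)$ with ${\sf Kohwt}(D)=x^{\beta}$, i.e.\ $\beta$ is the column-count vector of $D$. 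So it suffices to show that for \emph{every} $D\in{\sf Koh}(\alpha)$, writing $\col(D)=(c_1,\dots,c_n)$ for its column-count vector, we have $\col(D)\geq_{\sf dom}\alpha$; note $|\col(D)|=|\alpha|$ automatically, since Kohnert moves preserve the total number of boxes.

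The key step is to show $\col(D)\geq_{\sf dom}\col(D')$ whenever $D'$ is obtained from $D$ by one Kohnert move. A Kohnert move takes the top box of some column, say column $j$, and slides it leftward within its row to the rightmost empty position, say column $i$ with $i<j$. The effect on column counts is exactly $c_i\mapsto c_i+1$, $c_j\mapsto c_j-1$, all other entries fixed. Then for any threshold $t$: if $t<i$ or $t\geq j$, the partial sum $\sum_{m=1}^t c_m$ is unchanged; if $i\leq t<j$, it increases by $1$. Hence every partial sum of $\col(D)$ is $\geq$ the corresponding partial sum of $\col(D')$, i.e.\ $\col(D)\geq_{\sf dom}\col(D')$. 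Since every $D\in{\sf Koh}(\alpha)$ is reached from $D(\alpha)$ by a finite sequence of Kohnert moves and $\geq_{\sf dom}$ is transitive, we get $\col(D)\geq_{\sf dom}\col(D(\alpha))=\alpha$, completing the proof.

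I do not expect any serious obstacle here; the only point requiring a word of care is that the box being moved is the \emph{top} box of column $j$, so after the move column $j$ genuinely has one fewer box (it cannot, e.g., leave a gap that is later refilled in a way that changes the count — column counts are insensitive to the vertical positions of boxes), and the destination is strictly to the left ($i<j$), which is what makes the dominance inequality go the right way. One could alternatively deduce this from Corollary~\ref{cor:nonzeroconsequence} together with the fact that dominance order is generated by moves of the form $\beta\mapsto(\dots,\beta_i+1,\dots,\beta_j-1,\dots)$ with $i<j$, but the direct Kohnert argument is cleaner and self-contained.
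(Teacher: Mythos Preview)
Your approach via Kohnert's rule is exactly what the paper intends; the corollary is placed immediately after Theorem~\ref{thm:Kohnert} with no separate proof, so the implicit argument is precisely the one you give.

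There is, however, a directional slip in your intermediate step. If $D'$ is obtained from $D$ by a Kohnert move sending a box from column $j$ to column $i<j$, then the partial sums \emph{increase} in passing from $D$ to $D'$, so the correct inequality is $\col(D')\geq_{\sf dom}\col(D)$, not $\col(D)\geq_{\sf dom}\col(D')$ as you wrote. Chaining along $D(\alpha)\to D_1\to\cdots\to D$ then yields $\col(D)\geq_{\sf dom}\col(D(\alpha))=\alpha$, which is your (correct) final conclusion. As literally written, though, your claimed inequality would give $\alpha\geq_{\sf dom}\beta$ by transitivity, the opposite of what you want. Fix the direction of that one sentence and the proof is clean.
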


\subsection{Split-symmetry} 
We recall some notions from \cite[Section~4]{Hodges.Yong1}. Suppose 
\[d_0:=0<d_1<d_2<\ldots<d_k<d_{k+1}:=n\] 
and $D=\{d_1,\ldots,d_k\}$. Let $\Pi_D$ be the subring of ${\sf Pol}$ 
consisting of the polynomials
that are separately symmetric in $X_i:=\{x_{d_{i-1}+1},\ldots,x_{d_i}\}$ for
$1\leq i\leq k+1$. If $f\in \Pi_D$, $f$ is \emph{$D$-split-symmetric}.

The ring $\Pi_D$ has a basis of \emph{$D$-Schur polynomials} 
\[s_{\lambda^1,\ldots,\lambda^k}:=s_{\lambda^1}(X_1)s_{\lambda^2}(X_2)\cdots s_{\lambda^k}(X_k),\]
where 
\[(\lambda^1,\ldots,\lambda^k)\in {\sf Par}_D:={\sf Par}_{d_1-d_0}\times \cdots \times{\sf Par}_{d_{k+1}-d_k},\]
and ${\sf Par}_t$ is the set of partitions with at most $t$ nonzero-parts. See \cite[Definition~4.3, Corollary~4.4]{Hodges.Yong1}. Thus, for any
$f\in \Pi_D$ there is a unique expression
\[f=\sum_{(\lambda^1,\ldots,\lambda^k)\in {\sf Par}_D} c_{\lambda^1,\ldots,\lambda^k} s_{\lambda^1,\ldots,\lambda^k}.\]
If $c_{\lambda^1,\ldots,\lambda^k}\in \{0,1\}$ for all $(\lambda^1,\ldots,\lambda^k)\in {\sf Par}_D$,
$f$ is called \emph{$D$-multiplicity-free}.

This fact allows us to study Levi-sphericality using key polynomials:

\begin{theorem}[{\cite[Theorem~4.13]{Hodges.Yong1}}]
\label{thm:fundamentalRelationship}
Let $\lambda \in {\sf Par}_n$, and $w \in {\mathfrak S}_n$. Suppose $I\subseteq J(w)$ and $D=[n-1]-I$.
$X_w$ is $L_I$-spherical if and only if  $\kappa_{w \lambda}$ is $D$-multiplicity-free
for all $\lambda \in {\sf Par}_n$.
\end{theorem}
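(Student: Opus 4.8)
The plan is to translate both sides of the claimed equivalence into statements about the Demazure module $V_w(\lambda)$ attached to $w$ and $\lambda$, and then to match ``$X_w$ has a dense Borel orbit'' with ``$V_w(\lambda)$ is multiplicity-free as an $L_I$-module'' via the Vinberg--Kimelfeld criterion for spherical varieties. Throughout one uses that Schubert varieties are normal, that $\operatorname{Pic}(X_w)$ is generated by the restrictions $\mathcal{L}_\lambda|_{X_w}$ of the line bundles on $GL_n/B$, that $\mathcal{L}_\lambda|_{X_w}$ is globally generated for $\lambda$ dominant and ample when $\lambda$ is regular dominant, and that tensoring by a power of the determinant (i.e. translating $\lambda$ by a multiple of $(1,\dots,1)$) changes neither sphericality nor multiplicity-freeness, so that it suffices to let $\lambda$ range over ${\sf Par}_n$.

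First I would set up the geometry--representation-theory dictionary. Applying the Vinberg--Kimelfeld theorem with $G=L_I$ and with $\mathcal{L}=\mathcal{L}_\lambda|_{X_w}$ for a fixed regular dominant $\lambda$ shows that $X_w$ is $L_I$-spherical if and only if $\bigoplus_{m\ge 0} H^0(X_w,\mathcal{L}_{m\lambda})$ is a multiplicity-free $L_I$-module. To promote this to the statement ``$H^0(X_w,\mathcal{L}_\mu)$ is a multiplicity-free $L_I$-module for every dominant $\mu$,'' I would invoke the standard fact that on an $L_I$-spherical variety every $L_I$-linearized globally generated line bundle has multiplicity-free space of global sections: the bundle defines an $L_I$-equivariant morphism onto a projective $L_I$-variety that, being a homomorphic image of a spherical variety, is again spherical with ample $\mathcal{O}(1)$, and the pushed-forward structure sheaf computation (Stein factorization plus normality) identifies its sections with $H^0(X_w,\mathcal{L}_\mu)$. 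I expect \emph{this} step---carefully handling the singular weights $\mu$, the possibly non-ample restrictions $\mathcal{L}_\mu|_{X_w}$, and the normality/connected-fiber hypotheses---to be the main technical obstacle; the converse direction of Vinberg--Kimelfeld is comparatively routine.

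Next I would identify the module and its character. Since $I\subseteq J(w)$, the Demazure module $V_w(\lambda)=H^0(X_w,\mathcal{L}_\lambda)^*$ is stable under the parabolic $P_I\supseteq B$, hence is a module for the Levi $L_I\cong GL_{d_1-d_0}\times\cdots\times GL_{d_{k+1}-d_k}$. By the Demazure character formula, together with the normalization $\kappa_{w\lambda}=\pi_w\kappa_\lambda$ of Section~\ref{sec:4}, the formal character of $V_w(\lambda)$ with respect to the maximal torus of $GL_n$ is $\kappa_{w\lambda}$ (up to the sign/inversion inherent in passing to a dual, which is immaterial for multiplicity questions); in particular this exhibits $\kappa_{w\lambda}\in\Pi_D$, recovering its $D$-split-symmetry from the $L_I$-action.

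Finally I would match multiplicities and conclude. The irreducible polynomial $L_I$-modules are the outer tensor products $S_{\lambda^1}(X_1)\boxtimes\cdots\boxtimes S_{\lambda^{k+1}}(X_{k+1})$ with $(\lambda^1,\dots,\lambda^{k+1})\in{\sf Par}_D$, whose characters are exactly the $D$-Schur polynomials $s_{\lambda^1,\dots,\lambda^{k+1}}$, a basis of $\Pi_D$. Because a $\prod_i GL_{m_i}$-module is determined up to isomorphism by its torus character, the multiplicity of $S_{\lambda^1}\boxtimes\cdots\boxtimes S_{\lambda^{k+1}}$ in $V_w(\lambda)$ equals the coefficient $c_{\lambda^1,\dots,\lambda^{k+1}}$ in the $D$-Schur expansion $\kappa_{w\lambda}=\sum c_{\lambda^1,\dots,\lambda^{k+1}}\,s_{\lambda^1,\dots,\lambda^{k+1}}$. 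Hence $H^0(X_w,\mathcal{L}_\lambda)$ is a multiplicity-free $L_I$-module precisely when $\kappa_{w\lambda}$ is $D$-multiplicity-free; combining this with the first step, ranging $\lambda$ over ${\sf Par}_n$, gives that $X_w$ is $L_I$-spherical if and only if $\kappa_{w\lambda}$ is $D$-multiplicity-free for all $\lambda\in{\sf Par}_n$.
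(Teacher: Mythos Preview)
This theorem is not proved in the present paper at all: it is quoted verbatim as \cite[Theorem~4.13]{Hodges.Yong1} and used as a black box to reduce Theorem~\ref{thm:main} to Theorem~\ref{thm:maingoal}. There is therefore no ``paper's own proof'' to compare against here.

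That said, your outline is essentially the argument one finds in \cite{Hodges.Yong1}: normality of Schubert varieties plus the Vinberg--Kimelfeld/Brion criterion reduces $L_I$-sphericality to multiplicity-freeness of $H^0(X_w,\mathcal{L}_\lambda)$ as an $L_I$-module for all dominant $\lambda$; the Borel--Weil/Demazure identification gives this module the torus character $\kappa_{w\lambda}$; and since the irreducible polynomial $L_I$-characters are exactly the $D$-Schur polynomials, $L_I$-multiplicity-freeness is the same as $D$-multiplicity-freeness of $\kappa_{w\lambda}$. You have correctly identified the one genuinely delicate point, namely passing from a single ample $\mathcal{L}_\lambda$ (where Vinberg--Kimelfeld applies directly) to all globally generated $\mathcal{L}_\mu$; in \cite{Hodges.Yong1} this is handled, as you suggest, via the fact that equivariant images of spherical varieties are spherical together with projective normality of Schubert varieties. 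Your sketch is sound as a proof plan for the cited result.
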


In view of Theorem~\ref{thm:fundamentalRelationship}, the following is equivalent to Theorem~\ref{thm:main}.

\begin{theorem}
\label{thm:maingoal}
Let $D=[n-1]-I$. $w$ is $I$-spherical if and only if
$\kappa_{w\lambda}$ is $D$-multiplicity-free for all $\lambda\in {\sf Par}_n$.
\end{theorem}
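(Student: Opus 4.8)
The plan is to prove Theorem~\ref{thm:maingoal} by separating the ``$\Rightarrow$'' and ``$\Leftarrow$'' directions, in each case translating the statement about $D$-multiplicity-freeness of $\kappa_{w\lambda}$ into combinatorics of the poset ${\mathcal S}_{I,\gamma}$ introduced in Section~\ref{sec:3}. The first reduction, to be carried out in Section~\ref{sec:4}, uses Theorem~\ref{thm:fundamentalRelationship}: since $\kappa_{w\lambda}$ is a polynomial whose monomials are governed by Theorem~\ref{thm:nonzerorule}, and since $D$-split-symmetric polynomials expand uniquely in $D$-Schur polynomials, the coefficient $c_{\lambda^1,\ldots,\lambda^k}$ of a $D$-Schur function in $\kappa_{w\lambda}$ can be extracted by an alternating sum over monomials $x^\beta$ of $\kappa_{w\lambda}$ whose ``dominant rearrangement'' in each block equals the given partition tuple. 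Concretely, I would fix a split-partition $\gamma=(\lambda^1,\dots,\lambda^k)$, identify it with a composition as in Section~\ref{sec:3}, form the poset ${\mathcal S}_{I,\gamma}$, and show that the $\gamma$-coefficient of $\kappa_{w\lambda}$ in the $D$-Schur basis equals $\sum_{\beta\in{\mathcal S}_{I,\gamma}}{\sf sgn}(\beta)\,[x^\beta]\kappa_{w\lambda}$, using Lemma~\ref{lemma:maintrick} (the row-swap sign rule for generalized Schur polynomials) to see that only the ${\mathcal S}_{I,\gamma}$-orbit of $\gamma$ contributes and contributes with the stated sign.

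For the ``$\Rightarrow$'' direction, assuming $w$ is $I$-spherical, I want to show every such alternating sum is $0$ or $1$. Here I would restrict attention to the subposet ${\mathcal P}_{c\lambda,\gamma}\subseteq{\mathcal S}_{I,\gamma}$ consisting of those $\beta$ with $[x^\beta]\kappa_{w\lambda}>0$; by Corollary~\ref{cor:nonzeroconsequence} this set is closed under the moves $t_{ij}$ that decrease Bruhat rank (so it is a lower set, or at least a well-behaved subposet), and using the Coxeter-element description of $w$ from Definition~\ref{def:spherical-standard-coxeter} together with Proposition~\ref{lem:witness-start-w0}, I would identify ${\mathcal P}_{c\lambda,\gamma}$ with a Bruhat interval in the Young subgroup ${\mathfrak S}_{d_1-d_0}\times\cdots\times{\mathfrak S}_{d_{k+1}-d_k}$ (the Proposition~\ref{prop:interval} referenced in the outline). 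The alternating sum $\sum_\beta{\sf sgn}(\beta)$ over this interval is then (up to sign) a value of the Möbius function of Bruhat order, which by Deodhar's result is $\pm1$ when the interval is the whole poset and $0$ otherwise — but to run this argument cleanly one needs the ``Diamond property'' (Theorem~\ref{thm:diamond}), namely that ${\mathcal P}_{c\lambda,\gamma}$ is a nice enough subposet that its own Möbius-type alternating sum coincides with the Bruhat-interval one; I would invoke Theorem~\ref{thm:diamond} as a black box here and defer its proof to Section~\ref{sec:6}.

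For the ``$\Leftarrow$'' direction (Section~\ref{sec:7}), I would argue contrapositively: if $w$ is not $I$-spherical, I must exhibit a partition $\lambda$ and a split-partition $\gamma$ for which the $\gamma$-coefficient of $\kappa_{w\lambda}$ in the $D$-Schur basis is at least $2$. By the equivalence of Definitions~\ref{def:spherical-standard-coxeter} and~\ref{def:original-spherical} (Theorem~\ref{thm:equivalent}), failure of $I$-sphericality means every reduced word of $w$ either repeats some $s_{d_i}$ or overuses some block $A_t$, i.e. violates (S.1) or (S.2). In the first case I would use a repeated $s_{d_i}$ to produce, via Kohnert moves (Theorem~\ref{thm:Kohnert}) or the tableau rule, two distinct monomials straddling the block boundary $d_i$ that collapse to the same $D$-Schur term with the same sign, forcing multiplicity $\geq2$; in the second case I would use the counting bound in (S.2) to locate a $321$ or $3412$ pattern (via Proposition~\ref{prop:Sn-coxeter-pattern}) inside a single block and build the doubled contribution there. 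The main obstacle I anticipate is the ``$\Rightarrow$'' direction's reliance on the Diamond property: proving that the nonzero-monomial support ${\mathcal P}_{c\lambda,\gamma}$ really is poset-isomorphic to a Bruhat interval — rather than merely a lower set whose rank-generating alternating sum happens to vanish — requires controlling precisely which $t_{ij}$-moves preserve positivity of $[x^\beta]\kappa_{w\lambda}$, and this is exactly where the Coxeter-element structure of $w$ (as opposed to general $w$) must be exploited; getting the poset-isomorphism statement right, and in particular checking the diamond/atom conditions that make the Möbius function computation go through, will be the technical heart of the argument.
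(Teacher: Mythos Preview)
Your overall architecture for ``$\Rightarrow$'' matches the paper's, but there is a genuine gap. You write the $\gamma$-coefficient as $\sum_{\beta\in\mathcal{S}_{I,\gamma}}{\sf sgn}(\beta)\,[x^\beta]\kappa_{w\lambda}$ and then restrict to the support. The paper instead writes $\kappa_{w\lambda}=\pi_{w_0(I)}\kappa_{c\lambda}$ and extracts the coefficient as $\sum_{\beta\in\mathcal{P}_{c\lambda,\gamma}}{\sf sgn}(\beta)\,[x^\beta]\kappa_{c\lambda}$, i.e.\ with $\kappa_{c\lambda}$, not $\kappa_{w\lambda}$. This matters: for the M\"obius-function identity (\ref{eqn:deodhar}) to apply, the sum must be an unweighted sign sum over the interval, so you need $[x^\beta]\kappa_{c\lambda}\in\{0,1\}$ for every $\beta$. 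That fact is not automatic; it holds precisely because $c$ is a standard Coxeter element, so $X_c$ is toric and $\kappa_{c\lambda}$ is $[n{-}1]$-multiplicity-free (this is invoked in the proof of Lemma~\ref{lem:clampattern}). Your formulation with $\kappa_{w\lambda}$ gives the same number, but the monomial coefficients of $\kappa_{w\lambda}$ need not be $\leq 1$, so you cannot directly reduce to a M\"obius sum without first passing to $\kappa_{c\lambda}$ and invoking this $0/1$ property. This is the step you are missing.

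For ``$\Leftarrow$'' your route diverges from the paper's and, as written, is unlikely to work. You propose to case-split on which of (S.1) or (S.2) is violated by reduced words of $w$; but these are conditions on individual reduced words, and different words of the same $w$ can fail different conditions, so there is no canonical ``case'' attached to $w$. The paper instead uses Definition~\ref{def:spherical-standard-coxeter} directly: non-sphericality means $u=w_0(I)w$ is not a Coxeter element of any parabolic, hence by Proposition~\ref{prop:Sn-coxeter-pattern} $u$ contains a $321$ or $3412$ pattern (in $u$, not ``inside a single block''). For each pattern the paper hand-builds a specific $\lambda$ with parts in $\{0,1,2\}$ (resp.\ $\{0,1,2,3\}$) and a specific $\gamma\in{\sf Par}_D$, then counts Kohnert diagrams explicitly to show that the signed sum over $\mathcal{P}_{u\lambda,\gamma}$ is at least $2$. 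Your sketch of ``two distinct monomials straddling the block boundary $d_i$'' does not obviously produce a Schur coefficient $\geq 2$, since contributions at different Bruhat ranks come with opposite signs; what makes the paper's construction work is that $\mathcal{P}_{u\lambda,\gamma}$ is forced to have height $\leq 1$ (resp.\ height $0$), so no cancellation can occur.
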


Our goal is therefore to prove Theorem~\ref{thm:maingoal}. To do this, we will use the lemma below.

\begin{lemma}
\label{lemma:thetrick}
Let $\beta\in {\sf Comp}_n$, then 
\[\pi_{w_0(I)}(x_1^{\beta_1}\cdots x_n^{\beta_n})\in \{0,
{\sf sgn}(\beta) s_{\alpha^1,\ldots,\alpha^k}\},\] 
where $(\alpha^1,\ldots,\alpha^k)\in {\sf Par}_D$.
\end{lemma}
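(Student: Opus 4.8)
The plan is to compute $\pi_{w_0(I)}(x^\beta)$ by using a reduced word for $w_0(I)$ that is compatible with the block decomposition, reducing the whole computation to the classical (single-block) case. Since $w_0(I)=w_0^{(1)}\cdots w_0^{(k+1)}$ where $w_0^{(i)}$ is the longest element of the symmetric group on the $i$-th block $X_i=\{x_{d_{i-1}+1},\ldots,x_{d_i}\}$, and the operators $\pi_j$ for $j$ in distinct blocks commute, we have $\pi_{w_0(I)}=\pi_{w_0^{(1)}}\cdots\pi_{w_0^{(k+1)}}$, with the factors acting on disjoint sets of variables. Hence it suffices to prove the one-block statement: for a single block of size $t$ with variables $x_1,\ldots,x_t$ and $w_0$ the longest element of $\mathfrak S_t$, $\pi_{w_0}(x_1^{\beta_1}\cdots x_t^{\beta_t})\in\{0,\ \pm s_\alpha(x_1,\ldots,x_t)\}$ for some partition $\alpha$, with the sign matching ${\sf sgn}(\beta)$. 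The general statement then follows by multiplying the block contributions, and the sign multiplies correctly because ${\sf sgn}$ is defined via the rank in ${\mathcal S}_{I,\gamma}\cong \prod_i\mathfrak S_{d_i-d_{i-1}}$ (Proposition~\ref{prop:posetsiso}), which is additive over blocks.

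For the one-block case, the key identity is that $\pi_{w_0}$ applied to a monomial is (up to the Weyl denominator) the antisymmetrizer. Concretely, it is standard that for the longest element $w_0\in\mathfrak S_t$,
\[
\pi_{w_0}(f)=\frac{\sum_{u\in\mathfrak S_t}(-1)^{\ell(u)}\, u\!\cdot\!\bigl(x_1^{t-1}x_2^{t-2}\cdots x_t^{0}\cdot f\bigr)}{\prod_{i<j}(x_i-x_j)}\cdot\frac{1}{\prod_{i<j}(x_i-x_j)}
\]
— more precisely $\prod_{i<j}(x_i-x_j)\cdot\pi_{w_0}(f)$ equals the full antisymmetrization of $\prod_{i<j}(x_i-x_j)\cdot f$ divided appropriately; I will invoke the clean form: $\pi_{w_0}\bigl(x^{\beta}\bigr)=a_{\beta+\delta}/a_{\delta}$ where $\delta=(t-1,t-2,\ldots,0)$ and $a_{\mu}=\det(x_j^{\mu_i})$. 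This is exactly the generalized Schur polynomial $s_\beta$ of (\ref{eqn:schurdef}). Now apply Lemma~\ref{lemma:maintrick}: either two entries of $\beta+\delta$ coincide, in which case $a_{\beta+\delta}=0$ and the output is $0$; or $\beta+\delta$ has distinct entries, and sorting it into decreasing order $\alpha+\delta$ (with $\alpha$ a partition) costs the sign $(-1)^{\ell(v)}$ where $v$ is the sorting permutation, giving $\pi_{w_0}(x^\beta)=(-1)^{\ell(v)}s_\alpha$. It remains to check $(-1)^{\ell(v)}={\sf sgn}(\beta)$: this is where Lemma~\ref{lemma:posetBruhatorder}(ii) and the definition of ${\sf sgn}$ via $\theta$ enter — each $t_i$ that decreases the rank corresponds under $\Phi$ to an adjacent transposition flipping an inversion of $\beta+\delta$, so ${\sf sgn}(\beta)=(-1)^{\#\text{inversions of }\beta+\delta}=(-1)^{\ell(v)}$, with the convention that $\beta\in{\mathcal S}_{I,\gamma}$ matches $\gamma$ having the sorted (weakly increasing in the ${\sf Comp}$-sense) shape.

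The main obstacle I anticipate is bookkeeping the sign: matching the combinatorial ${\sf sgn}(\beta)$ coming from the rank function of ${\mathcal S}_{I,\gamma}$ with the determinantal sign $(-1)^{\ell(v)}$ from the row-swaps in Lemma~\ref{lemma:maintrick}. The cleanest route is to verify the identity on covering relations: if $t_{ij}\beta<_{\text{Bruhat}}\beta$ (equivalently by Lemma~\ref{lemma:posetBruhatorder}(i), $\beta_i\le\beta_j-(j-i)$, i.e. $(\beta+\delta)_i<(\beta+\delta)_j$ when $i,j$ are adjacent — note $\delta$ here decreasing matches the $t_{ij}$ shift of $j-i$), then passing from $\beta$ to $t_{ij}\beta$ flips exactly one inversion of $\beta+\delta$, hence toggles both ${\sf sgn}$ (by Lemma~\ref{lemma:posetBruhatorder}(ii)) and the determinantal sign; since both signs agree at $\beta=\gamma$ (where $s_\gamma$ appears with coefficient $+1={\sf sgn}(\gamma)$, as $\gamma$ is already a partition in each block, so no row-swaps are needed), they agree everywhere. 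A secondary point to be careful about is that different monomials $x^\beta$ with $\beta$ in the same $t_{ij}$-orbit can map to the \emph{same} generalized Schur polynomial up to sign, but that is exactly the content of the statement and needs no extra argument beyond the determinant computation above; and the case distinction ``$0$ versus $\pm s_\alpha$'' is automatic since $a_{\beta+\delta}$ vanishes precisely when $\beta+\delta$ has a repeat.
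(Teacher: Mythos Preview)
Your approach is essentially identical to the paper's: reduce to a single block via the factorization $\pi_{w_0(I)}=\pi_{w_0^{(1)}}\cdots\pi_{w_0^{(k+1)}}$, use the identity $\pi_{w_0}(x^\beta)=s_\beta=a_{\beta+\delta}/a_\delta$, and then invoke Lemma~\ref{lemma:maintrick} to sort $\beta$ into a partition, picking up the sign ${\sf sgn}(\beta)$ or hitting zero. Your displayed antisymmetrizer formula is garbled (it has the Vandermonde in the denominator twice), but you immediately abandon it for the correct clean form, which is exactly what the paper cites from \cite{Lascoux:polynomials}; and your explicit verification of the sign via covering relations is just an unpacking of what the paper compresses into ``Lemma~\ref{lemma:maintrick} combined with Definition~\ref{def:rank}.''
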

\begin{proof}
First, consider the special case that $w_0(I)=w_0$. By \cite[Proposition~1.5.1]{Lascoux:polynomials},
\[\pi_{w_0}(f)=\frac{1}{\Delta_n}x^{\rho}\sum_{w\in {\mathfrak S}_n}(-1)^{\ell(w)}w(f).\]
Hence by (\ref{eqn:schurdef}), $\pi_{w_0}(x^{\beta})=s_{\beta}$. Rearrange $\beta$ to be weakly decreasing by application of the operators $t_1,t_2,\ldots$ and
swapping two adjacent entries where the left entry is strictly smaller than the other one. This can always be achieved unless during this process one arrives at a composition $\kappa$ where $\kappa_{i+1}=\kappa_i+1$.
In that case, Lemma~\ref{lemma:maintrick} asserts $s_{\beta}=0$.
Otherwise we arrive at $\alpha\in {\sf Par}_n$
and Lemma~\ref{lemma:maintrick} combined with Definition~\ref{def:rank} shows $s_{\beta}={\sf sgn}(\beta)s_{\alpha}$.

In the general case, $w_{0}(I)$ is by definition the long element
of the Young subgroup ${\mathfrak S}_{d_1-d_0}\times \cdots \times {\mathfrak S}_{d_{k+1}-d_k}$
of ${\mathfrak S}_n$. Hence $w_0(I)=w_0^{(1)} w_0^{(2)}\ldots, w_0^{(k+1)}$ where
$w_0^{(i)}$ is the long element of ${\mathfrak S}_{d_{i}-d_{i-1}}=$ \ the parabolic
subgroup of ${\mathfrak S}_n$ generated by $s_{d_{i-1}+1},s_{d_{i-1}+2},\ldots, s_{d_i-1}$.
Hence, it follows that 
\begin{equation}
\label{eqn:April7aaa}
\pi_{w_0(I)}=\pi_{w_0^{(1)}}\pi_{w_0^{(2)}}\cdots \pi_{w_0^{(k+1)}}.
\end{equation}
and the factors commute. Thus, the general case follows from (\ref{eqn:April7aaa}) and
the special case.
\end{proof}

\section{The subposet ${\mathcal P}_{u\lambda,\gamma}$ of ${\mathcal S}_{I,\gamma}$  and the Proof of Theorem~\ref{thm:maingoal} ($\Rightarrow$)}\label{sec:5}

\begin{lemma}\label{lemma:containsall}
${\mathcal S}_{I,\gamma}$ (as a set) contains all $\beta\in {\sf Comp}_n$ such that
$\pi_{w_0(I)}x^{\beta}=\pm s_{\gamma}$.
\end{lemma}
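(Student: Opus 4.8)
The plan is to unwind the definitions of the three posets and use Lemma~\ref{lemma:thetrick} to pin down exactly which $\beta$ can satisfy $\pi_{w_0(I)}x^\beta = \pm s_\gamma$. By Lemma~\ref{lemma:thetrick}, if $\pi_{w_0(I)}x^\beta$ is nonzero then it equals ${\sf sgn}(\beta)\, s_{\alpha^1,\ldots,\alpha^k}$ for a split-partition $(\alpha^1,\ldots,\alpha^k)\in{\sf Par}_D$, and the proof of that lemma tells us \emph{how} $\alpha$ arises: within each block one sorts the restriction of $\beta$ into weakly decreasing order using the adjacent swaps $t_j$ (equivalently the row-swap property for generalized Schur polynomials), never encountering a ``$\kappa_{i+1}=\kappa_i+1$'' obstruction (else the result would be $0$). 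So the hypothesis $\pi_{w_0(I)}x^\beta = \pm s_\gamma$ forces the block-wise sorted form of $\beta$ to be exactly $\gamma$: in each block $i$, the multiset $\{\beta_j - (\text{position offset}): j \in \text{block } i\}$, rearranged, must equal the partition $\gamma^i$ up to the shift by $\delta$. This is precisely the statement that $\beta$ lies in the orbit of $\gamma$ under the $t_{ij}$'s with $i,j$ in the same block — equivalently, $\beta + \Delta$ is obtained from $\gamma + \Delta$ by permuting the entries within each block.

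The key steps, in order: (1) Apply Lemma~\ref{lemma:thetrick} to conclude that $\pi_{w_0(I)}x^\beta = \pm s_\gamma \neq 0$ forces, block by block, the sorted-decreasing rearrangement of $\beta$ restricted to block $i$ to coincide with $\gamma$ restricted to block $i$. Here one uses that the $D$-Schur polynomials $s_{\lambda^1,\ldots,\lambda^k}$ are linearly independent, so $s_{\alpha^1,\ldots,\alpha^k} = s_{\gamma^1,\ldots,\gamma^k}$ implies $\alpha^i = \gamma^i$ for all $i$. (2) Translate ``same sorted form within each block'' into: $\tilde\beta := \beta + \Delta$ has the same entries as $\gamma + \Delta$ within each block, i.e. $\tilde\beta \in \tilde{\mathcal S}_{I,\gamma}$, since $\tilde{\mathcal S}_{I,\gamma}$ is by definition the Young subgroup of permutations of the entries of $\gamma+\Delta$ acting within blocks. (3) Conclude $\beta = \Phi(\tilde\beta) \in {\rm Im}\,\Phi = {\mathcal S}_{I,\gamma}$. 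One should also double-check the converse direction is not needed — the lemma only claims containment of such $\beta$ — so the argument is one-directional.

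The main obstacle I anticipate is step (1): carefully extracting from the \emph{proof} of Lemma~\ref{lemma:thetrick} (not just its statement) the fact that the resulting split-partition $\alpha$ is the block-wise sorted rearrangement of $\beta$, and in particular that the hypothesis rules out the vanishing case on \emph{every} block. One must argue that if $\pi_{w_0(I)}x^\beta = \pm s_\gamma$, then in no block does the sorting procedure hit a composition with $\kappa_{j+1} = \kappa_j + 1$; this is automatic because such an occurrence would make the whole product zero by Lemma~\ref{lemma:maintrick} and \eqref{eqn:April7aaa}, contradicting $\pm s_\gamma \neq 0$. A secondary subtlety is keeping the ``offset'' bookkeeping straight: the shift within block $i$ is by the staircase $\delta_{d_i - d_{i-1}}$ plus a constant $f_i$, and one needs that $\Phi$ subtracts exactly this $\Delta$ so that the block-wise sorted condition on $\beta$ matches the condition $\tilde\beta \in \tilde{\mathcal S}_{I,\gamma}$ on the nose. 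Once the dictionary between ``block-sorted form of $\beta$ equals $\gamma$'' and ``$\tilde\beta \in \tilde{\mathcal S}_{I,\gamma}$'' is set up cleanly, the rest is immediate from Proposition~\ref{prop:posetsiso} and the definition of ${\mathcal S}_{I,\gamma}$.
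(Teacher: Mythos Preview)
The proposal is correct and follows essentially the same route as the paper: both invoke the sorting procedure from the proof of Lemma~\ref{lemma:thetrick} to reduce $\beta$ block-wise to some $\gamma'\in{\sf Par}_D$, rule out the vanishing case, conclude $\gamma'=\gamma$, and then identify $\beta$ as an element of ${\mathcal S}_{I,\gamma}$. The only cosmetic difference is that the paper records the sorting as a chain of $t_i$'s and cites Lemma~\ref{lemma:commutes}, whereas you phrase the same conclusion as $\beta+\Delta\in\tilde{\mathcal S}_{I,\gamma}$ followed by applying $\Phi$.
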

\begin{proof}
Suppose $\beta\in {\sf Comp}_n$ satisfies $\pi_{w_0(I)}x^{\beta}=\pm s_{\gamma}(\neq 0)$.
As in the proof of Lemma~\ref{lemma:thetrick} by successive applying the operators $t_1,t_2,\ldots$ ($i\in I$) to $\beta$, we either arrive at some $\gamma'\in {\sf Par}_D$ or
a $\kappa\in {\sf Comp}_n$ with $\kappa_{i+1}=\kappa_i+1$ where $i,i+1$ are in the same block.
In the latter case we conclude, by the (proof of) Lemma~\ref{lemma:thetrick} that $\pi_{w_0(I)}x^{\beta}=0$, a contradiction. Otherwise we find $\pm s_{\gamma}= s_{\gamma'}$, which can
only happen if $\gamma=\gamma'$. Thus, we have found a sequence of $t_i$'s connecting
$\beta$ to $\gamma$. The result then follows from Lemma~\ref{lemma:commutes} and the
definition of ${\mathcal S}_{I,\gamma}$.
\end{proof}

We need a subposet of ${\mathcal S}_{I,\gamma}$ attached to the following datum:
\begin{itemize}
\item $w=w_0(I)u\in {\mathfrak S}_n$ where $I\subset J(w)$ and $\ell(w)=\ell(w_0(I))+\ell(u)$.
\item $\alpha=u\lambda$ for some $\lambda\in {\sf Par}_n$.
\item $\gamma\in {\sf Par}_D$ where $D=[n]-I=\{d_1<d_2<\ldots<d_k\}$.
\end{itemize}

\begin{definition}
${\mathcal P}_{\alpha,\gamma}$ is the subposet of ${\mathcal S}_{I,\gamma}$ induced by those
$\beta\in {\mathcal S}_{I,\gamma}$ such that $[x^\beta]\kappa_{\alpha}\neq 0$.
\end{definition}

The next result holds for $u=c$, a standard Coxeter element for a parabolic subgroup.

\begin{theorem}[Diamond property]\label{thm:diamond}
Let $\beta \in {\mathcal P}_{c \lambda,{\bf \gamma}}$. Let $i < j$ in the same block and $p < q$ in the same block with $(i,j) \neq (p,q)$.  If both $t_{ij}\beta$ and $t_{pq}\beta$ are in $\mathcal{P}_{c\lambda,{\bf\gamma}}$ and cover $\beta$, 
then there exists $\beta'\in {\mathcal P}_{c \lambda,{\bf \gamma}}$ such that $t_{ij}\beta , t_{pq}\beta < \beta'$.
\end{theorem}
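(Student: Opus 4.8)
\textbf{Proof plan for Theorem~\ref{thm:diamond}.}
The plan is to produce an explicit candidate for $\beta'$ and verify it lies in $\mathcal{P}_{c\lambda,\gamma}$. Since $t_{ij}\beta$ and $t_{pq}\beta$ both cover $\beta$, Lemma~\ref{lemma:posetBruhatorder}(i) gives $\beta_i > \beta_j-(j-i)$ and $\beta_p>\beta_q-(q-p)$, and covering (rather than merely $\beta <_{\text{Bruhat}}$) forces these to be ``minimal'' moves in $\mathcal{S}_{I,\gamma}$, i.e.\ there is no index strictly between $i$ and $j$ (resp.\ $p$ and $q$) with value lying strictly between $\beta_i-i$ and $\beta_j-j$ (resp.\ between $\beta_p-p$ and $\beta_q-q$) --- this is the standard description of Bruhat covers in $\mathfrak S_n$ transported through $\Phi$. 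The natural guess is $\beta' := t_{ij}t_{pq}\beta = t_{pq}t_{ij}\beta$ when the pairs $\{i,j\}$ and $\{p,q\}$ are ``non-crossing/nested-compatibly'' so that the two transpositions commute and both moves go up; one then checks $\beta' \gtrdot t_{ij}\beta$ and $\beta'\gtrdot t_{pq}\beta$ inside $\mathcal{S}_{I,\gamma}$ using Lemma~\ref{lemma:posetBruhatorder}. The genuinely different case is when $\{i,j\}$ and $\{p,q\}$ overlap in exactly one index or cross (so the transpositions do not commute); here one must instead take $\beta'$ to be obtained by a single $t_{ab}$ applied to $t_{ij}\beta$ (equivalently to $t_{pq}\beta$) with $\{a,b\}$ the ``outer'' pair --- the join of $s_{ij}\cdot\Phi^{-1}(\beta)$ and $s_{pq}\cdot\Phi^{-1}(\beta)$ in Bruhat order always exists here because $\Phi^{-1}(\beta)$ together with two upward covers in a Young subgroup generates a poset whose join is computed by the usual combinatorics of $\mathfrak S_n$.

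The key steps, in order: (1)~translate the hypothesis into inequalities on the entries of $\beta$ via Lemma~\ref{lemma:posetBruhatorder}(i) and the cover condition; (2)~do a case analysis on the relative position of $\{i,j\}$ and $\{p,q\}$ (disjoint, nested, crossing, sharing one endpoint), in each case writing down $\beta'$ explicitly as a short word in the $t_{ab}$'s applied to $\beta$; (3)~for each case, check $t_{ij}\beta <_{\text{Bruhat}} \beta'$ and $t_{pq}\beta <_{\text{Bruhat}} \beta'$ using Proposition~\ref{prop:posetsiso} (so that it suffices to verify the analogous statement in the Bruhat order of the Young subgroup $\mathfrak S_{d_1-d_0}\times\cdots\times\mathfrak S_{d_{k+1}-d_k}$, where the existence of joins of two covers is classical); and (4)~--- the crucial geometric input --- verify that $[x^{\beta'}]\kappa_{c\lambda}\neq 0$, so that $\beta'\in\mathcal{P}_{c\lambda,\gamma}$ and not merely in $\mathcal{S}_{I,\gamma}$. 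For step~(4) the tool is Corollary~\ref{cor:nonzeroconsequence} (and Theorem~\ref{thm:nonzerorule}): starting from a tableau $T\in{\sf Tab}(c\lambda)$ of content $t_{ij}\beta$ (which exists since $t_{ij}\beta\in\mathcal{P}_{c\lambda,\gamma}$), one must exhibit a tableau of content $\beta'$, moving labels between the relevant columns; the hypothesis that $c$ is a standard Coxeter element is what controls the shape $D(c\lambda)$ finely enough to make this label-migration possible.

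The main obstacle is precisely step~(4) in the overlapping/crossing case: unlike Corollary~\ref{cor:nonzeroconsequence}, which only lifts a single ``value transfer'' between two columns of a fixed tableau, here we need to know that the support of $\kappa_{c\lambda}$ --- a $2$-dimensional face structure (the Schubitope of $D(c\lambda)$, cf.\ the proof of Theorem~\ref{thm:nonzerorule}) --- actually contains the full ``diamond'' spanned by $\beta,\,t_{ij}\beta,\,t_{pq}\beta,\,\beta'$, not just the three given corners. I expect to handle this by combining the tableau descriptions of the contents $t_{ij}\beta$ and $t_{pq}\beta$ simultaneously: from the two tableaux witnessing $t_{ij}\beta$ and $t_{pq}\beta$ one extracts enough structural information about which labels occupy which columns of $D(c\lambda)$ to build a single tableau of content $\beta'$. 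The fact that $c\lambda$ is a key polynomial associated to a Coxeter element --- so that $D(c\lambda)$ is, up to reordering columns, obtained from the Young diagram of $\lambda$ by a very restricted set of moves --- is what should make this bookkeeping tractable; formalizing this ``which columns can a label live in'' analysis, uniformly over all $\lambda\in{\sf Par}_n$, is the real content of the proof and is deferred, as the excerpt indicates, to Section~\ref{sec:6}.
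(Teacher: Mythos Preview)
Your high-level architecture---case analysis on the relative position of $(i,j)$ and $(p,q)$, then construct an explicit $\beta'$ and verify $[x^{\beta'}]\kappa_{c\lambda}\neq 0$---matches the paper's. But the plan leaves step~(4) essentially unresolved, and the paper's actual execution relies on machinery you have not identified.

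First, the paper does \emph{not} verify membership by directly ``combining the tableau descriptions of $t_{ij}\beta$ and $t_{pq}\beta$.'' Instead it proves in advance a numerical criterion (Proposition~\ref{prop:goingup}): for $\beta\in\mathcal{P}_{c\lambda,\gamma}$ with $\beta_i>\beta_j-(j-i)$, one has $t_{ij}\beta\in\mathcal{P}_{c\lambda,\gamma}$ if and only if three explicit inequalities hold, involving ${\sf leftmin}_{c\lambda}(i)$, ${\sf rightmax}_{c\lambda}(j)$, and (when the pair is ``interweaved'') a partial-sum condition at an index ${\sf center}_{c\lambda}(i,j)$. The diamond argument then becomes an inequality chase: one shows the three conditions for $\beta'$ follow from the six conditions already known for $t_{ij}\beta$ and $t_{pq}\beta$, together with Lemma~\ref{lemma:overflowintorightmax}. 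Your plan to build a tableau for $\beta'$ by hand from two given tableaux would have to rediscover this criterion implicitly, and would be very hard to carry out uniformly in $\lambda$.

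Second, Proposition~\ref{prop:goingup} itself rests on a structural fact you do not mention: $c\lambda$ avoids the composition patterns $012$, $1032$, $0011$, $0021$, $1022$ (Lemma~\ref{lem:clampattern}). This is not a direct combinatorial consequence of $c$ being a Coxeter element; the paper derives it from the toricness of $X_c$ \cite{Karupp}, hence the $[n-1]$-multiplicity-freeness of $\kappa_{c\lambda}$ via \cite{Hodges.Yong1}, and then the pattern characterization of multiplicity-freeness from \cite{Hodges.Yong2}. These avoidance conditions are used pervasively in Lemmas~\ref{lem:lminrmax}--\ref{lem:formofclam} to pin down the row-structure of $D(c\lambda)$.

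Third, the genuinely delicate case is the crossing configuration $i<p<j<q$ (your terminology here is slightly off: $t_{ij}$ and $t_{pq}$ \emph{do} commute whenever $\{i,j\}\cap\{p,q\}=\emptyset$, including the crossing case, so $\beta'=t_{ij}t_{pq}\beta=t_{pq}t_{ij}\beta$ is the candidate there too; the non-commuting cases are the shared-endpoint ones). For crossings the obstruction is the third, ``interweaved,'' condition of Proposition~\ref{prop:goingup}, and the paper disposes of it with a separate lemma (Lemma~\ref{lem:notbothinterweaved}): if $i<p<j<q$ and both $t_{ij}\beta,t_{pq}\beta\in\mathcal{P}_{c\lambda,\gamma}$, then $(i,j)$ and $(p,q)$ cannot both be interweaved. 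This is a nontrivial inequality argument with no analogue in your outline, and without it the crossing case does not close.
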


We defer the proof of Theorem~\ref{thm:diamond} until Section~\ref{sec:diamond}. We complete this section by 
using Theorem~\ref{thm:diamond} to prove the ``$\Rightarrow$'' direction of Theorem~\ref{thm:main}. 

The following result is immediate from the Diamond Property (Theorem~\ref{thm:diamond}) and Newman's diamond lemma \cite{Newman}. 
\begin{lemma}
\label{claim:Mar4aaa}
${\mathcal P}_{c\lambda,\gamma}$ has a unique maximum.
\end{lemma}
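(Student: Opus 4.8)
The plan is to deduce Lemma~\ref{claim:Mar4aaa} from the Diamond Property (Theorem~\ref{thm:diamond}) exactly as advertised, via Newman's diamond lemma. The key point is to set up the right abstract rewriting system so that ``locally confluent'' $+$ ``terminating'' gives ``confluent,'' and then read off the unique maximum.

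\medskip

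\noindent\emph{Proof of Lemma~\ref{claim:Mar4aaa}.}
Consider the directed graph on the vertex set ${\mathcal P}_{c\lambda,\gamma}$ whose edges are the covering relations $\beta \lessdot_{\text{Bruhat}} \beta'$ of the induced subposet, oriented from $\beta$ (smaller) to $\beta'$ (larger). This is a terminating rewriting system: by Proposition~\ref{prop:posetsiso} the ambient poset ${\mathcal S}_{I,\gamma}$ is isomorphic to a product of symmetric groups in Bruhat order, hence is finite and graded (with rank function $\theta$ of Definition~\ref{def:rank}), so any directed path strictly increases $\theta$ and therefore has bounded length. Thus there are no infinite chains, and every element of ${\mathcal P}_{c\lambda,\gamma}$ lies below some maximal element.

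\medskip

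Next I check local confluence. Suppose $\beta, \beta_1, \beta_2 \in {\mathcal P}_{c\lambda,\gamma}$ with $\beta \lessdot \beta_1$ and $\beta \lessdot \beta_2$ two distinct covers in the induced subposet. By Lemma~\ref{lemma:commutes} (and the remark that ${\mathcal S}_{I,\gamma}$ is generated from $\gamma$ by the $t_{ij}$'s), each cover is of the form $\beta_1 = t_{ij}\beta$ and $\beta_2 = t_{pq}\beta$ for some $i<j$ in a common block and $p<q$ in a common block, with $(i,j)\neq(p,q)$; this uses that under the isomorphism $\Phi$ of Proposition~\ref{prop:posetsiso} the covers of Bruhat order on a product of symmetric groups are exactly multiplications by transpositions $s_{ij}$ within a block, which $\Phi$ carries to the $t_{ij}$ by Lemma~\ref{lemma:commutes}, together with Lemma~\ref{lemma:posetBruhatorder}(i) to confirm these are genuine covers in ${\mathcal S}_{I,\gamma}$. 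Theorem~\ref{thm:diamond} then yields $\beta' \in {\mathcal P}_{c\lambda,\gamma}$ with $\beta_1, \beta_2 <_{\text{Bruhat}} \beta'$. Since ${\mathcal P}_{c\lambda,\gamma}$ is a finite graded poset, $\beta_1 < \beta'$ means $\beta'$ is reachable from $\beta_1$ by a directed path of covers in the induced subposet, and likewise from $\beta_2$; hence $\beta_1$ and $\beta_2$ have a common successor, which is local confluence.

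\medskip

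By Newman's diamond lemma \cite{Newman}, a terminating and locally confluent rewriting system is confluent: any two elements reachable from a common source have a common successor. Applying this to the source $\gamma$ (which lies below every element of ${\mathcal S}_{I,\gamma}$, and which belongs to ${\mathcal P}_{c\lambda,\gamma}$ since $[x^\gamma]\kappa_{c\lambda} \neq 0$—indeed $\gamma \leq_{\sf dom}$ every weight of $\kappa_{c\lambda}$ by Corollary~\ref{cor:weightsdom} and $\gamma$ is in the image, or more simply $\gamma$ is the minimum of the nonempty poset ${\mathcal P}_{c\lambda,\gamma}$), confluence forces ${\mathcal P}_{c\lambda,\gamma}$ to have a unique maximal element: if $M_1, M_2$ were two maximal elements, both are reachable from $\gamma$, so they would have a common successor, contradicting maximality unless $M_1 = M_2$. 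A unique maximal element in a finite poset with a minimum is the maximum. \qed

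\medskip

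\noindent The one point needing care—and the only place where anything beyond bookkeeping happens—is that Theorem~\ref{thm:diamond} delivers an upper bound $\beta'$ for $t_{ij}\beta$ and $t_{pq}\beta$ in the \emph{Bruhat order} of the subposet rather than an immediate common cover; translating ``$\beta_1 < \beta'$ in a finite graded poset'' into ``$\beta'$ is reachable by upward covering moves'' is what makes the hypothesis of Newman's lemma applicable, and it is exactly the gradedness coming from Proposition~\ref{prop:posetsiso} that licenses this. Everything else is the standard packaging of a poset with the diamond condition.
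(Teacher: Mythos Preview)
Your argument is the paper's: deduce the lemma from Theorem~\ref{thm:diamond} via Newman's diamond lemma, and you have correctly unpacked what the paper's one-line proof leaves implicit. One point deserves care: when you assert that each cover $\beta \lessdot \beta_1$ \emph{in the induced subposet} ${\mathcal P}_{c\lambda,\gamma}$ is of the form $\beta_1 = t_{ij}\beta$, you are tacitly using that ${\mathcal P}_{c\lambda,\gamma}$ is an order ideal in ${\mathcal S}_{I,\gamma}$ (so induced covers coincide with ambient covers); this is exactly Lemma~\ref{claim:Mar4bbb}, whose proof is independent of the present lemma. Citing it there---and again when you pass from ``$t_{ij}\beta < \beta'$ in ${\mathcal S}_{I,\gamma}$'' to ``$\beta'$ is reachable from $t_{ij}\beta$ by covers inside ${\mathcal P}_{c\lambda,\gamma}$''---closes the only gap.
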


\begin{lemma}\label{claim:Mar4bbb}
Suppose $\beta\in {\mathcal P}_{\alpha,\gamma}$, $\beta_i<\beta_j-(j-i)$ for some $i<j$ in the same block.
Then $t_{ij}\beta\in {\mathcal P}_{\alpha,\gamma}$.
\end{lemma}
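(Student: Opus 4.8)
The statement is essentially Corollary~\ref{cor:nonzeroconsequence} transported into the poset $\mathcal{S}_{I,\gamma}$, so the plan is to reduce to that corollary. First I would unwind the meaning of membership in $\mathcal{P}_{\alpha,\gamma}$: by definition $\beta\in\mathcal{P}_{\alpha,\gamma}$ means $\beta\in\mathcal{S}_{I,\gamma}$ and $[x^\beta]\kappa_\alpha\neq 0$; since the coefficients of key polynomials are nonnegative (Theorem~\ref{thm:Kohnert}, or Theorem~\ref{thm:nonzerorule}), ``$\neq 0$'' is the same as ``$>0$''. So I have $\beta\in\mathcal{S}_{I,\gamma}$ with $[x^\beta]\kappa_\alpha>0$ and the hypothesis $\beta_i<\beta_j-(j-i)$ for $i<j$ in the same block, and I must check the two conditions defining $t_{ij}\beta\in\mathcal{P}_{\alpha,\gamma}$, namely $t_{ij}\beta\in\mathcal{S}_{I,\gamma}$ and $[x^{t_{ij}\beta}]\kappa_\alpha>0$.

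For the first condition, note that $t_{ij}\beta$ is obtained from $\beta$ by the action of $t_{ij}$, and $i,j$ lie in the same block, so by Lemma~\ref{lemma:commutes} the element $t_{ij}\beta$ corresponds under $\Phi$ to $s_{ij}\widehat{\beta}$ inside the Young subgroup $\tilde{\mathcal{S}}_{I,\gamma}$, where $\widehat\beta=\Phi^{-1}(\beta)$. Since $\tilde{\mathcal{S}}_{I,\gamma}$ is closed under $s_{ij}$ for $i,j$ in the same block, $t_{ij}\beta\in\mathcal{S}_{I,\gamma}$; alternatively one simply invokes the Remark that $\mathcal{S}_{I,\gamma}$ is generated from $\gamma$ by the $t_{ij}$'s. (In fact the hypothesis $\beta_i<\beta_j-(j-i)$ together with Lemma~\ref{lemma:posetBruhatorder}(i) shows $\beta<_{\text{Bruhat}}t_{ij}\beta$, which incidentally gives the ``cover-or-above'' strengthening one expects, though only membership is asserted here.)

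For the second condition I would apply Corollary~\ref{cor:nonzeroconsequence} directly. Write $t:=\beta_j-\beta_i$; the hypothesis $\beta_i<\beta_j-(j-i)$ gives $t>j-i\geq 1$, so in particular $t\in\mathbb{Z}_{>0}$. Set $s:=j-i$, which satisfies $1\leq s\leq t$. Then the composition $\beta':=(\ldots,\beta_i+s,\ldots,\beta_j-s,\ldots)=(\ldots,\beta_i+(j-i),\ldots,\beta_j-(j-i),\ldots)=t_{ij}\beta$ by the defining formula \eqref{tijdef} for $t_{ij}$. Corollary~\ref{cor:nonzeroconsequence} then yields $[x^{t_{ij}\beta}]\kappa_\alpha>0$, completing the verification that $t_{ij}\beta\in\mathcal{P}_{\alpha,\gamma}$.

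There is no serious obstacle here; the only things to be careful about are bookkeeping points: confirming that the indices $i<j$ feeding Corollary~\ref{cor:nonzeroconsequence} are exactly the $i<j$ of the hypothesis (they are, and both corollary and lemma use the convention $i<j$), that the quantity $s=j-i$ really lies in the range $[1,t]$ forced by the strict inequality $\beta_i<\beta_j-(j-i)$, and that nonnegativity of key-polynomial coefficients lets us pass freely between ``$\neq 0$'' and ``$>0$''. So the proof is essentially a two-line invocation of Corollary~\ref{cor:nonzeroconsequence} plus the closure of $\mathcal{S}_{I,\gamma}$ under same-block $t_{ij}$.
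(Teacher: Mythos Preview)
Your approach via Corollary~\ref{cor:nonzeroconsequence} is exactly the paper's, but your choice of $s$ is wrong. With $s=j-i$, the corollary produces the composition with $\beta_i+(j-i)$ in position $i$ and $\beta_j-(j-i)$ in position $j$. By formula~\eqref{tijdef}, however, $t_{ij}\beta$ has $\beta_j-(j-i)$ in position $i$ and $\beta_i+(j-i)$ in position $j$: the two entries are interchanged, not just shifted. So the $\beta'$ you wrote down is \emph{not} $t_{ij}\beta$.

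The correct choice is $s=t-(j-i)=\beta_j-\beta_i-(j-i)$. Then
\[
\beta_i+s=\beta_j-(j-i)=(t_{ij}\beta)_i,\qquad
\beta_j-s=\beta_i+(j-i)=(t_{ij}\beta)_j,
\]
as required. The hypothesis $\beta_i<\beta_j-(j-i)$ gives $s\geq 1$, and $j>i$ gives $s<t$, so $1\le s\le t$ and Corollary~\ref{cor:nonzeroconsequence} applies. Everything else in your argument---closure of $\mathcal{S}_{I,\gamma}$ under same-block $t_{ij}$ via Lemma~\ref{lemma:commutes}, and passing between ``$\neq 0$'' and ``$>0$'' using nonnegativity of key coefficients---is correct.
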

\begin{proof}
By Lemma~\ref{lemma:containsall}
${\mathcal S}_{I,\gamma}$ consists of all $\beta$ such that
$\pi_{w_0(I)}x^{\beta}=\pm s_{\gamma}$. Let $\beta':=t_{ij}\beta$. Thus,
$\beta_i'=\beta_j-(j-i)$, $\beta_j'=\beta_i+(j-i)$, and $\beta_k'=\beta_k$ if $k\neq i,j$.
The hypothesis that $\beta_i<\beta_j-(j-i)$ means $\beta_i<\beta_i'$ and $\beta_j'<\beta_j$
and $\beta_j'-\beta_i'=(j-i)\in {\mathbb Z}_{>0}$. 
Hence by Corollary~\ref{cor:nonzeroconsequence}, $[x^{\beta'}]\kappa_{\alpha}>0$. Therefore,
it follows that $\beta'=t_{ij}\beta\in {\mathcal P}_{\alpha,\gamma}$, as desired.
\end{proof}

\begin{lemma}
Let ${\mathfrak S}:={\mathfrak S}_{d_1-d_0}\times\cdots\times {\mathfrak S}_{d_{k+1}-d_k}$ be a Young subgroup of ${\mathfrak S}_n$. Suppose $[u,v]\subset {\mathfrak S}$ is an interval. Then
\begin{equation}
\label{eqn:deodhar}
\sum_{u\leq w\leq v} (-1)^{\ell(uw)}=\begin{cases}
1 & \text{if $u=v$}\\
0 & \text{otherwise}
\end{cases}
\end{equation}
\end{lemma}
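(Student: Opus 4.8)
The plan is to reduce this to the classical fact that the M\"obius function of Bruhat order on a finite Coxeter group (here the Young subgroup $\mathfrak S$ with its own Bruhat order) takes values $\mu(u,w)=(-1)^{\ell(w)-\ell(u)}$ on every interval $[u,w]$, together with the defining property of the M\"obius function that $\sum_{u\le w\le v}\mu(u,w)=\delta_{u,v}$. Concretely, I would first observe that a Young subgroup $\mathfrak S=\mathfrak S_{d_1-d_0}\times\cdots\times \mathfrak S_{d_{k+1}-d_k}$, with the induced Bruhat order and length function $\ell$ (which is the sum of the lengths in the factors), is a parabolic subgroup of $\mathfrak S_n$; its Bruhat order is the restriction of the Bruhat order on $\mathfrak S_n$ and coincides with the product of the Bruhat orders on the factors. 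Hence any Bruhat interval $[u,v]\subset\mathfrak S$ is a product of Bruhat intervals in the factor symmetric groups.

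Next I would invoke the result of Deodhar~\cite{Deodhar} (or, equivalently, the classical computation going back to Verma and Deodhar) that for any interval $[u,v]$ in the Bruhat order of a Coxeter group, the M\"obius function is $\mu(u,v)=(-1)^{\ell(v)-\ell(u)}$. Since $\ell(uw)=\ell(w)-\ell(u)$ for $u\le w$ (again using that $u\le w$ in Bruhat order of $\mathfrak S$ forces $\ell(u)\le\ell(w)$, and $\ell(uw)$ is computed in $\mathfrak S$), the sum in \eqref{eqn:deodhar} becomes $\sum_{u\le w\le v}(-1)^{\ell(w)-\ell(u)}=\sum_{u\le w\le v}\mu(u,w)$. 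By the definition of the M\"obius function of the poset $[u,v]$, this sum is $1$ if $u=v$ and $0$ otherwise, which is exactly the claimed identity.

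There is essentially no hard step here; the only thing requiring a word of care is making sure the length function and Bruhat order appearing in \eqref{eqn:deodhar} are the intrinsic ones of $\mathfrak S$ (equivalently, componentwise), so that Deodhar's theorem applies verbatim, rather than some restriction of data from $\mathfrak S_n$ that might differ. Since $\ell$ restricted to a parabolic subgroup agrees with the parabolic subgroup's own length function, and Bruhat order on a parabolic subgroup agrees with the restriction of the ambient Bruhat order, this identification is automatic. Thus the write-up is a two-line deduction: cite $\mu(u,w)=(-1)^{\ell(uw)}$ from~\cite{Deodhar}, then apply $\sum_{u\le w\le v}\mu(u,w)=\delta_{u,v}$.

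\begin{proof}
Since $\mathfrak S$ is a parabolic (Young) subgroup of $\mathfrak S_n$, the Bruhat order on $\mathfrak S$ is the restriction of the Bruhat order on $\mathfrak S_n$, and the length function $\ell$ restricted to $\mathfrak S$ is the intrinsic length function of the Coxeter group $\mathfrak S$. In particular, for $u\le w$ in $\mathfrak S$ we have $\ell(u)\le\ell(w)$ and $\ell(uw)=\ell(w)-\ell(u)$. By Deodhar~\cite{Deodhar}, the M\"obius function of the Bruhat order on any Coxeter group, in particular on $\mathfrak S$, satisfies $\mu(u,w)=(-1)^{\ell(w)-\ell(u)}=(-1)^{\ell(uw)}$ on every interval $[u,w]$. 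Therefore
\[
\sum_{u\le w\le v}(-1)^{\ell(uw)}=\sum_{u\le w\le v}\mu(u,w),
\]
and by the defining recursion of the M\"obius function of the poset $[u,v]$, the right-hand side equals $1$ if $u=v$ and $0$ otherwise. This proves~\eqref{eqn:deodhar}.
\end{proof}
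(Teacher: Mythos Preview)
Your approach is correct and in fact slightly more direct than the paper's: the paper first invokes Deodhar for the irreducible case ${\mathfrak S}_n$ and then passes to the Young subgroup via the product formula for M\"obius functions \cite[Proposition~3.8.2]{EC1}, whereas you observe that ${\mathfrak S}$ is itself a Coxeter group (of reducible type $A$) so that Deodhar's theorem applies to it directly. Both routes land on $\mu_{\mathfrak S}(u,w)=(-1)^{\ell(uw)}$ and then use the M\"obius recursion; yours simply skips the product-decomposition step.

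One small correction: the claim ``$\ell(uw)=\ell(w)-\ell(u)$ for $u\le w$'' is false in general for Bruhat order (it characterizes left weak order, not strong Bruhat order). For instance in ${\mathfrak S}_3$ with $u=s_1$ and $w=s_2s_1$ one has $u\le w$ but $\ell(uw)=\ell(s_1s_2s_1)=3\ne 1$. What you actually need, and what is always true, is the parity statement $(-1)^{\ell(uw)}=(-1)^{\ell(u)+\ell(w)}=(-1)^{\ell(w)-\ell(u)}$, which follows from the sign homomorphism. Replace the length equality by this parity argument and the proof is clean.
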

\begin{proof}
For a (locally) finite poset $P$ let $\mu_P:P\times P\to {\mathbb R}$ be its M\"obius function.
This is defined recursively by $\mu_P(x,x)=1$ and $\mu_P(x,z)=-\sum_{x\leq_P z<_P y} \mu_P(x,z)$. When $P={\mathfrak S}={\mathfrak S}_n$, the lemma holds since $(-1)^{\ell(uw)}$ is the M\"obius function
for ${\mathfrak S}_n$ under Bruhat order~\cite{Deodhar}.

For the general case, recall  
\cite[Proposition~3.8.2]{EC1}, which states that if $P$ and $Q$ be locally finite posets, and  $P\times Q$ is their direct product, if $(s,t)\leq (s',t')$ in $P\times Q$ then the M\"obius functions of $P\times Q, P$, and $Q$ are related by 
\begin{equation}
\label{eqn:productMobius}
\mu_{P \times Q}((s, t), (s', t')) = \mu_P(s, s')\mu_Q(t, t').
\end{equation}
Elements of ${\mathfrak S}$ are uniquely factorizable as 
$w=p^{(1)} p^{(2)} \cdots p^{(k+1)}$ where $p^{(i)}$ is an element of
the parabolic subgroup ${\mathfrak S}_{d_i-d_{i-1}}$ of ${\mathfrak S}_n$ generated by $s_{d_{i-1}+1},s_{d_{i-1}+2},\ldots,
s_{d_i-1}$. Similarly, let $u=q^{(1)} q^{(2)} \cdots q^{(k+1)}$ be the factorization of $u\in {\mathfrak S}$,
and $u\leq_{\text{Bruhat}} w$. By iterating application of (\ref{eqn:productMobius}) $k$-many times,
\[\mu_{\mathfrak S}(u,w)=\prod_{i=1}^{k+1}\mu_{{\mathfrak S}_{d_i-d_{i-1}}}(q^{(i)},p^{(i)})
=(-1)^{\sum_{i=1}^{k+1}\ell(q^{(i)} p^{(i)})}=(-1)^{\ell(wu)},\]
and the result follows.
\end{proof}

\begin{proposition}
\label{prop:interval}
$({\mathcal P}_{c\lambda,\gamma},<_{\text{\emph{Bruhat}}})$ is isomorphic (as posets) to an interval
in $({\mathfrak S}_{d_1-d_0}\times\cdots \times {\mathfrak S}_{d_{k+1}-d_k},<_{\text{\emph{Bruhat}}})$.
\end{proposition}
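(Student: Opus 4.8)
<br>

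The plan is to show that $\mathcal{P}_{c\lambda,\gamma}$ is a lower-order ideal with a unique maximum inside $\mathcal{S}_{I,\gamma}$, and therefore corresponds to an interval under the poset isomorphism $\Phi$ of Proposition~\ref{prop:posetsiso}. Concretely, I would prove that for $\beta \in \mathcal{P}_{c\lambda,\gamma}$ and any $i<j$ in the same block with $\beta <_{\text{Bruhat}} t_{ij}\beta$ but $t_{ij}\beta \not\geq \beta$ in the covering sense handled below, we nonetheless have $t_{ij}\beta \in \mathcal{P}_{c\lambda,\gamma}$; equivalently, that $\mathcal{P}_{c\lambda,\gamma}$ is closed under going up in $\mathcal{S}_{I,\gamma}$ toward its maximum, i.e. it is an order ideal of the dual, hence a lower set of $\mathcal{S}_{I,\gamma}$ with its ``upside-down'' Bruhat order.

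First I would invoke Lemma~\ref{claim:Mar4aaa}: the Diamond Property (Theorem~\ref{thm:diamond}) together with Newman's diamond lemma gives a unique maximal element $\hat\beta \in \mathcal{P}_{c\lambda,\gamma}$. Next, the key step: I claim $\mathcal{P}_{c\lambda,\gamma} = [\gamma, \hat\beta]$ as a subposet of $\mathcal{S}_{I,\gamma}$. One inclusion is trivial since $\gamma$ is the global minimum and $\hat\beta$ is the maximum of $\mathcal{P}_{c\lambda,\gamma}$, so every element lies in the interval $[\gamma,\hat\beta]$. For the reverse inclusion, suppose $\beta \in [\gamma,\hat\beta] \subseteq \mathcal{S}_{I,\gamma}$; I must show $[x^\beta]\kappa_{c\lambda} \neq 0$. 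I would argue by downward induction on rank: if $\beta \in \mathcal{P}_{c\lambda,\gamma}$ we are done, otherwise pick a cover $\beta \lessdot_{\text{Bruhat}} \beta''$ with $\beta'' \in [\gamma,\hat\beta]$ (such exists since $\beta \neq \hat\beta$), with $\beta'' = t_{ij}\beta$ for some $i<j$ in the same block. By the induction hypothesis $\beta'' \in \mathcal{P}_{c\lambda,\gamma}$. Now Lemma~\ref{lemma:posetBruhatorder}(i) gives $\beta_i > \beta_j - (j-i)$, and since $\beta'' = t_{ij}\beta$ we have $\beta''_i < \beta''_j - (j-i)$; applying Lemma~\ref{claim:Mar4bbb} to $\beta''$ with the pair $(i,j)$ yields $t_{ij}\beta'' = \beta \in \mathcal{P}_{c\lambda,\gamma}$, completing the induction.

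Finally, transporting through the isomorphisms: by Proposition~\ref{prop:posetsiso}, $(\mathcal{S}_{I,\gamma}, <_{\text{Bruhat}})$ is isomorphic to $({\mathfrak S}_{d_1-d_0}\times\cdots\times{\mathfrak S}_{d_{k+1}-d_k}, <_{\text{Bruhat}})$, and an interval maps to an interval under a poset isomorphism. Hence $\mathcal{P}_{c\lambda,\gamma} = [\gamma,\hat\beta]$ maps to an interval $[\Phi^{-1}(\gamma)\text{-image}, \Phi^{-1}(\hat\beta)\text{-image}]$ in the Young subgroup, which is the claim.

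The main obstacle I anticipate is the downward induction step establishing that $[\gamma,\hat\beta] \subseteq \mathcal{P}_{c\lambda,\gamma}$: one must be careful that the cover relation $\beta \lessdot \beta''$ in $\mathcal{S}_{I,\gamma}$ is indeed realized by a single $t_{ij}$ with $i<j$ in the same block (which follows from Proposition~\ref{prop:posetsiso} and Lemma~\ref{lemma:commutes}, since covers in Bruhat order of a Young subgroup are given by transpositions within a single factor), and that the numerical hypothesis of Lemma~\ref{claim:Mar4bbb} is met — but this is exactly the content of Lemma~\ref{lemma:posetBruhatorder}(i) applied in the right direction. Everything else is bookkeeping with the established isomorphisms.
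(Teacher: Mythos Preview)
Your proposal is correct and follows the same approach as the paper's proof. The paper argues more tersely---it simply notes that Lemma~\ref{claim:Mar4bbb} makes $\mathcal{P}_{c\lambda,\gamma}$ a down-closed subset of $\mathcal{S}_{I,\gamma}$, which together with the unique maximum $\beta_{\sf max}$ from Lemma~\ref{claim:Mar4aaa} forces $\Gamma(\mathcal{P}_{c\lambda,\gamma})=[\Gamma(\gamma),\Gamma(\beta_{\sf max})]$---whereas you spell out the downward induction explicitly; the content is identical.
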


Assuming the proof of Theorem~\ref{thm:diamond} (given in the next section), we are
ready to present:

\noindent
\emph{Proof of Proposition~\ref{prop:interval} and Theorem~\ref{thm:maingoal} ($\Rightarrow$):}
Let 
\[\Gamma:({\mathcal S}_{I,\gamma},<_{\text{Bruhat}})
\rightarrow ({\mathfrak S}_{d_1-d_0}\times\cdots \times {\mathfrak S}_{d_{k+1}-d_k},<_{\text{Bruhat}})\]
denote the isomorphism of posets from Proposition~\ref{prop:posetsiso}.

Let $\beta_{\sf max}$ be the unique maximum of
${\mathcal P}_{c\lambda,\gamma}\subseteq {\mathcal S}_{I,\gamma}$, guaranteed to exist by Lemma~\ref{claim:Mar4aaa}.
The unique minimum is $\gamma$. It follows from Lemma~\ref{claim:Mar4bbb} that 
\[\Gamma({\mathcal P}_{c\lambda,\gamma})=[\Gamma(\gamma),\Gamma(\beta_{\sf max})]
\subseteq ({\mathfrak S}_{d_1-d_0}\times\cdots \times {\mathfrak S}_{d_{k+1}-d_k},<_{\text{Bruhat}}).\]
This is the assertion of Proposition~\ref{prop:interval}.

If ${\sf sgn}(\beta)$ is the sign associated to $\beta$, then this maps to $(-1)^{\ell(w_{\beta})}$, 
which agrees with the M\"obius function on ${\mathfrak S}$. Now apply (\ref{eqn:deodhar}) to conclude $s_{\gamma}$ appears in the $D$-split expansion of $\kappa_{w\lambda}=\pi_{w_0(I)}\kappa_{c\lambda}$
with coefficient zero or one, completing the proof of Theorem~\ref{thm:maingoal}.\qed

\begin{example}
Let $w=765432918$ and $\lambda=987654321$. Hence $J(w)=\{1,2,3,4,5,6,8\}$; let 
$I=\{2,3,4,5,6\}\subseteq J(w)$. Thus $w_0(I)=176543289$ and we can factor $w=w_0(I)c$ where $c$ is the
standard Coxeter element $c=234567918=s_8 s_1 s_2 s_3 s_4 s_5 s_6 s_7$. Now, $c^{-1}=812345697$ 
and $w^{-1}=865432197$. Therefore
$\alpha=c\lambda=298765413$, whereas $w\lambda=245678913$.

Since $D=[9]-I=\{1,7,8,9\}$, we have that $\kappa_{w\lambda}=\kappa_{245678913}\in \Pi_D$ 
is separately symmetric in the
sets of indeterminates $\{x_1\}, \{x_2,x_3,x_4,x_5,x_6,x_7\},\{x_8\},\{x_9\}$.

Since $c$ is a standard Coxeter element, by  \cite[Theorem 4.13(II)]{Hodges.Yong1}, we have that
$\kappa_{c\lambda}$ is $[n-1]$-multiplicity-free.
Consider the term $x^{928765422}$ appearing in $\kappa_{c\lambda}$.
Now
\[\pi_{w_0(I)}(x^{928765422})=s_{9, \underline{28}7654, 2, 2}
=-s_{9,7\underline{37}654,2,2}=s_{9,76\underline{46}54,2,2}
=-s_{9, 765554,2,2},\]
where we have underlined the swaps. 

The list of monomials $x^{\beta}$ of $\kappa_{c\lambda}$ such that $\pi_{w_0(I)}(x^{\beta})=\pm s_{9,765554,2,2}$, together with
the signs they contribute are:
\[[9, 7, 6, 5, 5, 5, 4, 2, 2] \ 1,
[9, 7, 4, 7, 5, 5, 4, 2, 2] \ -1,
[9, 7, 6, 4, 6, 5, 4, 2, 2] \ -1,\]
\[[9, 5, 8, 4, 6, 5, 4, 2, 2] \ 1,
[9, 7, 3, 7, 6, 5, 4, 2, 2] \ 1,
[9, 5, 8, 5, 5, 5, 4, 2, 2] \ -1,\]
\[[9, 2, 8, 7, 6, 5, 4, 2, 2] \ -1,
[9, 3, 8, 7, 5, 5, 4, 2, 2] \ 1.\]
These elements form a poset ${\mathcal P}_{c\lambda,\gamma=9,765554,2,2}$ shown in Figure~\ref{fig:976555422} isomorphic to an interval $[\mathrm{id},s_2s_3s_4]$ in Bruhat order, consistent with Proposition~\ref{prop:interval}.
\begin{figure}[h!]
\begin{tikzpicture}[scale=2.0]
\node[label=below:{9,765554,2,2}] at (0,0) {$\bullet$};
\node[label=left:{9,585554,2,2}] at (-1.5,1) {$\bullet$};
\node[label=below:{9,747554,2,2}] at (0,1) {$\bullet$};
\node[label=right:{9,764654,2,2}] at (1.5,1) {$\bullet$};
\node[label=left:{9,387554,2,2}] at (-1.5,2) {$\bullet$};
\node[label=above:{9,584654,2,2}] at (0,2) {$\bullet$};
\node[label=right:{9,737654,2,2}] at (1.5,2) {$\bullet$};
\node[label=above:{9,287654,2,2}] at (0,3) {$\bullet$};
\draw(0,0)--(-1.5,1);
\node at (-0.8,0.4) {$t_2$};
\draw(0,0)--(0,1);
\node at (0.1,0.4) {$t_3$};
\draw(0,0)--(1.5,1);
\node at (0.8,0.4) {$t_4$};
\draw(-1.5,1)--(-1.5,2);
\node at (-1.65,1.5) {$t_{24}$};
\draw(-1.5,1)--(0,2);
\draw(0,1)--(-1.5,2);
\draw(0,1)--(1.5,2);
\draw(1.5,1)--(0,2);
\draw(1.5,1)--(1.5,2);
\node at (1.6,1.5) {$t_3$};
\draw(-1.5,2)--(0,3);
\node at (-0.85,2.6) {$t_{25}$};
\draw(0,2)--(0,3);
\node at (0.15,2.6) {$t_{24}$};
\draw(1.5,2)--(0,3);
\node at (0.8,2.6) {$t_2$};
\end{tikzpicture}

\caption{The poset ${\mathcal P}_{c\lambda,\gamma}$ for $c=234567918$, $\lambda=987654321$, $\gamma=976555422$, $I=\{2,3,4,5,6\}$ with some edges labeled}
\label{fig:976555422}
\end{figure}
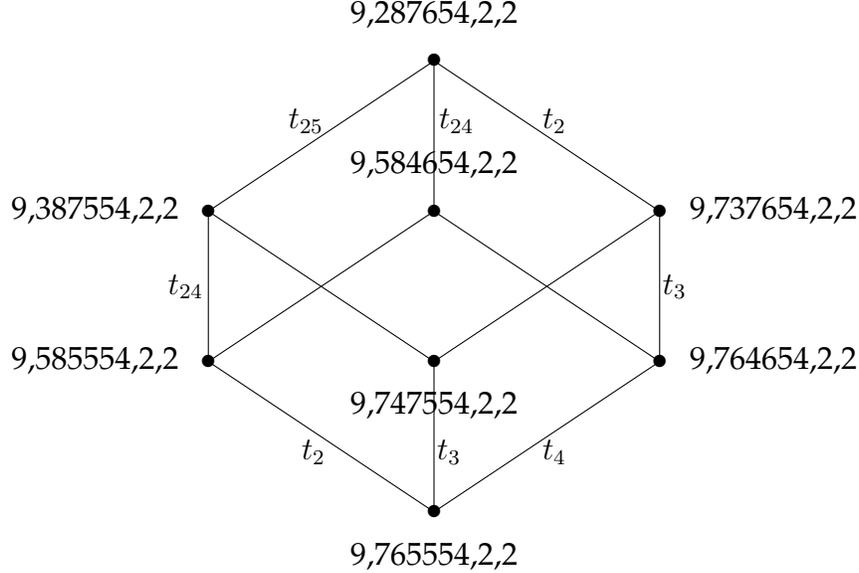

Indeed the coefficients sum to zero, in agreement with the above discussion about the M\"obius function.\qed
\end{example}

\section{Proof of the Diamond property (Theorem~\ref{thm:diamond})}\label{sec:diamond}
\label{sec:6}

Throughout this section we fix a decomposition $w=w_0(I)c$ where $c$ is a standard Coxeter element of some 
parabolic such that $\ell(w)=\ell(w_0(I))+\ell(c)$, and $\lambda\in {\sf Par}_n$.
 
\begin{lemma} 
\label{lem:clamblockdecrease}
Let $w = w_0(I) u \in {\mathfrak S}_n$ with $\ell(w)=\ell(w_0(I))+\ell(u)$. If $i \in I$, then $(u \lambda)_{i} \geq (u \lambda)_{i+1}$.
\end{lemma}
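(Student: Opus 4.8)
The plan is to unwind the definition of $\kappa_{u\lambda}$ in terms of the composition $u\lambda = (\lambda_{u^{-1}(1)}, \ldots, \lambda_{u^{-1}(n)})$ and show that the hypothesis $\ell(w) = \ell(w_0(I)) + \ell(u)$ forces the entries of $u\lambda$ to be weakly decreasing within each block. Recall $I \subseteq J(w)$ and $w = w_0(I)u$; the length-additivity says $u$ is the minimal-length coset representative of the coset $W_I u$ (equivalently, $u^{-1}$ is the minimal-length representative of $u^{-1} W_I$). I would first translate this into the standard fact that $u$ has no left descents in $I$: for each $i \in I$, $\ell(s_i u) > \ell(u)$. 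Indeed, if some $i \in I$ were a left descent of $u$, then $w_0(I) u = w_0(I) s_i (s_i u)$ would have length strictly less than $\ell(w_0(I)) + \ell(u)$, since $\ell(w_0(I) s_i) = \ell(w_0(I)) - 1$ and lengths are subadditive — contradicting the hypothesis.

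Next I would convert ``$i \in I$ is not a left descent of $u$'' into a statement about one-line notation. A left descent $i$ of $u$ means $i+1$ appears to the left of $i$ in the one-line notation of $u$, i.e. $u^{-1}(i) > u^{-1}(i+1)$. So the hypothesis gives $u^{-1}(i) < u^{-1}(i+1)$ for all $i \in I$. Now fix $i \in I$; then $i$ and $i+1$ lie in the same block (since the blocks are exactly the maximal runs of consecutive integers not separated by an element of $D = [n-1] - I$, and $i \in I$ means there is no block boundary between positions $i$ and $i+1$). We have $(u\lambda)_i = \lambda_{u^{-1}(i)}$ and $(u\lambda)_{i+1} = \lambda_{u^{-1}(i+1)}$. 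Since $u^{-1}(i) < u^{-1}(i+1)$ and $\lambda$ is a partition (weakly decreasing), we get $\lambda_{u^{-1}(i)} \geq \lambda_{u^{-1}(i+1)}$, that is, $(u\lambda)_i \geq (u\lambda)_{i+1}$, as claimed.

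I do not expect any serious obstacle here: the only mild subtlety is the first step, making precise why length-additivity of the factorization $w = w_0(I) u$ is equivalent to $u$ having no left descents in $I$. This is a completely standard fact about parabolic quotients (see, e.g., \cite[Proposition~2.4.4]{Bjorner.Brenti}), and one can also see it directly from the deletion property (Proposition~\ref{lem:deletion-property}): if $s_i u$ were shorter than $u$ for some $i \in I$, pick a reduced word for $w_0(I)$ ending in $s_i$ and a reduced word for $u$; concatenating and applying deletion shows a letter cancels, so $\ell(w) < \ell(w_0(I)) + \ell(u)$. Everything else is a direct substitution into the definition of $u\lambda$ together with monotonicity of the partition $\lambda$.
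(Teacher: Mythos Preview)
Your proposal is correct and follows essentially the same approach as the paper's proof: length-additivity of $w = w_0(I)u$ forces $J(u) \cap I = \emptyset$, hence $u^{-1}(i) < u^{-1}(i+1)$ for $i \in I$, and monotonicity of $\lambda$ gives $(u\lambda)_i \geq (u\lambda)_{i+1}$. Your write-up is simply more detailed in justifying the first step.
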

\begin{proof}
The length additivity of $w_0(I)$ and $u$ implies $J(u) \cap J(w_0(I)) = J(u) \cap I = \emptyset$. Thus $u^{-1}(i) < u^{-1}(i+1)$, and since $\lambda$ is a partition, $(u \lambda)_{i} = \lambda_{u^{-1}(i)} \geq \lambda_{u^{-1}(i+1)} = (u \lambda)_{i+1}$.
\end{proof}

We will use the following notion from \cite{Hodges.Yong1}:

\begin{definition}[Composition patterns]
Let 
${\sf Comp} := \bigcup_{n=1}^{\infty}  {\sf Comp}_n$.  
For $\alpha=(\alpha_1,\ldots,
\alpha_{\ell}), \beta=(\beta_1,\ldots, \beta_{k}) \in {\sf Comp}$, $\alpha$ \emph{contains the composition pattern} $\beta$ 
if there exists integers $j_1 < j_2 < \cdots < j_k$ that satisfy:
\begin{itemize}
\item $(\alpha_{j_{1}},\ldots,\alpha_{j_{k}})$ is order isomorphic to $\beta$ ($\alpha_{j_s} \leq \alpha_{j_t}$ if and only if $\beta_{s} \leq \beta_{t})$, 
\item $|\alpha_{j_s} - \alpha_{j_t}| \geq |\beta_{s} - \beta_{t}|$.
\end{itemize}
If $\alpha$ does not contain $\beta$, then $\alpha$ \emph{avoids} $\beta$. 
\end{definition}

\begin{lemma}
\label{lem:clampattern}
$c \lambda$ avoids $012$, $1032$, $0011$, $0021$, $1022$.
\end{lemma}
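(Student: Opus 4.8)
The claim is that $c\lambda$ (where $c$ is a standard Coxeter element for a parabolic and $\ell(w)=\ell(w_0(I))+\ell(c)$) avoids the five composition patterns $012$, $1032$, $0011$, $0021$, $1022$. My plan is to argue by contradiction: suppose $c\lambda$ contains one of these patterns at positions $j_1<j_2<\cdots$, and derive a contradiction from the structure of $c\lambda$. The two structural inputs I expect to use are: (i) Lemma~\ref{lem:clamblockdecrease}, which says $(c\lambda)_i\geq (c\lambda)_{i+1}$ whenever $i\in I$, i.e. within a block $c\lambda$ is weakly decreasing; and (ii) the explicit description of $c\lambda$ coming from $c$ being a \emph{standard Coxeter element}. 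For the latter, recall that $c\lambda = (\lambda_{c^{-1}(1)},\ldots,\lambda_{c^{-1}(n)})$, and that $c^{-1}$, being the inverse of a product of distinct simple transpositions, has a very restricted one-line notation (by Proposition~\ref{prop:Sn-coxeter-pattern}, $c$ avoids $321$ and $3412$, so $c$ is a union of increasing runs glued in a controlled way). The key quantitative fact I will extract is that the entries of $c\lambda$, read left to right, can only \emph{jump up} by one ``level'' of $\lambda$ at a time — more precisely, that $c\lambda$ has at most a bounded pattern of ascents because of how a Coxeter element permutes $[n]$.

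\textbf{Key steps, in order.} First I would record the precise combinatorial shape of $c\lambda$. Writing $c$ as a product of distinct $s_i$'s, the permutation $c$ (hence $c^{-1}$) is determined by a choice of a subset of $[n-1]$ together with an order; the resulting one-line notation of $c^{-1}$ consists of a few increasing runs, and consequently $c\lambda = \lambda\circ c^{-1}$ is a concatenation of weakly decreasing runs, with controlled behavior at the junctions. Concretely: because $c$ avoids $321$, $c\lambda$ has no strictly decreasing subsequence of length $3$ among the ``distinct-value'' positions — more useful is the observation that whenever $(c\lambda)_a < (c\lambda)_b$ for $a<b$, the indices $a,b$ lie in different blocks (by (i)), and the number of such ``ascent pairs'' and their relative configuration is constrained by $c$ being Coxeter. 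Second, for each forbidden pattern I would check the contradiction:
\begin{itemize} % (placeholder only; will not actually use a list in final text)
\end{itemize}
— rather, handling them in prose: the patterns $012$ and $0011$ and $0021$ and $1022$ each force two separate ``ascents'' in $c\lambda$ (a value strictly smaller than something to its right, occurring twice, or with a third element interleaved), and I would show a standard Coxeter element cannot produce two such ascents in the configuration demanded, while $1032$ forces an ascent that ``crosses over'' an earlier descent in a way incompatible with the $3412$-avoidance of $c$ transported through $\lambda$. In each case the bridge is: an ascent in $c\lambda$ at positions $a<b$ forces $c^{-1}(a)>c^{-1}(b)$ (since $\lambda$ is weakly decreasing, an ascent in $\lambda\circ c^{-1}$ comes from a descent of $c^{-1}$), and then the pattern of descents of $c^{-1}$ demanded by the composition pattern gives a $321$ or $3412$ in $c^{-1}$, equivalently in $c$, contradicting Proposition~\ref{prop:Sn-coxeter-pattern}.

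\textbf{Main obstacle.} The delicate point is that composition-pattern containment allows slack (the condition $|\alpha_{j_s}-\alpha_{j_t}|\geq|\beta_s-\beta_t|$, not equality), so I cannot simply read off equalities among the $\lambda$-values; I must be careful that, e.g., a $0011$ pattern in $c\lambda$ genuinely forces \emph{two distinct} descents of $c^{-1}$ rather than being realizable with repeated $\lambda$-values coming from a single descent. Resolving this requires pinning down which positions of $c\lambda$ can share a common value and which ascents are ``independent,'' and this is exactly where the fine structure of Coxeter elements (each $s_i$ used at most once, so $c^{-1}$ has a very thin set of descents, each contributing a single ``swap'') does the work. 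I expect the cleanest route is to first prove a helper lemma describing $c\lambda$ as: partition $[n]$ into maximal blocks on which $c\lambda$ is weakly decreasing (these refine the $I$-blocks), and show that between consecutive such runs the left run's minimum is at most the right run's values in a one-step fashion; then each forbidden pattern is eliminated by a short case check against this normal form. The bookkeeping across the five patterns is routine once the normal form is in hand; setting up that normal form correctly is the real content.
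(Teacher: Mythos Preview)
Your approach is correct and genuinely different from the paper's. The paper does not argue combinatorially at all: it invokes that $X_c$ is a toric variety (Karuppuchamy), hence by \cite[Theorem~4.13(II)]{Hodges.Yong1} the key polynomial $\kappa_{c\lambda}$ is $[n-1]$-multiplicity-free for every $\lambda$, and then applies the classification from \cite{Hodges.Yong2} which says $\kappa_\alpha$ is $[n-1]$-multiplicity-free iff $\alpha$ avoids $012,1032,0022,0021,1022$. This immediately gives four of the five patterns (with $0022$ in place of $0011$); the pattern $0011$ is then handled by a perturbation trick: if $c\lambda$ contained $0011$, replace every part of $\lambda$ equal to the common value at positions $j_3,j_4$ by that value plus one, producing a new $\tilde\lambda$ with $c\tilde\lambda$ containing $0022$, contradicting the multiplicity-freeness of $\kappa_{c\tilde\lambda}$. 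So the paper's route is short but leans on two substantial external results.

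Your elementary route works, and in fact more cleanly than you fear. The ``main obstacle'' you flag dissolves once you notice the following uniform feature of the four-letter patterns $1032,0011,0021,1022$: in each, both of the first two entries are strictly smaller than both of the last two. Since order-isomorphism transfers strict inequalities, any occurrence at positions $j_1<j_2<j_3<j_4$ gives $(c\lambda)_{j_s}<(c\lambda)_{j_t}$ for every $s\in\{1,2\}$, $t\in\{3,4\}$, hence (as you observe, since $\lambda$ is weakly decreasing) $c^{-1}(j_s)>c^{-1}(j_t)$ for all such $s,t$. Thus the pattern of $c^{-1}$ on $\{j_1,j_2,j_3,j_4\}$ lies in $\{3412,3421,4312,4321\}$, each of which contains $321$ or equals $3412$; since $c^{-1}$ is again a product of distinct generators, Proposition~\ref{prop:Sn-coxeter-pattern} gives the contradiction. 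The three-letter pattern $012$ is even easier (it forces a $321$ in $c^{-1}$ directly). No ``normal form'' for $c\lambda$ is needed, and the equalities in $0011$, $0021$, $1022$ are irrelevant --- you never need to compare $c^{-1}(j_1)$ with $c^{-1}(j_2)$ or $c^{-1}(j_3)$ with $c^{-1}(j_4)$. What your approach buys is a self-contained argument that avoids the toric-variety input and the pattern classification of \cite{Hodges.Yong2}; what the paper's approach buys is brevity, given those results are already in hand.
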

\begin{proof}
Since $c$ is a standard Coxeter element in a parabolic subgroup, $X_c \subseteq GL_n / B$ is a toric variety~\cite{Karupp}. Hence, by \cite[Theorem 4.13(II)]{Hodges.Yong1}, $\kappa_{c \lambda}$ is $[n-1]$-multiplicity-free for all $\lambda\in {\sf Par}_n$. In \cite{Hodges.Yong2}, it is shown that $\kappa_{\alpha}$ is $[n-1]$-multiplicity free if and only if $\alpha$ avoids $012$, $1032$, $0022$, $0021$, $1022$. Thus, since $\kappa_{c \lambda}$ is $[n-1]$-multiplicity-free, $c \lambda$ avoids $012$, $1032$, $0022$, $0021$, $1022$.

To seek a contradiction, suppose that $c \lambda$ contains the pattern $0011$. Let $j_1<j_2<j_3<j_4$ be the integers such that $(c \lambda)_{j_1}, (c \lambda)_{j_2}, (c \lambda)_{j_3}, (c \lambda)_{j_4}$ contains the composition pattern $0011$. Let $\tilde \lambda \in {\sf Par}_n$ be obtained from $\lambda$ by replacing all part lengths equal to $(c \lambda)_{j_3}$ by $(c \lambda)_{j_3} + 1$. Then $c \tilde\lambda$ contains the pattern $0022$. We conclude, \emph{via} \cite{Hodges.Yong2}, that $\kappa_{c \tilde\lambda}$ is not $[n-1]$-multiplicity-free. By \cite[Theorem 4.13(II)]{Hodges.Yong1}, this implies $X_c$ is not a toric variety, a contradiction. Thus $c \lambda$ must also avoid the pattern $0011$.
\end{proof}

\begin{definition}
\label{def:leftminrightmax}
Let ${\sf leftmin}_{\alpha}(i) = \min\{ \alpha_{j} : j \leq i \}$ and ${\sf rightmax}_{\alpha}(i) = \max\{ \alpha_{j} : j \geq i \}$. 
\end{definition}

\begin{lemma}
\label{lem:lminrmax}
Let $1 \leq i,j \leq n$ and $F\in {\sf Tab}(c\lambda)$. Then
\begin{itemize}
    \item[(i)] $({\sf wt}(F))_k \geq {\sf leftmin}_{c \lambda}(i)$ for $1 \leq k \leq i$.
    \item[(ii)] $({\sf wt}(F))_{k} \leq {\sf rightmax}_{c \lambda}(j)$ for $j \leq k \leq n$.
    \item[(iii)] If $i < j$ are in the same block and ${\sf leftmin}_{c \lambda}(i) = (c \lambda)_i$ and ${\sf rightmax}_{c \lambda}(j) = (c \lambda)_{j}$, then $({\sf wt}(F))_i = (c \lambda)_i$ and $({\sf wt}(F))_{j} = (c \lambda)_{j}$.
\end{itemize}
\end{lemma}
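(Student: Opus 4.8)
The plan is to read off (i) and (ii) directly from the definition of ${\sf Tab}(c\lambda)$, specifically the row-distinctness condition together with Theorem~\ref{thm:nonzerorule}, and then obtain (iii) as a squeeze from the two one-sided bounds. For (i): fix $k\le i$ and let $F\in{\sf Tab}(c\lambda)$ have weight $\beta={\sf wt}(F)$. By Theorem~\ref{thm:nonzerorule}, $[x^\beta]\kappa_{c\lambda}\ne 0$, so by Corollary~\ref{cor:weightsdom} we have $\beta\ge_{\sf dom}c\lambda$, i.e. $\sum_{t\le m}\beta_t\ge\sum_{t\le m}(c\lambda)_t$ for all $m$. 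The cleaner route, though, is to argue combinatorially: the label $k$ occupies $\beta_k$ distinct rows of $D(c\lambda)$, and by flaggedness every box in column $j$ carries a label $\le j$; dually, a label of value $k$ can only sit in columns $\ge k$. I would instead use the Kohnert picture (Theorem~\ref{thm:Kohnert}): Kohnert moves never change the number of boxes in a row, and a box starting in column $j$ can only move leftward, so after any sequence of moves column $k$ of the resulting diagram has at least as many boxes as $\min\{(c\lambda)_j : j\le k\}$ forces — more precisely, the boxes originally in columns $1,\dots,k$ number $\sum_{j\le k}(c\lambda)_j$ and can only redistribute among columns $1,\dots,k$, while boxes from columns $>k$ may enter columns $\le k$ but never leave; hence $\sum_{j\le k}\beta_j\ge\sum_{j\le k}(c\lambda)_j$, and combined with the avoidance of $012$ (Lemma~\ref{lem:clampattern}) — which controls how small an individual early coordinate can be — one extracts $\beta_k\ge{\sf leftmin}_{c\lambda}(i)$ for $k\le i$.

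Actually the more economical argument is the following, which I would write out. For (i), note ${\sf leftmin}_{c\lambda}(i)=\min_{j\le i}(c\lambda)_j=:m$. In $D(c\lambda)$, rows $1,2,\dots,m$ are each completely filled across columns $1$ through $i$ (since every one of columns $1,\dots,i$ has height $\ge m$), so these $m$ bottom rows of the first $i$ columns contain, in row $r$, exactly the labels of an $i$-element row-distinct flagged filling. Under Kohnert moves restricted to this region one sees each of columns $1,\dots,i$ retains at least $m$ boxes — here I would invoke that the bottom $m$ rows stay full because a box can only be removed from a column by moving it left into a vacancy, and there are no vacancies below height $m$ in columns $1,\dots,i$. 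So $\beta_k\ge m$ for $k\le i$. Part (ii) is the mirror image under the symmetry reversing the order of columns and complementing (or just directly: a label of value $k$ lives only in columns $\ge k$, and ${\sf rightmax}_{c\lambda}(j)=\max_{j'\ge j}(c\lambda)_{j'}=:M$ bounds the total number of boxes available in columns $j,\dots,n$, namely $\sum_{j'\ge j}(c\lambda)_{j'}$, which redistributes only among those columns plus possibly flows leftward out of them; so $\sum_{j'\ge j}\beta_{j'}\le\sum_{j'\ge j}(c\lambda)_{j'}$, and then row-distinctness of any single column of height $\le M$ gives $\beta_k\le M$ for $k\ge j$). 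Part (iii): apply (i) with index $i$ to get $\beta_i\ge{\sf leftmin}_{c\lambda}(i)=(c\lambda)_i$, and apply (ii) with index $j$ to get $\beta_j\le{\sf rightmax}_{c\lambda}(j)=(c\lambda)_j$. But $i<j$ in the same block with $I\subseteq J(w)$ forces, via Lemma~\ref{lem:clamblockdecrease} applied repeatedly, $(c\lambda)_i\ge(c\lambda)_{i+1}\ge\cdots\ge(c\lambda)_j$; combined with ${\sf leftmin}_{c\lambda}(i)=(c\lambda)_i$ and ${\sf rightmax}_{c\lambda}(j)=(c\lambda)_j$ this is only consistent if $(c\lambda)_i=(c\lambda)_j$ unless more care is taken — here I must instead use that $\beta$ itself satisfies $\beta_i\ge\beta_{i+1}-?$; cleaner: I already have $\beta_i\ge(c\lambda)_i\ge(c\lambda)_j\ge\beta_j$ is false in general, so instead I use dominance once more. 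From $\sum_{j'\le i}\beta_{j'}\ge\sum_{j'\le i}(c\lambda)_{j'}$ and $\sum_{j'\ge j}\beta_{j'}\le\sum_{j'\ge j}(c\lambda)_{j'}$ together with $\sum\beta=\sum c\lambda$, the total mass is conserved, and the hypothesis that the left-minimum is attained \emph{at} $i$ and the right-maximum \emph{at} $j$ pins $\beta_i$ and $\beta_j$ exactly; the details amount to observing that the inequality $\beta_i\ge(c\lambda)_i$ cannot be strict without violating $\beta_j\le(c\lambda)_j$ and conservation within the block.

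The main obstacle I anticipate is making the Kohnert/tableau bookkeeping in (i) and (ii) fully rigorous — in particular, carefully justifying that Kohnert moves cannot drain a column below the relevant left-minimum, and dually cannot overfill columns past the right-maximum, rather than merely asserting it from the picture. A safer and more self-contained alternative, which I would actually adopt in the write-up, is to avoid Kohnert's rule entirely and argue purely from Theorem~\ref{thm:nonzerorule}: given $F\in{\sf Tab}(c\lambda)$ of weight $\beta$, for (i) exhibit that the number of rows containing any fixed label $k\le i$ is at least $m={\sf leftmin}_{c\lambda}(i)$ because the bottom $m$ rows of $D(c\lambda)$ each have width $\ge i$ and flaggedness plus row-distinctness force every label $\le i$ (hence $k$) to appear in each of them after completing to a valid filling — and symmetrically for (ii) using flaggedness to confine large labels to late columns. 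Then (iii) follows by the squeeze argument above, invoking Lemma~\ref{lem:clamblockdecrease} to know the block entries of $c\lambda$ are weakly decreasing so that the two one-sided bounds meet. I expect (i) and (ii) to each take a short paragraph and (iii) to be a two-line corollary of them.
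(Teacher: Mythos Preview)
Your ideas for (i) and (ii), once you strip away the false starts, land on essentially the paper's argument: for (i), the bottom ${\sf leftmin}_{c\lambda}(i)$ rows contain boxes in each of columns $1,\ldots,i$, and flaggedness plus row-distinctness force $F(k,r)=k$ there by induction on $k$, so label $k$ appears at least ${\sf leftmin}_{c\lambda}(i)$ times; for (ii), a label $k$ lives only in columns $\ge k$ and at most once per row, so its count is bounded by ${\sf rightmax}_{c\lambda}(k)\le {\sf rightmax}_{c\lambda}(j)$. The Kohnert and dominance detours are unnecessary.

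Part (iii), however, has a genuine gap. You correctly obtain $\beta_i\ge (c\lambda)_i$ from (i) and $\beta_j\le (c\lambda)_j$ from (ii), but you still need the reverse inequalities $\beta_i\le (c\lambda)_i$ and $\beta_j\ge (c\lambda)_j$. Your proposed ``conservation within the block'' argument does not supply them: knowing $\sum_{j'\le i}\beta_{j'}\ge\sum_{j'\le i}(c\lambda)_{j'}$, $\sum_{j'\ge j}\beta_{j'}\le\sum_{j'\ge j}(c\lambda)_{j'}$, and $\sum\beta=\sum c\lambda$ only tells you that the middle segment $\sum_{i<j'<j}\beta_{j'}$ is \emph{at most} its $c\lambda$-counterpart, which does not pin down $\beta_i$ or $\beta_j$ individually. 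There is no mechanism in your sketch that prevents, say, $\beta_i>(c\lambda)_i$ with compensating deficits elsewhere.

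The missing idea is that the hypotheses \emph{propagate}. Since $i<j$ lie in the same block, Lemma~\ref{lem:clamblockdecrease} gives $(c\lambda)_i\ge(c\lambda)_{i+1}\ge\cdots\ge(c\lambda)_j$. Combined with ${\sf leftmin}_{c\lambda}(i)=(c\lambda)_i$ (so every column $\le i$ has height $\ge(c\lambda)_i\ge(c\lambda)_j$), this yields ${\sf leftmin}_{c\lambda}(j)=(c\lambda)_j$. Symmetrically, ${\sf rightmax}_{c\lambda}(j)=(c\lambda)_j$ together with the block-decrease gives ${\sf rightmax}_{c\lambda}(i)=(c\lambda)_i$. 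Now apply (i) and (ii) at the index $j$ to get $\beta_j\ge(c\lambda)_j$ and $\beta_j\le(c\lambda)_j$, and apply (i) and (ii) at the index $i$ to get the same for $\beta_i$. This two-line squeeze is what your proposal is reaching for but does not find.
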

\begin{proof}
(i): By Definition \ref{def:leftminrightmax}, for $1 \leq k \leq i$, $(c \lambda)_k \geq {\sf leftmin}_{c \lambda}(i)$. By induction, and the definition of flagged fillings, $F(k, r) = k$ for $1 \leq k \leq i$ and $1 \leq r \leq {\sf leftmin}_{c \lambda}(i)$. Thus $({\sf wt}(F))_k \geq {\sf leftmin}_{c \lambda}(i)$ for $1 \leq k \leq i$.

(ii): Once again we apply Definition \ref{def:leftminrightmax}, concluding ${\sf rightmax}_{c \lambda}(k) \leq {\sf rightmax}_{c \lambda}(j)$ for $j \leq k \leq n$. By the definition of flagged fillings a value $k$ can only appear once in a fixed row, and only in columns greater than or equal to $k$. Hence, $({\sf wt}(F))_{k} \leq {\sf rightmax}_{c \lambda}(k) \leq {\sf rightmax}_{c \lambda}(j)$.

(iii): If $i, j$ are in the same block, then Lemma \ref{lem:clamblockdecrease}, applied inductively, implies $(c \lambda)_k \geq (c \lambda)_j$ for $i \leq k \leq j$. This, combined with ${\sf leftmin}_{c \lambda}(i) = (c \lambda)_i$, implies that ${\sf leftmin}_{c \lambda}(j) = (c \lambda)_j$. Applying (i) and (ii) to $j$ yields $({\sf wt}(F))_{j} \geq (c \lambda)_{j}$ and $({\sf wt}(F))_{j} \leq (c \lambda)_{j}$. Hence $({\sf wt}(F))_{j} = (c \lambda)_{j}$.

Additionally, $(c \lambda)_k \geq (c \lambda)_j$ for $i \leq k \leq j$ combined with ${\sf rightmax}_{c \lambda}(j) = (c \lambda)_{j}$ gives ${\sf rightmax}_{c \lambda}(i) = (c \lambda)_{i}$. Applying (i) and (ii) to $i$ again yields the desired equality.
\end{proof}

\begin{lemma}
\label{lem:rectanglesw}
Let $i\leq j$ with $(c \lambda)_k \geq (c \lambda)_{k+1}$ for $i \leq k < j$. Let $m$ be the maximum value such that $i \leq m \leq j$ and $(c \lambda)_m \geq {\sf leftmin}_{c \lambda}(i)$. Then
\[
| \left\{ (d,r) \in D(c \lambda) : d \leq m \right\} | = m \text{ for } 1 \leq r \leq {\sf leftmin}_{c \lambda}(i).
\]
This implies that for all $F \in  {\sf Tab}(c \lambda)$ , 
\[
F(d,r) = d \text{ for } 1 \leq r \leq {\sf leftmin}_{c \lambda}(i)\text{ and } 1\leq d \leq m.
\]
\end{lemma}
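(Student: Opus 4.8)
The statement is essentially two claims: first a counting identity about the cells of $D(c\lambda)$ in columns $1,\ldots,m$ and rows up to $L:={\sf leftmin}_{c\lambda}(i)$, and second a rigidity statement that every flagged filling $F$ is forced on that rectangular region. The plan is to reduce the second claim to the first by a straightforward induction on columns, and to prove the first claim by analyzing the shape of $c\lambda$ on the range $[i,j]$ using the hypothesis and Lemma~\ref{lem:clampattern}.

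First I would establish the counting identity. By definition, $m$ is the largest index in $[i,j]$ with $(c\lambda)_m \geq L$. The hypothesis that $(c\lambda)_k \geq (c\lambda)_{k+1}$ for $i\leq k<j$ means $c\lambda$ is weakly decreasing on $[i,j]$, so in fact $(c\lambda)_k \geq L$ for all $i \leq k \leq m$; combined with ${\sf leftmin}_{c\lambda}(i)=L$, we get $(c\lambda)_k \geq L$ for $1 \leq k \leq m$ as well (every column $d \leq i$ has height $\geq L$ by definition of ${\sf leftmin}$, and columns in $[i,m]$ by the above). Hence every one of columns $1,\ldots,m$ contains a box in each of rows $1,\ldots,L$, which gives exactly $m$ boxes $(d,r)$ with $d\leq m$ in each fixed row $r\leq L$. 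That is the displayed count.

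Second, I would prove the rigidity statement $F(d,r)=d$ for $1\leq r\leq L$, $1\leq d\leq m$, by induction on $d$ (or, equivalently, a simultaneous induction on rows and columns exactly as in the proof of Lemma~\ref{lem:lminrmax}(i)). The base case $d=1$: column $1$ is flagged, so its only admissible label is $1$, forcing $F(1,r)=1$ for $r\leq L$. For the inductive step, suppose $F(d',r)=d'$ for all $d'<d$ and $r\leq L$. In row $r\leq L$, the labels $1,2,\ldots,d-1$ already occur (in columns $1,\ldots,d-1$ by the inductive hypothesis), and by flaggedness the label in column $d$ is at most $d$; row-distinctness then forces $F(d,r)=d$. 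Since column $d$ has height $(c\lambda)_d \geq L$, this is valid for all $r\leq L$, completing the induction. This is exactly the argument sketched in Lemma~\ref{lem:lminrmax}(i), now run on the interval $[1,m]$ using the counting identity to guarantee that every relevant cell exists.

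The only genuinely delicate point is verifying that $(c\lambda)_k \geq L$ throughout $1\leq k\leq m$, i.e. that there is no "dip" below $L$ among the first $i$ columns that would break the flagged-filling argument — but this is immediate from the definition ${\sf leftmin}_{c\lambda}(i)=\min\{(c\lambda)_j : j\leq i\}$, so columns $1,\ldots,i$ all have height $\geq L$, and columns $i,\ldots,m$ have height $\geq L$ by the weak-decrease hypothesis and maximality of $m$. So no appeal to Lemma~\ref{lem:clampattern} is actually needed here; the pattern-avoidance content is already packaged into the hypotheses. I expect the whole proof to be short, with the main (minor) obstacle being bookkeeping the nested induction on rows versus columns cleanly.
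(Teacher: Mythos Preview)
Your proposal is correct and follows essentially the same approach as the paper: the paper's proof reads in full ``This first claim follows from the definition of ${\sf leftmin}_{c \lambda}(i)$. The latter then follows from inductively applying the flagged and row distinct properties of $F$,'' which is exactly the argument you have spelled out in detail. Your observation that Lemma~\ref{lem:clampattern} is not needed here is also correct---the paper does not invoke it either.
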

\begin{proof}
This first claim follows from the definition of ${\sf leftmin}_{c \lambda}(i)$. The latter then follows from inductively applying the flagged and row distinct properties of $F$.
\end{proof}

\begin{definition}
If $i < j$ with $(c \lambda)_k \geq (c \lambda)_{k+1}$ for $i \leq k < j$, ${\sf leftmin}_{c\lambda}(i) < (c\lambda)_i$, and 
${\sf rightmax}_{c\lambda}(j) > (c\lambda)_j$, then we say the pair $(i,j)$ is \emph{interweaved}. For such an $(i,j)$, define $${\sf center}_{c\lambda}(i,j) = \max\{ k : i \leq k \leq j \text{ and }(c\lambda)_k \geq {\sf rightmax}_{c\lambda}(j)\}.$$
\end{definition}

Notice ${\sf center}_{c\lambda}(i,j)\neq -\infty$ since $(c\lambda)_i\geq {\sf rightmax}_{c\lambda}(j)$ (otherwise, we have
${\sf leftmin}_{c\lambda}(i)<(c\lambda)_i<{\sf rightmax}_{c\lambda}(j)$ which says $c\lambda$ contains a $012$-pattern, contradicting Lemma~\ref{lem:clampattern}.

\begin{lemma}
\label{lem:formofclam}
Let $i < j$ with $(c \lambda)_k \geq (c \lambda)_{k+1}$ for $i \leq k < j$. Then
\begin{itemize}
\item[(i)] If ${\sf leftmin}_{c \lambda}(i) = (c \lambda)_i$ and ${\sf rightmax}_{c \lambda}(j) > (c \lambda)_{j}$, then \[
    | \left\{ (d,r) \in D(c \lambda) : d > i \right\} | \leq 1\text{ for }r > (c \lambda)_i,
\]
\item[(ii)] If ${\sf leftmin}_{c \lambda}(i) < (c \lambda)_i$\text{ and }${\sf rightmax}_{c \lambda}(j) = (c \lambda)_{j}$, then \[
    | \left\{ (d,r) \in D(c \lambda) : d \leq j \right\} | \geq j-1 \text{ for }{\sf leftmin}_{c \lambda}(i) < r \leq (c \lambda)_{j},
\]
\item[(iii)] If ${\sf leftmin}_{c \lambda}(i) < (c \lambda)_i$ and ${\sf rightmax}_{c \lambda}(j) > (c \lambda)_{j}$, then\[
	| \left\{ (d,r) \in D(c \lambda) : d \geq {\sf center}_{c \lambda}(i,j) \right\} | = 1\text{ for } {\sf leftmin}_{c \lambda}(i) < r \leq {\sf rightmax}_{c \lambda}(j),
\]
and
\[
| \left\{ (d,r) \in D(c \lambda) : d \leq {\sf center}_{c \lambda}(i,j) \right\} | = {\sf center}_{c \lambda}(i,j)-1\text{ for }{\sf leftmin}_{c \lambda}(i) < r \leq {\sf rightmax}_{c \lambda}(j).
\]
\end{itemize}
\end{lemma}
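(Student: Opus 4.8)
\textbf{Proof plan for Lemma~\ref{lem:formofclam}.}
The plan is to treat the three cases separately but in parallel, extracting in each case a structural constraint on the shape $D(c\lambda)$ from the relevant forbidden composition patterns listed in Lemma~\ref{lem:clampattern}. Throughout, the running hypothesis $(c\lambda)_k\geq(c\lambda)_{k+1}$ for $i\leq k<j$ means that the columns $i,i+1,\ldots,j$ of $D(c\lambda)$ form a weakly decreasing staircase, which is the geometric picture to keep in mind; the statements about $D(c\lambda)$ then transfer to every $F\in{\sf Tab}(c\lambda)$ exactly as in Lemma~\ref{lem:rectanglesw}, via the flagged and row-distinct conditions.

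For (i): since ${\sf leftmin}_{c\lambda}(i)=(c\lambda)_i$, no column to the left of $i$ exceeds height $(c\lambda)_i$, so the boxes in rows $r>(c\lambda)_i$ all sit in columns $\geq i$. It suffices to show that among columns $i,i+1,\ldots$ (actually only those $\leq$ the first place where ${\sf rightmax}$ is achieved) at most one column reaches height $>(c\lambda)_i$; if two did, say columns $i<d_1<d_2$ both had height $>(c\lambda)_i$, then together with column $i$ they would give a $0022$ pattern if the two tall columns are equal in height, or, using ${\sf rightmax}_{c\lambda}(j)>(c\lambda)_j$ to produce a short column to the right, an $0021$ or $1022$ pattern. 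The bookkeeping here is to identify which four indices to pick so that the pattern is genuinely one of the forbidden five; I expect this to be the main obstacle, since one must be careful about relative heights and the ``$|\alpha_{j_s}-\alpha_{j_t}|\ge|\beta_s-\beta_t|$'' magnitude condition in the definition of composition pattern.

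For (ii): this is in a sense dual to (i). Here ${\sf rightmax}_{c\lambda}(j)=(c\lambda)_j$, so no column $\geq j$ exceeds height $(c\lambda)_j$, and combined with the staircase hypothesis this forces columns $i,\ldots,j$ themselves to have height $\geq(c\lambda)_j$ wherever they are ``active'' — more precisely, for rows $r$ with ${\sf leftmin}_{c\lambda}(i)<r\leq(c\lambda)_j$, I claim all of columns $1,\ldots,j$ except possibly one are present in row $r$. The single possible exception comes from ${\sf leftmin}_{c\lambda}(i)<(c\lambda)_i$, which says some column $\leq i$ is short; were there two such short columns in the relevant row range, one would extract a $1032$ or $012$ pattern from those two short columns together with two of the full-height columns among $i+1,\ldots,j$. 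So the count of boxes $(d,r)$ with $d\leq j$ is at least $j-1$ in that row range.

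For (iii): now both deficiencies are present, and the definition of ${\sf center}:={\sf center}_{c\lambda}(i,j)$ is engineered precisely to be the unique column in $[i,j]$ of height $\geq{\sf rightmax}_{c\lambda}(j)$ that is as far right as possible. First I would show $(c\lambda)_{\sf center}\geq{\sf rightmax}_{c\lambda}(j)>{\sf leftmin}_{c\lambda}(i)$ (the first inequality by definition of ${\sf center}$, the strict one noted just before the lemma), and that every column strictly to the right of ${\sf center}$ and $\leq j$ has height $<{\sf rightmax}_{c\lambda}(j)$ by maximality of ${\sf center}$, while columns $>j$ have height $\leq{\sf rightmax}_{c\lambda}(j)$ with equality excluded by the staircase/${\sf rightmax}$ interplay; combined with avoidance of $0011$ (the extra pattern proved in Lemma~\ref{lem:clampattern}) this rules out a second column of height $\geq r$ among columns $>{\sf center}$ for $r$ in the stated range, giving exactly one box. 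For the second displayed equation, apply Lemma~\ref{lem:rectanglesw} with the staircase on $[i,{\sf center}]$: by definition ${\sf center}$ is the largest $m\in[i,j]$ with $(c\lambda)_m\geq{\sf rightmax}_{c\lambda}(j)$, and since ${\sf leftmin}_{c\lambda}(i)<{\sf rightmax}_{c\lambda}(j)\leq(c\lambda)_{\sf center}$, Lemma~\ref{lem:rectanglesw} (in a version with threshold ${\sf rightmax}_{c\lambda}(j)$ rather than ${\sf leftmin}_{c\lambda}(i)$, which holds by the identical staircase argument) tells us columns $1,\ldots,{\sf center}$ each contribute a box in every row $r\leq{\sf rightmax}_{c\lambda}(j)$ — but we must subtract the one short column $\leq i$ guaranteed by ${\sf leftmin}_{c\lambda}(i)<(c\lambda)_i$, and avoidance of $012$ forbids a second, yielding ${\sf center}-1$. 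The delicate point throughout (iii) is checking that the various ``at most one short column'' / ``at most one tall column'' claims really follow from the five-element forbidden list rather than requiring a sixth pattern; I would verify each extraction of four indices explicitly against Lemma~\ref{lem:clampattern}.
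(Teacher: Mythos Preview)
Your overall strategy matches the paper's: in each part, assume the box count is violated and extract from the offending columns, together with one or two auxiliary columns, a four-term composition pattern that lands in the forbidden list of Lemma~\ref{lem:clampattern}. For (i) and (ii) this is exactly what the paper does (though your aside in (i) that ``no column to the left of $i$ exceeds height $(c\lambda)_i$'' has the inequality reversed---${\sf leftmin}_{c\lambda}(i)=(c\lambda)_i$ means all columns $\leq i$ have height \emph{at least} $(c\lambda)_i$; it does not matter since you only count $d>i$).

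For (iii), however, several of your details would not survive as written. First, the claim that for columns $>j$ ``equality [with ${\sf rightmax}_{c\lambda}(j)$] is excluded'' is false: by definition some column $y>j$ achieves $(c\lambda)_y={\sf rightmax}_{c\lambda}(j)$, and that column contributes a box in every row $r\leq{\sf rightmax}_{c\lambda}(j)$. The first displayed equation is really asserting that among columns to the right of ${\sf center}_{c\lambda}(i,j)$ there is exactly one such box, and the argument must accommodate this column rather than exclude it. Second, invoking only $0011$-avoidance (first equation) or only $012$-avoidance (second equation) is not enough. The paper fixes witness indices $x<i$ with $(c\lambda)_x={\sf leftmin}_{c\lambda}(i)$ and $y>j$ with $(c\lambda)_y={\sf rightmax}_{c\lambda}(j)$; for the first equation, with ${\sf center}_{c\lambda}(i,j)<d_1<d_2$ both present at row $r$, one splits on $d_1\leq j$ (where $(x,d_1,y)$ gives $012$) versus $j<d_1<d_2$ (where $(x,j,d_1,d_2)$ is checked against the full list $012,1032,0021,0011,1022$); for the second equation, with $d_1<d_2<{\sf center}_{c\lambda}(i,j)$ both absent at row $r$, one splits on $d_2\geq i$ (contradicting the definition of ${\sf center}$ via the staircase) versus $d_1<d_2<i$ (where $(d_1,d_2,i,y)$ is checked against the same list). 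Third, your attempt to reach the second equation via a ``modified Lemma~\ref{lem:rectanglesw} with threshold ${\sf rightmax}_{c\lambda}(j)$'' cannot work: that lemma needs every column up to the relevant index to have height at least the threshold, and column $x$ has height ${\sf leftmin}_{c\lambda}(i)<{\sf rightmax}_{c\lambda}(j)$. You must argue directly, as above, that at most one column $\leq{\sf center}_{c\lambda}(i,j)$ can be short---and that requires the full pattern list, not $012$ alone.
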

\begin{proof}
(i): Let $r > (c \lambda)_i$. If $j < d_1 < d_2$, then $c \lambda$ contains the pattern ($(c \lambda)_{i}$, $(c \lambda)_{j}, (c \lambda)_{d_1}$, $(c \lambda)_{d_2}$). Suppose that $(d_1, r),(d_2, r) \in D(c \lambda)$. This implies $(c \lambda)_{d_1}, (c \lambda)_{d_2} \geq (c \lambda)_{i}$.
This, combined with $(c \lambda)_{i} \geq (c \lambda)_{j}$, implies $((c \lambda)_{i}, (c \lambda)_{j}, (c \lambda)_{d_1}, (c \lambda)_{d_2})$ contains $012$, $1032$, $0021$, $0011$, or $1022$. This contradicts Lemma~\ref{lem:clampattern}. Thus 
\[| \left\{ (d, r) \in D(c \lambda) : d > j \right\} | \leq 1\text{ for }r > (c \lambda)_i.\] 
Further, since $r > (c \lambda)_i \geq (c \lambda)_k$ for $i \leq k \leq j$, 
\[
    | \left\{ (d, r) \in D(c \lambda) : d > i \right\} | \leq 1\text{ for }r > (c \lambda)_i.
\]

(ii): Let ${\sf leftmin}_{c \lambda}(i) < r \leq (c \lambda)_{j}$. If $d_1 < d_2 < i$, then $c \lambda$ contains the pattern $((c \lambda)_{d_1}$, $(c \lambda)_{d_2}$, $(c \lambda)_{i}$, $(c \lambda)_{j})$. Suppose that $(d_1, r),(d_2, r) \notin D(c \lambda)$. This implies $(c \lambda)_{d_1}, (c \lambda)_{d_2} \leq (c \lambda)_{j}$.
This, combined with $(c \lambda)_{i} \geq (c \lambda)_{j}$, implies $((c \lambda)_{d_1}, (c \lambda)_{d_2}, (c \lambda)_{i}, (c \lambda)_{j})$ contains $012$, $1032$, $0021$, $0011$, or $1022$. This contradicts Lemma~\ref{lem:clampattern}. Thus 
\[| \left\{ (d, r) \in D(c \lambda) : d \leq i \right\} | \geq i-1 \text{ for }{\sf leftmin}_{c \lambda}(i) < r \leq (c \lambda)_{j}.\] 
Further,  since $r \leq (c \lambda)_j \leq (c \lambda)_k$ for $i \leq k \leq j$,
\[
    | \left\{ (d, r) \in D(c \lambda) : d \leq j \right\} | \geq j-1 \text{ for }{\sf leftmin}_{c \lambda}(i) < r \leq (c \lambda)_{j}.
\]

(iii): Let $x$ be an integer such that $x < i$ and $(c \lambda)_x = {\sf leftmin}_{c \lambda}(i)$, and $y$ be an integer such that $y > j$ and $(c \lambda)_y = {\sf rightmax}_{c \lambda}(j)$.

Our claim holds vacuously if $(c \lambda)_x \geq (c \lambda)_y$. Hence, for the rest of the proof we assume $(c \lambda)_x < (c \lambda)_y$. Now $c \lambda$ contains the pattern $((c \lambda)_x, (c \lambda)_i, (c \lambda)_{j}, (c \lambda)_y)$ and by Lemma~\ref{lem:clampattern} this pattern avoids $012$. This, combined with $(c \lambda)_{j} < (c \lambda)_y$, implies
\begin{equation}
\label{eq:case4eq1}
(c \lambda)_x \geq (c \lambda)_{j}.
\end{equation}
It further implies, when combined with $(c \lambda)_{x} < (c \lambda)_i$, that
\begin{equation}
\label{eq:case4eq11}
(c \lambda)_i \geq (c \lambda)_{y}.
\end{equation}

Let $(c \lambda)_x < r \leq (c \lambda)_y$. Let ${\sf center}_{c \lambda}(i,j) < d_1 < d_2$. Suppose, to obtain a contradiction, that $(d_1, r), (d_1, r) \in D(c \lambda)$. Then 
\begin{equation}
\label{eq:case4eq2}
(c \lambda)_{d_1}, (c \lambda)_{d_2} > (c \lambda)_x.
\end{equation}
If $d_1 \leq j$, then the definition of ${\sf center}_{c \lambda}(i,j)$ implies $(c \lambda)_{d_1} < (c \lambda)_{y}$. This implies $c \lambda$ contains the pattern $((c \lambda)_x, (c \lambda)_{d_1}, (c \lambda)_y)$ which is a $012$ pattern. This contradicts Lemma~\ref{lem:clampattern}. Otherwise, if $j < d_1 < d_2$, then $c \lambda$ contains the pattern $((c \lambda)_x, (c \lambda)_{j}, (c \lambda)_{d_1}, (c \lambda)_{d_2})$. By \eqref{eq:case4eq1} and \eqref{eq:case4eq2}, this pattern contains $012$, $1032$, $0021$, $0011$, or $1022$. This contradicts Lemma~\ref{lem:clampattern}. Thus
\begin{equation}
\label{eq:case4claimCardBoxes}
    | \left\{ (d, r) \in D(c \lambda) : d \geq {\sf center}_{c \lambda}(i,j) \right\} | = 1\text{ for } {\sf leftmin}_{c \lambda}(i) < r \leq {\sf rightmax}_{c \lambda}(j).
\end{equation}

Let $(c \lambda)_x < r \leq (c \lambda)_y$. Let $d_1 < d_2 < {\sf center}_{c \lambda}(i,j)$. Suppose, to obtain a contradiction, that $(d_1, r), (d_2, r) \notin D(c \lambda)$. Thus 
\begin{equation}
\label{eq:case4eq22}
(c \lambda)_{d_1}, (c \lambda)_{d_2} < (c \lambda)_y.
\end{equation}
If $d_2 \geq i$, then $(c \lambda)_{d_2} \geq (c \lambda)_{j}$ and the definition of ${\sf center}_{c \lambda}(i,j)$ implies $(c \lambda)_{d_2} \geq (c \lambda)_y$. This contradicts \eqref{eq:case4eq22}. Otherwise, if $d_1 < d_2 < i$, then $c \lambda$ contains $((c \lambda)_{d_1}, (c \lambda)_{d_2}, (c \lambda)_{i}, (c \lambda)_{y})$. By \eqref{eq:case4eq11} and \eqref{eq:case4eq22}, this pattern contains $012$, $1032$, $0021$, $0011$, or $1022$. This contradicts Lemma~\ref{lem:clampattern}. We conclude $| \left\{ (d, r) \in D(c \lambda) : d \leq {\sf center}_{c \lambda}(i,j) \right\} | \geq {\sf center}_{c \lambda}(i,j)-1$. Since $(c \lambda)_x < r$, we can strengthen this inequality to
\[
    |\! \left\{ (d, r) \!\in\! D(c \lambda) \!:\! d \!\leq\! {\sf center}_{c \lambda}(i,j) \right\}\! |\! = \!{\sf center}_{c \lambda}(i,j)\!-\!1\text{ for }{\sf leftmin}_{c \lambda}(i) \!<\! r \!\leq\! {\sf rightmax}_{c \lambda}(j).
    \qedhere
\]\end{proof}

\begin{proposition}
\label{prop:goingup}
Let $\beta \in {\mathcal P}_{c \lambda,{\bf \gamma}}$, $i<j$ in the same block,
 and $\beta_i > \beta_{j} - (j-i)$. Then $t_{i,j} \beta \in {\mathcal P}_{c \lambda,{\bf \gamma}}$ if and only if 
\begin{enumerate}
\item ${\sf leftmin}_{c \lambda}(i) \leq \beta_{j} - (j-i)$;
\item ${\sf rightmax}_{c \lambda}(j) \geq \beta_i + (j-i)$; and
\item if $(i,j)$ is interweaved, then $$\qquad {\small \beta_1 + \cdots + \beta_{i-1} + (\beta_{j} - (j-i)) + \beta_{i+1} + \cdots + \beta_{{\sf center}_{c \lambda}(i,j)} \geq (c \lambda)_1 + \cdots + (c \lambda)_{{\sf center}_{c \lambda}(i,j)}.}$$
\end{enumerate}
\end{proposition}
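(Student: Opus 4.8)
The plan is to reformulate membership of $t_{ij}\beta$ in $\mathcal{P}_{c\lambda,\gamma}$ via Theorem~\ref{thm:nonzerorule}: since $t_{ij}\beta\in\mathcal{S}_{I,\gamma}$ automatically (by Lemma~\ref{lemma:containsall}, $\mathcal{S}_{I,\gamma}$ consists of all $\beta'$ with $\pi_{w_0(I)}x^{\beta'}=\pm s_\gamma$, and $t_{ij}$ stays inside this set by Lemma~\ref{lemma:commutes}), the only content is whether $[x^{t_{ij}\beta}]\kappa_{c\lambda}\neq 0$, i.e.\ whether there exists $F\in\mathsf{Tab}(c\lambda)$ of weight $t_{ij}\beta=(\ldots,\beta_j-(j-i),\ldots,\beta_i+(j-i),\ldots)$. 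Note $\beta_i>\beta_j-(j-i)$ forces $\beta_i+(j-i)>\beta_j$, so $t_{ij}\beta$ covers $\beta$; we are genuinely moving ``up'' in $\mathcal{S}_{I,\gamma}$. Since $i,j$ are in the same block, Lemma~\ref{lem:clamblockdecrease} gives $(c\lambda)_k\geq(c\lambda)_{k+1}$ for $i\leq k<j$, so all the structural lemmas of this section (Lemmas~\ref{lem:lminrmax}, \ref{lem:rectanglesw}, \ref{lem:formofclam}) apply to the pair $(i,j)$.

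For the ``only if'' direction (necessity of (1)--(3)): suppose $F\in\mathsf{Tab}(c\lambda)$ has weight $t_{ij}\beta$. Conditions (1) and (2) follow directly from Lemma~\ref{lem:lminrmax}(i),(ii): $(\mathsf{wt}(F))_i=\beta_j-(j-i)\geq\mathsf{leftmin}_{c\lambda}(i)$ and $(\mathsf{wt}(F))_j=\beta_i+(j-i)\leq\mathsf{rightmax}_{c\lambda}(j)$. For (3), when $(i,j)$ is interweaved, I would use Lemma~\ref{lem:formofclam}(iii): the columns $d\leq m:=\mathsf{center}_{c\lambda}(i,j)$ are ``full'' in rows up to $\mathsf{leftmin}_{c\lambda}(i)$ by Lemma~\ref{lem:rectanglesw}, and in rows strictly between $\mathsf{leftmin}_{c\lambda}(i)$ and $\mathsf{rightmax}_{c\lambda}(j)$ the shape $D(c\lambda)$ restricted to columns $\leq m$ has exactly $m-1$ boxes per row, while beyond column $m$ there is exactly one box per such row. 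Counting which labels $1,\ldots,m$ can fill the (flagged) boxes in columns $\leq m$, and tracking that the total weight carried by labels $\leq m$ inside $F$ is at least the number of boxes in columns $\leq m$, translates into the inequality $\sum_{k\leq m}(\mathsf{wt}(F))_k\geq\sum_{k\leq m}(c\lambda)_k$; plugging in $\mathsf{wt}(F)=t_{ij}\beta$ and noting index $j>m$ (since $\mathsf{center}_{c\lambda}(i,j)<j$ as $(c\lambda)_j<\mathsf{rightmax}_{c\lambda}(j)$) gives exactly (3).

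For the ``if'' direction (sufficiency): starting from $F\in\mathsf{Tab}(c\lambda)$ of weight $\beta$ (which exists since $\beta\in\mathcal{P}_{c\lambda,\gamma}$), I would construct $F'\in\mathsf{Tab}(c\lambda)$ of weight $t_{ij}\beta$ by relabeling: roughly, replace some occurrences of $i$ by $j$ and vice versa so as to move $(j-i)$ units of weight from position $i$ to position $j$. The delicate point is that this relabeling must respect both flaggedness (labels in column $d$ are $\leq d$) and row-distinctness. Conditions (1) and (2) guarantee enough ``room'': (1) says the current weight $\beta_i$ can be lowered to $\beta_j-(j-i)$ because there are at least that many columns forced to contain $i$ (via Lemma~\ref{lem:rectanglesw} type reasoning), and (2) says label $j$ can appear $\beta_i+(j-i)$ times because $\mathsf{rightmax}$ is large enough (Lemma~\ref{lem:formofclam}(i),(ii) controlling how boxes are distributed). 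In the interweaved case I expect to need (3) to certify that, after making columns $\leq\mathsf{center}_{c\lambda}(i,j)$ carry their forced labels, the remaining weight to be redistributed among columns $>\mathsf{center}_{c\lambda}(i,j)$ (where by Lemma~\ref{lem:formofclam}(iii) each relevant row has a unique box) can be realized; essentially (3) is the precise bookkeeping inequality that makes a greedy row-by-row relabeling succeed.

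The main obstacle I anticipate is the sufficiency argument in the interweaved case: one must exhibit an explicit valid filling (or a valid sequence of Kohnert moves, via Theorem~\ref{thm:Kohnert}, as an alternative) rather than merely counting, and showing that conditions (1)--(3) are not just necessary but jointly sufficient requires carefully organizing the relabeling so that no row acquires a repeated label and no column exceeds its flag bound simultaneously. I would handle this by splitting the index range into the three zones $[1,i-1]$, the ``center block'' $[i,\mathsf{center}_{c\lambda}(i,j)]$, and $[\mathsf{center}_{c\lambda}(i,j)+1,n]$ (using $[i,j]$ and $[j+1,n]$ in the non-interweaved subcases), invoking Lemma~\ref{lem:formofclam} to pin down the box counts in each zone and in each row-band, and then performing the relabeling band by band from the top row downward, where the inequalities (1)--(3) are exactly what is needed to keep the process consistent at every step.
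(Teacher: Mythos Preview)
Your strategy is correct and aligns with the paper's proof. Two organizational points are worth noting. For the necessity of (3), the paper takes a shorter route than your box-counting via Lemma~\ref{lem:formofclam}(iii): since $j>{\sf center}_{c\lambda}(i,j)$, condition~(3) is literally the dominance inequality $\sum_{k\leq m}(t_{ij}\beta)_k\geq\sum_{k\leq m}(c\lambda)_k$ at $m={\sf center}_{c\lambda}(i,j)$, so it follows in one line from Corollary~\ref{cor:weightsdom}. For sufficiency, rather than your three-zone split, the paper breaks into four cases according to whether ${\sf leftmin}_{c\lambda}(i)=(c\lambda)_i$ and whether ${\sf rightmax}_{c\lambda}(j)=(c\lambda)_j$; the case where both equalities hold is shown to be impossible, and in each of the remaining three a new filling $G$ is built from $F$ by an explicit row-by-row relabeling rule (replacing certain $i$'s by $j$'s, repacking the first few columns when necessary). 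Your zone decomposition and the paper's case split encode the same distinctions---the non-interweaved subcases are exactly when one of your outer zones degenerates---so the two approaches converge once the details are written out.
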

\begin{proof}
\noindent $(\Rightarrow)$ We prove the contrapositive. That is, we assume that ${\sf leftmin}_{c \lambda}(i) > \beta_{j} - (j-i)$, ${\sf rightmax}_{c \lambda}(j) < \beta_i + (j-i)$, or $(i,j)$ is interweaved with $\beta_1 + \cdots + \beta_{i-1} + (\beta_{j} - (j-i)) + \beta_{i+1} + \cdots + \beta_{{\sf center}_{c \lambda}(i,j)} < (c \lambda)_1 + \cdots (c \lambda)_{{\sf center}_{c \lambda}(i,j)}$. Let $\tau = t_{i,j} \beta$ and suppose, to seek a contradiction, that $F \in {\sf Tab}(c \lambda)$ with $\tau = {\sf wt}(F)$.

\noindent \textit{Case ${\sf leftmin}_{c \lambda}(i) > \beta_{j} - (j-i)$:}  By the case hypothesis, ${\sf leftmin}_{c \lambda}(i) > \tau_{i} = ({\sf wt}(F))_{i}$. This contradicts Lemma~\ref{lem:lminrmax}(i).

\noindent \textit{Case ${\sf rightmax}_{c \lambda}(i) < \beta_j + (j-i)$:} By the case hypothesis, ${\sf rightmax}_{c \lambda}(i) < \tau_{j} = ({\sf wt}(F))_{j}$. This contradicts Lemma~\ref{lem:lminrmax}(ii).

\noindent \textit{Case $(i,j)$ is interweaved with $\beta_1 + \cdots + \beta_{i-1} + (\beta_{j} - (j-i)) + \beta_{i+1} + \cdots + \beta_{{\sf center}_{c \lambda}(i,j)} < (c \lambda)_1 + \cdots +(c \lambda)_{{\sf center}_{c \lambda}(i,j)}$:} The case hypothesis implies that  $c \lambda \nleq_{{\sf dom}} \tau$. This contradicts Corollary~\ref{cor:weightsdom}.

\noindent $(\Leftarrow)$ Since $[x^\beta]\kappa_{c \lambda}\neq 0$, we know there exists an $F \in {\sf Tab}(c \lambda)$ with ${\sf wt}(F)=\beta$. There are four cases to consider.

\noindent \textit{Case ${\sf leftmin}_{c \lambda}(i) = (c \lambda)_i$\text{ and }${\sf rightmax}_{c \lambda}(j) = (c \lambda)_{j}$}: By Lemma~\ref{lem:lminrmax}(iii), $\beta_i = (c \lambda)_i$ and  $\beta_{j} = (c \lambda)_{j}$. Thus
\begin{equation}
(c \lambda)_i = {\sf leftmin}_{c \lambda}(i) \leq \beta_{j} - (j-i) = (c \lambda)_{j} - (j-i),
\end{equation}
where the first equality is the case hypothesis, the inequality is the proposition hypothesis. Thus $j > i$ implies that $(c \lambda)_i < (c \lambda)_j$. This is a contradiction of Lemma~\ref{lem:clamblockdecrease}, and hence this case cannot occur.

\noindent \textit{Case ${\sf leftmin}_{c \lambda}(i) = (c \lambda)_i$\text{ and }${\sf rightmax}_{c \lambda}(j) > (c \lambda)_{j}$}:  By Lemma \ref{lem:rectanglesw}, $F(d, r) = d$ for all $1 \leq d \leq i$ and $r \leq (c \lambda)_i$. Hence, there is an $i$ in every row $r \leq (c \lambda)_i$ of $F$, and 
\begin{equation}
\label{eq:ababab2021}
\beta_i \geq (c \lambda)_i.
\end{equation}

The flagged property of $F$, combined with Lemma \ref{lem:formofclam}(i), implies that either $i$ or $j$, but not both, are in row $r > (c \lambda)_i$ of $F$. By the definition of $F$ and \eqref{eq:ababab2021}, there are exactly $\beta_i - (c \lambda)_i \geq 0$ such rows containing only $i$, but not $j$. By the case and proposition hypotheses, 
\[\beta_i - (c \lambda)_i = \beta_i - {\sf leftmin}_{c \lambda}(i) \geq \beta_i - (\beta_{j} - (j-i)) > 0.\] Setting $v := \beta_i - (\beta_{j} - (j-i))$ we can choose $v$ rows $r_1,\ldots,r_v > (c \lambda)_i$ in $F$ that contain $i$ and not $j$. 

The filling $G$ is obtained from $F$ by changing the $i$ in rows $r_1,\ldots,r_v$ to a $j$. By construction, $G$ is row distinct. For $i \leq k \leq j$, the boxes $(k, r_1),\ldots(k,r_v) \notin D(c \lambda)$  since $r_1,\ldots,r_v > (c \lambda)_i \geq (c \lambda)_k$. Hence the flagged property of $F$ implies that the $i$ in these rows of $F$ must appear in a column strictly greater than $j$. Thus the $j$ in these rows of $G$ appears in a column greater than $j$, and $G$ is flagged.

Let $\tau = {\sf wt}(G)$. Then $\tau_i = \beta_i - v = \beta_i - (\beta_i - (\beta_{j} - (j-i))) = \beta_{j} - (j-i)$, $\tau_{j} = \beta_{j} + v = \beta_{j} + (\beta_i - (\beta_{j} - (j-i))) = \beta_{i} + (j-i)$. Otherwise, $\tau_k = \beta_k$ for $r \neq i, j$. Thus $\tau = t_{i,j} \beta$. We conclude that $t_{i,j} \beta$ is an exponent vector of $\kappa_{c \lambda}$.

\noindent \textit{Case ${\sf leftmin}_{c \lambda}(i) < (c \lambda)_i$\text{ and }${\sf rightmax}_{c \lambda}(j) = (c \lambda)_{j}$}: 
The row distinct and flagged properties of $F$, combined with Lemma~\ref{lem:rectanglesw} and Lemma~\ref{lem:formofclam}(ii), imply that at least one of $i$ or $j$ are in row $r$ of $F$ for $1 \leq r \leq (c \lambda)_{j}$. By the case and proposition hypotheses, $\beta_{j} < \beta_{i} + (j-i) \leq {\sf rightmax}_{c \lambda}(j) = (c \lambda)_{j}$.

 Hence, there are at least $(c \lambda)_{j} - \beta_{j}$ rows $r$, with $1 \leq r \leq (c \lambda)_{j}$, of $F$ that contain $i$ but not $j$. Setting $v := \beta_i + (j-i) - \beta_{j} \leq (c \lambda)_{j} - \beta_{j}$, we choose $v$ rows in $F$, $r_1,\ldots,r_v \leq (c \lambda)_{j}$, that contain $i$ but not $j$. By Lemma \ref{lem:formofclam}(ii) and the flagged property of $F$, for each $e \in \{r_1,\ldots,r_v\}$ there is exactly one $d_e \leq j$ such that $(d_e, e) \notin D(c \lambda)$. It follows, by the definition of $e$ and the flagged property of $F$, that the content of row $e$ in the first $j$ columns of $F$ is equal to $\{1,\ldots,j-1\}$. We use this fact to define the filling $G$.

The filling $G$ is obtained from $F$ via the following rule. Let $1 \leq e \leq \lambda_1$, then
\begin{itemize}
    \item[(i)] $e \notin \{r_1,\ldots,r_v\}$: The $e$-th row of $G$ equals the $e$-th row of $F$.
    \item[(ii)] $e \in \{r_1,\ldots,r_v\}$: The $e$-th row of $G$ is defined by filling each of the values in $[j] \setminus \{i\}$ in the minimal column possible. Explicitly, $G(d, e)=d$ for $d<d_e$, $G(d, e)=d-1$ for $d_e < d \leq i$, $G(d, e)=d$ for $i < r < j$. Then, set $G(j, e)=j$, and for any column greater than $j$ the entries in row $e$ of $F$ and $G$ coincide.
\end{itemize}

Clearly $G$ is row distinct; for $e \in \{r_1,\ldots,r_v\}$, the content of row $e$ of $G$ is equal to the content of row $e$ of $F$ with the unique $i$ replaced by $j$. It is equally easy to verify that each of (i)-(ii) leaves the respective column in $G$ satisfying the flagged constraint. Let $\tau = {\sf wt}(G)$. Then $\tau_i = \beta_i - v = \beta_i - (\beta_i + (j-i) - \beta_{j}) = \beta_{j} - (j-i)$, $\tau_{j} = \beta_{j} + v = \beta_{j} + (\beta_i + (j-i) - \beta_{j}) = \beta_{i} + (j-i)$. Otherwise, $\tau_k = \beta_k$ for $k \neq i, j$. Thus $\tau = t_{i,j} \beta$. We conclude that $t_{i,j} \beta$ is an exponent vector of $\kappa_{c \lambda}$.

\noindent \textit{Case ${\sf leftmin}_{c \lambda}(i) < (c \lambda)_i$\text{ and }${\sf rightmax}_{c \lambda}(j) > (c \lambda)_{j}$}: Let $x$ be an integer such that $x < i$ and $(c \lambda)_x = {\sf leftmin}_{c \lambda}(i)$, and $y$ be an integer such that $y > j$ and $(c \lambda)_y = {\sf rightmax}_{c \lambda}(j)$. Suppose, for sake of contradiction, that $(c \lambda)_x \geq (c \lambda)_y$. Then, by Lemma~\ref{lem:lminrmax}(i), $\beta_i \geq (c \lambda)_x \geq (c \lambda)_y = {\sf rightmax}_{c \lambda}(j)$. Thus, $\beta_i + (j-i) > {\sf rightmax}_{c \lambda}(j)$, which contradicts the hypothesis (2). Thus,
\begin{equation}
\label{eq:case4leftminlessthanrightmax}
(c \lambda)_x < (c \lambda)_{y}.
\end{equation}

Corollary~\ref{cor:weightsdom} implies $\beta_1 + \cdots + \beta_{{\sf center}_{c \lambda}(i,j)} \geq (c \lambda)_1 + \cdots + (c \lambda)_{{\sf center}_{c \lambda}(i,j)}$. Now
\begin{equation}
\label{eq:centerijoverflow}
\begin{split}
| \left\{ (d, r) \in D(c \lambda) : d > {{\sf center}_{c \lambda}(i,j)}\text{, }1 \leq r \leq (c \lambda)_y \text{, and }F(d, r) \leq {{\sf center}_{c \lambda}(i,j)} \right\}| = & \\
\beta_1 + \cdots + \beta_{{\sf center}_{c \lambda}(i,j)} - ((c \lambda)_1 + \cdots + (c \lambda)_{{\sf center}_{c \lambda}(i,j)}) & \\
\end{split}
\end{equation}
Then our hypothesis $\beta_1 + \cdots + \beta_{i-1} + (\beta_{j} - (j-i)) + \beta_{i+1} + \cdots + \beta_{{\sf center}_{c \lambda}(i,j)} \geq (c \lambda)_1 + \cdots + (c \lambda)_{{\sf center}_{c \lambda}(i,j)}$ is equivalent to $\beta_1 + \cdots + \beta_{{\sf center}_{c \lambda}(i,j)} - ((c \lambda)_1 + \cdots + (c \lambda)_{{\sf center}_{c \lambda}(i,j)}) \geq \beta_i - (\beta_{j} - (j-i))$. Applying this to \eqref{eq:centerijoverflow} yields
\begin{equation}
\label{eq:case4claimboxeslessei}
\begin{split}
    | \left\{ (d, r) \in D(c \lambda) : d > {{\sf center}_{c \lambda}(i,j)}\text{, }1 \leq r \leq (c \lambda)_y \text{, and }F(d, r) \leq {{\sf center}_{c \lambda}(i,j)} \right\}| \geq & \\
    \beta_i - (\beta_{j} - (j-i)). & \\
\end{split}
\end{equation}
We can further refine \eqref{eq:case4claimboxeslessei}. By the definition of ${\sf center}_{c \lambda}(i,j)$, \eqref{eq:case4leftminlessthanrightmax}, and Lemma~\ref{lem:clamblockdecrease}, $(c \lambda)_{d} \geq (c \lambda)_x$ for all $i \leq d \leq {\sf center}_{c \lambda}(i,j)$. By Lemma \ref{lem:rectanglesw}, $F(d, r) = d$ for all $d \leq {\sf center}_{c \lambda}(i,j)$ and $r \leq (c \lambda)_x$. Thus, the row distinct property of $F$ transforms \eqref{eq:case4claimboxeslessei} into
\begin{multline}
\label{eq:case4claimboxeslesseirefined}
    | \left\{ (d, r) \in D(c \lambda) : d > {{\sf center}_{c \lambda}(i,j)}\text{, }(c \lambda)_x < r \leq (c \lambda)_y \text{, and }F(d, r) \leq {\sf center}_{c \lambda}(i,j) \right\} |\\  \geq \beta_i - (\beta_{j} - (j-i)).
\end{multline}
By Lemma \ref{lem:formofclam}(iii), the rows $(c\lambda)_x< r\leq c\lambda\leq (c\lambda)_y$ have ${\sf center}_{c \lambda}(i,j)$ boxes in $D(c\lambda)$. By \eqref{eq:case4claimboxeslesseirefined}, we can pick $v := \beta_i - (\beta_{j} - (j-i))$ of these rows, where the ${\sf center}_{c \lambda}(i,j)$ many boxes of $D(c\lambda)$ are filled using precisely the labels $1,2,\ldots,{\sf center}_{c \lambda}(i,j)$. By Lemma \ref{lem:formofclam}(iii), for each $e \in \{r_1,\ldots,r_v\}$ there is exactly one $d_e \leq {\sf center}_{c \lambda}(i,j) $ such that $(d_e, e) \notin D(c \lambda)$.

The filling $G$ is obtained from $F$ via the following rule. Let $1 \leq e \leq \lambda_1$.
\begin{itemize}
    \item[(i)] $e \notin \{r_1,\ldots,r_v\}$: The $e$-th row of $G$ equals the $e$-th row of $F$.
    \item[(ii)] The $e$-th row of $G$ is defined by filling each of the values in $[{\sf center}_{c \lambda}(i,j)-1] \setminus \{i\}$ in the minimal column possible. Explicitly, $G(d, e)= d$ for $d<d_e$, $G(d, e)=d-1$ for $d_e < d \leq i$, $G(d, e)=d$ for $i < d \leq {\sf center}_{c \lambda}(i,j)$. Then set the value of the unique box in a column greater than ${\sf center}_{c \lambda}(i,j)$ to be $j$.
\end{itemize}
Clearly $G$ is row distinct; for $e \in \{r_1,\ldots,r_v\}$, the content of row $e$ of $G$ is equal to the content of row $e$ of $F$ with the unique $i$ replaced by $j$. It is an easy check to verify that each of (i)-(ii) leaves the respective row in $G$ satisfying the flagged constraint. Let $\tau = {\sf wt}(G)$. Then $\tau_i = \beta_i - v = \beta_i - (\beta_i - (\beta_{j} - (j-i))) = \beta_{j} - (j-i)$, $\tau_{j} = \beta_{j} + v = \beta_{j} + (\beta_i - (\beta_{j} - (j-i))) = \beta_{i} + (j-i)$. Otherwise, $\tau_k = \beta_k$ for $k \neq i, j$. Thus $\tau = t_i \beta$. We conclude that $t_i \beta$ is an exponent vector of $\kappa_{c \lambda}$.
\end{proof}

\begin{lemma}
\label{lemma:overflowintorightmax} Let $i \in [n-1]$ and $\beta \in {\mathcal P}_{c \lambda,{\bf \gamma}}$. Then $$\max \{ {\sf rightmax}_{c \lambda}(i + 1) - {\sf leftmin}_{c \lambda}(i), 0 \} \geq \beta_1 + \cdots + \beta_i - ((c \lambda)_1 + \cdots + (c \lambda)_i).$$ 
\end{lemma}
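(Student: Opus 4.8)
The plan is to fix one filling realizing $\beta$ and reinterpret the left-hand side as a count of ``overflow'' cells. Since $\beta\in {\mathcal P}_{c\lambda,\gamma}$ we have $[x^\beta]\kappa_{c\lambda}\neq 0$, so by Theorem~\ref{thm:nonzerorule} there is $F\in {\sf Tab}(c\lambda)$ with ${\sf wt}(F)=\beta$. Then $\beta_1+\cdots+\beta_i$ counts the cells of $F$ carrying a label in $\{1,\dots,i\}$; by the flagged condition every cell in columns $1,\dots,i$ carries such a label, and there are exactly $(c\lambda)_1+\cdots+(c\lambda)_i$ such cells. Hence
\[
\beta_1+\cdots+\beta_i-\big((c\lambda)_1+\cdots+(c\lambda)_i\big)=\#\{(d,r)\in D(c\lambda): d>i,\ F(d,r)\le i\},
\]
the number of \emph{overflow cells}. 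Writing $L:={\sf leftmin}_{c\lambda}(i)$ and $R:={\sf rightmax}_{c\lambda}(i{+}1)$, it therefore suffices to bound the number of overflow cells by $\max\{R-L,0\}$.

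First I would localize overflow cells to the rows $r$ with $L<r\le R$. The inequality $r\le R$ is immediate, since an overflow cell $(d,r)$ has $d\ge i+1$ and $(d,r)\in D(c\lambda)$, so $r\le (c\lambda)_d\le R$. For $r>L$: if $r\le L$ then $(c\lambda)_k\ge L\ge r$ for all $k\le i$, so all cells $(k,r)$ with $k\le i$ lie in $D(c\lambda)$, and the flagged and row-distinct conditions force $F(k,r)=k$ for $1\le k\le i$ (the ``rectangle'' argument from the proof of Lemma~\ref{lem:rectanglesw}); then the label $F(d,r)\le i$ of the overflow cell already occurs in row $r$ in a column $\le i$, contradicting row-distinctness. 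Thus all overflow cells lie in the $\max\{R-L,0\}$ rows $r$ with $L<r\le R$.

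The crux is to show each row contains at most one overflow cell; the bound then follows. Suppose row $r$ had overflow cells $(d_1,r)$ and $(d_2,r)$ with $i<d_1<d_2$ and labels $a=F(d_1,r)$, $b=F(d_2,r)$ in $\{1,\dots,i\}$, necessarily distinct, and set $m:=\max(a,b)\le i$. Since neither $a$ nor $b$ occurs among the cells of row $r$ in columns $1,\dots,m$ (they appear in columns $>i\ge m$), those $m$ columns hold at most $m-2$ cells of row $r$, producing indices $x_1<x_2\le m$ with $(c\lambda)_{x_1},(c\lambda)_{x_2}<r$. Now $x_1<x_2<d_1<d_2$ with $(c\lambda)_{x_1},(c\lambda)_{x_2}<r\le (c\lambda)_{d_1},(c\lambda)_{d_2}$, and a short case split finishes: if $(c\lambda)_{x_1}<(c\lambda)_{x_2}$ then $(x_1,x_2,d_1)$ is a $012$ pattern, and if $(c\lambda)_{d_1}<(c\lambda)_{d_2}$ then $(x_2,d_1,d_2)$ is a $012$ pattern; otherwise $(c\lambda)_{x_1}\ge (c\lambda)_{x_2}$ and $(c\lambda)_{d_1}\ge (c\lambda)_{d_2}$, and the quadruple $(x_1,x_2,d_1,d_2)$ realizes $0011$, $0021$, $1022$, or $1032$ according to which of the equalities $(c\lambda)_{x_1}=(c\lambda)_{x_2}$, $(c\lambda)_{d_1}=(c\lambda)_{d_2}$ hold. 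Every possibility contradicts Lemma~\ref{lem:clampattern}, so there is at most one overflow cell per row, and the number of overflow cells is at most $\#\{r:L<r\le R\}=\max\{R-L,0\}$.

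The main obstacle is the final case analysis: one must check that each configuration of the columns $x_1<x_2<d_1<d_2$ genuinely certifies a forbidden composition pattern, which requires care with the gap conditions $|\alpha_{j_s}-\alpha_{j_t}|\ge |\beta_s-\beta_t|$ in the definition of pattern containment (for instance, that when $(c\lambda)_{x_1}>(c\lambda)_{x_2}$ and $(c\lambda)_{d_1}>(c\lambda)_{d_2}$ the strict chain of integers $(c\lambda)_{x_2}<(c\lambda)_{x_1}<(c\lambda)_{d_2}<(c\lambda)_{d_1}$ forces enough separation to certify $1032$). The first two steps are routine bookkeeping about flagged, row-distinct fillings.
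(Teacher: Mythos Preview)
Your argument is correct and genuinely simpler than the paper's. Both proofs begin with the same ``overflow'' identity
\[
\beta_1+\cdots+\beta_i-\big((c\lambda)_1+\cdots+(c\lambda)_i\big)=\#\{(d,r)\in D(c\lambda): d>i,\ F(d,r)\le i\},
\]
and both localize overflow cells to rows $L<r\le R$. From there the paper proceeds by an extended case analysis: it first separates $(c\lambda)_i\ge(c\lambda)_{i+1}$ from $(c\lambda)_i<(c\lambda)_{i+1}$, and in the former case further splits into four subcases according to whether ${\sf leftmin}_{c\lambda}(i)=(c\lambda)_i$ and whether ${\sf rightmax}_{c\lambda}(i+1)=(c\lambda)_{i+1}$, invoking the structural Lemma~\ref{lem:formofclam} in each branch to bound the overflow row by row. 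The case $(c\lambda)_i<(c\lambda)_{i+1}$ requires yet another argument.

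Your route bypasses Lemma~\ref{lem:formofclam} entirely. The key new idea is the pigeonhole step: two overflow labels $a,b\le i$ in the same row force, via the flagged and row-distinct conditions in columns $1,\dots,m=\max(a,b)$, the existence of two ``short'' columns $x_1<x_2\le m$ with $(c\lambda)_{x_1},(c\lambda)_{x_2}<r$. The quadruple $x_1<x_2<d_1<d_2$ then realizes one of the forbidden composition patterns of Lemma~\ref{lem:clampattern} directly; the gap conditions are satisfied because $(c\lambda)_{x_2}\le(c\lambda)_{x_1}<r\le(c\lambda)_{d_2}\le(c\lambda)_{d_1}$ gives a strict integer chain in the fully strict subcase, and the other subcases are similar. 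This yields ``at most one overflow cell per row'' uniformly in $i$, with no dependence on the relative sizes of $(c\lambda)_i$ and $(c\lambda)_{i+1}$. The paper's approach, by contrast, extracts finer information about the shape of $D(c\lambda)$ in the relevant rows (Lemma~\ref{lem:formofclam}), which is reused elsewhere in Section~\ref{sec:diamond}; your argument is more economical for this lemma alone but does not produce that structural byproduct.
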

\begin{proof}
Since $[x^\beta]\kappa_{c \lambda}\neq 0$, there exists an $F \in {\sf Tab}(c \lambda)$ with ${\sf wt}(F)=\beta$. Now,
\begin{equation}
\label{eq:overflowintorightmax:main}
| \left\{ (d, r) \in D(c \lambda) : d > i \text{ and } F(d, r) \leq i \right\}| = \beta_1 + \cdots + \beta_i - ((c \lambda)_1 + \cdots + (c \lambda)_i)
\end{equation}

We first prove this lemma for $i \in [n-1]$ with $(c \lambda)_i \geq (c \lambda)_{i+1}$.
Let $r \leq {\sf leftmin}_{c \lambda}(i)$. Lemma \ref{lem:rectanglesw} implies that $F(d_1, r) = d_1$ for $d_1 \leq i$ and $r \leq {\sf leftmin}_{c \lambda}(i)$. Since $F$ is row distinct this implies
\begin{equation}
\label{eq:overflowintorightmax:bottomleftrectangle}
| \left\{ (d, r) \in D(c \lambda) : d > i \text{ and } F(d, r) \leq i \right\}| = 0 \text{ for }1 \leq r \leq {\sf leftmin}_{c \lambda}(i).
\end{equation}

Suppose ${\sf leftmin}_{c \lambda}(i) \geq {\sf rightmax}_{c \lambda}(i+1)$. Then there exist no $(d, r) \in D(c \lambda)$ such that $d > i$ and $r > {\sf leftmin}_{c \lambda}(i)$. This, combined with \eqref{eq:overflowintorightmax:main} and \eqref{eq:overflowintorightmax:bottomleftrectangle}, implies $\beta_1 + \cdots + \beta_i - ((c \lambda)_1 + \cdots + (c \lambda)_i) = 0$. Thus our result trivially holds.

For the rest of the proof we assume ${\sf leftmin}_{c \lambda}(i) < {\sf rightmax}_{c \lambda}(i+1)$. 

\noindent \textit{Case ${\sf leftmin}_{c \lambda}(i) = (c \lambda)_i$\text{ and }${\sf rightmax}_{c \lambda}(i+1) = (c \lambda)_{i+1}$}: By our assumption $(c \lambda)_i \geq (c \lambda)_{i+1}$ and the case hypothesis, ${\sf leftmin}_{c \lambda}(i) = (c \lambda)_i \geq (c \lambda)_{i+1} = {\sf rightmax}_{c \lambda}(i+1)$. Thus, since we are assuming ${\sf leftmin}_{c \lambda}(i) < {\sf rightmax}_{c \lambda}(i+1)$, this case does not occur.

\noindent \textit{Case ${\sf leftmin}_{c \lambda}(i) = (c \lambda)_i$\text{ and }${\sf rightmax}_{c \lambda}(i+1) > (c \lambda)_{i+1}$}: We have that ${\sf leftmin}_{c \lambda}(i) = (c \lambda)_i$ paired with \eqref{eq:overflowintorightmax:bottomleftrectangle}, and combined with Lemma \ref{lem:formofclam}(i) implies $$| \left\{ (d, r) \in D(c \lambda) : d > i\text{ and } F(d, r) \leq i \right\}| \leq {\sf rightmax}_{c \lambda}(i + 1) - {\sf leftmin}_{c \lambda}(i).$$ Then \eqref{eq:overflowintorightmax:main} gives the required inequality.

\noindent \textit{Case ${\sf leftmin}_{c \lambda}(i) < (c \lambda)_i$\text{ and }${\sf rightmax}_{c \lambda}(i+1) = (c \lambda)_{i+1}$}: Lemma \ref{lem:formofclam}(ii) says $$| \left\{ (d,r) \in D(c \lambda) : d \leq i + 1 \right\} | \geq i\text{ for }{\sf leftmin}_{c \lambda}(i) < r \leq (c \lambda)_{i+1},$$ which implies
\begin{equation}
\label{eq:overflow:leftisalmostfull}
| \left\{ (d,r) \in D(c \lambda) : d \leq i \right\} | \geq i-1 \text{ for }{\sf leftmin}_{c \lambda}(i) < r \leq (c \lambda)_{i+1}.
\end{equation} 
Since ${\sf rightmax}_{c \lambda}(i+1) = (c \lambda)_{i+1}$, there exist no $(d, r) \in D(c \lambda)$ such that $d > i$ and $r > (c \lambda)_{i+1}$. This, combined with \eqref{eq:overflowintorightmax:bottomleftrectangle}, and the row distinct property of $F$ paired with \eqref{eq:overflow:leftisalmostfull}, implies that $$| \left\{ (d, r) \in D(c \lambda) : d > i \text{ and } F(d, r) \leq i \right\}| \leq {\sf rightmax}_{c \lambda}(i + 1) - {\sf leftmin}_{c \lambda}(i).$$ Applying \eqref{eq:overflowintorightmax:main} concludes the proof in this case.

\noindent \textit{Case ${\sf leftmin}_{c \lambda}(i) < (c \lambda)_k$\text{ and }${\sf rightmax}_{c \lambda}(i+1) > (c \lambda)_{i+1}$}: There exist no $(d, r) \in D(c \lambda)$ such that $d > i$ and $r > {\sf rightmax}_{c \lambda}(i+1)$. We apply Lemma \ref{lem:formofclam}(iii), noting that 
${\sf center}_{c \lambda}(i,i+1) = i$, and \eqref{eq:overflowintorightmax:bottomleftrectangle} to imply that $$| \left\{ (d, r) \in D(c \lambda) : d > i \text{ and } F(d, r) \leq i \right\}| \leq {\sf rightmax}_{c \lambda}(i + 1) - {\sf leftmin}_{c \lambda}(i).$$ Once again we conclude after applying \eqref{eq:overflowintorightmax:main}.

This completes the proof for $i$ such that $(c \lambda)_i \geq (c \lambda)_{i+1}$. Otherwise, $i \in [n-1]$ with $(c \lambda)_i < (c \lambda)_{i+1}$. If $i = 1$ or $i = n - 1$ the proof is straightforward. Otherwise, let $x < i < i+1 < y$. Then 
\begin{equation}
\label{eq:iplus1isrightmax}
(c \lambda)_{i+1} \geq (c \lambda)_{y}
\end{equation}
and $(c \lambda)_{x} \geq (c \lambda)_{i}$ by Lemma~\ref{lem:clampattern} ($012$-avoidance). If $(c \lambda)_x < (c \lambda)_y$, then $c \lambda$ contains the composition pattern $012$, $1032$, $0021$, $0011$, or $1022$. This contradicts Lemma~\ref{lem:clampattern}. Thus $(c \lambda)_x \geq (c \lambda)_y$ for all $x < i$. This implies ${\sf leftmin}_{c \lambda}(i-1) \geq (c \lambda)_y$ for all $i+1 < y$. We conclude  ${\sf leftmin}_{c \lambda}(i-1) \geq {\sf rightmax}_{c \lambda}(i+2)$. By Lemma~\ref{lem:rectanglesw} (the second displayed equation, where we have applied it to $i-1$) and the row distinct property of $F$, this implies
\begin{equation}
\label{eq:upperbd1}
| \left\{ (d, r) \in D(c \lambda) : d > i \text{, } 1 \leq r \leq {\sf rightmax}_{c \lambda}(i+2) \text{, and } F(d, r) \leq i - 1 \right\}| = 0.
\end{equation}
Then ${\sf leftmin}_{c \lambda}(i) = (c \lambda)_{i}$ by Lemma~\ref{lem:clampattern} ($012$-avoidance) and, combined with Lemma~\ref{lem:rectanglesw} applied to $i$, this implies
\begin{equation}
\label{eq:upperbd2}
| \left\{ (d, r) \in D(c \lambda) : d > i \text{, } 1 \leq r \leq {\sf leftmin}_{c \lambda}(i) \text{, and } F(d, r) \leq i \right\}| = 0.
\end{equation}
Now
\[
\begin{split}
| \left\{ (d, r) \in D(c \lambda) : d > i \text{ and } F(d, r) \leq i \right\}| & \leq (c \lambda)_{i+1} - {\sf leftmin}_{c \lambda}(i) \\  &  = {\sf rightmax}_{c \lambda}(i+1) - {\sf leftmin}_{c \lambda}(i). \\
\end{split}
\]
The inequality comes by studying the intervals $[1,{\sf leftmin}_{c\lambda}(i)]$, 
$({\sf leftmin}_{c\lambda}(i),{\sf rightmax}_{c\lambda}(i+2)]$, and $({\sf rightmax}_{c\lambda}(i+2),(c\lambda)_{i+1}]$. Respectively, we use \eqref{eq:upperbd2}, and \eqref{eq:upperbd1}  paired with the row distinct property of $F$, for the first two intervals. For the third interval, we use the fact that there is at most one column, namely $y=i+1$, such that $y>d$ and $(c\lambda)_y>{\sf rightmax}_{c\lambda}(i+2)$. The equality follows from \eqref{eq:iplus1isrightmax}.
\end{proof}

\begin{lemma}
\label{lem:notbothinterweaved}
Let $i<p<j<q$ be in the same block and $\beta \in {\mathcal P}_{c \lambda,{\bf \gamma}}$. If $(i,j)$ and $(p,q)$ are interweaved, $\beta<_{\text{\emph{Bruhat}}} t_{i,j}\beta$, and
$\beta<_{\text{\emph{Bruhat}}} t_{p,q}\beta$, then $t_{i,j} \beta \notin {\mathcal P}_{c \lambda,{\bf \gamma}}$ or $t_{p,q} \beta \notin {\mathcal P}_{c \lambda,{\bf \gamma}}$. 
\end{lemma}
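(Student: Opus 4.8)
The plan is to assume, for contradiction, that both $t_{i,j}\beta$ and $t_{p,q}\beta$ lie in $\mathcal{P}_{c\lambda,\gamma}$, and to extract from Proposition~\ref{prop:goingup}(3) two dominance-type inequalities that are jointly impossible. Since $(i,j)$ and $(p,q)$ are both interweaved, both pairs satisfy ${\sf leftmin}_{c\lambda}(\cdot)<(c\lambda)_{\cdot}$ and ${\sf rightmax}_{c\lambda}(\cdot)>(c\lambda)_{\cdot}$ at their respective left/right endpoints, so condition (3) of Proposition~\ref{prop:goingup} applies to each. Writing $m_1={\sf center}_{c\lambda}(i,j)$ and $m_2={\sf center}_{c\lambda}(p,q)$, membership of $t_{i,j}\beta$ gives
\[
\beta_1+\cdots+\beta_{i-1}+(\beta_j-(j-i))+\beta_{i+1}+\cdots+\beta_{m_1}\ge (c\lambda)_1+\cdots+(c\lambda)_{m_1},
\]
equivalently $\beta_1+\cdots+\beta_{m_1}-((c\lambda)_1+\cdots+(c\lambda)_{m_1})\ge \beta_i-(\beta_j-(j-i))>0$, and similarly membership of $t_{p,q}\beta$ gives $\beta_1+\cdots+\beta_{m_2}-((c\lambda)_1+\cdots+(c\lambda)_{m_2})\ge \beta_p-(\beta_q-(q-p))>0$.

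First I would pin down the relative order of the indices $i<p<j<q$ and the two centers. The key structural fact to establish is that, because $i<p\le j$ (actually $p<j$) and $p<q$, the center of $(p,q)$ must satisfy $m_2< p$ is impossible while the center of $(i,j)$ satisfies $m_1\ge i$; more precisely I would show $m_1$ lies strictly to the left of $p$, i.e.\ $m_1<p$, or alternatively that $m_2$ lies strictly to the left of... — the exact placement is what needs care. The governing input is that on the range $i\le k< j$ we have $(c\lambda)_k\ge(c\lambda)_{k+1}$ (this is part of the definition of interweaved, via Lemma~\ref{lem:clamblockdecrease} being invoked implicitly), and similarly on $p\le k<q$; since $[i,j]$ and $[p,q]$ overlap in $[p,j]$, the whole interval $[i,q]$ is weakly decreasing for $c\lambda$. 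Then ${\sf rightmax}_{c\lambda}(j)$ and ${\sf rightmax}_{c\lambda}(q)$ are controlled by values at indices $>j$ and $>q$ respectively, and ${\sf leftmin}_{c\lambda}(i)\le{\sf leftmin}_{c\lambda}(p)$. Using the $012$-avoidance of $c\lambda$ (Lemma~\ref{lem:clampattern}) at the quadruple $(x,i,q,y)$ with $(c\lambda)_x={\sf leftmin}_{c\lambda}(i)$ and $(c\lambda)_y={\sf rightmax}_{c\lambda}(q)$ forces relations among these minima and maxima, and I expect this to show that ${\sf center}_{c\lambda}(i,j)$ and ${\sf center}_{c\lambda}(p,q)$ both lie in the overlap region and in fact must coincide, or that one center dominates the prefix sum bound of the other.

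The contradiction I am aiming for: combine the two prefix-sum inequalities with Lemma~\ref{lemma:overflowintorightmax} applied at a carefully chosen index (likely $i-1$, or the index just left of the smaller center), which bounds $\beta_1+\cdots+\beta_k-((c\lambda)_1+\cdots+(c\lambda)_k)$ from above by $\max\{{\sf rightmax}_{c\lambda}(k+1)-{\sf leftmin}_{c\lambda}(k),0\}$. Because $\beta<_{\text{Bruhat}} t_{i,j}\beta$ and $\beta<_{\text{Bruhat}} t_{p,q}\beta$ translate (via Lemma~\ref{lemma:posetBruhatorder}(i)) into $\beta_i-i>\beta_j-j$ and $\beta_p-p>\beta_q-q$, i.e.\ the quantities $v_1:=\beta_i-(\beta_j-(j-i))$ and $v_2:=\beta_p-(\beta_q-(q-p))$ are strictly positive, the two dominance inequalities say the ``overflow past $m_1$'' is at least $v_1$ and the overflow past $m_2$ is at least $v_2$. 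The plan is to show these two demands on the same Tableau-filling $F$ (with ${\sf wt}(F)=\beta$) compete for the same boxes: the values that overflow past $m_2$ into columns $>m_2$ but carry a label $\le m_2$ are, after the $012$/$0011$/$0021$/$1032$/$1022$-avoidance analysis, forced to be the very same overflow boxes counted for $m_1$, so we would need a single pool of at most ${\sf rightmax}-{\sf leftmin}$ boxes to supply both $v_1$ and $v_2$, and a counting/pigeonhole argument (of the flavor already used in the proof of Lemma~\ref{lemma:overflowintorightmax}, Case ${\sf leftmin}<(c\lambda)_i$, ${\sf rightmax}>(c\lambda)_{i+1}$) yields the contradiction.

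The main obstacle I anticipate is the bookkeeping that identifies the two overflow regions and shows they cannot be simultaneously satisfied — concretely, proving that ${\sf center}_{c\lambda}(i,j)$ and ${\sf center}_{c\lambda}(p,q)$ interact the way I claim (one is $\le$ the other, and the larger-center inequality already ``uses up'' the overflow budget that the smaller-center inequality also needs). This requires pushing the pattern-avoidance lemma through several four-element subsequences of $c\lambda$ drawn from $\{x,i,p,j,q,y\}$, and the delicate point is that the strict inequalities $v_1,v_2>0$ must be combined with the fact that ${\sf rightmax}_{c\lambda}(j)$ and ${\sf rightmax}_{c\lambda}(q)$, though both $>$ their diagonal values, could a priori be quite different — so I would first reduce to showing ${\sf rightmax}_{c\lambda}(j)$ and ${\sf rightmax}_{c\lambda}(q)$ are realized at indices forcing a common upper bound via $012$-avoidance, and only then run the box-counting. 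Everything else (translating Bruhat covers to inequalities on $\beta_i-i$, invoking Proposition~\ref{prop:goingup} and Lemma~\ref{lemma:overflowintorightmax}) is routine.
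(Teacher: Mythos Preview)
Your strategy is the paper's strategy: suppose both $t_{i,j}\beta,t_{p,q}\beta\in\mathcal{P}_{c\lambda,\gamma}$, pull the two inequalities from Proposition~\ref{prop:goingup}(3), and play them off against Lemma~\ref{lemma:overflowintorightmax}. But two of the steps you flag as uncertain are exactly the crux, and your ``same pool of boxes'' heuristic points you away from how the contradiction is actually obtained.

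First, stop hedging on the centers: you must prove ${\sf center}_{c\lambda}(i,j)={\sf center}_{c\lambda}(p,q)$. This is short. Since $[i,j]$ and $[p,q]$ overlap, $c\lambda$ is weakly decreasing on all of $[i,q]$; since $(c\lambda)_k\le(c\lambda)_j<{\sf rightmax}_{c\lambda}(j)$ for $j\le k\le q$, the rightmax at $j$ is attained beyond $q$, hence ${\sf rightmax}_{c\lambda}(j)={\sf rightmax}_{c\lambda}(q)=:R$. Then both centers equal the last index $m\in[i,q]$ with $(c\lambda)_m\ge R$, and one checks $p\le m<j$. With a common $m$, the two Proposition~\ref{prop:goingup}(3) inequalities involve the \emph{same} prefix sum $\sum_{k\le m}\beta_k$, which is what makes them collide. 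The paper also records, and uses, that $(c\lambda)_k={\sf rightmax}_{c\lambda}(k)$ for $i\le k\le m$ (hence $\beta_k\le(c\lambda)_k$ there by Lemma~\ref{lem:lminrmax}(ii)) and $(c\lambda)_k={\sf leftmin}_{c\lambda}(k)$ for $m<k\le q$ (hence $\beta_k\ge(c\lambda)_k$ there by Lemma~\ref{lem:lminrmax}(i)); these sign facts are what let you drop terms later.

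Second, the contradiction is a chain of scalar inequalities, not a tableau pigeonhole, and Lemma~\ref{lemma:overflowintorightmax} is applied at $q$ (not only at $i-1$). Writing $C=\sum_{k\le m}(\beta_k-(c\lambda)_k)$, Lemma~\ref{lemma:overflowintorightmax} at $q$ together with $\beta_k\ge(c\lambda)_k$ on $(m,q]$ gives the \emph{lower} bound
\[
{\sf rightmax}_{c\lambda}(q)-(c\lambda)_q\ \ge\ C+(\beta_j-(c\lambda)_j)+(\beta_q-(c\lambda)_q).
\]
On the other side, each of the two Proposition~\ref{prop:goingup}(3) inequalities is reformulated (using the sign facts above, Lemma~\ref{lemma:overflowintorightmax} at $i{-}1$, and $012$-avoidance to compare $(c\lambda)_j$ with ${\sf leftmin}_{c\lambda}(i)$ and $(c\lambda)_p$ with ${\sf rightmax}_{c\lambda}(q)$), and then the two are \emph{added}. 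The sum rearranges to
\[
{\sf rightmax}_{c\lambda}(q)-(c\lambda)_q\ \le\ C+(\beta_j-(c\lambda)_j)+(\beta_q-(c\lambda)_q)-(j-i)-(q-p),
\]
which is strictly smaller than the lower bound since $(j-i)+(q-p)>0$. That is the contradiction. Your outline has all the ingredients but not this add-and-compare mechanism, which is the step that actually closes the argument.
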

\begin{proof}
If $(i,j)$ and $(p,q)$ are interweaved, then it is straightforward that ${\sf center}_{c \lambda}(i,j) = {\sf center}_{c \lambda}(p,q)$. Lemma~\ref{lem:clamblockdecrease} and the definition of ${\sf center}_{c \lambda}(i,j)$ implies
\[
(c \lambda)_k = {\sf rightmax}_{c \lambda}(k)\text{ for }i \leq k \leq {\sf center}_{c \lambda}(p,q),
\]
which in turn implies, via Lemma~\ref{lem:lminrmax}(ii), that
\begin{equation}
\label{eq:minusbeforecenter}
\beta_k - (c \lambda)_k \leq 0\text{ for }i \leq k \leq {\sf center}_{c \lambda}(p,q).
\end{equation}
In a similar fashion, the definition of ${\sf center}_{c \lambda}(i,j)$ and Lemma~\ref{lem:clampattern} ($012$-avoidance) implies $(c \lambda)_k = {\sf leftmin}_{c \lambda}(k)$ for ${\sf center}_{c \lambda}(p,q) < k \leq q$. Hence, Lemma~\ref{lem:lminrmax}(i) says
\begin{equation}
\label{eq:plusaftercenter}
\beta_k - (c \lambda)_k \geq 0\text{ for }{\sf center}_{c \lambda}(p,q) < k \leq q.
\end{equation}

Suppose that $t_{i,j} \beta, t_{p,q} \beta \in {\mathcal P}_{c \lambda,{\bf \gamma}}$. Let $C := \beta_1 + \cdots + \beta_{{\sf center}_{c \lambda}(i,j)} - ((c \lambda)_1 + \cdots + (c \lambda)_{{\sf center}_{c \lambda}(i,j)})$. Then, 
\begin{equation}
\label{eq:implicationsofoverflow}
\begin{split}
{\sf rightmax}_{c \lambda}(q) - (c \lambda)_q & = {\sf rightmax}_{c \lambda}(q+1) - {\sf leftmin}_{c \lambda}(q)\\
& \geq C + (\beta_{{\sf center}_{c \lambda}(i,j) + 1} + \cdots + \beta_{q}) - ((c \lambda)_{{\sf center}_{c \lambda}(i,j) + 1} + \cdots (c \lambda)_{q}) \\
& \geq C + (\beta_j - (c \lambda)_j) + (\beta_q - (c \lambda)_q), \\
\end{split}
\end{equation}
where the equality follows from the interweaving assumption combined with Lemma~\ref{lem:clampattern} ($012$-avoidance), the first inequality comes from
Lemma~\ref{lemma:overflowintorightmax} applied to $\beta$, and the final inequality
follows from \eqref{eq:plusaftercenter}.

Proposition~\ref{prop:goingup}(3) says
\begin{equation}
\label{eq:conseqijgoingupinterweave}
\beta_1 + \cdots + \beta_{i-1} + (\beta_{j} - (j-i)) + \beta_{i+1} + \cdots + \beta_{{\sf center}_{c \lambda}(i,j)} \geq (c \lambda)_1 + \cdots + (c \lambda)_{{\sf center}_{c \lambda}(i,j)},
\end{equation}
\begin{equation}
\label{eq:conseqpqgoingupinterweave}
\beta_1 + \cdots + \beta_{p-1} + (\beta_{q} - (q-p)) + \beta_{p+1} + \cdots + \beta_{{\sf center}_{c \lambda}(i,j)} \geq (c \lambda)_1 + \cdots + (c \lambda)_{{\sf center}_{c \lambda}(i,j)}. 
\end{equation}
Let $D := (\beta_{i+1} + \cdots + \beta_{{\sf center}_{c \lambda}(i,j)})-((c \lambda)_{i+1} + \cdots + (c \lambda)_{{\sf center}_{c \lambda}(i,j)})$. Reformulating \eqref{eq:conseqijgoingupinterweave} yields
\begin{equation}
\begin{split}
\label{eq:absolutemess1}
0 & \leq  (\beta_1 + \cdots + \beta_{i-1})-((c \lambda)_1 + \cdots + (c \lambda)_{i-1}) +  (\beta_j - (j-i) - (c \lambda)_i) + D \\
& \leq (c \lambda)_i -  {\sf leftmin}_{c \lambda}(i) +  (\beta_j - (j-i) - (c \lambda)_i) + D \\
& = (c \lambda)_i -  {\sf leftmin}_{c \lambda}(i) + ((c \lambda)_j + (\beta_j - (c \lambda)_j) - (j-i) - (c \lambda)_i) + D \\
& = ((c \lambda)_j - {\sf leftmin}_{c \lambda}(i)) + (\beta_j - (c \lambda)_j) - (j-i) + D \\
& \leq (\beta_j - (c \lambda)_j) - (j-i) + D, \\
\end{split}
\end{equation}
where the second inequality is via Lemma~\ref{lemma:overflowintorightmax} applied to $\beta$ (note
$(c\lambda)_i={\sf rightmax}_{c\lambda}(i)$ here), and the final inequality follows from Lemma~\ref{lem:clampattern} ($012$-avoidance).

Let $E := (\beta_{p+1} + \cdots + \beta_{{\sf center}_{c \lambda}(i,j)})-((c \lambda)_{p+1} + \cdots + (c \lambda)_{{\sf center}_{c \lambda}(i,j)})$. Reformulating \eqref{eq:conseqpqgoingupinterweave},
\begin{equation}
\begin{split}
\label{eq:absolutemess2}
0 & \leq  (\beta_1 + \cdots + \beta_{p-1})-((c \lambda)_1 + \cdots + (c \lambda)_{p-1}) +  (\beta_q - (q-p) - (c \lambda)_p) + E \\
& \leq (\beta_1 + \cdots + \beta_{i})-((c \lambda)_1 + \cdots + (c \lambda)_{i}) +  (\beta_q - (q-p) - (c \lambda)_p) + E \\
& \leq (\beta_1 + \cdots + \beta_{i})-((c \lambda)_1 + \cdots + (c \lambda)_{i}) +  (\beta_q - (q-p) - (c \lambda)_p) \\
& \leq (\beta_1 + \cdots + \beta_{i})-((c \lambda)_1 + \cdots + (c \lambda)_{i}) +  (\beta_q - (q-p) - {\sf rightmax}_{c \lambda}(q)) \\
& = (\beta_1\! +\! \cdots\! +\! \beta_{i})\!-\!((c \lambda)_1 \!+\! \cdots \!+\! (c \lambda)_{i}) \!+\! ((c \lambda)_q \!+\! (\beta_q \!-\! (c \lambda)_q) \!-\! (q-p) \!-\! {\sf rightmax}_{c \lambda}(q).
\end{split}
\end{equation}
where the second and third inequality are by \eqref{eq:minusbeforecenter}, the fourth inequality is by Lemma~\ref{lem:clampattern} ($012$-avoidance).

Adding \eqref{eq:absolutemess1} and \eqref{eq:absolutemess2} we have
\begin{equation}
0 \leq C + (\beta_j - (c \lambda)_j) + (\beta_q - (c \lambda)_q) - (j-i) - (q-p) + ((c \lambda)_q - {\sf rightmax}_{c \lambda}(q)) 
\end{equation}
which can be reformulated into
\begin{equation}
\begin{split}
{\sf rightmax}_{c \lambda}(q) - (c \lambda)_q & \leq C + (\beta_j - (c \lambda)_j) + (\beta_q - (c \lambda)_q) - (j-i) - (q-p) \\
& < C + (\beta_j - (c \lambda)_j) + (\beta_q - (c \lambda)_q)
\end{split}
\end{equation}
This, combined with \eqref{eq:implicationsofoverflow}, gives our desired contradiction. We conclude that $t_{i,j} \beta \notin {\mathcal P}_{c \lambda,{\bf \gamma}}$ or $t_{p,q} \beta \notin {\mathcal P}_{c \lambda,{\bf \gamma}}$.
\end{proof}

\noindent\emph{Conclusion of the proof of Theorem~\ref{thm:diamond}:}
Without loss of generality assume $(i,j) < (p,q)$ in lexicographic order. Both $\tau := t_{i,j}\beta$ and $\phi := t_{p,q}\beta$ cover $\beta$, thus
\begin{equation}
\label{eq:betaijineq}
\beta_i > \beta_j - (j-i) = \tau_i,
\end{equation}
\begin{equation}
\label{eq:betapqineq}
\beta_p > \beta_q - (q-p) = \phi_p.
\end{equation}
By Proposition~\ref{prop:goingup}, we have
\begin{equation}
\label{eq:betaijright}
\beta_i + (j-i) \leq {\sf rightmax}_{c \lambda}(j),
\end{equation}
\begin{equation}
\label{eq:betapqright}
\beta_p + (q-p) \leq {\sf rightmax}_{c \lambda}(q),
\end{equation}
\begin{equation}
\label{eq:betaijleft}
\beta_j - (j-i) \geq {\sf leftmin}_{c \lambda}(i),
\end{equation}
\begin{equation}
\label{eq:betapqleft}
\beta_q - (q-p) \geq {\sf leftmin}_{c \lambda}(p).
\end{equation}
Moreover, for the same reason, if $(i,j)$ is interweaved, then
\begin{equation}
\small
\label{eq:betaijcenter}
\beta_1 + \cdots + \beta_{i-1} + \beta_j - (j-i) + \beta_{i+1} + \cdots + \beta_{{\sf center}_{c \lambda}(i,j)} \geq (c \lambda)_1 + \cdots + (c \lambda)_{{\sf center}_{c \lambda}(i,j)}.
\end{equation}
If $(p,q)$ is interweaved, then
\begin{equation}
\small
\label{eq:betapqcenter}
\beta_1 + \cdots + \beta_{p-1} + \beta_q - (q-p) + \beta_{p+1} + \cdots + \beta_{{\sf center}_{c \lambda}(p,q)} \geq (c \lambda)_1 + \cdots + (c \lambda)_{{\sf center}_{c \lambda}(p,q)}.
\end{equation}
We now consider five cases depending on the overlap in the values $(i,j)$ and $(p,q)$. In what
follows, we will make repeated use of Lemma~\ref{lemma:posetBruhatorder}(i), which characterizes
the covering relation in $({\mathcal S}_{\gamma,I},<_{\text{Bruhat}})$.

\noindent \textit{Case 1.1 ($i$ and $p$ in the same block, $i=p$, $j < q$):} Suppose, for  contradiction, that $\beta_j - (j-i) = \beta_q - (q-p)$. Then, since $i = p$, this equality is equivalent to $\beta_j = \beta_q - (q - j)$. The contradicts Lemma~\ref{lemma:posetBruhatorder}(i), and hence $\beta_j - (j-i) \neq \beta_q - (q-p)$.

\noindent \textit{Subcase 1.1.1 $\beta_j - (j-i) > \beta_q - (q-p)$:} By the subcase hypothesis, $t_{i,j} \beta <_{\text{Bruhat}} t_{p,q} t_{i,j} \beta$. Then $t_{p,q} t_{i,j} \beta <_{\text{Bruhat}} t_{j,q} t_{p,q} t_{i,j} \beta$ by \eqref{eq:betaijineq}. Combining, we have $t_{i,j} \beta <_{\text{Bruhat}} t_{j,q} t_{p,q} t_{i,j} \beta = t_{p,q} \beta$. This contradicts the hypothesis that $t_{p,q} \beta$ covers $\beta$. Hence this subcase cannot occur.

\noindent \textit{Subcase 1.1.2 $\beta_j - (j-i) < \beta_q - (q-p)$:} We will show that $t_{i,j} \phi \in {\mathcal P}_{c \lambda,{\bf \gamma}}$. By the subcase hypothesis, the definition of $\phi$, and $i=p$,
\begin{equation}
\label{eq:prophypofirst:1.1.1}
\phi_i = \phi_p = \beta_q - (q-p) \geq \beta_j - (j-i-1) = \phi_j - (j-i-1).
\end{equation}
By \eqref{eq:betapqineq}, \eqref{eq:betaijright}, and $i=p$ we have
\begin{equation}
\label{eq:prophyposecond:1.1.1}
\phi_i + (j - i) = \beta_q - (q-p) + (j-i) < \beta_p + (j - i) = \beta_i + (j - i) \leq {\sf rightmax}_{c \lambda}(j).
\end{equation}
Since $\phi_j = \beta_j$, by \eqref{eq:betaijleft},
\begin{equation}
\label{eq:prophypothird:1.1.1}
\phi_j - (j-i) = \beta_j - (j-i) \geq {\sf leftmin}_{c \lambda}(i).
\end{equation}

Finally, $\phi_r = \beta_r$ for $r \neq p , q$. If $(i,j)$ is interweaved, then \eqref{eq:betaijcenter} and $i=p$ combined with the previous sentence implies
\begin{align}
\label{eq:prophypofourth:1.1.1}
\begin{split}
\small
\phi_1 \!+\! \cdots \!+\! \phi_{i-1} \!+\! \phi_j \!-\! (j-i) \!+\! \phi_{i+1} + \cdots + \phi_{{\sf center}_{c \lambda}(i,j)} &  = \beta_1 + \cdots + \beta_{i-1} + \beta_j - (j-i) \\
& \,\,\,\, + \beta_{i+1} + \cdots + \beta_{{\sf center}_{c \lambda}(i,j)} \\
& \geq (c \lambda)_1 + \cdots + (c \lambda)_{{\sf center}_{c \lambda}(i,j)}. \\
\end{split}
\end{align}
The hypotheses of Proposition~\ref{prop:goingup} are satisfied for for $t_{i,j} \phi$ by \eqref{eq:prophypofirst:1.1.1}, \eqref{eq:prophyposecond:1.1.1}, \eqref{eq:prophypothird:1.1.1}, and \eqref{eq:prophypofourth:1.1.1}. Hence, $t_{i,j} \phi \in {\mathcal P}_{c \lambda,{\bf \gamma}}$. 

By \eqref{eq:prophypofirst:1.1.1}, $\phi < t_{i,j} \phi$. By \eqref{eq:betapqineq}, $\beta_i + (j-i) = \beta_p + (j-i) > \beta_q - (q-p) + (j-i) = \beta_q - (q-j)$, and hence $\tau = t_{i,j} \beta <_{\text{Bruhat}} t_{j,q} t_{i,j} \beta = t_{i,j} \phi$.

\noindent \textit{Case 1.2 ($i$ and $p$ in the same block, $i < p$, $j = p$):} In this case,
\begin{equation}
\label{eq:tauineq1.2.1}
\tau_p = \beta_i + (j-i) > \beta_j \geq \beta_q - (q-p-1) = \tau_q - (q-p-1),
\end{equation}
\begin{equation}
\label{eq:phiineq1.2.1}
\phi_i = \beta_i \geq \beta_j - (j-i-1) \geq \beta_q - (q-p) - (j-i-1) = \phi_j - (j-i-1).
\end{equation}

Before breaking into subcases we first prove that $\phi <_{\text{Bruhat}} t_{i,j} \phi, t_{p,q} \tau$, and $\tau <_{\text{Bruhat}} t_{i,j} \phi, t_{p,q} \tau$. First, $\phi <_{\text{Bruhat}} t_{i,j} \phi$ and $\tau <_{\text{Bruhat}} t_{p,q} \tau$ follow from \eqref{eq:phiineq1.2.1} and \eqref{eq:tauineq1.2.1}. Then, \eqref{eq:betaijineq} implies 
\[\phi_i = \beta_i > \beta_j - (j-i) = \beta_j + (q-j) + (q-i) = \phi_q - (q-i),\] 
and hence $\phi <_{\text{Bruhat}} t_{i,q} \phi = t_{p,q} \tau$. Finally, by \eqref{eq:betapqineq}, 
\[\tau_i = \beta_j - (j-i) > \beta_q - (q-j) - (j-i) = \tau_q - (q-i),\] 
and thus $\tau <_{\text{Bruhat}} t_{i,q} \tau = t_{i,j} \phi$. Hence, in all the following subcases, it remains to show that at least one of $t_{i,j} \phi$ or $t_{p,q} \tau$ are in ${\mathcal P}_{c \lambda,{\bf \gamma}}$.

\noindent \textit{Subcase 1.2.1 ${\sf leftmin}_{c \lambda}(i) = (c \lambda)_i$\text{ and }${\sf rightmax}_{c \lambda}(j) > (c \lambda)_{j}$:} By Lemma~\ref{lem:clamblockdecrease}, the subcase hypothesis implies
\begin{equation}
\label{eq:leftminall1.2.1}
{\sf leftmin}_{c \lambda}(k) = (c \lambda)_k\text{ for }i\leq k \leq q.
\end{equation}
By Lemma~\ref{lem:lminrmax}(i) this implies
\begin{equation}
\label{eq:bclamdiff1.2.1}
\beta_k \geq (c \lambda)_k\text{ for }i\leq k \leq q.
\end{equation}
In this subcase, \eqref{eq:betaijleft} and \eqref{eq:betapqleft} become
\begin{align}
\label{eq:ijdiff1.2.1} \beta_j - (j-i) & \geq (c \lambda)_i = (c \lambda)_j + ((c \lambda)_i - (c \lambda)_j),\\
\label{eq:pqdiff1.2.1} \beta_q - (q-p) & \geq (c \lambda)_p = (c \lambda)_q + ((c \lambda)_p - (c \lambda)_q). 
\end{align}
Thus
\begin{align}
\label{eq:main1.2.1}
\begin{split}
{\sf rightmax}_{c \lambda}(q) - (c \lambda)_q & = {\sf rightmax}_{c \lambda}(q+1) - {\sf leftmin}_{c \lambda}(q) \\
& \geq \beta_1 + \cdots + \beta_q - ((c \lambda)_1 + \cdots + (c \lambda)_q) \\
& = \left(\sum_{t=1}^{i-1}\beta_t-(c\lambda)_t\right) +[\beta_i-(c\lambda)_i]
+\left(\sum_{t=i+1, t\neq p }^{q-1} \beta_t\!-\!(c\lambda)_t\right) \\
&  \ \ \ \ \ \ \ \  \!+\![\beta_p-(c\lambda)_p]+[\beta_q-(c\lambda)_q]
\\
& \geq [\beta_i-(c\lambda)_i]+[\beta_p-(c\lambda)_p]+[\beta_q-(c\lambda)_q]\\
& \geq [\beta_i \!-\! (c \lambda_i)] \!+\! [(j\!-\!i) \!+\! ((c \lambda)_i \!-\! (c \lambda)_j)] \!+\! [(q\!-\!p) \!+\! ((c \lambda)_p \!-\! (c \lambda)_q)] \\
& = \beta_i + (q-i) - (c \lambda)_q,
\end{split}
\end{align}
where the first equality follows from the subcase hypotheses and \eqref{eq:leftminall1.2.1}, the first inequality from Lemma~\ref{lemma:overflowintorightmax} with ${\sf rightmax}_{c \lambda}(q+1) - {\sf leftmin}_{c \lambda}(q) \geq 0$, the second inequality by Corollary~\ref{cor:weightsdom}
, and \eqref{eq:bclamdiff1.2.1}, and third inequality is by
\eqref{eq:ijdiff1.2.1}, \eqref{eq:pqdiff1.2.1}, and the final equality by $p=j$. Rewriting \eqref{eq:main1.2.1}, we arrive at ${\sf rightmax}_{c \lambda}(q) \geq \beta_i + (q-i) = \tau_p + (q-p)$.  Further, by \eqref{eq:betapqleft}, $\tau_q - (q-p) = \beta_q - (q-p) \geq {\sf leftmin}_{c \lambda}(p)$. The hypotheses of Proposition~\ref{prop:goingup} are satisfied for $t_{p,q} \tau$ by the preceding two sentences, the subcase hypothesis, and \eqref{eq:tauineq1.2.1}. Hence, $t_{p,q} \tau \in {\mathcal P}_{c \lambda,{\bf \gamma}}$ (notice $(p,q)$ cannot be interweaved since $j=p$ and
$c\lambda$ is $012$-avoiding by Lemma~\ref{lem:clampattern}).

\noindent \textit{Subcase 1.2.2 ${\sf leftmin}_{c \lambda}(i) < (c \lambda)_i$\text{ and }${\sf rightmax}_{c \lambda}(j) = (c \lambda)_{j}$:} By the subcase hypotheses,
\begin{equation}
\label{eq:rightmaxall1.2.2.1}
{\sf rightmax}_{c \lambda}(k) = (c \lambda)_k\text{ for }i\leq k \leq j,
\end{equation}
and hence by Lemma~\ref{lem:lminrmax}(ii)
\begin{equation}
\label{eq:rightmaxall1.2.2}
\beta_k \leq (c \lambda)_k\text{ for }i\leq k \leq j.
\end{equation}
In this subcase, \eqref{eq:betaijright} becomes
\begin{align}
\label{eq:ijdiff1.2.2} \beta_i + (j-i) & \leq (c \lambda)_j = (c \lambda)_i + ((c \lambda)_j - (c \lambda)_i). 
\end{align}
By Corollary~\ref{cor:weightsdom} applied to $\phi=t_{p,q}\beta$,
\begin{equation}
\label{eq:central1.2.2}
\beta_1 \!+\! \cdots \!+\! \beta_{i-1} - ((c \lambda)_1 + \cdots + (c \lambda)_{i-1}) \!\geq\! - ((\beta_{i} + \cdots + \beta_{j-1} + (\beta_{q} - (q-p)) - ((c \lambda)_{i} + \!\cdots\! + (c \lambda)_j)).
\end{equation}
We conclude
\begin{align}
\label{eq:main1.2.2}
\begin{split}
(c \lambda)_i -  {\sf leftmin}_{c \lambda}(i) & = {\sf rightmax}_{c \lambda}(i) - {\sf leftmin}_{c \lambda}(i-1) \\
& \geq \beta_1 + \cdots + \beta_{i-1} - ((c \lambda)_1 + \cdots + (c \lambda)_{i-1}) \\
& \geq - ((\beta_{i} + \cdots + \beta_{j-1} + (\beta_{q} - (q-p)) - ((c \lambda)_{i} + \cdots + (c \lambda)_j)) \\
& = -\left(\sum_{t=i+1}^{j-1}\beta_t-(c\lambda)_t\right)-[\beta_i-(c\lambda)_i] - [\beta_{q} - (q-p) - (c \lambda)_j]\\
& \geq -[\beta_i-(c\lambda)_i] - [\beta_{q} - (q-p) - (c \lambda)_j]\\
& \geq -[-(j-i) + ((c \lambda)_j - (c \lambda)_i)] - [(\beta_{q} - (q-p) - (c \lambda)_j] \\
& = (q-i) - (\beta_q - (c \lambda)_i);
\end{split}
\end{align}
the first equality follows by the subcase hypotheses, the first inequality from Lemma~\ref{lemma:overflowintorightmax} with ${\sf rightmax}_{c \lambda}(i) - {\sf leftmin}_{c \lambda}(i-1) \geq 0$, the second inequality by \eqref{eq:central1.2.2}, the third inequality by 
\eqref{eq:rightmaxall1.2.2}, 
the fourth by \eqref{eq:ijdiff1.2.2}, and the final equality by $p=j$. Now \eqref{eq:main1.2.2} is equivalent to 
\[{\sf leftmin}_{c \lambda}(i) \leq \beta_q - (q-i) = \phi_j - (j-i).\]  
Further, by \eqref{eq:betaijright}, 
\[\phi_i + (j-i) = \beta_i + (j-i) \geq {\sf rightmax}_{c \lambda}(j).\] 
The hypotheses of Proposition~\ref{prop:goingup} are satisfied for for $t_{i,j} \phi$ by the preceding two sentences, the subcase hypothesis, and \eqref{eq:phiineq1.2.1}. Hence, $t_{i,j} \phi \in {\mathcal P}_{c \lambda,{\bf \gamma}}$.

\noindent \textit{Subcase 1.2.3 ${\sf leftmin}_{c \lambda}(i) < (c \lambda)_i$\text{, }${\sf rightmax}_{c \lambda}(j) > (c \lambda)_{j}$:} In this subcase, ${\sf leftmin}_{c \lambda}(j) = (c \lambda)_j$, since ${\sf leftmin}_{c \lambda}(j) < (c \lambda)_j$ would imply that $c \lambda$ contains $012$. Thus, since $(i,j)$ is interweaved, ${\sf center}_{c \lambda}(i,j) < j$ and Lemma~\ref{lem:clamblockdecrease} implies 
\begin{equation}
\label{eq:leftmin1.2.3}
{\sf leftmin}_{c \lambda}(k) = (c \lambda)_k\text{ for }{\sf center}_{c \lambda}(i,j) < k \leq q. 
\end{equation}
Corollary~\ref{cor:weightsdom}, applied to $\beta$ and $\tau$, respectively, implies $$(\beta_1 + \cdots + \beta_{{\sf center}_{c \lambda}(i,j)}) - ((c \lambda)_1 + \cdots + (c \lambda)_{{\sf center}_{c \lambda}(i,j)}) \geq 0,$$ and  $$(\beta_1 + \cdots + \beta_{i-1} + \beta_j - (j-i) + \beta_{i+1} + \cdots + \beta_{{\sf center}_{c \lambda}(i,j)}) - ((c \lambda)_1 + \cdots + (c \lambda)_{{\sf center}_{c \lambda}(i,j)}) \geq 0.$$ These two inequalities, combined with $\beta_i > \beta_j - (j-i)$, yield
\begin{equation}
\label{eq:thistookmethreedaystorealize1.2.3}
(\beta_1 + \cdots + \beta_{{\sf center}_{c \lambda}(i,j)}) - ((c \lambda)_1 + \cdots + (c \lambda)_{{\sf center}_{c \lambda}(i,j)}) \geq \beta_i - (\beta_j - (j-i)).
\end{equation}
Thus
\begin{align}
\label{eq:main1.2.3}
\begin{split}
{\sf rightmax}_{c \lambda}(q) - (c \lambda)_q  & = {\sf rightmax}_{c \lambda}(q+1) - {\sf leftmin}_{c \lambda}(q) \\
& \geq \beta_1 + \cdots + \beta_{{\sf center}_{c \lambda}(i,j)} - ((c \lambda)_1 + \cdots + (c \lambda)_{{\sf center}_{c \lambda}(i,j)}) \\
& \,\,\,\, + (\beta_{{{\sf center}_{c \lambda}(i,j)+1}} - (c \lambda)_{{{\sf center}_{c \lambda}(i,j)+1}}) + \cdots + (\beta_{j-1} - (c \lambda)_{j-1}) \\
& \,\,\,\, + (\beta_q \!-\! (q-p) \!-\! (c \lambda)_j) \!+\! (\beta_{j+1} \!-\! (c \lambda)_{j+1}) \!+\! \cdots \!+\! (\beta_{q-1}\! -\! (c \lambda)_{q-1}) \\
& \,\,\,\, + (\beta_j + (q-p) - (c \lambda)_q) \\
& \geq \beta_1 + \cdots + \beta_{{\sf center}_{c \lambda}(i,j)} - ((c \lambda)_1 + \cdots + (c \lambda)_{{\sf center}_{c \lambda}(i,j)})\\
&\ \ \ \  + (\beta_j + (q-p) - (c \lambda)_q) \\
& \geq \beta_i - (\beta_j - (j-i)) + (\beta_j + (q-p) - (c \lambda)_q) \\
& = \beta_i + (q-i) - (c \lambda)_q,
\end{split}
\end{align}
where the first equality follows from the subcase hypotheses, the second inequality from Lemma~\ref{lemma:overflowintorightmax} with ${\sf rightmax}_{c \lambda}(q+1) - {\sf leftmin}_{c \lambda}(q) \geq 0$ applied to $\tau$, the third from \eqref{eq:leftmin1.2.3} and Lemma~\ref{lem:lminrmax}(i), the fourth by \eqref{eq:thistookmethreedaystorealize1.2.3}, and the final by $p=j$. Hence, \eqref{eq:main1.2.3} implies ${\sf rightmax}_{c \lambda}(q) \geq \beta_i + (q-i) = \tau_i + (q-p)$. By \eqref{eq:betapqleft}, ${\sf leftmin}_{c \lambda}(p) \leq \beta_q - (q-p) = \tau_q - (q-p)$. We conclude by Proposition~\ref{prop:goingup} applied to $t_{p,q} \tau$ that $t_{p,q} \tau \in {\mathcal P}_{c \lambda,{\bf \gamma}}$ (notice $(p,q)$ cannot be interweaved since $j=p$ and
$c\lambda$ is $012$-avoiding by Lemma~\ref{lem:clampattern}).

\noindent \textit{Case 1.3 ($i$ and $p$ in the same block, $i<p$, $j = q$):} Lemma~\ref{lemma:posetBruhatorder}(i) implies $\beta_i \neq \beta_p - (p - i)$.

\noindent \textit{Subcase 1.3.1 $\beta_i > \beta_p - (p - i)$:} It is easily checked that $t_{p,q} \beta <_{\text{Bruhat}} t_{i,p} t_{p,q} \beta <_{\text{Bruhat}} t_{p,q} t_{i,p} t_{p,q} \beta = t_{i,j} \beta$. Hence $t_{i,j} \beta$ is not a cover of $\beta$ and this subcase cannot occur.

\noindent \textit{Subcase 1.3.2 $\beta_i < \beta_p - (p - i)$:} By the subcase hypothesis, the definition of $\tau$, and $j=q$, 
\begin{equation}
\label{eq:prophypofirst:1.3.2}
\tau_p = \beta_p > \beta_i + (p - i) = \beta_i + (j-i) - (q-p) = \tau_j - (q - p) = \tau_q - (q - p).
\end{equation}

By \eqref{eq:betaijineq}, \eqref{eq:betapqleft}, and $j=q$ we have
\begin{equation}
\label{eq:prophyposecond:1.3.2}
\tau_q - (q - p) = \tau_j - (q-p) = \beta_i + (j-i) - (q-p) > \beta_j - (q - p) = \beta_q + (q - p) \geq {\sf leftmin}_{c \lambda}(q).
\end{equation}
Since $\tau_p = \beta_p$, by \eqref{eq:betapqright},
\begin{equation}
\label{eq:prophypothird:1.3.2}
\tau_p + (q-p) = \beta_p + (q-p) \leq {\sf rightmax}_{c \lambda}(q).
\end{equation}

Finally, $\tau_r = \beta_r$ for $r \neq i , j$. If $(p,q)$ is interweaved, then \eqref{eq:betapqcenter} and $j=q$ combined with the previous sentence implies
\begin{align}
\label{eq:prophypofourth:1.3.2}
\begin{split}
\small
\tau_1 + \cdots + \tau_{p-1} + \tau_q - (q - p) + & \\
 \tau_{p+1} + \cdots + \tau_{{\sf center}_{c \lambda}(p,q)} &  = \beta_1 + \cdots + \beta_{i-1} + \beta_j - (j-i) + \beta_{i+1} + \cdots \\
& \,\,\,\, + \beta_{p-1} \!+\! \beta_i \!+\! (j-i) - (q-p) \!+\! \beta_{p+1} \!+\! \cdots \!+ \!\beta_{{\sf center}_{c \lambda}(i,j)} \\
& = \beta_1 \!+\! \cdots \!+\! \beta_{p-1} \!+\! \beta_j - (q - p) \!+\! \beta_{p+1} \!+\! \cdots \!+\! \beta_{{\sf center}_{c \lambda}(i,j)} \\
& \geq (c \lambda)_1 + \cdots + (c \lambda)_{{\sf center}_{c \lambda}(i,j)}. \\
\end{split}
\end{align}
The hypotheses of Proposition~\ref{prop:goingup} are satisfied for $t_{pq} \tau$ by \eqref{eq:prophypofirst:1.3.2}, \eqref{eq:prophyposecond:1.3.2}, \eqref{eq:prophypothird:1.3.2}, and \eqref{eq:prophypofourth:1.3.2}. Hence, $t_{p, q} \tau \in {\mathcal P}_{c \lambda,{\bf \gamma}}$. 

We conclude by \eqref{eq:prophypofirst:1.1.1} that $\tau <_{\text{Bruhat}} t_{p,q} \tau$. By \eqref{eq:betaijineq}, $\phi_i = \beta_i > \beta_j - (j-i) = \beta_q - (q-p) - (p-i) = \phi_p - (p-i)$, and hence $\phi = t_{p,q} \beta <_{\text{Bruhat}} t_{i,p} t_{p,q} \beta = t_{p, q} \tau$.

\noindent \textit{Case 1.4 ($i<p<j<q$ are all disjoint):} In this case $\tau, \phi <_{\text{Bruhat}} t_{i, j} t_{p,q} \beta = t_{p,q} t_{i, j} \beta$. By Lemma~\ref{lem:notbothinterweaved}, at least one of $(i,j)$ or $(p,q)$ is not interweaved. If $(i,j)$ is not interweaved then it follows from applying
Proposition~\ref{prop:goingup} to $t_{i,j}\beta\in {\mathcal P}_{c\lambda,\gamma}$ 
 that $t_{i, j} t_{p,q} \beta$ satisfies the hypotheses of Proposition~\ref{prop:goingup}. Similarly, if $(p, q)$ is not interweaved, $t_{p, q} t_{i,j} \beta$ is shown to satisfy the hypotheses of Proposition~\ref{prop:goingup}.

\noindent \textit{Case 1.5 ($i<j<p<q$ are all disjoint):} Once again $\tau, \phi <_{\text{Bruhat}} t_{i, j} t_{p,q} \beta = t_{p,q} t_{i, j} \beta$. It is easy to check that $t_{i, j} t_{p,q} \beta$ satisfies the hypotheses of Proposition~\ref{prop:goingup}. \qed

\section{Proof of Theorem~\ref{thm:maingoal} ($\Leftarrow$)} \label{sec:7}
Let us restate the ``$\Leftarrow$'' direction of Theorem~\ref{thm:maingoal}:

\begin{proposition}
Let $w\in {\mathfrak S}_n$, $I\subset J(w)$ and $D=[n-1]-I$ where $w$ is not $I$-spherical. There exists $\lambda\in{\sf Par}_D$ such that $\kappa_{w\lambda}$ is not $D$-multiplicity-free.
\end{proposition}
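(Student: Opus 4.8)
The plan is to prove the contrapositive directly: assuming $w$ is not $I$-spherical, produce a partition $\lambda$ with $\kappa_{w\lambda}$ not $D$-multiplicity-free. The first step is a structural reduction. Since $I\subseteq J(w)$, the right inversion set $N(w^{-1})$ is closed and contains every simple root indexed by $I$, hence contains all of $\Phi_I^{+}$; therefore $\ell(w)=\ell(w_0(I))+\ell(u)$ where $u:=w_0(I)w$, the factorization $w=w_0(I)u$ is reduced, and $\kappa_{w\lambda}=\pi_{w_0(I)}\kappa_{u\lambda}$ for every $\lambda$ (the unlabelled lemma in Section~\ref{sec:4} asserting $\kappa_{w\lambda}=\pi_{w_0(I)}\kappa_{c\lambda}$ is stated for a standard Coxeter element, but its proof uses only this length additivity). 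By Theorem~\ref{thm:equivalent} and Proposition~\ref{prop:Sn-coxeter-pattern}, ``$w$ not $I$-spherical'' is equivalent to ``$u$ is not a standard Coxeter element'', i.e.\ to ``$u$ contains the pattern $321$ or $3412$.''

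Second, I would analyze the position of such a pattern relative to the blocks $D=[n-1]-I$. Because $u$ has no left descent in $I$, the permutation $u^{-1}$ is increasing along each block $\{d_{t-1}+1,\dots,d_t\}$ of indices, equivalently each value-block occurs in increasing order inside the one-line word of $u$; from this one deduces that a $321$ pattern of $u$ uses three values lying in three distinct value-blocks (with decreasing block indices), and that a $3412$ pattern straddles at least two value-blocks in a forced way. Via the identity $(u\lambda)_p=\lambda_{u^{-1}(p)}$ I would then choose $\lambda$ strictly decreasing with widely separated parts, tuned so that the composition $\alpha:=u\lambda$ — and hence also $w\lambda$, which is $\alpha$ with each block sorted weakly increasingly — contains a composition pattern forcing $\kappa_{w\lambda}$ to carry a repeated $D$-Schur constituent; the genericity of $\lambda$ away from the pattern coordinates serves to make all other coordinates of $\alpha$ inert for the single $D$-Schur coefficient to be tracked.

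The substance is then an explicit computation in $\kappa_\alpha$. By Lemma~\ref{lemma:thetrick} and Lemma~\ref{lemma:containsall}, for every $\gamma\in{\sf Par}_D$ the coefficient of $s_{\gamma}$ in the $D$-Schur expansion of $\kappa_{w\lambda}=\pi_{w_0(I)}\kappa_{\alpha}$ equals $\sum_{\beta\in{\mathcal P}_{\alpha,\gamma}}{\sf sgn}(\beta)\,[x^{\beta}]\kappa_{\alpha}$, so it suffices to choose $\gamma$ making this signed sum at least $2$. I would extract the witnessing monomials $x^{\beta}$ of $\kappa_{\alpha}$ from the tableau rule of Theorem~\ref{thm:nonzerorule} (equivalently Kohnert's rule, Theorem~\ref{thm:Kohnert}), tracking signs and the multiplicities $[x^{\beta}]\kappa_{\alpha}$: in the $3412$ case, arranging that $\kappa_{\alpha}$ already has a block-decreasing monomial of coefficient $\geq 2$, which survives $\pi_{w_0(I)}$ with a definite sign and cannot cancel against other terms landing on $s_{\gamma}$; in the $321$ case, arranging (with genericity forcing all relevant $[x^{\beta}]\kappa_{\alpha}=1$) that ${\mathcal P}_{\alpha,\gamma}$ is not an interval in the Young-subgroup Bruhat order — indeed it violates the Diamond property of Theorem~\ref{thm:diamond} — so that the resulting M\"obius-type signed count exceeds $1$. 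This last calibration, done uniformly across the $321$/$3412$ dichotomy and the block-distribution subcases using only the tableau/Kohnert combinatorics of Section~\ref{sec:4} and the structural facts about $c\lambda$-type compositions from Section~\ref{sec:6}, is the main obstacle; a workable alternative that keeps the casework short is to reduce ``$w$ not $I$-spherical'' to containment of one of finitely many minimal non-$I$-spherical permutations (the $w_0(I)$-twists of $321$ and $3412$), verify those directly, and invoke a lemma that failure of $L_I$-sphericality propagates along the relevant pattern classes.
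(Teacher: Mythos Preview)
Your overall architecture matches the paper exactly: factor $w=w_0(I)u$ with $\ell$-additivity, reduce via Theorem~\ref{thm:equivalent} and Proposition~\ref{prop:Sn-coxeter-pattern} to ``$u$ contains $321$ or $3412$'', use $\kappa_{w\lambda}=\pi_{w_0(I)}\kappa_{u\lambda}$, and compute a target $D$-Schur coefficient as the signed sum $\sum_{\beta\in{\mathcal P}_{u\lambda,\gamma}}{\sf sgn}(\beta)[x^{\beta}]\kappa_{u\lambda}$. Your $3412$ intuition is also right: the paper finds $\gamma$ with ${\mathcal P}_{u\lambda,\gamma}=\{\gamma\}$ and $[x^{\gamma}]\kappa_{u\lambda}=2$ via an explicit Kohnert count.

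The gap is your $321$ mechanism. You propose to take $\lambda$ strictly decreasing with widely separated parts so that ``genericity forces all relevant $[x^{\beta}]\kappa_{\alpha}=1$'' and then argue that ${\mathcal P}_{u\lambda,\gamma}$ fails the Diamond property, making the M\"obius-type sum exceed $1$. Neither half works as stated. First, generic $\lambda$ does \emph{not} force key-polynomial coefficients to be $1$; there is no reason the relevant $[x^{\beta}]\kappa_{u\lambda}$ become multiplicity-free just because the parts of $\lambda$ are far apart, and you give no argument for it. Second, Theorem~\ref{thm:diamond} is stated only for $c\lambda$ with $c$ a standard Coxeter element, so ``violating it'' for general $u$ is not a contradiction you can cash in; and even if ${\mathcal P}_{u\lambda,\gamma}$ fails to be an interval, that alone does not force the signed sum above $1$ without explicit control of both the poset and the multiplicities. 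The paper does the opposite of what you suggest: it takes $\lambda$ with parts only in $\{0,1,2\}$ so that $u\lambda$ contains a $012$ composition-pattern, chooses $\gamma=(u\lambda+\vec e_p-\vec e_r)|_D$, and shows (Claims~\ref{claim:height1}--\ref{claim:overcount}) that ${\mathcal P}_{u\lambda,\gamma}$ has height $\leq 1$, the rank-$1$ elements are at most $k_1-1$ many and each has $[x^{\beta}]\kappa_{u\lambda}=1$, while $[x^{\gamma}]\kappa_{u\lambda}=k_1+1$. The signed sum is thus $\geq (k_1+1)-(k_1-1)=2$. The crucial device is \emph{small} part sizes, which makes the Kohnert-diagram enumeration finite and explicit; your widely separated $\lambda$ would make these counts intractable.

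Your fallback (reduce to finitely many minimal non-$I$-spherical permutations and a ``sphericality propagates along patterns'' lemma) is a different proof strategy not in the paper; it would require proving such a propagation lemma for $D$-multiplicity-freeness, which you have not supplied.
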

\begin{proof}
Let $u=w_0(I)\cdot w$. Since $w$ is not $I$-spherical, by Definition~\ref{def:spherical-standard-coxeter}, $u$ is not a product of distinct generators. By Proposition~\ref{prop:Sn-coxeter-pattern}, $u$ contains $321$ or $3412$. We divide our analysis into cases based on the patterns contained in $u$. 
For $\mu\in {\sf Comp}_n$ write $\mu|_D=(\mu^1,\ldots,\mu^k)$ to denote the splitting of $\mu$ into blocks of sizes $d_1-d_0,\ldots,d_{k+1}-d_k=n-d_k$. Note that $\mu|_D\in{\sf Par}_D$ if it is weakly decreasing in each block.

\noindent
\textit{Case 1 ($u$ contains the pattern $321$):} Choose the partition $\lambda$ whose parts are in $\{2,1,0\}$ so that $u\lambda$ contains the values $0,1,2$ at indices $p'<q<r'$. Choose the pattern $012$ so that $r'-p'$ is minimized. Also choose the minimum $p\leq p'$ such that $u\lambda$ contains only $0$'s at indices $p,\ldots,p'$ and choose the maximum $r\geq r'$ such that $u\lambda$ contains only $2$'s at indices $r',\ldots,r$. An example of a skyline diagram of $u\lambda$ is shown in Figure~\ref{fig:skyline-012}.
\begin{figure}[h!]
\begin{tikzpicture}[scale=0.500000000000000]
\node at (0,0) {$\cdot$};
\node at (1,0) {$\cdot$};
\node at (2,0) {$\cdot$};
\node at (3,0) {$\cdot$};
\node at (4,0) {$\cdot$};
\node at (5,0) {$\cdot$};
\node at (6,0) {$\cdot$};
\node at (7,0) {$\cdot$};
\node at (8,0) {$\cdot$};
\node at (9,0) {$\cdot$};
\node at (10,0) {$\cdot$};
\node at (11,0) {$\cdot$};
\node at (12,0) {$\cdot$};
\node at (13,0) {$\cdot$};
\node at (14,0) {$\cdot$};
\node at (15,0) {$\cdot$};
\node at (16,0) {$\cdot$};
\draw(0,0)--(1,0)--(1,1)--(0,1)--(0,0);
\draw(1,0)--(2,0);
\draw(2,0)--(3,0);
\draw(3,0)--(4,0);
\draw(4,0)--(5,0)--(5,1)--(4,1)--(4,0);
\draw(4,1)--(5,1)--(5,2)--(4,2)--(4,1);
\draw(5,0)--(6,0)--(6,1)--(5,1)--(5,0);
\draw(5,1)--(6,1)--(6,2)--(5,2)--(5,1);
\draw(6,0)--(7,0)--(7,1)--(6,1)--(6,0);
\draw(7,0)--(8,0)--(8,1)--(7,1)--(7,0);
\draw(8,0)--(9,0)--(9,1)--(8,1)--(8,0);
\draw(9,0)--(10,0);
\draw(10,0)--(11,0);
\draw(11,0)--(12,0);
\draw(12,0)--(13,0)--(13,1)--(12,1)--(12,0);
\draw(12,1)--(13,1)--(13,2)--(12,2)--(12,1);
\draw(13,0)--(14,0)--(14,1)--(13,1)--(13,0);
\draw(13,1)--(14,1)--(14,2)--(13,2)--(13,1);
\draw(14,0)--(15,0)--(15,1)--(14,1)--(14,0);
\draw(14,1)--(15,1)--(15,2)--(14,2)--(14,1);
\draw(15,0)--(16,0)--(16,1)--(15,1)--(15,0);
\node at (0.500000000000000,-0.4) {$ $};
\node at (1.50000000000000,-0.4) {$p $};
\node at (2.50000000000000,-0.4) {$ $};
\node at (3.50000000000000,-0.4) {$p' $};
\node at (4.50000000000000,-0.4) {$ $};
\node at (5.50000000000000,-0.4) {$ $};
\node at (6.50000000000000,-0.4) {$ $};
\node at (7.50000000000000,-0.4) {$q $};
\node at (8.50000000000000,-0.4) {$ $};
\node at (9.50000000000000,-0.4) {$ $};
\node at (10.5000000000000,-0.4) {$ $};
\node at (11.5000000000000,-0.4) {$ $};
\node at (12.5000000000000,-0.4) {$r' $};
\node at (13.5000000000000,-0.4) {$ $};
\node at (14.5000000000000,-0.4) {$r $};
\node at (15.5000000000000,-0.4) {$ $};
\end{tikzpicture}
\caption{A skyline diagram for $u\lambda$ that contains $012$}
\label{fig:skyline-012}
\end{figure}
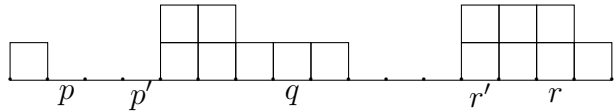
Here, $(u\lambda)_{p'}=0$, $(u\lambda)_{q}=1$, $(u\lambda)_{r'}=2$. In the interval $[p'+1,q]$,
$u\lambda$ can take on $1$'s or $2$'s, and all the $2$'s are left of the $1$'s by minimality of $r'-p'$. Similarly, in the interval $[q,r'-1]$,
$u\lambda$ takes on values $1$'s followed by $0$'s. Thus, in the interval $[p'+1,r'-1]$, say
$u\lambda$ takes on $k_2\geq0$ many $2$'s, then $k_1\geq1$ many $1$'s and then $k_0\geq0$ many $0$'s, and $(u\lambda)_{q}=1$.

Since $I\subset J(w)$, $D=[n-1]-I$, $w\lambda$ is weakly increasing in each block so $u\lambda$ is weakly decreasing in each block, \emph{i.e.}, $(u\lambda)|_D\in{\sf Par}_D$.  The argument that
follows only uses this property of $D$.

Consider the following composition
\[\gamma=(\gamma^1,\ldots,\gamma^k)=(u\lambda+\vec e_p- \vec e_r)|_D.\]
It is easily checked that if $(u\lambda)_i\geq (u\lambda)_{i+1}$, then $\gamma_i\geq\gamma_{i+1}$ by our choice of $p$ and $r$. Thus each $\gamma^i$ is indeed a partition, meaning that $\gamma\in{\sf Par}_D$.

Recall the poset ${\mathcal P}_{u\lambda,\gamma}$ (Section~\ref{sec:5}) contains all vectors $\beta$ such that the monomial $x^{\beta}$ appears in the expansion of $\kappa_{u\lambda}$ and $\pi_{w_0(I)}x^{\beta}=\pm s_{\gamma}$ (see Lemma~\ref{lemma:containsall}). By Lemma~\ref{claim:Mar4bbb}, ${\mathcal P}_{u\lambda,\gamma}$ is an order ideal  in ${\mathcal S}_{I,\gamma}$. Also each element $\beta$ can be generated from $\gamma$ via the moves $t_{ij}$'s. 

\begin{claim}\label{claim:height1}
${\mathcal P}_{u\lambda,\gamma}$ has height at most $1$. Moreover it has at most $k_1-1$ many
$\beta$ such that $\theta(\beta)=1$.
\end{claim}
\noindent
\emph{Proof of Claim~\ref{claim:height1}:}
Since all part sizes of $u\lambda$ belong in $\{0,1,2\}$, it is straightforward from
Lemma~\ref{lemma:posetBruhatorder}(i) that the only $t_{ij}$'s that increase the rank of $\beta$ are \[t_i:(\ldots,1,1,\ldots)\mapsto(\ldots,0,2,\ldots)\] for $i$ and $i+1$ in the same block. The number of nonzero values in the composition decreases by one when we apply such a move. Let $\#_{\neq0}\beta$ be the number of nonzero values in $\beta$. By Kohnert's rule (Theorem~\ref{thm:Kohnert}), $\#_{\neq0}\beta\geq\#_{\neq0}u\lambda$ for $[x^{\beta}]\kappa_{u\lambda}>0$. At the same time, $\#_{\neq0}\gamma=\#_{\neq0}u\lambda+1$, meaning that for all $\beta\in {\mathcal P}_{u\lambda,\gamma}$, $\beta$ can be obtained from $\gamma$ via at most one such move $t_i$. 

Next, let $\beta=t_i\gamma\in{\mathcal P}_{u\lambda,\gamma}$. Since $\beta\geq_{\sf dom}u\lambda$, by Corollary~\ref{cor:weightsdom}, we necessarily have $p'<i<r'$ so $i$ is one of $r'+k_2+1,\ldots,r'+k_2+k_1-1$ such that $i$ and $i+1$ are in the same block. Thus, there are at most $k_1-1$ choices for $i$.
\qed

\begin{claim}\label{claim:betaisone}
If $\beta\in {\mathcal P}_{u\lambda,\gamma}$ and $\theta(\beta)=1$ then
$[x^\beta]\kappa_{u\lambda}=1$.
\end{claim}
\noindent\emph{Proof of Claim~\ref{claim:betaisone}:}
 For each such $\beta=t_{i}\gamma$, there is exactly one corresponding Kohnert diagram, as we need to move the top box in column $r$ of $u\lambda$ to column $i+1$, and the single box in column $i$ of $u\lambda$ to column $p$. An example of such Kohnert diagrams corresponding to the example in Figure~\ref{fig:skyline-012} is shown in Figure~\ref{fig:skyline-012-negative}.\qed
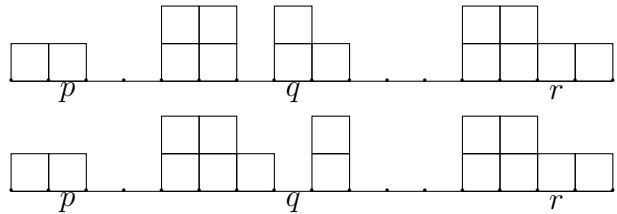
\begin{figure}[h!]
\begin{tikzpicture}[scale=0.500000000000000]
\node at (0,0) {$\cdot$};
\node at (1,0) {$\cdot$};
\node at (2,0) {$\cdot$};
\node at (3,0) {$\cdot$};
\node at (4,0) {$\cdot$};
\node at (5,0) {$\cdot$};
\node at (6,0) {$\cdot$};
\node at (7,0) {$\cdot$};
\node at (8,0) {$\cdot$};
\node at (9,0) {$\cdot$};
\node at (10,0) {$\cdot$};
\node at (11,0) {$\cdot$};
\node at (12,0) {$\cdot$};
\node at (13,0) {$\cdot$};
\node at (14,0) {$\cdot$};
\node at (15,0) {$\cdot$};
\node at (16,0) {$\cdot$};
\draw(0,0)--(1,0);
\draw(0,0)--(1,0)--(1,1)--(0,1)--(0,0);
\draw(1,0)--(2,0);
\draw(1,0)--(2,0)--(2,1)--(1,1)--(1,0);
\draw(2,0)--(3,0);
\draw(3,0)--(4,0);
\draw(4,0)--(5,0);
\draw(4,0)--(5,0)--(5,1)--(4,1)--(4,0);
\draw(4,1)--(5,1)--(5,2)--(4,2)--(4,1);
\draw(5,0)--(6,0);
\draw(5,0)--(6,0)--(6,1)--(5,1)--(5,0);
\draw(5,1)--(6,1)--(6,2)--(5,2)--(5,1);
\draw(6,0)--(7,0);
\draw(7,0)--(8,0);
\draw(7,0)--(8,0)--(8,1)--(7,1)--(7,0);
\draw(7,1)--(8,1)--(8,2)--(7,2)--(7,1);
\draw(8,0)--(9,0);
\draw(8,0)--(9,0)--(9,1)--(8,1)--(8,0);
\draw(9,0)--(10,0);
\draw(10,0)--(11,0);
\draw(11,0)--(12,0);
\draw(12,0)--(13,0);
\draw(12,0)--(13,0)--(13,1)--(12,1)--(12,0);
\draw(12,1)--(13,1)--(13,2)--(12,2)--(12,1);
\draw(13,0)--(14,0);
\draw(13,0)--(14,0)--(14,1)--(13,1)--(13,0);
\draw(13,1)--(14,1)--(14,2)--(13,2)--(13,1);
\draw(14,0)--(15,0);
\draw(14,0)--(15,0)--(15,1)--(14,1)--(14,0);
\draw(15,0)--(16,0);
\draw(15,0)--(16,0)--(16,1)--(15,1)--(15,0);
\node at (0.500000000000000,-0.3) {$ $};
\node at (1.50000000000000,-0.3) {$p $};
\node at (2.50000000000000,-0.3) {$ $};
\node at (3.50000000000000,-0.3) {$ $};
\node at (4.50000000000000,-0.3) {$ $};
\node at (5.50000000000000,-0.3) {$ $};
\node at (6.50000000000000,-0.3) {$ $};
\node at (7.50000000000000,-0.3) {$q $};
\node at (8.50000000000000,-0.3) {$ $};
\node at (9.50000000000000,-0.3) {$ $};
\node at (10.5000000000000,-0.3) {$ $};
\node at (11.5000000000000,-0.3) {$ $};
\node at (12.5000000000000,-0.3) {$ $};
\node at (13.5000000000000,-0.3) {$ $};
\node at (14.5000000000000,-0.3) {$r $};
\node at (15.5000000000000,-0.3) {$ $};
\end{tikzpicture}
\begin{tikzpicture}[scale=0.500000000000000]
\node at (0,0) {$\cdot$};
\node at (1,0) {$\cdot$};
\node at (2,0) {$\cdot$};
\node at (3,0) {$\cdot$};
\node at (4,0) {$\cdot$};
\node at (5,0) {$\cdot$};
\node at (6,0) {$\cdot$};
\node at (7,0) {$\cdot$};
\node at (8,0) {$\cdot$};
\node at (9,0) {$\cdot$};
\node at (10,0) {$\cdot$};
\node at (11,0) {$\cdot$};
\node at (12,0) {$\cdot$};
\node at (13,0) {$\cdot$};
\node at (14,0) {$\cdot$};
\node at (15,0) {$\cdot$};
\node at (16,0) {$\cdot$};
\draw(0,0)--(1,0);
\draw(0,0)--(1,0)--(1,1)--(0,1)--(0,0);
\draw(1,0)--(2,0);
\draw(1,0)--(2,0)--(2,1)--(1,1)--(1,0);
\draw(2,0)--(3,0);
\draw(3,0)--(4,0);
\draw(4,0)--(5,0);
\draw(4,0)--(5,0)--(5,1)--(4,1)--(4,0);
\draw(4,1)--(5,1)--(5,2)--(4,2)--(4,1);
\draw(5,0)--(6,0);
\draw(5,0)--(6,0)--(6,1)--(5,1)--(5,0);
\draw(5,1)--(6,1)--(6,2)--(5,2)--(5,1);
\draw(6,0)--(7,0);
\draw(6,0)--(7,0)--(7,1)--(6,1)--(6,0);
\draw(7,0)--(8,0);
\draw(8,0)--(9,0);
\draw(8,0)--(9,0)--(9,1)--(8,1)--(8,0);
\draw(8,1)--(9,1)--(9,2)--(8,2)--(8,1);
\draw(9,0)--(10,0);
\draw(10,0)--(11,0);
\draw(11,0)--(12,0);
\draw(12,0)--(13,0);
\draw(12,0)--(13,0)--(13,1)--(12,1)--(12,0);
\draw(12,1)--(13,1)--(13,2)--(12,2)--(12,1);
\draw(13,0)--(14,0);
\draw(13,0)--(14,0)--(14,1)--(13,1)--(13,0);
\draw(13,1)--(14,1)--(14,2)--(13,2)--(13,1);
\draw(14,0)--(15,0);
\draw(14,0)--(15,0)--(15,1)--(14,1)--(14,0);
\draw(15,0)--(16,0);
\draw(15,0)--(16,0)--(16,1)--(15,1)--(15,0);
\node at (0.500000000000000,-0.3) {$ $};
\node at (1.50000000000000,-0.3) {$p $};
\node at (2.50000000000000,-0.3) {$ $};
\node at (3.50000000000000,-0.3) {$ $};
\node at (4.50000000000000,-0.3) {$ $};
\node at (5.50000000000000,-0.3) {$ $};
\node at (6.50000000000000,-0.3) {$ $};
\node at (7.50000000000000,-0.3) {$q $};
\node at (8.50000000000000,-0.3) {$ $};
\node at (9.50000000000000,-0.3) {$ $};
\node at (10.5000000000000,-0.3) {$ $};
\node at (11.5000000000000,-0.3) {$ $};
\node at (12.5000000000000,-0.3) {$ $};
\node at (13.5000000000000,-0.3) {$ $};
\node at (14.5000000000000,-0.3) {$r $};
\node at (15.5000000000000,-0.3) {$ $};
\end{tikzpicture}
\caption{Kohnert diagrams with weight $x^{\beta}=x^{t_{ij}\gamma}$ where $\beta\in{\mathcal P}_{u\lambda,\gamma}$ }
\label{fig:skyline-012-negative}
\end{figure}

\begin{claim}\label{claim:overcount}
$[x^{\gamma}]\kappa_{u\lambda}=k_1+1$.
\end{claim}
\noindent
\emph{Proof of Claim~\ref{claim:overcount}:}
The $D\in {\sf Koh}(u\lambda)$ such that ${\sf Kohwt}(D)=\gamma$ are obtained by either
\begin{itemize}
\item moving the top box of column $r$ in $u\lambda$ moved to column $p$; or
\item moving the unique box in the column $z\in \{p'+k_2+1,\ldots,p'+k_2+k_1\}$ to column $p$ followed by moving the top box in column $r$ to column $z$.
\end{itemize}
These Kohnert diagrams corresponding to the example shown in Figure~\ref{fig:skyline-012} are shown in Figure~\ref{fig:skyline-012-positive}.\qed 

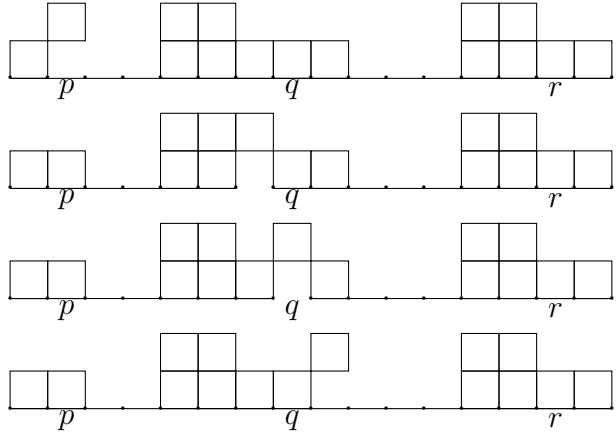
\begin{figure}[h!]
\begin{tikzpicture}[scale=0.500000000000000]
\node at (0,0) {$\cdot$};
\node at (1,0) {$\cdot$};
\node at (2,0) {$\cdot$};
\node at (3,0) {$\cdot$};
\node at (4,0) {$\cdot$};
\node at (5,0) {$\cdot$};
\node at (6,0) {$\cdot$};
\node at (7,0) {$\cdot$};
\node at (8,0) {$\cdot$};
\node at (9,0) {$\cdot$};
\node at (10,0) {$\cdot$};
\node at (11,0) {$\cdot$};
\node at (12,0) {$\cdot$};
\node at (13,0) {$\cdot$};
\node at (14,0) {$\cdot$};
\node at (15,0) {$\cdot$};
\node at (16,0) {$\cdot$};
\draw(0,0)--(1,0);
\draw(0,0)--(1,0)--(1,1)--(0,1)--(0,0);
\draw(1,0)--(2,0);
\draw(1,1)--(2,1)--(2,2)--(1,2)--(1,1);
\draw(2,0)--(3,0);
\draw(3,0)--(4,0);
\draw(4,0)--(5,0);
\draw(4,0)--(5,0)--(5,1)--(4,1)--(4,0);
\draw(4,1)--(5,1)--(5,2)--(4,2)--(4,1);
\draw(5,0)--(6,0);
\draw(5,0)--(6,0)--(6,1)--(5,1)--(5,0);
\draw(5,1)--(6,1)--(6,2)--(5,2)--(5,1);
\draw(6,0)--(7,0);
\draw(6,0)--(7,0)--(7,1)--(6,1)--(6,0);
\draw(7,0)--(8,0);
\draw(7,0)--(8,0)--(8,1)--(7,1)--(7,0);
\draw(8,0)--(9,0);
\draw(8,0)--(9,0)--(9,1)--(8,1)--(8,0);
\draw(9,0)--(10,0);
\draw(10,0)--(11,0);
\draw(11,0)--(12,0);
\draw(12,0)--(13,0);
\draw(12,0)--(13,0)--(13,1)--(12,1)--(12,0);
\draw(12,1)--(13,1)--(13,2)--(12,2)--(12,1);
\draw(13,0)--(14,0);
\draw(13,0)--(14,0)--(14,1)--(13,1)--(13,0);
\draw(13,1)--(14,1)--(14,2)--(13,2)--(13,1);
\draw(14,0)--(15,0);
\draw(14,0)--(15,0)--(15,1)--(14,1)--(14,0);
\draw(15,0)--(16,0);
\draw(15,0)--(16,0)--(16,1)--(15,1)--(15,0);
\node at (0.500000000000000,-0.3) {$ $};
\node at (1.50000000000000,-0.3) {$p $};
\node at (2.50000000000000,-0.3) {$ $};
\node at (3.50000000000000,-0.3) {$ $};
\node at (4.50000000000000,-0.3) {$ $};
\node at (5.50000000000000,-0.3) {$ $};
\node at (6.50000000000000,-0.3) {$ $};
\node at (7.50000000000000,-0.3) {$q $};
\node at (8.50000000000000,-0.3) {$ $};
\node at (9.50000000000000,-0.3) {$ $};
\node at (10.5000000000000,-0.3) {$ $};
\node at (11.5000000000000,-0.3) {$ $};
\node at (12.5000000000000,-0.3) {$ $};
\node at (13.5000000000000,-0.3) {$ $};
\node at (14.5000000000000,-0.3) {$r $};
\node at (15.5000000000000,-0.3) {$ $};
\end{tikzpicture}
\begin{tikzpicture}[scale=0.500000000000000]
\node at (0,0) {$\cdot$};
\node at (1,0) {$\cdot$};
\node at (2,0) {$\cdot$};
\node at (3,0) {$\cdot$};
\node at (4,0) {$\cdot$};
\node at (5,0) {$\cdot$};
\node at (6,0) {$\cdot$};
\node at (7,0) {$\cdot$};
\node at (8,0) {$\cdot$};
\node at (9,0) {$\cdot$};
\node at (10,0) {$\cdot$};
\node at (11,0) {$\cdot$};
\node at (12,0) {$\cdot$};
\node at (13,0) {$\cdot$};
\node at (14,0) {$\cdot$};
\node at (15,0) {$\cdot$};
\node at (16,0) {$\cdot$};
\draw(0,0)--(1,0);
\draw(0,0)--(1,0)--(1,1)--(0,1)--(0,0);
\draw(1,0)--(2,0);
\draw(1,0)--(2,0)--(2,1)--(1,1)--(1,0);
\draw(2,0)--(3,0);
\draw(3,0)--(4,0);
\draw(4,0)--(5,0);
\draw(4,0)--(5,0)--(5,1)--(4,1)--(4,0);
\draw(4,1)--(5,1)--(5,2)--(4,2)--(4,1);
\draw(5,0)--(6,0);
\draw(5,0)--(6,0)--(6,1)--(5,1)--(5,0);
\draw(5,1)--(6,1)--(6,2)--(5,2)--(5,1);

\draw(6,1)--(7,1)--(7,2)--(6,2)--(6,1);
\draw(7,0)--(8,0);
\draw(7,0)--(8,0)--(8,1)--(7,1)--(7,0);
\draw(8,0)--(9,0);
\draw(8,0)--(9,0)--(9,1)--(8,1)--(8,0);
\draw(9,0)--(10,0);
\draw(10,0)--(11,0);
\draw(11,0)--(12,0);
\draw(12,0)--(13,0);
\draw(12,0)--(13,0)--(13,1)--(12,1)--(12,0);
\draw(12,1)--(13,1)--(13,2)--(12,2)--(12,1);
\draw(13,0)--(14,0);
\draw(13,0)--(14,0)--(14,1)--(13,1)--(13,0);
\draw(13,1)--(14,1)--(14,2)--(13,2)--(13,1);
\draw(14,0)--(15,0);
\draw(14,0)--(15,0)--(15,1)--(14,1)--(14,0);
\draw(15,0)--(16,0);
\draw(15,0)--(16,0)--(16,1)--(15,1)--(15,0);
\node at (0.500000000000000,-0.3) {$ $};
\node at (1.50000000000000,-0.3) {$p $};
\node at (2.50000000000000,-0.3) {$ $};
\node at (3.50000000000000,-0.3) {$ $};
\node at (4.50000000000000,-0.3) {$ $};
\node at (5.50000000000000,-0.3) {$ $};
\node at (6.50000000000000,-0.3) {$ $};
\node at (7.50000000000000,-0.3) {$q $};
\node at (8.50000000000000,-0.3) {$ $};
\node at (9.50000000000000,-0.3) {$ $};
\node at (10.5000000000000,-0.3) {$ $};
\node at (11.5000000000000,-0.3) {$ $};
\node at (12.5000000000000,-0.3) {$ $};
\node at (13.5000000000000,-0.3) {$ $};
\node at (14.5000000000000,-0.3) {$r $};
\node at (15.5000000000000,-0.3) {$ $};
\end{tikzpicture}

\begin{tikzpicture}[scale=0.500000000000000]
\node at (0,0) {$\cdot$};
\node at (1,0) {$\cdot$};
\node at (2,0) {$\cdot$};
\node at (3,0) {$\cdot$};
\node at (4,0) {$\cdot$};
\node at (5,0) {$\cdot$};
\node at (6,0) {$\cdot$};
\node at (7,0) {$\cdot$};
\node at (8,0) {$\cdot$};
\node at (9,0) {$\cdot$};
\node at (10,0) {$\cdot$};
\node at (11,0) {$\cdot$};
\node at (12,0) {$\cdot$};
\node at (13,0) {$\cdot$};
\node at (14,0) {$\cdot$};
\node at (15,0) {$\cdot$};
\node at (16,0) {$\cdot$};
\draw(0,0)--(1,0);
\draw(0,0)--(1,0)--(1,1)--(0,1)--(0,0);
\draw(1,0)--(2,0);
\draw(1,0)--(2,0)--(2,1)--(1,1)--(1,0);
\draw(2,0)--(3,0);
\draw(3,0)--(4,0);
\draw(4,0)--(5,0);
\draw(4,0)--(5,0)--(5,1)--(4,1)--(4,0);
\draw(4,1)--(5,1)--(5,2)--(4,2)--(4,1);
\draw(5,0)--(6,0);
\draw(5,0)--(6,0)--(6,1)--(5,1)--(5,0);
\draw(5,1)--(6,1)--(6,2)--(5,2)--(5,1);
\draw(6,0)--(7,0);
\draw(6,0)--(7,0)--(7,1)--(6,1)--(6,0);

\draw(7,1)--(8,1)--(8,2)--(7,2)--(7,1);
\draw(8,0)--(9,0);
\draw(8,0)--(9,0)--(9,1)--(8,1)--(8,0);
\draw(9,0)--(10,0);
\draw(10,0)--(11,0);
\draw(11,0)--(12,0);
\draw(12,0)--(13,0);
\draw(12,0)--(13,0)--(13,1)--(12,1)--(12,0);
\draw(12,1)--(13,1)--(13,2)--(12,2)--(12,1);
\draw(13,0)--(14,0);
\draw(13,0)--(14,0)--(14,1)--(13,1)--(13,0);
\draw(13,1)--(14,1)--(14,2)--(13,2)--(13,1);
\draw(14,0)--(15,0);
\draw(14,0)--(15,0)--(15,1)--(14,1)--(14,0);
\draw(15,0)--(16,0);
\draw(15,0)--(16,0)--(16,1)--(15,1)--(15,0);
\node at (0.500000000000000,-0.3) {$ $};
\node at (1.50000000000000,-0.3) {$p $};
\node at (2.50000000000000,-0.3) {$ $};
\node at (3.50000000000000,-0.3) {$ $};
\node at (4.50000000000000,-0.3) {$ $};
\node at (5.50000000000000,-0.3) {$ $};
\node at (6.50000000000000,-0.3) {$ $};
\node at (7.50000000000000,-0.3) {$q $};
\node at (8.50000000000000,-0.3) {$ $};
\node at (9.50000000000000,-0.3) {$ $};
\node at (10.5000000000000,-0.3) {$ $};
\node at (11.5000000000000,-0.3) {$ $};
\node at (12.5000000000000,-0.3) {$ $};
\node at (13.5000000000000,-0.3) {$ $};
\node at (14.5000000000000,-0.3) {$r $};
\node at (15.5000000000000,-0.3) {$ $};
\end{tikzpicture}

\begin{tikzpicture}[scale=0.500000000000000]
\node at (0,0) {$\cdot$};
\node at (1,0) {$\cdot$};
\node at (2,0) {$\cdot$};
\node at (3,0) {$\cdot$};
\node at (4,0) {$\cdot$};
\node at (5,0) {$\cdot$};
\node at (6,0) {$\cdot$};
\node at (7,0) {$\cdot$};
\node at (8,0) {$\cdot$};
\node at (9,0) {$\cdot$};
\node at (10,0) {$\cdot$};
\node at (11,0) {$\cdot$};
\node at (12,0) {$\cdot$};
\node at (13,0) {$\cdot$};
\node at (14,0) {$\cdot$};
\node at (15,0) {$\cdot$};
\node at (16,0) {$\cdot$};
\draw(0,0)--(1,0);
\draw(0,0)--(1,0)--(1,1)--(0,1)--(0,0);
\draw(1,0)--(2,0);
\draw(1,0)--(2,0)--(2,1)--(1,1)--(1,0);
\draw(2,0)--(3,0);
\draw(3,0)--(4,0);
\draw(4,0)--(5,0);
\draw(4,0)--(5,0)--(5,1)--(4,1)--(4,0);
\draw(4,1)--(5,1)--(5,2)--(4,2)--(4,1);
\draw(5,0)--(6,0);
\draw(5,0)--(6,0)--(6,1)--(5,1)--(5,0);
\draw(5,1)--(6,1)--(6,2)--(5,2)--(5,1);
\draw(6,0)--(7,0);
\draw(6,0)--(7,0)--(7,1)--(6,1)--(6,0);
\draw(7,0)--(8,0);
\draw(7,0)--(8,0)--(8,1)--(7,1)--(7,0);
\draw(8,0)--(9,0);
\draw(8,1)--(9,1)--(9,2)--(8,2)--(8,1);
\draw(9,0)--(10,0);
\draw(10,0)--(11,0);
\draw(11,0)--(12,0);
\draw(12,0)--(13,0);
\draw(12,0)--(13,0)--(13,1)--(12,1)--(12,0);
\draw(12,1)--(13,1)--(13,2)--(12,2)--(12,1);
\draw(13,0)--(14,0);
\draw(13,0)--(14,0)--(14,1)--(13,1)--(13,0);
\draw(13,1)--(14,1)--(14,2)--(13,2)--(13,1);
\draw(14,0)--(15,0);
\draw(14,0)--(15,0)--(15,1)--(14,1)--(14,0);
\draw(15,0)--(16,0);
\draw(15,0)--(16,0)--(16,1)--(15,1)--(15,0);
\node at (0.500000000000000,-0.3) {$ $};
\node at (1.50000000000000,-0.3) {$p $};
\node at (2.50000000000000,-0.3) {$ $};
\node at (3.50000000000000,-0.3) {$ $};
\node at (4.50000000000000,-0.3) {$ $};
\node at (5.50000000000000,-0.3) {$ $};
\node at (6.50000000000000,-0.3) {$ $};
\node at (7.50000000000000,-0.3) {$q $};
\node at (8.50000000000000,-0.3) {$ $};
\node at (9.50000000000000,-0.3) {$ $};
\node at (10.5000000000000,-0.3) {$ $};
\node at (11.5000000000000,-0.3) {$ $};
\node at (12.5000000000000,-0.3) {$ $};
\node at (13.5000000000000,-0.3) {$ $};
\node at (14.5000000000000,-0.3) {$r $};
\node at (15.5000000000000,-0.3) {$ $};
\end{tikzpicture}
\caption{Kohnert diagrams with weight $x^{\gamma}$}
\label{fig:skyline-012-positive}
\end{figure}

Hence, by Claims~\ref{claim:height1},~\ref{claim:betaisone},~\ref{claim:overcount},
\[
[s_{\gamma}]\kappa_{w\lambda}=\sum_{\beta\in{\mathcal P}_{u\lambda,\gamma}}{\sf sgn}(\beta)[x^{\beta}]\kappa_{u\lambda}\geq (k_1+1)-(k_1-1)=2
\]
so $\kappa_{w\lambda}$ is not $D$-multiplicity-free.

\noindent
\textit{Case 2 ($u$ avoids the pattern $321$ but $u$ contains the pattern $3412$):} Pick $\lambda\in {\sf Par}_n$ to consist of values in $\{3,2,1,0\}$ so that $u\lambda$ contains the values $1,0,3,2$ at indices $p'<q'<r'<z'$ so that $z'-p'$ is minizied. Analogous to Case 1, choose the minimum $p\leq p'$ such that $u\lambda$ contains only $1$'s in the interval $[p,p']$ and choose the maximum $z\geq z'$ such that $u\lambda$ contains only $2$'s on $[z',z]$. Let $q>p$ be the minimum index such that $(u\lambda)_{q}=0$ and let $r<z$ be the maximum index such that $(u\lambda)_{r}=3$. Since $u$ avoids $321$, $u\lambda$ avoids $012$, and together with the minimality of $z'-p'$, we see that $(u\lambda)_{p'+1},\ldots,(u\lambda)_{z'-1}$ can only take on values in $\{0,3\}$. An example of a skyline diagram of $u\lambda$ is shown in Figure~\ref{fig:skyline-1032}.
\begin{figure}[h!]
\begin{tikzpicture}[scale=0.500000000000000]
\node at (0,0) {$\cdot$};
\node at (1,0) {$\cdot$};
\node at (2,0) {$\cdot$};
\node at (3,0) {$\cdot$};
\node at (4,0) {$\cdot$};
\node at (5,0) {$\cdot$};
\node at (6,0) {$\cdot$};
\node at (7,0) {$\cdot$};
\node at (8,0) {$\cdot$};
\node at (9,0) {$\cdot$};
\node at (10,0) {$\cdot$};
\node at (11,0) {$\cdot$};
\node at (12,0) {$\cdot$};
\node at (13,0) {$\cdot$};
\node at (14,0) {$\cdot$};
\node at (15,0) {$\cdot$};
\node at (16,0) {$\cdot$};
\node at (17,0) {$\cdot$};
\draw(0,0)--(1,0)--(1,1)--(0,1)--(0,0);
\draw(0,1)--(1,1)--(1,2)--(0,2)--(0,1);
\draw(0,2)--(1,2)--(1,3)--(0,3)--(0,2);
\draw(1,0)--(2,0)--(2,1)--(1,1)--(1,0);
\draw(2,0)--(3,0)--(3,1)--(2,1)--(2,0);
\draw(3,0)--(4,0);
\draw(4,0)--(5,0);
\draw(5,0)--(6,0)--(6,1)--(5,1)--(5,0);
\draw(5,1)--(6,1)--(6,2)--(5,2)--(5,1);
\draw(5,2)--(6,2)--(6,3)--(5,3)--(5,2);
\draw(6,0)--(7,0)--(7,1)--(6,1)--(6,0);
\draw(6,1)--(7,1)--(7,2)--(6,2)--(6,1);
\draw(6,2)--(7,2)--(7,3)--(6,3)--(6,2);
\draw(7,0)--(8,0);
\draw(8,0)--(9,0)--(9,1)--(8,1)--(8,0);
\draw(8,1)--(9,1)--(9,2)--(8,2)--(8,1);
\draw(8,2)--(9,2)--(9,3)--(8,3)--(8,2);
\draw(9,0)--(10,0);
\draw(10,0)--(11,0);
\draw(11,0)--(12,0)--(12,1)--(11,1)--(11,0);
\draw(11,1)--(12,1)--(12,2)--(11,2)--(11,1);
\draw(11,2)--(12,2)--(12,3)--(11,3)--(11,2);
\draw(12,0)--(13,0)--(13,1)--(12,1)--(12,0);
\draw(12,1)--(13,1)--(13,2)--(12,2)--(12,1);
\draw(12,2)--(13,2)--(13,3)--(12,3)--(12,2);
\draw(13,0)--(14,0)--(14,1)--(13,1)--(13,0);
\draw(13,1)--(14,1)--(14,2)--(13,2)--(13,1);
\draw(13,2)--(14,2)--(14,3)--(13,3)--(13,2);
\draw(14,0)--(15,0)--(15,1)--(14,1)--(14,0);
\draw(14,1)--(15,1)--(15,2)--(14,2)--(14,1);
\draw(15,0)--(16,0)--(16,1)--(15,1)--(15,0);
\draw(15,1)--(16,1)--(16,2)--(15,2)--(15,1);
\draw(16,0)--(17,0)--(17,1)--(16,1)--(16,0);
\node at (0.500000000000000,-0.4) {$ $};
\node at (1.50000000000000,-0.4) {$p $};
\node at (2.50000000000000,-0.4) {$p' $};
\node at (3.50000000000000,-0.4) {$q $};
\node at (4.50000000000000,-0.4) {$ $};
\node at (5.50000000000000,-0.4) {$ $};
\node at (6.50000000000000,-0.4) {$ $};
\node at (7.50000000000000,-0.4) {$q' $};
\node at (8.50000000000000,-0.4) {$ $};
\node at (9.50000000000000,-0.4) {$ $};
\node at (10.5000000000000,-0.4) {$ $};
\node at (11.5000000000000,-0.4) {$ $};
\node at (12.5000000000000,-0.4) {$r' $};
\node at (13.5000000000000,-0.4) {$r $};
\node at (14.5000000000000,-0.4) {$ $};
\node at (15.5000000000000,-0.4) {$z,z' $};
\node at (16.5000000000000,-0.4) {$ $};
\end{tikzpicture}
\caption{A skyline diagram for $u\lambda$ that contains $1032$ and avoids $012$ (possibly $z=z'$)}
\label{fig:skyline-1032}
\end{figure}
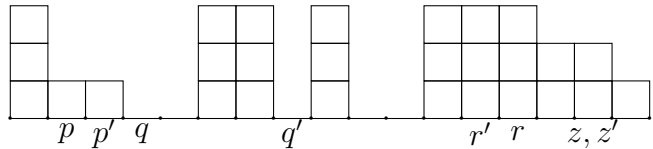

Similar to Case 1, let 
\[\gamma=(\gamma^1,\ldots,\gamma^k)=(u\lambda+\vec e_p+\vec e_q-\vec e_r-\vec e_z)|_D\in{\sf Par}_D.\] 

\begin{claim}
\label{claim:height0}
${\mathcal P}_{u\lambda,\gamma}=\{\gamma\}$.
\end{claim}
\noindent 
\emph{Proof of Claim~\ref{claim:height0}:} By Proposition~\ref{prop:posetsiso}, 
Lemma~\ref{lemma:commutes}, and Lemma~\ref{claim:Mar4bbb}, it suffices to show that there does not exist $i,i+1$ in the same block such that $\beta=t_i\gamma\in {\mathcal P}_{u\lambda,\gamma}$. If such a $t_i $ exists, then $[x^{\beta}]\kappa_{u\lambda}$ and so $\beta\geq_{\sf dom} u\lambda$, Corollary~\ref{cor:weightsdom}. Also we must have $p\leq i<z$ since $\gamma$ and $u\lambda$ only differ in that interval. Let 
$\beta_{\leq j}:=(\beta_1,\ldots,\beta_j)$ and recall that $\#_{\neq0}\beta$ is the number of nonzero entries in $\beta$. By Kohnert's rule, Theorem~\ref{thm:Kohnert}, for $\beta\in{\mathcal P}_{u\lambda,\gamma}$, $\#_{\neq0}\beta_{\leq j}\geq \#_{\neq0}(u\lambda)_{\leq j}$ for all $j$. Consider the following cases:
\begin{itemize}
\item $p=i<q$, $t_i:\gamma=(\ldots,2,1,\ldots)\mapsto(\ldots,0,3,\ldots)$, $\#_{\neq0}\beta_{\leq i}<\#_{\neq0}(u\lambda)_{\leq i}$;
\item $p<i<q$, $t_i:\gamma=(\ldots,1,1,\ldots)\mapsto(\ldots,0,2,\ldots)$, $\#_{\neq0}\beta_{\leq i}<\#_{\neq0}(u\lambda)_{\leq i}$;
\item $q\leq i<r$, $t_i:\gamma=(\ldots,1,0,\ldots)\mapsto(\ldots,-1,2,\ldots)$ or $(\ldots,3,\beta_{i+1},\ldots)\mapsto (\ldots,\beta_{i+1}-1,4,\ldots)$, with impossible part sizes;
\item $r\leq i<z$, $t_i:\gamma=(\ldots,2,2,\ldots)\mapsto(\ldots,1,3,\ldots)$ or $(\ldots,2,1,\ldots)\mapsto(\ldots,0,3,\ldots)$, where the newly generated part of size $3$ cannot be obtained by Kohnert's rule, Theorem~\ref{thm:Kohnert}, since $u\lambda$, $\gamma$ and $\beta$ only differ on the interval $[p,z]$, that is $\beta\not \in {\mathcal P}_{u\lambda,\gamma}$, a contradiction.
\end{itemize}
As a result, no such $t_i$ exists. \qed

\begin{claim}\label{claim:gammaistwo}
$[x^{\gamma}]\kappa_{u\lambda}=2$.
\end{claim}
\noindent 
\emph{Proof of Claim~\ref{claim:gammaistwo}:}
The $D\in {\sf Koh}(u\lambda)$ such that ${\sf Kohwt}(D)=\gamma$ are obtained from $u\lambda$ by 
\begin{itemize}
\item moving the top box of column $r$ to column $p$ and moving the top box of column $z$ to column $q$; or
\item moving the top box of column $r$ to column $q$ and moving the top box of column $z$ to column $p$;
\end{itemize}
as shown in Figure~\ref{fig:skyline-1032-positive}.\qed

Therefore, by Claim~\ref{claim:height0} and Claim~\ref{claim:gammaistwo}, $[s_{\lambda}]\kappa_{w\lambda}=[x^{\gamma}]\kappa_{u\lambda}=2$, as desired.
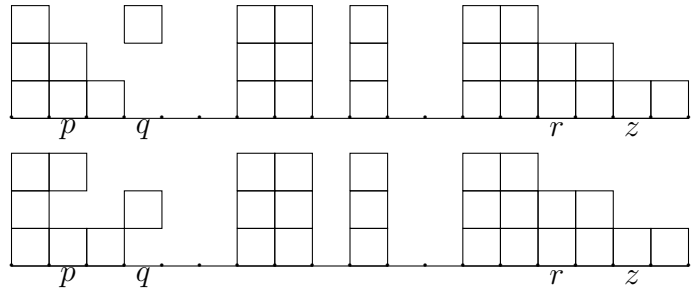
\begin{figure}[h!]
\begin{tikzpicture}[scale=0.500000000000000]
\node at (0,0) {$\cdot$};
\node at (1,0) {$\cdot$};
\node at (2,0) {$\cdot$};
\node at (3,0) {$\cdot$};
\node at (4,0) {$\cdot$};
\node at (5,0) {$\cdot$};
\node at (6,0) {$\cdot$};
\node at (7,0) {$\cdot$};
\node at (8,0) {$\cdot$};
\node at (9,0) {$\cdot$};
\node at (10,0) {$\cdot$};
\node at (11,0) {$\cdot$};
\node at (12,0) {$\cdot$};
\node at (13,0) {$\cdot$};
\node at (14,0) {$\cdot$};
\node at (15,0) {$\cdot$};
\node at (16,0) {$\cdot$};
\node at (17,0) {$\cdot$};
\node at (18,0) {$\cdot$};
\draw(0,0)--(1,0);
\draw(0,0)--(1,0)--(1,1)--(0,1)--(0,0);
\draw(0,1)--(1,1)--(1,2)--(0,2)--(0,1);
\draw(0,2)--(1,2)--(1,3)--(0,3)--(0,2);
\draw(1,0)--(2,0);
\draw(1,0)--(2,0)--(2,1)--(1,1)--(1,0);
\draw(1,1)--(2,1)--(2,2)--(1,2)--(1,1);
\draw(2,0)--(3,0);
\draw(2,0)--(3,0)--(3,1)--(2,1)--(2,0);
\draw(3,0)--(4,0);
\draw(3,2)--(4,2)--(4,3)--(3,3)--(3,2);
\draw(4,0)--(5,0);
\draw(5,0)--(6,0);
\draw(6,0)--(7,0);
\draw(6,0)--(7,0)--(7,1)--(6,1)--(6,0);
\draw(6,1)--(7,1)--(7,2)--(6,2)--(6,1);
\draw(6,2)--(7,2)--(7,3)--(6,3)--(6,2);
\draw(7,0)--(8,0);
\draw(7,0)--(8,0)--(8,1)--(7,1)--(7,0);
\draw(7,1)--(8,1)--(8,2)--(7,2)--(7,1);
\draw(7,2)--(8,2)--(8,3)--(7,3)--(7,2);
\draw(8,0)--(9,0);
\draw(9,0)--(10,0);
\draw(9,0)--(10,0)--(10,1)--(9,1)--(9,0);
\draw(9,1)--(10,1)--(10,2)--(9,2)--(9,1);
\draw(9,2)--(10,2)--(10,3)--(9,3)--(9,2);
\draw(10,0)--(11,0);
\draw(11,0)--(12,0);
\draw(12,0)--(13,0);
\draw(12,0)--(13,0)--(13,1)--(12,1)--(12,0);
\draw(12,1)--(13,1)--(13,2)--(12,2)--(12,1);
\draw(12,2)--(13,2)--(13,3)--(12,3)--(12,2);
\draw(13,0)--(14,0);
\draw(13,0)--(14,0)--(14,1)--(13,1)--(13,0);
\draw(13,1)--(14,1)--(14,2)--(13,2)--(13,1);
\draw(13,2)--(14,2)--(14,3)--(13,3)--(13,2);
\draw(14,0)--(15,0);
\draw(14,0)--(15,0)--(15,1)--(14,1)--(14,0);
\draw(14,1)--(15,1)--(15,2)--(14,2)--(14,1);
\draw(15,0)--(16,0);
\draw(15,0)--(16,0)--(16,1)--(15,1)--(15,0);
\draw(15,1)--(16,1)--(16,2)--(15,2)--(15,1);
\draw(16,0)--(17,0);
\draw(16,0)--(17,0)--(17,1)--(16,1)--(16,0);
\draw(17,0)--(18,0);
\draw(17,0)--(18,0)--(18,1)--(17,1)--(17,0);
\node at (0.500000000000000,-0.3) {$ $};
\node at (1.50000000000000,-0.3) {$p $};
\node at (2.50000000000000,-0.3) {$ $};
\node at (3.50000000000000,-0.3) {$q $};
\node at (4.50000000000000,-0.3) {$ $};
\node at (5.50000000000000,-0.3) {$ $};
\node at (6.50000000000000,-0.3) {$ $};
\node at (7.50000000000000,-0.3) {$ $};
\node at (8.50000000000000,-0.3) {$ $};
\node at (9.50000000000000,-0.3) {$ $};
\node at (10.5000000000000,-0.3) {$ $};
\node at (11.5000000000000,-0.3) {$ $};
\node at (12.5000000000000,-0.3) {$ $};
\node at (13.5000000000000,-0.3) {$ $};
\node at (14.5000000000000,-0.3) {$r $};
\node at (15.5000000000000,-0.3) {$ $};
\node at (16.5000000000000,-0.3) {$z $};
\node at (17.5000000000000,-0.3) {$ $};
\end{tikzpicture}

\begin{tikzpicture}[scale=0.500000000000000]
\node at (0,0) {$\cdot$};
\node at (1,0) {$\cdot$};
\node at (2,0) {$\cdot$};
\node at (3,0) {$\cdot$};
\node at (4,0) {$\cdot$};
\node at (5,0) {$\cdot$};
\node at (6,0) {$\cdot$};
\node at (7,0) {$\cdot$};
\node at (8,0) {$\cdot$};
\node at (9,0) {$\cdot$};
\node at (10,0) {$\cdot$};
\node at (11,0) {$\cdot$};
\node at (12,0) {$\cdot$};
\node at (13,0) {$\cdot$};
\node at (14,0) {$\cdot$};
\node at (15,0) {$\cdot$};
\node at (16,0) {$\cdot$};
\node at (17,0) {$\cdot$};
\node at (18,0) {$\cdot$};
\draw(0,0)--(1,0);
\draw(0,0)--(1,0)--(1,1)--(0,1)--(0,0);
\draw(0,1)--(1,1)--(1,2)--(0,2)--(0,1);
\draw(0,2)--(1,2)--(1,3)--(0,3)--(0,2);
\draw(1,0)--(2,0);
\draw(1,0)--(2,0)--(2,1)--(1,1)--(1,0);
\draw(1,2)--(2,2)--(2,3)--(1,3)--(1,2);
\draw(2,0)--(3,0);
\draw(2,0)--(3,0)--(3,1)--(2,1)--(2,0);
\draw(3,0)--(4,0);
\draw(3,1)--(4,1)--(4,2)--(3,2)--(3,1);
\draw(4,0)--(5,0);
\draw(5,0)--(6,0);
\draw(6,0)--(7,0);
\draw(6,0)--(7,0)--(7,1)--(6,1)--(6,0);
\draw(6,1)--(7,1)--(7,2)--(6,2)--(6,1);
\draw(6,2)--(7,2)--(7,3)--(6,3)--(6,2);
\draw(7,0)--(8,0);
\draw(7,0)--(8,0)--(8,1)--(7,1)--(7,0);
\draw(7,1)--(8,1)--(8,2)--(7,2)--(7,1);
\draw(7,2)--(8,2)--(8,3)--(7,3)--(7,2);
\draw(8,0)--(9,0);
\draw(9,0)--(10,0);
\draw(9,0)--(10,0)--(10,1)--(9,1)--(9,0);
\draw(9,1)--(10,1)--(10,2)--(9,2)--(9,1);
\draw(9,2)--(10,2)--(10,3)--(9,3)--(9,2);
\draw(10,0)--(11,0);
\draw(11,0)--(12,0);
\draw(12,0)--(13,0);
\draw(12,0)--(13,0)--(13,1)--(12,1)--(12,0);
\draw(12,1)--(13,1)--(13,2)--(12,2)--(12,1);
\draw(12,2)--(13,2)--(13,3)--(12,3)--(12,2);
\draw(13,0)--(14,0);
\draw(13,0)--(14,0)--(14,1)--(13,1)--(13,0);
\draw(13,1)--(14,1)--(14,2)--(13,2)--(13,1);
\draw(13,2)--(14,2)--(14,3)--(13,3)--(13,2);
\draw(14,0)--(15,0);
\draw(14,0)--(15,0)--(15,1)--(14,1)--(14,0);
\draw(14,1)--(15,1)--(15,2)--(14,2)--(14,1);
\draw(15,0)--(16,0);
\draw(15,0)--(16,0)--(16,1)--(15,1)--(15,0);
\draw(15,1)--(16,1)--(16,2)--(15,2)--(15,1);
\draw(16,0)--(17,0);
\draw(16,0)--(17,0)--(17,1)--(16,1)--(16,0);
\draw(17,0)--(18,0);
\draw(17,0)--(18,0)--(18,1)--(17,1)--(17,0);
\node at (0.500000000000000,-0.3) {$ $};
\node at (1.50000000000000,-0.3) {$p $};
\node at (2.50000000000000,-0.3) {$ $};
\node at (3.50000000000000,-0.3) {$q $};
\node at (4.50000000000000,-0.3) {$ $};
\node at (5.50000000000000,-0.3) {$ $};
\node at (6.50000000000000,-0.3) {$ $};
\node at (7.50000000000000,-0.3) {$ $};
\node at (8.50000000000000,-0.3) {$ $};
\node at (9.50000000000000,-0.3) {$ $};
\node at (10.5000000000000,-0.3) {$ $};
\node at (11.5000000000000,-0.3) {$ $};
\node at (12.5000000000000,-0.3) {$ $};
\node at (13.5000000000000,-0.3) {$ $};
\node at (14.5000000000000,-0.3) {$r $};
\node at (15.5000000000000,-0.3) {$ $};
\node at (16.5000000000000,-0.3) {$z $};
\node at (17.5000000000000,-0.3) {$ $};
\end{tikzpicture}
\caption{Kohnert diagrams with weight $x^{\gamma}$}
\label{fig:skyline-1032-positive}
\end{figure}
\end{proof}

\section*{Acknowledgements}
We thank David Brewster, Jiasheng Hu, and Husnain Raza for writing useful computer code (in the
NSF RTG funded ICLUE program). We also thank David Anderson, Mahir Can, Alexander Diaz-Lopez, Christian Gaetz, Megumi Harada, Bogdan Ion, Syu Kato, and Allen Knutson for stimulating conversations during the preparation of this work.
We used SageMath, as well as the Maple packages ACE and Coxeter/Weyl in our investigations.
This work was partially completed during (virtual) residence at ICERM's Spring 2021 semester
``Combinatorial Algebraic Geometry''; we thank the organizers providing an hospitable environment.
AY was partially supported by a Simons Collaboration Grant, and an NSF RTG grant.
RH was partially supported by an AMS-Simons Travel Grant.


\begin{thebibliography}{9999999}

\bibitem[ARY18]{ARYpreprint} A.~Adve, C.~Robichaux, A.~Yong, \emph{Complexity, combinatorial positivity, and Newton polytopes}, preprint, 2018. \textsf{arXiv:1810.10361}

\bibitem[ARY19]{ARYFPSAC} \bysame, \emph{Computational complexity, Newton polytopes,
and Schubert polynomials}, S\'em. Lothar. Combin. 82B (2020), Art. 52, 12 pp.

\bibitem[ARY21]{ARYAdv} \bysame,  \emph{An efficient algorithm for deciding vanishing of Schubert polynomial coefficients}, Adv.~Math., Volume 383, 4 June 2021, 107669.



\bibitem[BB05]{Bjorner.Brenti} A.~Bj\"{o}rner and F.~Brenti, 
\emph{Combinatorics of Coxeter groups.} Graduate Texts in Mathematics, 231. Springer, New York, 2005. xiv+363 pp. 

\bibitem[D77]{Deodhar} V.~Deodhar, \emph{Some characterizations of Bruhat ordering on a Coxeter group and determination of the relative Mobius function.} Invent. Math. 39 (1977), no. 2, 187--198. 

\bibitem[FGPS20]{Fan}
N.~J.~Fan, P.~L.~Guo, S.~Peng, S.~Sun, \emph{Lattice points in the Newton polytopes of key polynomials.} SIAM J. Discrete Math. 34 (2020), no. 2, 1281--1289.

\bibitem[FMS18]{Fink} A.~Fink, K.~M\'esz\'aros, A.~St.~Dizier, \emph{Schubert polynomials as integer point transforms of generalized permutahedra.} Adv. Math. 332 (2018), 465--475.

\bibitem[F97]{Fulton} W.~Fulton, \emph{Young tableaux. With applications to representation theory and geometry.} London Mathematical Society Student Texts, 35. Cambridge University Press, Cambridge, 1997. 

\bibitem[G21]{Gaetz} C.~Gaetz, \emph{Spherical Schubert varieties and pattern avoidance},
preprint, 2021. \textsf{arXiv:2104.03264}

\bibitem[GH20]{GH20} Y.~Gao, K.~H\" anni, \emph{Boolean elements in the Bruhat order}, preprint, 2020. \textsf{arXiv:2007.08490}

\bibitem[HY20a]{Hodges.Yong1} R.~Hodges and A.~Yong, \emph{Coxeter combinatorics and spherical Schubert geometry},
preprint, 2020. \textsf{arXiv:2007.09238}

\bibitem[HY20b]{Hodges.Yong2} \bysame, \emph{Multiplicity-free key polynomials}, preprint, 2020.
\textsf{arXiv:2007.09229}

\bibitem[K13]{Karupp} P.~Karuppuchamy, \emph{On Schubert varieties.} Comm. Algebra 41 (2013), no. 4, 1365--1368. 

\bibitem[K90]{Kohnert} A. Kohnert, \emph{Weintrauben, Polynome, Tableaux}, Bayreuth Math. Schrift. {\bf 38} (1990), 1--97.

\bibitem[L13]{Lascoux:polynomials} A.~Lascoux, \emph{Polynomials}, 2013. \url{http://www-igm.univ-mlv.fr/~al/ARTICLES/CoursYGKM.pdf}

\bibitem[N42]{Newman} M.~H.~A.~Newman, \emph{On theories with a combinatorial definition of "equivalence."} Ann. of Math. (2) 43 (1942), 223--243.

\bibitem[S12]{EC1} R.~P.~Stanley, \emph{Enumerative combinatorics. Volume 1.} Second edition. Cambridge Studies in Advanced Mathematics, 49. Cambridge University Press, Cambridge, 2012. xiv+626 pp.

\bibitem[T07]{Tenner} B.~E.~Tenner, \emph{Pattern avoidance and the Bruhat order}, 
J. Combin. Theory Ser. A 114 (2007), no. 5, 888--905.

\end{thebibliography}
\end{document}